\newcommand{\R}{\mathbb{R}}
\newcommand{\Z}{\mathbb{Z}}
\newcommand{\N}{\mathbb{N}}
\newcommand{\C}{\mathbb{C}}
\newcommand{\LL}{\mathcal{L}}
\renewcommand{\H}{\mathcal{H}}
\newcommand{\CS}{\mathcal{S}}
\newcommand{\CR}{\mathcal{R}}
\newcommand{\CO}{\mathcal{O}}
\newcommand{\E}{{\mathcal E}} 
\newcommand{\F}{{\mathcal F}}
\newcommand{\po}{\partial}
\newcommand{\wto}{\rightharpoonup} 
\newcommand{\ve}{\varepsilon}
\newcommand{\la}{\langle}
\newcommand{\ra}{\rangle}
\newcommand{\vphi}{\varphi}
\newcommand{\loc}{{\text{\rm loc}}}
\newcommand{\X}{\times}
\renewcommand{\d}{\delta}
\renewcommand{\l}{\lambda}
\renewcommand{\a}{\alpha}
\renewcommand{\b}{\beta}
\newcommand{\s}{\sigma}
\newcommand{\g}{\gamma} 
\newcommand{\z}{\zeta}
\renewcommand{\k}{\kappa}
\newcommand{\sgn}{\text{\rm sgn}}
\newcommand{\Om}{\Omega}
\newcommand{\om}{\omega}
\newcommand{\supp}{\text{\rm supp}\,}
\newcommand{\M}{{\mathcal M}}
\renewcommand{\E}{{\mathcal E}}
\newcommand{\DM}{\mathcal D\mathcal M}
\renewcommand{\div}{\text{\rm div}\,}
\renewcommand{\supp}{\text{\rm supp}\,}
\newcommand{\Div}{{\rm div}}
\newcommand{\cO}{{\mathcal O}}
\newcommand{\cU}{{\mathcal U}}
\newcommand{\cV}{{\mathcal V}}
\newcommand{\cE}{{\mathcal E}}
\newcommand{\cL}{{\mathcal L}}
\newcommand{\cP}{{\mathcal P}}
\newcommand{\cB}{{\mathcal B}}
\newcommand{\cA}{{\mathcal A}}
\newcommand{\cR}{{\mathcal R}}
\newcommand{\bfa}{{\mathbf a}}
\newcommand{\esslim}{\operatorname{ess}\!\lim}
\newcommand{\sign}{\operatorname{sign}}
\newcommand{\bbf}{\mathbf{b}}
\newcommand{\abf}{\mathbf{a}}
\newcommand{\Abf}{\mathbf{A}}
\newcommand{\Bbf}{\mathbf{B}}
\newcommand{\Rbf}{\mathbf{R}}
\newcommand{\Sbf}{\mathbf{S}}
\newcommand{\Lip}{\text{\rm Lip}}
\renewcommand{\S}{{\mathcal S}}
\newcommand{\B}{{\mathcal B}}
\newcommand{\bbE}{{\mathbb E}}
\newcommand{\bbP}{{\mathbb P}}
\newcommand{\bbM}{{\mathbb M}}
\newcommand{\bbB}{{\mathbb B}}
\newcommand{\fkU}{{\frak U}}
\newcommand{\dist}{\text{dist\,}}
\renewcommand{\Lip}{\text{Lip\,}}
\newcommand{\un}[1]{\underline{#1}}
\newcommand{\ul}{\underline}
\newcommand{\wh}{\widehat}
\newcommand{\wt}{\widetilde}
\newcommand{\ff}{\mathfrak{f}}
\newcommand{\qq}{\mathfrak{q}}
\newcommand{\fF}{\mathfrak{F}}
\newcommand{\FF}{\mathfrak{F}}
\theoremstyle{plain}
\newtheorem{theorem}{Theorem}[section]
\newtheorem{corollary}{Corollary}[section]
\newtheorem{lemma}{Lemma}[section]
\newtheorem{proposition}{Proposition}[section]
\theoremstyle{definition}
\newtheorem{definition}{Definition}[section]
\theoremstyle{remark}
\newtheorem{remark}{Remark}[section]
\numberwithin{equation}{section}
\begin{document}

\title[Boundary Value Problem for Stochastic Parabolic Equations]
{A Boundary Value Problem for a Class of Anisotropic Stochastic Degenerate Parabolic-Hyperbolic Equations }
\author[H.~Frid]{Hermano Frid} 

\address{Instituto de Matem\'atica Pura e Aplicada - IMPA\\
         Estrada Dona Castorina, 110 \\
         Rio de Janeiro, RJ 22460-320, Brazil}
\email{hermano@impa.br}

\author[Y.~Li]{Yachun Li}

\address{School of Mathematical Sciences, MOE-LSC, and SHL-MAC,  Shanghai Jiao Tong University\\ Shanghai 200240, P.R.~China}

\email{ycli@sjtu.edu.cn}

\author[D.~Marroquin]{Daniel Marroquin}
\thanks{D.~Marroquin thankfully acknowledges the support from CNPq, through grant proc. 150118/2018-0.}

\address{Instituto de Matem\'{a}tica - Universidade Federal do Rio de Janeiro\\
Cidade Universit\'{a}ria, 21945-970, Rio de Janeiro, Brazil}
\email{marroquin@im.ufrj.br}

%
\author[J.F.C.~Nariyoshi] {Jo\~ao F.C.~Nariyoshi}
\thanks{J.F.C.~Nariyoshi appreciatively acknowledges the support from CNPq, through grant proc. 140600/2017-5, and from FAPESP, through grant proc. 2021/01800-7. }

\address{Instituto de Matem\'{a}tica, Estat\'{i}stica e Computa\c{c}\~ao Cient\'{i}fica da Universidade de Campinas - IMECC/Unicamp\\  Rua S\'{e}rgio Buarque de Holanda, 651 \\
 Campinas, SP, 13083-859, Brazil}
\email{jfcnar@unicamp.br}

\author[Z.~Zeng]{Zirong Zeng}

\address{School of Mathematical Sciences, MOE-LSC, and SHL-MAC,  Shanghai Jiao Tong University\\ Shanghai 200240, P.R.~China}

\email{beckzzr@sjtu.edu.cn}

\keywords{divergence-measure fields, normal traces, Gauss-Green theorem,
          product rule} 
\subjclass{Primary: 26B20,28C05, 35L65, 35B35; Secondary: 26B35, 26B12
            35L67}
\date{}
\thanks{}

\begin{abstract} We establish the well-posedness of an initial-boundary value problem of mixed type for a stochastic nonlinear parabolic-hyperbolic equation on a space domain $\cO=\cO'\X\cO''$ where a Neumann boundary condition is imposed on $\po\cO'\X\cO''$, the hyperbolic boundary, and a Dirichlet condition is imposed on $\cO'\X\po\cO''$, the parabolic boundary. Among other points to be highlighted in our analysis of this problem we mention  the new strong trace
theorem for the special class of stochastic nonlinear parabolic-hyperbolic equations studied here, which is decisive for the uniqueness of the kinetic solution, and the new averaging lemma for the referred class of equations which is a vital part of the proof of the strong trace property. We also provide a detailed analysis of the approximate nondegenerate problems, which is also made here for the first time, as far as the authors know, whose solutions we prove to converge to the solution of our initial-boundary value problem.    

\end{abstract}

\maketitle 

\tableofcontents

\section{Introduction}\label{S:1}

Let $\cO=\cO'\X \cO''$ be a bounded smooth open subset of $\R^d$, with $\cO'\subset\R^{d'}$, $\cO''\in\R^{d''}$, $d=d'+d''$. We consider the following mixed-type initial-boundary value problem for a quasilinear degenerate parabolic-hyperbolic stochastic partial differential equation 
\begin{align}
& du+ \div(\Abf (u))\,dt=  D_{x''}^2: \Bbf (u)\,dt +\Phi(u)\,dW,\qquad x\in\cO,\, t\in(0,T),\label{e1.1}\\
&u(0,x)=u_0(x),\quad\text{for $x\in\cO$}, \label{e1.2}\\
&u(t,x)= u_b(t),  \quad \text{for $(t,x)\in \cO'\X\po\cO''$},  \label{e1.3}\\
&\Abf(u)\cdot\nu=0,\quad \text{for $(t,x)\in \po\cO'\X\cO''$}, \label{e1.4}
\end{align}
where, $\Abf:\R\to\R^d$ and $\Bbf:\R\to \bbM^{d''}$ are smooth maps,  $\bbM^{d''}$ denote the space of $d''\X d''$ matrices, $\Bbf=(B_{ij})_{i,j=d'+1}^d$, and $D_{x''}^2=(\po_{x_i x_j}^2)_{i,j=d'+1}^{d}$. For $\Rbf=(R_{ij}),\,\Sbf=(S_{ij})\in\bbM^{d''}$ we denote 
$\Rbf:\Sbf= \sum_{i,j}R_{ij}S_{ij}$ and, by extrapolation, $D_{x''}^2:\Bbf=\sum_{i,j=d'+1}^d\po_{x_ix_j}^2 B_{ij}$ . The matrix $\Bbf(u)$ is symmetric and its derivative $\bbf(u)=\frac{d}{du}\Bbf(u)$ is a symmetric nonnegative $d''\X d''$ matrix. 
    $W$ is a cylindrical Wiener process.

\subsection{Hypotheses}\label{SS:1.1}
The flux function $\Abf=(A_1, \cdots, A_d):\R\to\R^d$ is assumed to be of class $C^2$ and we denote its derivative by $\abf=(a_1,\cdots,a_d)$. The diffusion matrix $\bbf=(b_{ij})_{i,j=d'+1}^d:\R\to \bbM^{d''}$ is symmetric and positive semidefinite. Its square-root matrix, also symmetric and positive semidefinite, is denoted by $\s$, which is assumed to be bounded and locally $\g$-H\"older continuous for some $\g>1/2$, that is, 
\begin{equation}\label{e1.4o}
|\s(\xi)-\s(\z)|\le C(R) |\xi-\z|^\g\qquad \text{for all $\xi,\z\in\R$, $|\xi-\z|<1$}.
\end{equation}
Moreover, we assume that, for some $b\in C^1(\R)$, with $\frac{db(u)}{du}>0$, for a.e.\ $u\in\R$,  for some constant $\Lambda>1$  and for all $\xi=(\xi_{d'+1},\cdots,\xi_d)\in\R^{d''}$ we have
\begin{equation}\label{e1.4'}
\frac{db(u)}{du}^2|\xi|^2\le \sum_{i,j=d'+1}^d b_{ij}(u)\xi_i\xi_j\le \Lambda \, \frac{db(u)}{du}^2|\xi|^2.
\end{equation}
As it was observed in \cite{FL},  \eqref{e1.4'} implies that the $B_{ij}$'s are locally Lipschitz  functions of $b(u)$, that is, there exists a locally Lipschitz continuous functions 
$\tilde B_{ij}$ such that   $B_{ij}(u)=\tilde B_{ij}(b(u))$, for all $i,j=d'+1,\cdots,d$.  Relation \eqref{e1.4'} immediately implies 
\begin{equation}\label{e1.4''}
\frac{db(u)}{du}|\xi|^2\le \sum_{i,j=d'+1}^d \s_{ij}(u)\xi_i\xi_j\le \Lambda^{1/2} \, \frac{db(u)}{du}|\xi|^2,
\end{equation}  
and similarly to $\bbf$, we deduce that $\Sigma(u):=\int_0^u\s(\z)\,d\z$ is a locally Lipschitz $d''\X d''$-matrix function of $b(u)$, that is, there is a locally Lipschitz $d''\X d''$-matrix function $\tilde \Sigma$ such that $\Sigma(u)=\tilde \Sigma(b(u))$.

Further, we require a nondegeneracy condition for the symbol $\LL$ associated to the kinetic form of \eqref{e1.1}. In order to have spatial regularity of  kinetic solutions we localize the $\chi$-function associated to such solution and so, for $\ell>0$ sufficiently large,  we may view our localized $\chi$-functions as periodic with period $\ell$. The symbol is defined by
$$
\LL(i\tau,i n, \xi):=i(\tau+\abf(\xi)\cdot n) + {n''}^\top \bbf(\xi)n'',
$$  
where   $n\in\ell\, \Z^d$ and  we write $n=(n',n'')$, for $n\in\Z^d$, where $n'\in\Z^{d'}$ and $n''\in\Z^{d''}$. The reduced symbol is defined by
$$
\LL_0(i\tau,i n, \xi):=i(\tau+\abf'(\xi)\cdot n') + {n''}^\top \bbf(\xi)n'',
$$  
where $\abf'(\xi)=(a_1(\xi),\cdots, a_{d'}(\xi))$,  $n\in\ell\, \Z^d$.
 
For $J,\d>0$ and $\eta\in C_b^\infty(\R)$ nonnegative, let 
\begin{equation*}
\begin{aligned}
\Om_{\LL}^\eta(\tau,\eta;\d)&:=\{\xi\in \supp\eta\,:\, |\LL(i\tau, i n, \xi)|\le \d\},\\
\om_{\LL}^\eta(J;\d) &:= \sup_{\tiny\begin{matrix} \tau\in\R, n\in \ell\,\Z^d\\ |n|\sim J\end{matrix}}|\Om_{\LL}^\eta(\tau,i n;\d)|.
\end{aligned}
\end{equation*}
Let $\LL_{\xi}:=\po_\xi\LL$. We suppose that there exist $\a\in (0,1)$, $\b>0$ and a measurable function  $\vartheta\in L_\loc^\infty(\R;[1,\infty))$ such that
\begin{equation}\label{e1.4'''}
\begin{aligned}
\om_{\cL}^\eta(J;\d) &\lesssim_\eta \left(\frac{\d}{J^\b}\right)^\a,\\
\sup_{\tiny{\begin{matrix}\tau\in\R,n\in\Z^d\\|n|\sim J \end{matrix}}}\sup_{\xi\in\supp \eta}\frac{|\cL_{\xi}(i\tau,in;\xi)|}{\vartheta(\xi)}&\lesssim_\eta J^\b,\qquad \forall \d>0,\, J\gtrsim 1,
\end{aligned}
\end{equation}
where we employ the usual notation $x\lesssim y$, if $x\le Cy$, for some absolute constant $C>0$, and $x\sim y$, if $x\lesssim y$ and $y\lesssim x$. 

The following example in the case where $d=2$,  corresponds to the one  in corollary~4.5 in \cite{TT}  where conditions  \eqref{e1.4''} are verified:
$$
du+\po_{x_1}(\frac1{l+1}u^{l+1})\,dt = \po_{x_2}^2(\frac1{n+1}|u^n| u)\,dt+ \Phi(u)\,dW,
$$
where $l,n\in\N$ satisfy $n\ge 2l$. The same argument as in corollary~4.5 of \cite{TT} applies to the corresponding equation in any space dimension $d$,   replacing $\po_{x_2}^2$ in the above equation by $\Delta_{x''}:=\po_{x_2}^2+\cdots+\po_{x_d}^2$.  Clearly, many other similar examples may be given.

We observe that conditions \eqref{e1.4'''}  imply  the weaker  non-degeneracy condition: For $(\tau,\k)\in\R^{d+1}$, with $\tau^2+|\k|^2=1$,  
\begin{equation}\label{e1.nondeg0}
\left| \left\{\xi \in\supp\eta \,:\, |\tau+\bfa(\xi)\cdot\k|^2+ \left( {\k''}^{\top} \bbf(\xi)\k''\right)^2 =0 \right \} \right|=0,
\end{equation}
and, due to \eqref{e1.4'} and the fact that $\frac{db(u)}{du}>0$, for a.e.\ $u\in\R$,   \eqref{e1.nondeg0} implies
\begin{equation}\label{e1.nondeg}
\left| \left\{\xi \in\supp\eta \,:\, |\tau+\bfa'(\xi)\cdot\k'|^2+ \left( {\k''}^{\top} \bbf(\xi)\k''\right)^2 =0 \right \} \right|=0,
\end{equation}
where $\abf'(\xi):=(a_1(\xi),\cdots,a_{d'}(\xi))$.

We are going to seek  solutions of the initial-boundary value problem \eqref{e1.1}-\eqref{e1.4} which assume values in an interval, say $[u_{\text{min}},u_{\text{max}}]$, such that $u_0(x),u_b(x)\in [u_{\text min},u_{\text max}]$, and
\begin{equation}\label{e1.f}
\Abf'(u_{\text min})=\Abf'(u_{\text max})=0,
\end{equation}
where $\Abf'(u):= (A_1(u),\cdots,A_{d'}(u))$.

As to the stochastic term, we adopt the framework of \cite{FL2}, similar to that in \cite{DV, DHV, GH}.  Let $(\Om,\, \F,\, (\F_t)_{t\ge0},\,$ $ \bbP)$ be a stochastic basis with a complete, right-continuous filtration. Let $\cP$ denote the predictable $\s$-algebra on $\Om\X[0,T]$ associated to $(\F_t)_{t\ge0}$. The initial datum may be random, $\F_0$-measurable, and we assume $u_0\in L^\infty(\Om\X\cO)$.  The process $W$ is a cylindrical Wiener process, 
$$
W(t)=\sum_{k\ge1}\b_k(t) e_k,
$$
with $(\b_k)_{k\ge1}$ being mutually independent real-valued standard Wiener processes relative to $(\F_t)_{t\ge0}$ and $(e_k)_{k\ge1}$ a complete orthonormal system in a separable Hilbert space $\fkU$. For each $u\in L^2(\cO)$, $\Phi(u): {\frak U}\to L^2(\cO)$ is defined by $\Phi(u)e_k=g_k(u(\cdot))$, where $g_k(\cdot)$ is a regular function on $\mathbb{R}$ satisfying the bounds
 \begin{equation}\label{e1.4*}
 |g_k(0)|+|\po_\xi g_k(\xi)|+|\po_\xi^2 g_k(\xi)|\le \a_k,\qquad \forall \xi\in\R,
 \end{equation}
 where $(\a_k)_{k\ge 1}$ is a sequence of positive numbers satisfying $D:=4\sum_{k\ge1}\a_k^2<\infty$.
 Observe that  \eqref{e1.4*} implies
 \begin{align}
 &G^2(u)=\sum_{k\ge1}|g_k(u)|^2\le D(1+|u|^2),\label{e1.4**}\\
 &\sum_{k\ge1}|g_k(u)-g_k(v)|^2\le D(|u-v|^2),\label{e1.5*}
 \end{align}
for all  $x,y\in\cO$, $u,v\in\R$.

The conditions on $\Phi$ imply that $\Phi: L^2(\cO)\to L_2({\frak U}; L^2(\cO))$,  where the latter denotes the space of Hilbert-Schmidt operators from ${\frak U}$ to $L^2(\cO)$. In particular, given a predictable process $u\in L^2(\Om\times [0,T]; L^2(\cO))$, the stochastic integral is a well defined process taking values in $L^2(\cO)$. Indeed, for each $u\in L^2(\cO)$, it follows from \eqref{e1.4*} that
$$
\sum_{k\ge1}\|g_k(u)\|_{L^2(\cO)}^2\le D(1+\|u\|_{L^2(\cO)}^2).
$$ 

In this setting, we can assume without loss of generality  that the $\s$-algebra $\F$ is countably generated and $(\F_t)_{t\ge0}$ is the filtration generated by the Wiener process and the initial condition. 
 
As aforementioned, we will look for bounded solutions of the initial-boundary value problem \eqref{e1.1}-\eqref{e1.3} which assume values in the interval $[u_{\text min},u_{\text max}]$. Accordingly, on top of \eqref{e1.f}, we assume that $[u_{\text min},u_{\text max}]\subset (-L_0,L_0)$, where $L_0$ is as in the nondegeneracy condition \eqref{e1.4'''}. Moreover, we assume that
\begin{equation}\label{e1.8'}
g_k(u_{\rm min})=g_k(u_{\rm max})=0, \quad\text{for all $k=1,2,\cdots$}. 
\end{equation}
We also assume that $u_0\in L^\infty(\cO)$ is deterministic (for simplicity) and satisfies $u_{\rm min} \le u_0\le u_{\rm max}$. Let us point out that $u_0$ may be random, in which case it should be assumed that it is $\mathcal{F}_0$-measurable. The extension to this more general setting is straightforward and follows the arguments below line by line just by adding an expectation where integration in the random parameter takes place. The conditions on $u_b$ are given below (see \eqref{e1.ub}). 

\subsection{Definitions and main result} \label{SS:1.2}

\begin{definition}[Kinetic measure]\label{D:1.1} A mapping $m$ from $\Om$ to $\M_b^+([0,T]\X\cO\X\R)$,the set of nonnegative bounded measures over $[0,T]\X\cO\X\R$, is said to be a kinetic measure if the following holds:
\begin{enumerate}
\item[(i)] $m$ is measurable, in the sense that for each $\psi\in C_0([0,T]\X\cO\X\R)$ the mapping $m(\psi):\Om\to\R$ is measurable, where by $C_0$ we denote the space of continuous functions vanishing at the boundary or when the norm of  the argument goes to infinity. 
\item[(ii)] $m$ vanishes for large $\xi$: if $B_R^c=\{\xi\in\R\,:\, |\xi|\le R\}$, then  
$$
\lim_{R\to\infty} \bbE\, m([0,T]\X\cO\X B_R^c)=0,
$$
\item[(iii)] for any $\psi\in C_0(\cO\X\R)$
$$
\int_{[0,t]\X\cO\X\R}\psi(x,\xi)\,dm(s,x,\xi)\in L^2(\Om\X[0,T])
$$
admits a predictable representative. 
\end{enumerate}
\end{definition}

Concerning the Dirichlet condition in the next definition we make the following comments and further assumptions. First,  we assume the following:
\begin{equation}\label{e1.Dir}
\text{$\Bbf(u)$ is diagonal, that is, $B_{ij}(u)\equiv0$ for $i\ne j$.}
\end{equation}

Second, we introduce the functions
 \begin{equation}\label{e1.10}
 \begin{aligned}
 & F(u,v):=\sgn (u-v) (A(u)-A(v)),\\
 &\bbB(u,v):=(\sgn(u-v)(B_{ij}(u)-B_{ij}(v)))_{i,j=1}^d\\
 & K_{x''}(u,v):=  \nabla_{x''}\cdot \bbB(u,v) -F(u,v),\\
 & H_{x''}(u,v,w):= K_{x''}(u,v)+K_{x''}(u,w)-K_{x''}(w,v),\\
 & \mathbb{G}(u,v):=\sgn (u-v)(\Phi(u)-\Phi(v)),
 \end{aligned}
 \end{equation}
where  $\nabla_{x''}\cdot \bbB(u,v)$ is the $d$-vector with components 
$$
(\nabla_{x''}\cdot \bbB (u,v))_j=\begin{cases}
0, &\text{if }j\le d',\\
\sum\limits_{i=d'+1}^d\po_{x_i}(\sgn(u-v)(B_{ij}(u)-B_{ij}(v))), &\text{if }d'+1\le j\le d,
\end{cases}
$$
and $\mathbb{G}:L^2(\cO)^2\to L_2(\mathfrak{U};L^2(\cO^2))$ is given by $\mathbb{G}(u,v)e_k=\sgn(u-v)(g_k(x,u)-g_k(x,v))$, $k\ge 1$.

Similarly, we define $F_+$, $\mathbb{B}_+$ and $\mathbb{G}_+$ as their counterparts in \eqref{e1.10} with $\sgn(\cdot)_+$ instead of $\sgn(\cdot)$.

We also define
\begin{equation}\label{e1.11}
\cA(u,v,w)=|u-v|+|u-w|-|w-v|.
\end{equation}

 Third, in order to take advantage of the fact that $\po\cO''$  is locally the graph of a $C^2$ function, we introduce a system of balls $\B''$, with the following property. For each $B''=B''(x_0'',r)\in\B''$,  a  ball with center at an arbitrary $x_0''\in\po\cO''$  we have that for some $\gamma\in C^2(\R^{d''-1})$,
\begin{equation}\label{e1.12}
B''\cap \cO''=\{(\bar y'',y_{d})\in B\,:\, y_{d}<\gamma(\bar y''), \bar y''=(y_{d'+1},\cdots,y_{d-1})\in \R^{d''-1}\},
\end{equation}
where the coordinate system $(y_{d'+1},\cdots,y_{d})$ is obtained from the original $(x_{d'+1}, \cdots,$ $x_{d})$ by  relabelling, reorienting and translating.  By relabelling we mean a permutation of the coordinates and by reorienting we mean changing the orientation of one of the coordinate axes. 

Fourth, we assume that $u_b$ is predictable, satisfies $u_{\text min}\le u_b(t,x)\le u_{\text max}$, $(t,x)\in(0,T)\X\po\cO$ and that
\begin{equation}\label{e1.ub}
u_b\in L^2(\Om\times[0,T];X)\cap L^2(\Om;H^1((0,T);L^2(\cO)))\cap L^4(\Om\times[0,T];Y),
\end{equation}
where, $X=L^2(\cO';H^4(\po\cO''))\cap L^2(\po\cO'';H_0^1(\cO')\cap H^2(\cO'))$ and $Y=L^4(\cO';W^{1,4}(\po\cO''))$. Condition \eqref{e1.ub} is intended to ensure that, given $B''\in\mathcal{B}''$ satisfying \eqref{e1.12}, there is an extension $u_{B''}\in L^2(\Om\times[0,T]; L^2(\cO';H^4(\cO)))\cap L^2(\Om\times[0,T];L^2(\cO'';H^2(\cO')))$ of $u_b$ to $\cO$ satisfying (strongly) the following
\begin{align}
&du_{B''} = \Delta u_{B''}dt -\Delta_{x''}^2 u_{B''} dt + \Phi(u_{B''})dW(t),\qquad x\in \cO,\, t\in(0,T),\label{e1.101}\\
&u_{B''}(0)=u_{B''0},\label{e1.102}\\
&u_{B''}(t)\big|_{\cO'\times\po\cO''}= u_b(t),\label{e1.102'} \\
&\frac{\partial u_{B''}}{\partial x_d}(t)\big|_{\cO'\times(\po\cO''\cap B'')} = 0,\label{e1.103}
\end{align}
where, $\Delta_{x''}^2=\sum_{i=d'+1}^d\po_{x_ix_i}^2\sum_{d'+1}^d\po_{x_jx_j}^2$ denotes the bi-Laplacian operator in the parabolic variables and $u_{B''0}$ is a smooth extension of $u_b(0,\cdot)$ to $\cO'\times B''$ such that $\frac{\partial u_{B0}}{\partial x_d}\big|_{\cO'\times(\po\cO''\cap B'')} = 0$. 

\begin{remark}
Actually, we only need to assume that, for each $B''\in\mathcal{B}''$, $u_b|_{\cO'\times (\po\cO''\cap B'')}$ is the restriction to $\cO'\times(\partial \cO''\cap B'')$ of a strong solution of a stochastic equation whose noise term is given by $\Phi(u_{B''})dW$, also satisfying \eqref{e1.103}.  It is possible to show that under the hypothesis \eqref{e1.4*} and assuming \eqref{e1.ub}, then there are strong solutions to \eqref{e1.101}-\eqref{e1.103}, in particular. The proof of this statement goes by the same lines as in  Appendix~A in \cite{FL2}, but using the operator $A=\Delta-\Delta_{x''}^2$ with domain 
\begin{multline*}
D(A) = \{ u \in L^2(\cO' \times \cO''); L^2(\cO''; H^2(\cO')) \cap u \in L^2(\cO'; (H_0^2 \cap H^4)(\cO'))  \\  
 \text{ and, in the sense of traces, } \partial_\nu u = 0 \text{ on } \partial \cO' \times \cO'' \},
\end{multline*}
instead of the bi-Lapacian, and using the results from Appendix~\ref{A4} and Appendix~\ref{B} below. The fact that there is such a function $u_{B''}$ that is a mild solution of \eqref{e1.101}-\eqref{e1.103} can be shown by standard methods via a fixed point argument by assuming only that $u_b\in L^2(\Om\times[0,T];X)\cap L^2(\Om;H^1((0,T);L^2(\cO)))$. The extra regularity assumed in \eqref{e1.ub} guarantees that the solution is in fact strong. In particular, the assumption that $u_b\in L^4(\Om\times[0,T];Y)$ in connexion with \eqref{e1.4*} is used to improve the regularity of the mild solution by adapting some ideas from \cite{Ha2}.  We refer to Appendix~A in \cite{FL2} for the details. 
\end{remark}

\begin{remark}
In the deterministic setting, that is, when $\Phi=0$, $u_{B''}$ may simply be obtained using \eqref{e1.12} by  setting $u_{B''}(\bar{x},x_d)=u_b(\bar{x})$, for $x=(\bar{x},x_d)\in B''\cap \overline{\cO}$ (cf. \cite{FL}). As in \cite{FL2} (cf. \cite{FL,MPT}), the extension of the values of the parabolic boundary to the interior of the domain will be used below to give meaning to the boundary condition \eqref{e1.3}. We will comment further on the extension of the boundary data satisfying \eqref{e1.101}-\eqref{e1.103} in Subsection~\ref{SS:1.4} below.
\end{remark}

%

\begin{definition}[Kinetic solution]\label{D:1.2} A predictable function $u\in L^\infty(\Om\X[0,T]\X \cO)$ is a kinetic solution of \eqref{e1.1}--\eqref{e1.3} if for all $p\ge 1$, 
$$
u\in L^p(\Om\X[0,T], \cP, d\bbP\otimes dt  ;  L^p(\cO)) \cap L^p(\Om; L^\infty(0,T; L^p(\cO))), 
$$  
and it satisfies the following:
\begin{enumerate} 
\item[(i)]{\bf Regularity:}  
\begin{equation}\label{e1.8'''}
\nabla_{x''} b(u) \in L^2(\Om\X[0,T]\X\cO),
\end{equation}
 where $\nabla_{x''}:=(\po_{x_{d'+1}},\cdots,\po_{x_d})$.
In particular, denoting $\Div_{x''}=\sum_{j=d'+1}^d\po_{x_j}$, we have that $\Div_{x''} \int_0^u\sigma(\xi)d\xi \in L^2(\Om\times[0,T]\times\cO)$.
\item[(ii)]{\bf Kinetic equation:} There exists a kinetic measure $m\ge n_1$, $\bbP$-a.s.\ such that the pair $(f={\bf 1}_{u>\xi}, m)$ satisfies, for all $\varphi\in C_c^\infty([0,T)\X\cO\X\R)$, $\bbP$-a.s., 
\begin{multline} \label{e1.9}
\int_0^T\la f(t),\po_t\varphi(t)\ra\, dt + \la f_0,\varphi(0)\ra +\int_0^T\la f(t), \abf\cdot\nabla\varphi(t)\ra\,dt\\
+\int_0^T\la f(t),{\bbf}:D_{x''}^2\varphi(t)\ra\,dt\\
=-\sum_{k\ge1}\int_0^T\int_{\cO}g_k(u(t,x))\varphi(t,x,u(t,x))\,dx\,\b_k(t)\\
-\frac12\int_0^T\int_{\cO} G^2(u(t,x))\po_\xi\varphi(t,x,u(t,x))\,dx\,dt+m(\po_\xi\varphi),
\end{multline}
where, $n_1:\Om\to\M_b^+([0,T]\X \cO\X\R)$ is defined as follows: for any $\varphi\in C_0 ([0,T]\X\cO\X\R)$ 
$$
n_1(\varphi)=\int_0^T\int_{\cO}\int_{\R}\varphi(t,x,\xi)\left|\div_{x''} \int_0^u\s(\z)\,d\z\right|^2\,d\d_{u(t,x)}(\xi)\,dx\,dt.
$$

\item[(iii)]{\bf Neumann condition on $\Gamma':=\po\cO'\X\cO''$:}  For all $\tilde \phi\in C_c^\infty((0,T)\X\R^{d'}\X\Om'')$,
\begin{multline}\label{e1.9N}
\int_0^T\int_{\cO}\{ u\po_t\tilde\phi +\Abf(u)\cdot\nabla \tilde\phi-\nabla_{x''}\cdot \Bbf(u)\cdot\nabla_{x''}\tilde\phi\}\,dx\,dt \\ +\int_0^T\int_{\cO} \tilde\phi\Phi(u)\,dx\, dW(t)=0,
\end{multline}
where $\nabla_{x''}\cdot \Bbf(u)$ is the $d''$-vector whose $j$-th component is $\sum_{i=d'+1}^d\po_{x_i}B_{ij}(u)$, for $d'+1\le j\le d$. Observe that the test function $\tilde\phi$ does not necessarily vanish over $\Gamma'_T:=(0,T)\X\Gamma'$. 
\end{enumerate}
Additionally, among all possible functions that satisfy (i), (ii) and (iii), item (iv) below will characterize uniquely the one that satisfies the boundary condition \eqref{e1.3}.
\begin{enumerate}
\item[(iv)] {\bf Dirichlet condition on $\Gamma'':=\cO'\X\po\cO''$:}  We say that $u$ satisfies \eqref{e1.3} if the following conditions hold.  For each $B''\in\B''$, and some random constant $C_*>0$ with finite expectation, depending only on $\Abf, \Bbf$ and $u_b$, we have a.s. and for all $0\le\tilde \varphi\in C_c^\infty((0,T)\X \cO'\times B'')$ that
\begin{multline}\label{e1.15}
\int_0^T\int_{\cO}\{|u(t,x)-u_B(t,x)|\po_t\tilde\varphi-K_{x''}(u(t,x),u_B(t,x))\cdot\nabla\tilde \varphi\}\,dx\, dt\\ 
+\sum_{k\ge 1}\int_0^T \int_\cO \mathbb{G}_k(u(t,x),u_B(t,x))\tilde \varphi \, dx\, d\beta_k(t)   \ge -C_*\|\tilde \varphi \|_{L^2(\cO\times [0,T])}.
\end{multline}
Also, if $v$ is any other kinetic solution of equation \eqref{e1.1} (possibly with different initial data $v_0$) and $\zeta_\delta''$ is any $\cO''$-boundary layer sequence (for whose precise definition we refer to Section~\ref{S:3}), then for all $0\le \tilde \phi \in C_c^\infty(\cO'\times B''\times\cO)$ and $0\le t\le T$, we have that
\begin{equation}\label{e1.16}
\liminf_{\delta\to 0}\bbE\int_0^t \int_{\cO^2} H_{x''}(u(s,x),v(s,y),u_B(s,x))\cdot\nabla_x\zeta_\delta''(x)\tilde \phi(x,y)\, dx\, dy\, ds\ge 0.
\end{equation}
Moreover,  for all $B''\in\cB''$ and for all $\phi\in C_c^1(B'')$, a.s.\ we have
\begin{multline}\label{e1.17}
\int_{\cO'}b(u(t,x))dx'=\int_{\cO'}b(u_b(t,x))dx' \\
 \text{ on } \po\cO\times (0,T)\text{ in the sense of traces in } L^2(0,T; H^1(B\cap\cO'')).
\end{multline}
%
%
\end{enumerate}
\end{definition}

\begin{remark}
Note that in the deterministic setting, that is, when $g_k\equiv 0$ for all $k\ge 1$, the constants are solutions of equation \eqref{e1.1} and the condition \eqref{e1.16} is only necessary to hold for constant solutions $v(t,y)=k$ (cf. \cite{MPT,FL}). In the present setting, like in \cite{FL2}, this is no longer the case.
\end{remark}

\begin{remark}[Chain rule]
Since, according to \eqref{e1.4''}, $\Sigma(u)=\int_0^u\s(\z)\,d\z$ is a locally Lipschitz function of $b(u)$, condition \eqref{e1.8'''} implies that $\nabla_{x''} \Sigma(u) \in L^2(\Om\times[0,T]\times\cO)$. Consequently, for any $0\le \vartheta\in C_b(\mathbb{R})$ the following chain rule formula holds in $L^2(\Om\times[0,T]\times\cO)$ (see the appendix in \cite{CP})
\begin{equation}\label{e1.c-r}
\Div_{x''} \int_0^u\vartheta(\xi)\sigma(\xi)d\xi = \vartheta(u)\Div_{x''} \int_0^u\sigma(\xi)d\xi, \qquad \text{in }\mathcal{D}'(\cO),\text{ a.e. }(\omega,t).
\end{equation}
\end{remark}

The main goal of this paper is to prove the following theorem. 

\begin{theorem}\label{T:1.1} Let $u_0\in L^\infty(\Om\X\cO)$ with $u_0,u_b\in[a,b]\subset(-M,M)$ and assume that \eqref{e1.4*} holds. Then, there is a unique kinetic solution of \eqref{e1.1}--\eqref{e1.4}, and it has almost surely continuous trajectories in $L^p(\cO)$, for all $p\in[1,\infty)$.
Moreover, if $u$ and $v$ are two kinetic solutions with initial data $u_0,v_0$ and  Dirichlet data $u_b,v_b$, we have
\begin{equation}\label{e1.18}
\bbE \int_{ \cO}|u(t,x)-v(t,x)|\,dx\le \bbE \int_{\cO}|u_0(x)-v_0(x)|\,dx, 
\end{equation}
for some $C>0$ depending only on the data of the problem. 
\end{theorem}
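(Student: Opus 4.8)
The plan is to follow the by-now classical route for kinetic solutions of stochastic degenerate parabolic–hyperbolic equations (as in \cite{DHV,FL2}), adapted to the mixed Neumann–Dirichlet boundary setting. The proof splits into three parts: (a) a priori estimates and uniqueness via an $L^1$-contraction, (b) existence via approximation by nondegenerate problems, and (c) the strong-trace/continuity statement. I would organize the argument so that uniqueness comes first, since the contraction estimate \eqref{e1.18} is the heart of the matter and the boundary conditions (iii)–(iv) in Definition~\ref{D:1.2} are precisely engineered to make it work.

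For uniqueness and \eqref{e1.18}: let $u,v$ be two kinetic solutions with data $u_0,v_0$ and $u_b,v_b$, with kinetic functions $f=\mathbf 1_{u>\xi}$, $g=\mathbf 1_{v>\xi}$ and kinetic measures $m,n$. The standard step is to write the equations for $f$ and $\bar g=1-g$, regularize in $(t,x,\xi)$ by convolution with approximate identities (being careful with the boundary: one convolves only in the directions tangent to the relevant piece of the boundary, and uses the boundary-layer sequences $\zeta_\delta''$ of Section~\ref{S:3} near $\Gamma''$), multiply, integrate, and take expectations. Itô's formula applied to the product produces the martingale terms, the Itô-correction term $\tfrac12\sum_k\|g_k(u)-g_k(v)\|^2$ coming from \eqref{e1.5*}, the flux term $F(u,v)$, the parabolic-dissipation term governed by the measures $m,n$ together with the parabolic-defect measures $n_1$ bounding them from below (item (ii)), and boundary contributions. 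Near the hyperbolic boundary $\Gamma'$ one uses the weak Neumann formulation \eqref{e1.9N} to show the flux contribution $F(u,v)\cdot\nu$ through $\partial\cO'\times\cO''$ has the right sign (it vanishes, in fact, by \eqref{e1.4}); near the parabolic boundary $\Gamma''$ one uses \eqref{e1.15}, \eqref{e1.16}, \eqref{e1.17} — the sign conditions on $H_{x''}$ and $K_{x''}$ against $\nabla_x\zeta_\delta''$ — to discard the boundary terms with a favorable sign, exactly the role these conditions were designed for. After letting the regularization parameters and then $\delta\to0$ in the right order, one arrives at
\begin{equation*}
\bbE\int_{\cO}|u(t,x)-v(t,x)|\,dx\le \bbE\int_{\cO}|u_0(x)-v_0(x)|\,dx + (\text{boundary term in }u_b,v_b),
\end{equation*}
and when $u_b=v_b$ the boundary term drops, giving \eqref{e1.18}; taking $u_0=v_0$ and $u_b=v_b$ yields uniqueness. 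A subtle point is the entropy-defect/doubling argument for the degenerate-parabolic part: one must use the chain rule \eqref{e1.c-r} and the regularity \eqref{e1.8'''} to identify the ``parabolic dissipation'' terms and show they combine into a nonpositive quantity, as in \cite{DHV}.

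For existence: approximate \eqref{e1.1}–\eqref{e1.4} by nondegenerate problems — e.g. replace $\Bbf$ by $\Bbf+\varepsilon\,\mathrm{Id}$ in the parabolic variables (and regularize the data), for which one has classical/variational well-posedness with enough regularity to justify the kinetic formulation. One derives $\varepsilon$-uniform bounds: the $L^p(\Omega;L^\infty_tL^p_x)$ and $L^\infty(\Omega\times[0,T]\times\cO)$ bounds from the maximum-principle-type estimates enforced by \eqref{e1.f} and \eqref{e1.8'}, the uniform $L^2$ bound on $\nabla_{x''}b_\varepsilon(u_\varepsilon)$, and a uniform bound on the kinetic measures $m_\varepsilon$. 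Then one passes to the limit: the nonlinear compactness needed to upgrade weak-$\star$ convergence of $f_\varepsilon$ to the indicator form $\mathbf 1_{u>\xi}$ comes from the averaging lemma stated in the introduction (the ``new averaging lemma for the referred class of equations''), which uses the nondegeneracy \eqref{e1.4'''}; this gives strong convergence of velocity averages and hence of $u_\varepsilon$, allowing one to pass to the limit in all nonlinear terms, including the stochastic integral (via the stochastic compactness / Skorokhod-type argument, or directly since one has strong convergence in $L^2(\Omega\times[0,T]\times\cO)$). One must also verify that the boundary conditions (iii)–(iv) survive the limit — the Neumann condition \eqref{e1.9N} is linear in the test function and passes easily; for the Dirichlet conditions \eqref{e1.15}–\eqref{e1.17} one uses the uniform regularity of the extensions $u_{B''}$ from \eqref{e1.101}–\eqref{e1.103} together with the strong convergence.

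For the strong-trace property and continuous trajectories: the $L^1$-contraction already gives, via a standard argument, that $t\mapsto u(t,\cdot)$ has a representative with a.s.\ continuous trajectories in $L^p(\cO)$ for every $p\in[1,\infty)$ — one compares $u$ with itself shifted in time, or uses the kinetic formulation to control $\|u(t)-u(s)\|_{L^1}$, and combines with the uniform $L^p$ bounds and the martingale continuity. The genuinely new ingredient, flagged in the abstract, is the strong trace theorem for this class on the boundary, which is what makes conditions \eqref{e1.15}–\eqref{e1.17} meaningful and is used inside the uniqueness proof near $\Gamma''$; its proof rests on the new averaging lemma. I expect the main obstacle to be precisely this boundary analysis in the doubling-of-variables step: controlling the boundary terms near $\Gamma''$ requires simultaneously the strong trace of the normal derivative of $\Sigma(u)$ (or of $b(u)$), the extension $u_{B''}$ solving \eqref{e1.101}–\eqref{e1.103}, and the boundary-layer sequences $\zeta_\delta''$, and making the limits $\delta\to0$ interact correctly with the regularization in $\xi$ and with the stochastic integrals — the ``anisotropic'' structure (Neumann in $x'$, Dirichlet in $x''$) means the two pieces of $\partial\cO$ must be handled by genuinely different mechanisms within the same estimate.
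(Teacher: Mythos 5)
Your overall architecture is the right one --- doubling of variables in the style of \cite{DHV,FL2} to obtain a Kruzhkov-type inequality, boundary-layer sequences and normal-trace machinery to discard the boundary terms, nondegenerate approximation plus a velocity-averaging compactness step for existence, and the $L^1$-contraction giving a.s.\ continuous $L^p$ trajectories; and your warning that the two boundary pieces must be handled by genuinely different mechanisms within the same estimate is exactly the crux.

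However, you have the roles of the two boundary mechanisms reversed, and this is a genuine gap. You assert that the strong trace theorem is used ``inside the uniqueness proof near $\Gamma''$'' and that it is ``what makes conditions \eqref{e1.15}--\eqref{e1.17} meaningful.'' In fact the opposite is true: the strong trace theorem (Theorem~\ref{T:5.1}) is established on the \emph{hyperbolic} boundary $\Gamma'=\po\cO'\X\cO''$, the Neumann side. Conditions \eqref{e1.15}--\eqref{e1.17} on the parabolic boundary $\Gamma''=\cO'\X\po\cO''$ are instead exploited through the theory of divergence-measure fields and \emph{weak} normal traces (Section~\ref{S:3}), exactly as in \cite{FL2}, where they combine with the stochastic extension $u_{B''}$ solving \eqref{e1.101}--\eqref{e1.103}. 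Relatedly, your claim that near $\Gamma'$ ``one uses the weak Neumann formulation \eqref{e1.9N} to show the flux contribution $F(u,v)\cdot\nu$ has the right sign (it vanishes, in fact, by \eqref{e1.4})'' cannot be carried out as stated: \eqref{e1.9N} is a weak identity and $F(u,v)\cdot\nu$ is nonlinear in $u,v$, so no sign or vanishing of this boundary flux follows from the weak formulation alone. What actually happens is that the Kruzhkov inequality \eqref{e3.unique1} is obtained only for test functions $\psi$ compactly supported in $x'$; to conclude uniqueness one must push $\psi$ to $\mathbf 1_{\cO'}$ via a boundary-layer sequence $\zeta_\delta'(x')$, and it is precisely the \emph{strong} trace $u^\tau$ on $\Gamma'$ together with $\eqref{e1.f}$ and the strict meaning of $\Abf(u^\tau)\cdot\nu=0$ that kills the boundary flux in the limit $\delta\to0$. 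This is why the paper needs to prove a new strong trace theorem (and the new averaging Lemma~\ref{L:5.1} driving it) at all; the averaging lemma used for the existence compactness is not this one but the Gess--Hofmanov\'a spatial-regularity estimate.

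A smaller discrepancy: your existence argument posits a single nondegenerate regularization $\Bbf\mapsto\Bbf+\ve\,\mathrm{Id}$. The paper's route is a two-stage approximation: a first, fourth-order problem with biharmonic viscosity $\mu\Delta_{x''}^2$ (solved by a Banach fixed-point argument in the semigroup framework of Appendix~\ref{appendixA}, and this is where the spectral analysis of $A=-\ve\Delta+\mu\Delta_{x''}^2$ enters), and only after $\mu\to0$ does one reach the second-order nondegenerate problem and then take $\ve\to0$ via Gy\"ongy--Krylov. The intermediate fourth-order layer is what lets one handle the quasilinear nonlinearity and the mixed boundary conditions simultaneously; your one-step regularization would need a replacement for this well-posedness argument.
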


It is important to have also at hand the notion of entropy solution.

\begin{definition}\label{D:1.3} A bounded measurable function $u\in L^\infty(\Om\X[0,T]\X \cO)$ is a weak entropy solution of \eqref{e1.1}--\eqref{e1.4} if for all $p\ge 1$, 
$$
u\in L^p(\Om\X[0,T], \cP, d\bbP\otimes dt  ;  L^p(\cO)) 
$$  
and it satisfies conditions (i), (iii) and (iv) of Definition~\ref{D:1.2} and
\begin{enumerate}
\item[(ii')]  There exists a kinetic  measure $m:\Om\to \M_b^+([0,T]\X\cO\X\R)$ satisfying 
$$
m \ge \d(\xi-u(t,x)) \sum_{k=d'+1}^d\left(\sum_{i=d'+1}^d\po_{x_i}\int^u \s_{ik}(\xi)\,d\xi \right)^2,
$$
a.s., where $\d(\xi)$ denotes the Dirac measure concentrated at 0, such that,  for all $\eta\in C^2(\R)$, with $\Abf_\eta, \Bbf_\eta$ such that $\frac{d}{du}\Abf_\eta(u)=\frac{d}{du}\eta(u)\frac{d}{du}\Abf(u)$ and $\frac{d}{du}\Bbf_\eta(u)=
\frac{d}{du}\eta(u) \frac{d}{du}\Bbf(u)$, and for all
$\varphi\in C_c^\infty([0,T)\X\cO)$, 
\begin{multline}\label{e1.8E-1}
\int_0^T\la\eta(u(t)),\po_t\varphi\ra\,dt +\la \eta(u_0),\varphi(0)\ra+ \int_0^T\la \Abf_\eta(u(t)),\nabla \varphi\ra\,dt \\ +\int_0^T \la \div_{x''}(\Bbf_\eta(u(t))),\nabla_{x''}\varphi\ra\,dt
 =\int_{[0,T]\X\cO\X\R} \frac{d^2\eta}{d\xi^2}(\xi)\varphi \,dm(t,x,\xi) \\
 -\sum_{k\ge1}\int_0^T\la g_k(u(t))\frac{d\eta}{du}(u(t)), \varphi\ra\,d\b_k(t)-\frac12\int_0^T\la G^2(u(t))\frac{d^2\eta}{du^2}(u(t)),\varphi\ra\,dt,
 \end{multline}
 a.s.\ where  $\la\cdot,\cdot\ra$ represents the inner product of  $L^2(\cO)$ or $L^2(\cO;\R^d)$. 
\end{enumerate}
\end{definition}

The following proposition establishes the equivalence between the notions of kinetic and weak entropy solutions, in the context of $L^\infty$ solutions.

\begin{proposition}  \label{P:1.1}  For a bounded measurable function $u:\Om\X[0,T]\X\cO\to\R$ it is equivalent to be a kinetic solution of \eqref{e1.1}--\eqref{e1.4} and a weak entropy solution of \eqref{e1.1}--\eqref{e1.4}.
\end{proposition}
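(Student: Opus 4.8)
The plan is to prove the two implications separately, the heart of the matter being the passage from the kinetic formulation \eqref{e1.9} with the indicator $f=\mathbf 1_{u>\xi}$ to the family of entropy inequalities \eqref{e1.8E-1}, and back. The main structural fact one needs is the standard identity relating the kinetic measure $m$ and the parabolic dissipation measure $n_1$: both formulations carry the \emph{same} notion of kinetic measure, and the constraint $m\ge n_1$ in Definition~\ref{D:1.2}(ii) is exactly the constraint $m\ge \d(\xi-u)\sum_k(\sum_i\po_{x_i}\int^u\s_{ik})^2$ in Definition~\ref{D:1.3}(ii$'$), since $n_1(\varphi)=\int\varphi(t,x,u(t,x))|\div_{x''}\int_0^u\s|^2\,dx\,dt$ is precisely $\int\varphi\,d\big(\d(\xi-u)|\div_{x''}\int_0^u\s|^2\big)$ and $|\div_{x''}\int_0^u\s|^2=\sum_{k}\big(\sum_i\po_{x_i}\int^u\s_{ik}\big)^2$ by the chain rule \eqref{e1.c-r} applied componentwise (using \eqref{e1.8'''}, which is shared by both definitions). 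Since conditions (i), (iii), (iv) appear verbatim in both definitions, only the equivalence of (ii) and (ii$'$) requires argument.

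For the direction kinetic $\Rightarrow$ entropy, I would take $\varphi\in C_c^\infty([0,T)\X\cO)$ and $\eta\in C^2(\R)$, and test \eqref{e1.9} against the function $(t,x,\xi)\mapsto \varphi(t,x)\eta'(\xi)$ — more precisely, since $\eta'$ need not be compactly supported in $\xi$ one truncates $\eta'$ and uses that $f=\mathbf 1_{u>\xi}$ with $u$ bounded, plus property (ii) of the kinetic measure (vanishing for large $\xi$), to pass to the limit; this is routine given $u\in L^\infty$. One computes $\la f,\po_t(\varphi\eta')\ra = \la \eta(u),\po_t\varphi\ra$ after integrating $\int_\R \mathbf 1_{u>\xi}\eta'(\xi)\,d\xi = \eta(u)-\eta(-\infty\text{-cutoff})$, and similarly the flux term $\la f,\abf\cdot\nabla(\varphi\eta')\ra$ produces $\la \Abf_\eta(u),\nabla\varphi\ra$ because $\int\mathbf 1_{u>\xi}a_j(\xi)\eta'(\xi)\,d\xi = \frac{d}{du}(A_j)_\eta(u)$ by the very definition of $\Abf_\eta$. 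The parabolic term $\la f,\bbf:D^2_{x''}(\varphi\eta')\ra$ similarly yields $\la\div_{x''}(\Bbf_\eta(u)),\nabla_{x''}\varphi\ra$ after an integration by parts justified by the regularity \eqref{e1.8'''}. The stochastic term $-\sum_k\int g_k(u)(\varphi\eta')(\cdot,u)\,d\b_k$ becomes $-\sum_k\int g_k(u)\eta'(u)\varphi\,d\b_k$, the Itô correction term becomes $-\frac12\int G^2(u)\eta''(u)\varphi$, and the term $m(\po_\xi(\varphi\eta'))=m(\varphi\eta'')$ gives $\int\eta''\varphi\,dm$. This is exactly \eqref{e1.8E-1}.

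For the converse, entropy $\Rightarrow$ kinetic, I would run the computation in reverse: given the family \eqref{e1.8E-1} for all $\eta\in C^2$, I want to recover \eqref{e1.9} for all $\vphi\in C_c^\infty([0,T)\X\cO\X\R)$. The standard trick is to approximate $\vphi$ by sums of products $\varphi_1(t,x)\psi(\xi)$ and, for each such product, choose $\eta$ with $\eta'(\xi)=\int_{-\infty}^{\xi}(-\po_\xi\psi)$... more cleanly: note $\int_\R \mathbf 1_{u>\xi}\vphi_\xi(t,x,\xi)\,d\xi = \vphi(t,x,u) - \vphi(t,x,-\infty)$, and for $\vphi$ compactly supported in $\xi$ the boundary term vanishes, so one wants to test \eqref{e1.8E-1} with $\eta$ a primitive in $\xi$ of (a slice of) $\vphi$. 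Concretely, write $\vphi(t,x,\xi)=\vphi_1(t,x)\eta'(\xi)$ with $\eta(\xi):=\int_{-\infty}^\xi \vphi(t,x,\zeta)\,d\zeta$ — here the $x,t$-dependence of $\eta$ forces one to work with finite linear combinations / a density argument, exactly as in Debussche–Vovelle and in \cite{FL2}. Summing (or integrating) the resulting entropy identities reconstructs \eqref{e1.9}; the reconstructed kinetic measure is the $m$ furnished by (ii$'$), and the lower bound $m\ge n_1$ follows from the lower bound in (ii$'$) via the chain-rule identity noted above. One should also record that the two a priori regularity requirements on $u$ — namely $u\in L^p(\Om\X[0,T];L^p(\cO))\cap L^p(\Om;L^\infty(0,T;L^p(\cO)))$ in Definition~\ref{D:1.2} versus only $u\in L^p(\Om\X[0,T];L^p(\cO))$ in Definition~\ref{D:1.3} — are reconciled because a weak entropy solution is in particular bounded and, testing \eqref{e1.8E-1} with $\eta(\xi)=\xi^p$-type convex functions and a time-localization, one deduces the $L^\infty_tL^p_x$ bound; since here $u\in L^\infty(\Om\X[0,T]\X\cO)$ by hypothesis this is immediate.

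The step I expect to be the main obstacle is the density/approximation argument in the converse direction: the natural choice of entropy $\eta$ depends on the spatial test function, so one cannot literally plug a single $\eta$ into \eqref{e1.8E-1}. The resolution is to expand a general $\vphi\in C_c^\infty([0,T)\X\cO\X\R)$ in the $\xi$-variable (e.g. via a partition of unity in $\xi$ or a Fourier/polynomial expansion with coefficients in $C_c^\infty([0,T)\X\cO)$), apply \eqref{e1.8E-1} to each factor, and sum, controlling the error using the continuity of all the terms (including the stochastic integral, via the Burkholder–Davis–Gundy inequality and \eqref{e1.4*}) in the appropriate topologies. All other steps are the routine integrations-in-$\xi$ sketched above together with the identification of the dissipation measures via \eqref{e1.c-r}.
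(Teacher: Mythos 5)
Your proposal is correct and reproduces the standard Chen--Perthame argument (integration against $\varphi(t,x)\eta'(\xi)$ for the kinetic $\Rightarrow$ entropy direction, and a density-in-$\xi$ argument for the converse, together with the identification of the dissipation measure constraints via the chain rule \eqref{e1.c-r}), which is precisely what the paper invokes: its entire proof consists of the remark that, since $u$ is bounded, the argument of \cite{CP} applies verbatim. So you and the paper take essentially the same route; yours merely spells out the details that the paper outsources to the reference.
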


Since $u$ is bounded, the proof follows from the same arguments in the proof of the corresponding result
in \cite{CP}.

\subsection{Outline of the content}\label{SS:1.4} This paper extends to stochastic equations the results in \cite{FL}. This extension is far from trivial since the study of degenerate parabolic-hyperbolic stochastic equations in bounded domains requires the combined use of many deep 
results in the frontier of the research in mathematical analysis and probability theory as indicated by the recent articles \cite{DHV} and \cite{GH}. In particular, since we do not impose restrictions on the spatial support of the noise, we are forced to extend the strong trace theorem in \cite{FL} to the present stochastic context, establishing then a new strong trace property for degenerate parabolic-hyperbolic stochastic equations. Moreover, a new averaging lemma is stated and proved (see Lemma~\ref{L:5.1} below), which was a missing point  in \cite{FL},  as a decisive step in the proof of the new strong trace theorem in Section~\ref{S:4}.  

The Dirichlet boundary condition on the parabolic boundary also poses a challenging problem, specially in the proof of uniqueness of solutions, due to the presence of the noise. However, in \cite{FL2} the authors developed several techniques that enable the usage of the existence of normal weak traces for divergence measure fields in the present stochastic setting allowing for a delicate analysis of the solutions near the boundary, which can be reproduced in our present context with slight adaptations. Said analysis uses the extension $u_{B''}$ of the prescribed values on the parabolic boundary to the interior of the domain in order to control the values of the solution near the boundary. On the other hand, in order to deduce the consistency of the definition of kinetic solutions to \eqref{e1.1}--\eqref{e1.3} with limits obtained from the vanishing viscosity method, it is necessary to impose that $u_B$ be a strong solution to a stochastic equation whose noise term is given by $\Phi(u_B)dW$, in order to avoid infinite quadratic variation in the limit when comparing a solution with the boundary data near the boundary. This imposition precludes us from using the trivial extension given by $\tilde u_B(\bar{x},x_d)=u_b(\bar{x})$ considered in the deterministic case treated in \cite{MPT, FL}.

The theory for degenerate parabolic-hyperbolic stochastic equations is an extension of the theory for stochastic conservation laws, which in turn 
have a recent yet intense history. For the sake of examples, we mention Kim \cite{Kim} for the first result of existence and uniqueness of entropy solutions of the Cauchy problem for a one-dimensional stochastic conservation law, in the additive case, that is, $\Phi$ does not depend on $u$. Feng and Nualart \cite{FN}, where a notion of strong entropy solution is introduced, which is more restrictive than that of entropy solution, and  for which the uniqueness is established in the class of entropy solutions in any space dimension, in the multiplicative case, i.e., $\Phi$ depending on $u$; existence of such strong entropy solutions is proven only in the one-dimensional case.  Chen, Ding and Karlsen \cite{CDK}, where the result in \cite{FN} was improved and existence in any dimension was proven in the context of the functions of bounded variation.    Debussche and Vovelle in  \cite{DV}, where a major step  in the development of this theory was made with the extension of the concept of kinetic solution, originally introduced by Lions, Perthame and Tadmor in \cite{LPT},  for deterministic conservation laws,  to the context of stochastic conservation laws, for which the  well-posedness of the Cauchy problem was established in the  periodic setting in any space dimension.  Bauzet, Vallet and Wittbold \cite{BVW}, where the existence and uniqueness of entropy solutions for the general Cauchy problem  was proved in any space dimension (see also,   \cite{KSt}). Concerning boundary value problems,  Vallet and  Wittbold \cite{VW}, in the additive case,  and Bauzet, Vallet and Wittbold \cite{BVW1}, in the multiplicative case,  obtain existence and uniqueness of entropy solutions to  the homogeneous Dirichlet  problem, i.e., null boundary condition.  The methods and results introduced in \cite{DV}  were later extended to degenerated parabolic problems by  Debussche, Hofmanov\'a and Vovelle \cite{DHV} and Gess and Hofmanov\'a \cite{GH}, and we refer to these papers for other relevant references on this subject.

\section{Doubling of Variables and Kruzhkov Inequality}\label{S:2} 

 We start recalling the result establishing the existence of left- and right-continuous  representatives of a kinetic solution proved in \cite{DV,DHV}.  The same property holds here also and the proof is exactly the same as in  \cite{DV,DHV} to which we refer. 
 
\begin{proposition}[Left- and right-continuous representatives]\label{P;2.1}  Let $u$ be a kinetic solution to \eqref{e1.1}--\eqref{e1.4}. Then $f=1_{u>\xi}$ admits representatives $f^-$ and 
$f^+$  which are almost surely left- and right-continuous, respectively, at all points $t^*\in[0,T]$ in the sense of distributions over $\cO\X\R$. More precisely, for all $t^*\in[0,T]$ there exist kinet functions $f^{*,\pm}$ on $\Om\X\cO\X\R$ such that setting $f^\pm(t^*)=f^{*,\pm}$ yields $f^{\pm}=f$ almost everywhere and
$$
\la f^\pm(t^*\pm\ve),\psi\ra\to \la f^\pm(t^*),\psi\ra,\qquad \ve\downarrow0,\,\forall\psi\in C_c^2(\cO\X\R),\quad\text{$\bbP$-a.s.}
$$
Moreover, $f^+=f^-$ for all $t^*\in[0,T]$ except for some at most countable set.
\end{proposition}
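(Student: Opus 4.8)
The plan is to derive the statement entirely from the kinetic equation~\eqref{e1.9}; note that the test functions there are compactly supported in $\cO$, so the Neumann and Dirichlet conditions (iii)--(iv) of Definition~\ref{D:1.2} play no role here, and the argument is exactly the one carried out for the Cauchy/periodic problem in~\cite{DV,DHV}. First I would reduce everything to a countable family of real-valued processes. Fixing $\psi\in C_c^2(\cO\X\R)$ and testing~\eqref{e1.9} with $\varphi(t,x,\xi)=\theta(t)\psi(x,\xi)$, $\theta\in C_c^\infty([0,T))$, then regularizing $\theta$ towards $\mathbf 1_{[0,t]}$ and invoking Lebesgue differentiation for the time-integrable flux, diffusion and It\^o-correction terms, together with the pathwise continuity of the stochastic integral, one gets that $\bbP$-a.s., for a.e.\ $t\in[0,T]$,
\[
\la f(t),\psi\ra = J_\psi(t) - m^\psi\big([0,t]\big),
\]
where $J_\psi$ collects $\la f_0,\psi\ra$, the time-integrals of $\la f(s),\mathbf a\cdot\nabla\psi+\mathbf b:D_{x''}^2\psi\ra$ and of $\tfrac12\la G^2(u(s))\po_\xi\psi(\cdot,u(s))\ra$, and the martingale $\sum_{k\ge1}\int_0^t\la g_k(u(s))\psi(\cdot,u(s))\ra\,d\beta_k(s)$, while $m^\psi$ is the finite signed measure on $[0,T]$ obtained by integrating $\po_\xi\psi$ against $m$ in the $(x,\xi)$ variables.

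Next I would exploit the structure of this identity. By the bounds~\eqref{e1.4*} together with the Burkholder--Davis--Gundy and Doob inequalities, $J_\psi$ admits an a.s.\ continuous modification; on the other hand $t\mapsto m^\psi([0,t])$, being the distribution function of a finite signed measure, is of bounded variation, right-continuous, and has at most countably many jumps, so it possesses left and right limits at every $t^*\in[0,T]$. Hence $t\mapsto\la f(t),\psi\ra$ coincides a.e.\ with a function having one-sided limits everywhere, and I would define $\la f^{\pm}(t^*),\psi\ra$ as those limits (using the left-continuous, resp.\ right-continuous, version of $m^\psi$); this is precisely the claimed convergence in the sense of distributions over $\cO\X\R$. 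To globalize in $\psi$, I would fix a countable set $\mathcal D\subset C_c^2(\cO\X\R)$ dense in $C_c^2$ and a $\bbP$-full event on which the above holds simultaneously for all $\psi\in\mathcal D$. Since $0\le f(t,\cdot)\le1$ and $f(t,\cdot)-\mathbf 1_{\{\xi<0\}}$ is supported in $\{|\xi|\le\|u\|_{L^\infty}\}$ for every $t$, the functionals $\psi\mapsto\la f^{\pm}(t^*),\psi\ra$ are uniformly bounded and extend to all of $L^1(\cO\X\R)$, so $f^{*,\pm}:=f^{\pm}(t^*)$ is the weak-$*$ limit in $L^\infty(\cO\X\R)$ of $f(t_n)$ along $t_n\downarrow t^*$, resp.\ $t_n\uparrow t^*$; as a weak-$*$ limit of kinetic functions it is again a kinetic function (and measurable in $\omega$). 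Finally, at every common Lebesgue point of the maps $t\mapsto\la f(t),\psi\ra$, $\psi\in\mathcal D$ --- a full-measure subset of $[0,T]$ --- both one-sided limits agree with $\la f(t^*),\psi\ra$, whence $f^{\pm}=f$ a.e.\ in $(\omega,t)$, and $f^+(t^*)=f^-(t^*)$ unless $t^*$ lies in the (at most countable) union over $\psi\in\mathcal D$ of the jump sets of $m^\psi$.

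The main obstacle is the careful passage from the distributional-in-time identity~\eqref{e1.9} to the pointwise primitive displayed above: this requires handling $m$ as a finite measure in the time variable (restriction to the slabs $[0,t]$), Lebesgue differentiation for the integrable-in-time integrands, and a bit of bookkeeping of the possible atom of $m$ at $t=0$ in relation to the initial term $\la f_0,\psi\ra$ --- which is exactly what dictates the correct (left- vs.\ right-continuous) normalization and the value of $f^+(0)$. The remaining points --- that a single $\bbP$-null set serves all $t^*$ and all test functions, and that the weak-$*$ limits $f^{\pm}(t^*)$ are bona fide kinetic functions (the bounds $0\le\cdot\le1$, monotonicity in $\xi$, and the normalizations at $\xi=\pm\|u\|_{L^\infty}$ all pass to the limit) --- are routine and are handled exactly as in~\cite{DV,DHV}.
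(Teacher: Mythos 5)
Your proof is a faithful reconstruction of the argument from Proposition 8 in \cite{DV} (and Proposition~3.1 in \cite{DHV}), which is exactly the proof the paper invokes (it states ``the proof is exactly the same as in \cite{DV,DHV} to which we refer''). The decomposition $\la f(t),\psi\ra = J_\psi(t) - m^\psi([0,t])$ with a continuous-modification part $J_\psi$ and a c\`adl\`ag bounded-variation part $m^\psi([0,\cdot])$, followed by globalization over a countable dense family of test functions and passage to the weak-$*$ limit to recover kinetic functions, is precisely the cited approach.
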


The following result is a key step in the proof of uniqueness. The proof is similar to that of the corresponding result in \cite{FL2} with slight adaptations and so we omit it here. 

\begin{theorem}[Doubling of variables]\label{T2.100} Let $u$ and $v$ be kinetic solutions of \eqref{e1.1}--\eqref{e1.3} with initial data $u_0$ and $v_0$, respectively. Denote $\nabla_{x+y}=\nabla_x + \nabla_y$. Then, for $0\le t\le T$ and nonnegative test functions $\theta\in C_c^\infty([0,T))$ and $\phi\in C_c^\infty(\cO^2)$, we have a.s. that
  \begin{multline}\label{e4.1*}
  -\bbE\int_0^T\int_{\cO^2}\left(u^{\pm}(s,x)-v^{\pm}(s,y)\right)_+\theta'(s)\phi(x,y)\, dx\, dy dt \\
  \le  \bbE\int_{\cO^2}\left(u_0(x)-v_0(y)\right)_+\theta(0)\phi(x,y)\, dx\, dy\\
  +\bbE\int_0^T\int_{\cO^2}F_+(u(s,x),v(s,y))\cdot \theta(s)\nabla_{x+y}\phi(x,y)\, dx\, dy\, ds \\
  - \bbE\int_0^T \int_{\cO^2}\nabla_{x''+y''}\cdot\mathbb{B}_+(u(s,x),v(s,y))\theta(s)\nabla_{x''+y''}\phi(x,y)\, dx\, dy\, ds.
 \end{multline}   
\end{theorem}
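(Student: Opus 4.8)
The plan is to establish \eqref{e4.1*} by the doubling-of-variables technique of \cite{DV,DHV}, adapted to the mixed boundary value problem as in \cite{FL2}. Only the velocity variable will be doubled and then collapsed onto its diagonal; the time variable is kept common to $u$ and $v$ so that the It\^o product formula is available, and the space variable is left doubled, its collapse being postponed to the later boundary-localized analysis underlying \eqref{e1.16}. Since $\phi\in C_c^\infty(\cO^2)$ is supported away from the boundary, the computation is purely interior and no boundary contributions arise. The doubling will in fact produce an \emph{equality} in which, besides the four terms appearing in \eqref{e4.1*}, there are the kinetic-measure contributions (nonpositive, hence to be discarded), the stochastic integrals (zero in expectation) and a noise error term (nonnegative but vanishing in the velocity-regularization limit); dropping the first group and passing to the limit in the last yields the asserted inequality.

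Concretely, I would first regularize $f=\mathbf{1}_{u>\xi}$ and $\overline g:=1-\mathbf{1}_{v>\zeta}=\mathbf{1}_{v\le\zeta}$ by convolution in the space and velocity variables, so that $f^\delta(t,x,\xi)$ and $\overline g^\delta(t,y,\zeta)$ satisfy regularized kinetic equations and are, for each fixed space-velocity point, semimartingales in $t$ (their finite-variation parts containing the regularized kinetic measures $m_1,m_2\ge n_1\ge0$ associated with $u$ and $v$, with the signs dictated by $\overline g=1-\mathbf{1}_{v>\zeta}$). Applying the It\^o product formula to $f^\delta(t,x,\xi)\,\overline g^\delta(t,y,\zeta)$, pairing with the product test function $\theta(t)\,\phi(x,y)\,\rho_\ve(\xi-\zeta)\,\chi_R\!\big(\tfrac{\xi+\zeta}{2}\big)$ (with $\rho_\ve$ an even mollifier and $\chi_R\equiv1$ near the range $[a,b]$ of the solutions), integrating in $(x,\xi,y,\zeta)$ and letting $\delta\to0$, I would then integrate out the velocity variables. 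Using $\iint_{\R^2}\mathbf{1}_{u>\xi}\mathbf{1}_{v\le\zeta}\,\rho_\ve(\xi-\zeta)\,d\xi\,d\zeta\to(u-v)_+$ as $\ve\to0$, and taking $R$ large (by Definition~\ref{D:1.1}(ii)) so that all terms carrying a derivative of $\chi_R$ are negligible, the transport terms from the two equations combine into $F_+(u,v)\cdot\nabla_{x+y}\phi$, while the second-order terms combine into $\mathbb{B}_+(u,v):(D_{x''}+D_{y''})^2\phi$, which after a single integration by parts in the parabolic variables --- licit because \eqref{e1.c-r}, \eqref{e1.8'''} and the diagonality \eqref{e1.Dir} give $\nabla_{x''+y''}\cdot\mathbb{B}_+(u,v)\in L^2$ --- becomes $-\,\nabla_{x''+y''}\cdot\mathbb{B}_+(u,v)\cdot\nabla_{x''+y''}\phi$. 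The left- and right-continuous representatives of Proposition~\ref{P;2.1} then provide the pointwise-in-$s$ meaning of the terms involving $u^{\pm},v^{\pm}$ and of $(u_0-v_0)_+$, for each of the two choices of sign.

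It remains to handle the leftover terms after taking $\bbE$. The stochastic integrals vanish in expectation. The two measure terms reduce, after the velocity integration, to $-\,\bbE\int_{[0,t]\times\cO\times\R}\Big(\int_{\cO}\phi(x,y)\,\rho_\ve(\xi-v(s,y))\,dy\Big)\,dm_1(s,x,\xi)$ and the symmetric expression with $m_2$ and $u$, both $\le0$ since $m_1,m_2\ge0$ and $\phi,\rho_\ve\ge0$, and so are simply discarded. (For \eqref{e4.1*} only $m\ge0$ is needed; the finer cancellation exploiting the lower bound $m\ge n_1$, in the spirit of \cite{CP}, enters only later, when $\phi$ is replaced by a sequence concentrating on $\{x=y\}$.) The noise contributions --- the quadratic covariation term together with the two $-\tfrac12G^2$ drift terms of \eqref{e1.9} --- collapse, after the velocity integration, to the nonnegative quantity $\tfrac12\,\bbE\int_0^t\!\int_{\cO^2}\sum_{k\ge1}|g_k(u(s,x))-g_k(v(s,y))|^2\,\phi(x,y)\,\rho_\ve(u(s,x)-v(s,y))\,dx\,dy\,ds$, which by \eqref{e1.5*} is at most $\tfrac D2\,\|\phi\|_{L^\infty}\,\bbE\int_0^t\!\int_{\cO^2}|u-v|^2\rho_\ve(u-v)\,dx\,dy\,ds\le C\ve$, and hence tends to $0$ as $\ve\to0$. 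Assembling these facts and letting $\ve\to0$ (then $R\to\infty$) yields \eqref{e4.1*}.

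I expect the main obstacle to be the careful bookkeeping of the It\^o product formula in the presence of the kinetic measures $m_1,m_2$: one must check that, after the velocity integration, the measure contributions carry the favorable (nonpositive) sign and that the stochastic covariation collapses to precisely the nonnegative noise error above, whose vanishing then follows from \eqref{e1.5*}. The other delicate point is justifying the single integration by parts turning $\mathbb{B}_+(u,v):(D_{x''}+D_{y''})^2\phi$ into $-\nabla_{x''+y''}\cdot\mathbb{B}_+(u,v)\cdot\nabla_{x''+y''}\phi$, which relies on the $H^1_{x''}$-regularity \eqref{e1.8'''} of $b(u)$, the chain rule \eqref{e1.c-r}, and the diagonality \eqref{e1.Dir}. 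Beyond these points the argument follows \cite{DV,DHV,FL2} essentially line by line.
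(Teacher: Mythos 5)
Your proposal follows the expected route of \cite{DV,DHV,FL2} --- doubling only the velocity variable, keeping $(t,x,y)$ common/separate, applying the It\^o product formula to regularized kinetic functions, and collapsing onto the diagonal in $(\xi,\zeta)$ --- and most of the bookkeeping is handled correctly: the sign of the kinetic-measure contributions after the integration by parts $\po_\xi\rho_\ve=-\po_\zeta\rho_\ve$, the vanishing of the stochastic integrals in expectation, and the $O(\ve)$ collapse of the noise error through \eqref{e1.5*} are all fine. There is, however, a genuine gap in the treatment of the parabolic second-order term.

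You assert that the second-order contributions from the two kinetic equations ``combine into $\mathbb{B}_+(u,v):(D_{x''}+D_{y''})^2\phi$.'' This is not what the raw doubling gives. Since in the test function $\theta(t)\phi(x,y)\rho_\ve(\xi-\zeta)$ all the $x$-derivatives act on $\phi$ alone (the velocity kernel $\rho_\ve$ does not depend on $x$), and likewise all the $y$-derivatives, pairing the two kinetic equations \eqref{e1.9} and collapsing the velocity variables produces exactly $\mathbb{B}_+(u,v):(D_{x''}^2+D_{y''}^2)\phi$ --- there is no cross term $2\,\mathbb{B}_+:D_{x''}D_{y''}\phi$. In integrated-by-parts form, the doubling yields $-\bigl(\nabla_{x''}\cdot\mathbb{B}_+\bigr)\cdot\nabla_{x''}\phi-\bigl(\nabla_{y''}\cdot\mathbb{B}_+\bigr)\cdot\nabla_{y''}\phi$, whereas the statement \eqref{e4.1*} asserts the full quadratic form $-\nabla_{x''+y''}\cdot\mathbb{B}_+\cdot\nabla_{x''+y''}\phi$, which in addition carries the mixed contributions $-\bigl(\nabla_{x''}\cdot\mathbb{B}_+\bigr)\cdot\nabla_{y''}\phi-\bigl(\nabla_{y''}\cdot\mathbb{B}_+\bigr)\cdot\nabla_{x''}\phi$.

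This discrepancy is precisely where the Chen--Perthame mechanism has to intervene, and it cannot be postponed. You remark that ``for \eqref{e4.1*} only $m\ge0$ is needed; the finer cancellation exploiting the lower bound $m\ge n_1$ \ldots enters only later.'' That is backwards: the lower bound $m\ge n_1$ from Definition~\ref{D:1.2}(ii), together with the chain rule \eqref{e1.c-r} and the Cauchy--Schwarz inequality applied to the cross terms of the entropy dissipation rate, is exactly what produces the missing mixed-derivative contributions and brings the parabolic part into the asserted form $-\nabla_{x''+y''}\cdot\mathbb{B}_+\cdot\nabla_{x''+y''}\phi$, modulo errors that vanish with the velocity-mollification parameter $\ve$ (this is the step the present paper, following \cite{FL2}, describes as ``adding and subtracting a few terms and performing a delicate analysis''). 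Discarding $m_1,m_2$ wholesale as merely nonpositive leaves you with $\mathbb{B}_+:(D_{x''}^2+D_{y''}^2)\phi$ on the right-hand side, which is a different statement and does not by itself yield \eqref{e4.1*}. The dissipation-measure accounting must therefore be carried out at this stage of the argument rather than deferred.
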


As a consequence of Theorem~\ref{T2.100} we have the following Kruzhkov inequality.

\begin{theorem}\label{T:2.1}
Let $u$ and $v$ be kinetic solutions of \eqref{e1.1}--\eqref{e1.3} with initial data $u_0$ and $v_0$, respectively. Then, for any nonnegative test functions $\theta\in C_c^\infty((0,T))$ and $\psi\in C_c^\infty(\cO'\times\mathbb{R}^{d''})$, we have a.s. that
\begin{multline}\label{e3.unique1}
  -\bbE\int_0^T\int_{\cO}\left|u^{\pm}(s,x)-v^{\pm}(s,x)\right|\theta'(s)\psi(x)\, dx\, dy \\
  \le  \bbE\int_0^T\int_{\cO}K_{x''}(u(s,x),v(s,x))\cdot \nabla\psi_1(x)\, dx\, dy\, ds. 
 \end{multline}
\end{theorem}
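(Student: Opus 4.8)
The plan is to deduce \eqref{e3.unique1} from the doubling-of-variables estimate \eqref{e4.1*} by concentrating the two–point test function on the diagonal $\{x=y\}$. Fix a symmetric mollifier $\rho_\delta(z)=\delta^{-d}\rho(z/\delta)$ on $\R^d$ and, for $0\le\psi\in C_c^\infty(\cO'\X\R^{d''})$, put
\[
\phi_\delta(x,y)=\psi\!\left(\tfrac{x+y}{2}\right)\rho_\delta(x-y)\ge 0.
\]
For $\delta$ small the $x'$– and $y'$–supports of $\phi_\delta$ stay in a fixed compact subset of $\cO'$, so the only obstruction to $\phi_\delta\in C_c^\infty(\cO^2)$ comes from the parabolic boundary $\cO'\X\po\cO''$, where $\psi$ need not vanish; I will come back to this. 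Assuming momentarily that \eqref{e4.1*} applies with $\phi=\phi_\delta$, one computes
\[
\nabla_{x+y}\phi_\delta=(\nabla\psi)\!\left(\tfrac{x+y}{2}\right)\rho_\delta(x-y),\qquad \nabla_{x''+y''}\phi_\delta=(\nabla_{x''}\psi)\!\left(\tfrac{x+y}{2}\right)\rho_\delta(x-y),
\]
the singular contributions from differentiating $\rho_\delta(x-y)$ cancelling because $\nabla_x$ and $\nabla_y$ act on it with opposite signs. Since in \eqref{e3.unique1} we take $\theta\in C_c^\infty((0,T))$, we have $\theta(0)=0$ and the initial–data term in \eqref{e4.1*} drops out.

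I would then symmetrize, applying \eqref{e4.1*} to both ordered pairs $(u,v)$ and $(v,u)$ (with the same $\phi_\delta$) and adding the resulting inequalities. On the left, $(u^\pm-v^\pm)_++(v^\pm-u^\pm)_+=|u^\pm-v^\pm|$ on the diagonal. On the right, using the elementary identity $\sgn_+(a)-\sgn_+(-a)=\sgn(a)$, the sum $F_+(u(x),v(y))+F_+(v(x),u(y))$ collapses on the diagonal to $F(u,v)$, and the same algebra applied entrywise to $\bbB_+$ turns the parabolic contributions into the $(x''+y'')$–divergence of $\bbB(u,v)$; together they reconstruct $K_{x''}(u,v)=\nabla_{x''}\cdot\bbB(u,v)-F(u,v)$, the quantity on the right of \eqref{e3.unique1}.

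Finally I would let $\delta\to0$. After the change of variables $(x,y)\mapsto(z,h)=\bigl(\tfrac{x+y}{2},\,x-y\bigr)$ every integrand has the form $g\bigl(u(s,z+\tfrac h2),v(s,z-\tfrac h2)\bigr)$ against a derivative of $\psi$ at $z$, tested with $\rho_\delta(h)$, and the convergence to the corresponding diagonal integral follows from continuity of translations — in $L^1_\loc$ for the bounded quantities $u^\pm,v^\pm,F_+$, and in $L^2_\loc$ for the parabolic term, where one uses the regularity $\nabla_{x''}b(u)\in L^2$ together with $B_{ij}=\tilde B_{ij}\circ b$ being locally Lipschitz in $b$ to control $\nabla_{x''+y''}\cdot\bbB_+(u(x),v(y))$ in $L^2_\loc$ uniformly in $\delta$. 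Boundedness in $(\omega,s)$ and dominated convergence then yield \eqref{e3.unique1}, the weight $\psi_1$ on the right being the variant of $\psi$ produced by these manipulations.

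I expect the genuine difficulty to be twofold. The parabolic term is the delicate one: $\bbB_+$ is nonsmooth, so $\nabla_{x''+y''}\cdot\bbB_+(u(x),v(y))$ must be interpreted through the chain rule \eqref{e1.c-r} and the $L^2$–bound on $\nabla_{x''}b(u)$, and making the symmetrization and the $\delta\to0$ limit rigorous at that level is the main technical burden. The other point is the boundary obstruction noted above: since $\psi$ may be nonzero near $\cO'\X\po\cO''$, $\phi_\delta$ is not compactly supported in $\cO^2$, so one must first multiply by an $\cO''$–boundary layer cutoff $\zeta_\eta''(x)\zeta_\eta''(y)$ (or split $\psi$ accordingly), apply Theorem~\ref{T2.100} to the admissible test function so obtained, and then send $\eta\to0$; absorbing the spurious near–boundary terms that arise from differentiating $\zeta_\eta''$ requires the divergence–measure/normal–trace machinery and the parabolic regularity developed elsewhere in the paper, and is where most care is needed.
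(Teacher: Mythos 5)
Your proposal follows essentially the paper's route: mollify the doubling-of-variables inequality \eqref{e4.1*}, cut off by a $\cO''$-boundary-layer sequence, control the near-boundary terms by normal-trace considerations, and pass to the diagonal; your explicit symmetrization over $(u,v)$ and $(v,u)$ simply makes visible a step the paper leaves implicit. The part of your argument that is too thin to verify is precisely the boundary step you flag as the main difficulty, and the reason is that generic divergence-measure/normal-trace theory does not by itself dispose of the terms produced by $\nabla\zeta_\eta''$. The paper closes them by inserting the extension $u_{B''}$ of the Dirichlet data, rewriting $K_{x''}(u(x),v(y))$ by adding and subtracting terms built from $u_{B''}$, and then invoking exactly conditions \eqref{e1.15} (giving a normal weak trace for the fields $K_{x''}(u,u_{B''})$ and $K_{x''}(v,u_{B''})$) and \eqref{e1.16} (which is phrased in the doubled variables $u(s,x)$, $v(s,y)$ \emph{with the mollifier still in place}, as a boundary-layer limit tested against $H_{x''}(u,v,u_{B''})$). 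Without this comparison against $u_{B''}$ nothing in Definition~\ref{D:1.2} lets you cancel the near-boundary contributions. Two further structural points the paper uses and you should not skip: the test function is $\phi(x,y)=\psi_1(x)\psi_2(y)$ with $\psi_2\equiv1$ on $\supp\psi_1$, together with an anisotropic mollifier $\rho_m(\bar x-\bar y)\rho_n(x_{d''}-y_{d''})$ adapted to the local graph coordinates near $\po\cO''$, not your symmetric $\psi((x+y)/2)\rho_\delta(x-y)$ — the boundary-normal and tangential rates of collapse onto the diagonal must be decoupled to match the boundary-layer scaling; and the limits must be taken in the order $\delta,\eta\to0$ (boundary layers) first and only then $m,n\to\infty$ (mollifier), since condition \eqref{e1.16} requires the mollifier to be held fixed when the boundary layer degenerates.
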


The proof of this result can be carried out line by line as the proof of Theorem~3.3 in \cite{FL2}, where the authors prove a comparison inequality for solutions of the Dirichlet problem for quasilinear degenerate parabolic stochastic partial differential equations on a bounded domain. Note that in the parabolic case, that is, when $d'=0$, this result automatically yields uniqueness as we are allowed to take $\psi\equiv 1$ as a test function in \eqref{e3.unique1}, which is the case considered in \cite{FL2}. In our present case, we need the strong trace property from Section~\ref{S:4} below in order to obtain the uniqueness of solutions.

\begin{proof}
We only give a sketch of the proof. First, we fix some ball $B\in\mathcal{B}$ centered at some point of the boundary $\po\cO$ satisfying \eqref{e1.12} and take smooth functions $\psi_1,\psi_2\in C_c^\infty(B)$ with $0\le \psi_i\le 1$, $i=1,2$, such that $\psi_2(x)=1$ for $x\in \supp\psi_1$. Then, we define $\psi(x,y)=\psi_1(x)\psi_2(y)$.

Next, as in \cite{FL2} (cf. \cite{FL,MPT}), we consider our coordinates $x=(\bar x, x_{d''})\in \mathbb{R}^{d-1}\times \mathbb{R}$ already relabelled so that $\po \cO\cap B=\{ x_{d''}=\lambda(\bar x)\}$ and we take, by approximation, the following test function in \eqref{e3.unique1}
\[
\varphi(x,y)=\zeta_\delta''(x'')\zeta_\eta''(y'')\rho(x-y)\psi(x,y),
\]
where $\zeta_\delta$ and $\zeta_\eta$ are $\cO''$-canonical boundary layer sequences (see Section~\ref{S:3} below), $\rho=\rho_{m,n}$ is given by $\rho_{m,n}=\rho_m(\bar x - \bar y')\rho_n(x_{d''}-y_{d''})$ and $\rho_m$ and $\rho_n$ are sequences of symmetric mollifiers in $\mathbb{R}^{d-1}$ and in $\mathbb{R}$, respectively.

Let us point out that, as in \cite{FL2}, assumption \eqref{e1.15} yields the existence of normal weak traces for the fields $K_{x''}(u,u_{B''})$ and $K_x(v,u_{B''})$, which is key to deal with the terms that involve derivatives of the boundary layer sequences.

Then, after adding and subtracting a few terms and performing a delicate analysis of the resulting inequality, where we use the normal weak traces for the fields $K_{x''}(u,u_{B''})$ and $K_x(v,u_{B''})$ to control the values of the solutions near the boundary, we are able to take the limit as $\delta,\eta\to 0$ first and then as $m,n\to \infty$ to obtain the following
\begin{multline}\label{e3.unique2}
  -\bbE\int_0^T\int_{\cO}\left|u^{\pm}(s,x)-v^{\pm}(s,x)\right|\theta'(s)\psi_1(x)\, dx\, dy \\
  \le  \bbE\int_0^T\int_{\cO}F(u(s,x),v(s,x))\cdot \nabla\psi_1(x)\, dx\, dy\, ds \\
   -\bbE\int_0^T \int_{\cO}\nabla_{x''}\cdot\mathbb{B}(u(s,x),v(s,x))\nabla_{x''}\psi_1(x)\, dx\, dy\, ds.
 \end{multline}
We omit the details as they follow line by line the proof of Theorem~3.3 in \cite{FL2} with slight adaptations.

To conclude, we see that we may take a covering $\{B_j''\}_{j=0}^N$ of $\overline{\cO''}$, where $B_j''\in \mathcal{B}''$ for $1\le j\le N$ and $B_0\subset\subset\cO''$ and a partition of unity $\{\tilde \psi_{j}\}_{j=0}^N$ subordinated, so that we have the inequality \eqref{e3.unique2} with $\psi_1=\tilde \psi_j$, for each $j=1,...,N$. Regarding $\tilde \psi_0$, we see that \eqref{e3.unique2} may also be deduced to hold with $\psi_1=\tilde \psi_0$ much more easily as there is no boundary analysis in this case. Thus, adding the inequalities \eqref{e3.unique2} corresponding to each $\tilde \psi_j$ we obtain \eqref{e3.unique1}. 
\end{proof}

\section{Divergence-Measure Fields and Normal Traces for Kinetic Solutions}\label{S:3}
  
As a preparation for our subsequent discussion about the strong trace property,  in this section we recall some facts in the theory of divergence-measure fields that will be used in this paper. We also comment on how these results can also be used to deduce the existence of normal weak traces for certain fields in this stochastic context.
    
 \begin{definition}\label{D:3.1} Let $U\subset\R^N$ be open. 
For $F\in L^p(U;\R^N)$, $1\le p\le\infty$, or $F\in\M(U;\R^N)$,
set
\begin{equation}\label{e1}
|\div F|(U):=\sup\{\,\int_U\nabla\vphi\cdot F\,\,: \,\,\vphi\in C_0^1(U),
\,\, |\vphi(x)|\le1,\ x\in U\,\}.
\end{equation}
For $1\le p\le\infty$, we say that $F$ is 
an $L^p$-divergence-measure field over $U$, i.e., $F\in\DM^p(U)$,
if $F\in L^p(U;\R^N)$ and
\begin{equation}\label{norm1}
\|F\|_{\DM^p(U)}:=\|F\|_{L^p(U;\R^N)}+|\div F|(U)<\infty.
\end{equation}
We say that $F$ is an extended divergence-measure field over $U$,
i.e., $F\in\DM^{ext}(U)$, if $F\in\M(U;\R^N)$ and
\begin{equation}\label{norm2}
\|F\|_{\DM^{ext}(U)}:=|F|(U)+|\div F|(U)<\infty.
\end{equation}
If $F\in \DM^*(U)$ for any open set $U\Subset\R^N$, 
then we say $F\in \DM^*_{loc}(\R^N)$.
\end{definition}

Here, we will be concerned only with bounded domains $U\subset\R^N$, and fields that are $L^p$ vector functions, so it will suffice to consider divergence-measure fields in 
$\DM^1(U)$.  We recall the Gauss-Green formula for general $\DM^1$-fields, first proved in \cite{CF2,CF3} and extended by Silhavy in \cite{S1}.

\begin{theorem}[Chen \& Frid \cite{CF2,CF3}, Silhav\'y \cite{S1}] \label{T:3}  If $F\in\DM^1(U)$ then there exists a linear functional $F\cdot\nu: \operatorname{Lip}(\po U)\to\R$ such that
\begin{equation}\label{e8}
F\cdot\nu(g|\po U)=\int_U \nabla g\cdot F+\int_U g\,\div F,
\end{equation}
for every $g\in \operatorname{Lip}(\R^N)\cap L^\infty(\R^N)$.  
Moreover,
\begin{equation}\label{e8'}
|F\cdot\nu(h)|\le |F|_{\DM(U)}|h|_{\Lip(\po U)},
\end{equation}
for all $h\in\Lip(\po U)$, 
 where we use the notation 
 $$
 |g|_{\Lip(C)}:= \sup_{x\in C}|g(x)|+\Lip_C(g).
 $$
 
Furthermore,  let $m\,:\, \R^N\to\R$ be a nonnegative Lipschitz function with $\supp m\subset \bar U$ which is strictly positive on $U$, and for each $\ve>0$ let $L_\ve=\{ x\in U\,:\,0<m(x)<\ve\}$. Then:
\begin{enumerate}

\item[(i)] {\rm({\em cf.} \cite{CF2,CF3} and \cite{S1})} If $g\in \Lip(\R^N)\cap L^\infty(\R^N)$, we have
\begin{equation}\label{e11}
   F\cdot\nu(g|\partial U)=-\lim_{\ve\to0}\ve^{-1}\int_{L_\ve}g \,\nabla m\cdot F\,dx;
\end{equation}

\item[(ii)] {\rm({\em cf.} \cite{S1})} If 
\begin{equation}\label{e12}
\liminf_{\ve\to0} \ve^{-1} \int_{L_\ve}|\nabla m\cdot F|\,dx<\infty,
\end{equation}
then $F\cdot\nu$ is a measure over $\partial U$. 
\end{enumerate} 
\end{theorem}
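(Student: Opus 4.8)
The statement to prove is Theorem~\ref{T:3}, the Gauss-Green formula for $\DM^1$ fields together with the representation formulas \eqref{e11} and \eqref{e12}. Here is how I would approach it.

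\medskip

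\textbf{Plan of proof.} The strategy is mollification: regularize $F$ by convolution and pass to the limit, using the finiteness of $\|F\|_{\DM^1(U)}$ to control the boundary contributions. First I would fix a Lipschitz function $m$ as in the statement (for instance $m(x) = \dist(x, \R^N\setminus U)$ truncated and smoothed, which is $1$-Lipschitz, strictly positive on $U$, and supported in $\bar U$), and set $L_\ve = \{0 < m < \ve\}$. The key observation is that for $g \in \Lip(\R^N)\cap L^\infty(\R^N)$ the ``boundary pairing'' must be \emph{defined} by the right-hand side of \eqref{e11}, so the first task is to show this limit exists. To that end, pick a smooth nonincreasing cutoff $\phi_\ve: [0,\infty)\to[0,1]$ with $\phi_\ve \equiv 1$ on $[\ve,\infty)$, $\phi_\ve \equiv 0$ near $0$, and use $g\,\phi_\ve(m)$ as a test function in the distributional definition of $\div F$: this gives
\[
\int_U \nabla(g\phi_\ve(m))\cdot F = -\int_U g\,\phi_\ve(m)\,d(\div F),
\]
i.e.
\[
\int_U \phi_\ve(m)\,\nabla g\cdot F + \int_U g\,\phi_\ve'(m)\,\nabla m\cdot F = -\int_U g\,\phi_\ve(m)\,d(\div F).
\]
As $\ve\to 0$, $\phi_\ve(m)\to \mathbf 1_U$ boundedly and a.e., so the first term converges to $\int_U \nabla g\cdot F$ by dominated convergence (using $F\in L^1$), and the right-hand side converges to $-\int_U g\,d(\div F)$ (using $|\div F|(U)<\infty$ and that $|\div F|$ gives no mass to $\po U$ once we work on the open set $U$). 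Hence the middle term converges; choosing $\phi_\ve'$ to approximate $-\ve^{-1}\mathbf 1_{(0,\ve)}$ identifies the limit as $-\lim_\ve \ve^{-1}\int_{L_\ve} g\,\nabla m\cdot F\,dx$, and we define $F\cdot\nu(g|\po U)$ to be this common value. This simultaneously proves \eqref{e8} and \eqref{e11}.

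\medskip

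\textbf{From the defining formula to the properties.} Once \eqref{e8} holds for all $g\in\Lip(\R^N)\cap L^\infty(\R^N)$, the bound \eqref{e8'} follows: the functional $g\mapsto F\cdot\nu(g|\po U)$ depends only on the boundary trace $g|\po U$ (two such $g$'s with the same trace differ by something whose pairing vanishes — this needs a short argument, again via the cutoff and the fact that $\ve^{-1}\int_{L_\ve}$ of a quantity vanishing on $\po U$ tends to $0$), so it descends to a functional on $\Lip(\po U)$; estimating $|\ve^{-1}\int_{L_\ve} g\,\nabla m\cdot F|\le \|g\|_{L^\infty(\po U)}\cdot \ve^{-1}\int_{L_\ve}|\nabla m\cdot F|$ plus a term controlled by $\Lip(g)$ and $\|F\|_{L^1}$, and optimizing the choice of $m$, gives \eqref{e8'} with constant $\|F\|_{\DM(U)}$. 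For part (ii): under hypothesis \eqref{e12}, along a sequence $\ve_j\to 0$ the measures $\mu_{\ve_j} := -\ve_j^{-1}(\nabla m\cdot F)\,\mathbf 1_{L_{\ve_j}}\,dx$ have uniformly bounded total variation, hence a weak-$*$ convergent subsequence to some measure $\mu$ on $\bar U$; since for $g\in C(\bar U)$ (approximated by Lipschitz functions) $\int g\,d\mu_{\ve_j}\to F\cdot\nu(g|\po U)$, the limit $\mu$ represents $F\cdot\nu$, it is supported on $\po U$, and $F\cdot\nu$ is a measure there.

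\medskip

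\textbf{Main obstacle.} The delicate point is the \emph{independence of the choice of $m$} and the passage $\phi_\ve(m)\to \mathbf 1_U$ in the term involving $\div F$: one must be sure that $|\div F|$ — a priori only a finite measure on the \emph{open} set $U$, possibly with mass accumulating toward $\po U$ — does not contribute a spurious boundary term when we test against $g\phi_\ve(m)$ and let $\ve\to0$. The clean way around this is to never extend $\div F$ to $\bar U$: work throughout with $U$ open, use $\phi_\ve(m)$ which is compactly supported in $U$ for each fixed $\ve>0$ (so $g\phi_\ve(m)\in C_c^1(U)$ is a legitimate test function), and only at the end take $\ve\to 0$ with dominated convergence controlled by $|\div F|(U)<\infty$. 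The equivalence of different admissible $m$'s then follows because the difference of the two ``boundary layer'' limits can be written as $\lim_\ve \ve^{-1}\int$(something vanishing on $\po U$)$\cdot F$, which is $0$ by the $L^1$ bound on $F$ together with $|L_\ve|\to0$. Everything else is routine once this is set up, and I would cite \cite{CF2,CF3,S1} for the parts already in the literature, giving details only for the boundary-layer representations \eqref{e11}–\eqref{e12}.
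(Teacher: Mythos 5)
The paper itself does not prove Theorem~\ref{T:3}; it recalls the result from \cite{CF2,CF3,S1}, so the only comparison possible is with the standard arguments in those references. Your overall strategy — test the distributional identity for $\div F$ against $g\,\phi_\ve(m)$ with $\phi_\ve(m)$ compactly supported in $U$, expand, and pass $\ve\to0$ — is indeed the Chen--Frid/Silhav\'y route, and the core identity \eqref{e8} together with \eqref{e11} is obtained correctly this way (modulo the typo: you want $\phi_\ve$ \emph{nondecreasing}, with $\phi_\ve'\approx\ve^{-1}\mathbf 1_{(0,\ve)}$, not $-\ve^{-1}\mathbf 1_{(0,\ve)}$). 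The treatment of part (ii) via weak-$*$ compactness of the uniformly bounded measures $\mu_{\ve_j}$ is also correct.

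The gap is in your derivation of \eqref{e8'}. You try to extract the bound from the boundary-layer representation by the estimate $|\ve^{-1}\int_{L_\ve}g\,\nabla m\cdot F|\le\|g\|_{L^\infty(\po U)}\,\ve^{-1}\int_{L_\ve}|\nabla m\cdot F|+\cdots$, but the quantity $\ve^{-1}\int_{L_\ve}|\nabla m\cdot F|$ (with the absolute value \emph{inside}) has no a priori bound — indeed, its finiteness is precisely the additional hypothesis \eqref{e12} of part (ii), not something available in general. What is bounded uniformly in $\ve$ is the \emph{signed} quantity $\ve^{-1}\int_{L_\ve}\nabla m\cdot F\approx\int_U\nabla(\phi_\ve(m))\cdot F$, which is $\le|\div F|(U)$ since $\phi_\ve(m)\in C_0^1(U)$ with $|\phi_\ve(m)|\le1$. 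So the estimate you invoke does not close, and the phrase ``optimizing the choice of $m$'' does not rescue it. The clean way to \eqref{e8'} is to bypass the boundary layer entirely: from \eqref{e8}, $|F\cdot\nu(g|\po U)|\le\Lip(g)\,\|F\|_{L^1(U)}+\|g\|_{L^\infty(U)}\,|\div F|(U)$; then, having shown the functional depends only on $g|_{\po U}$, given $h\in\Lip(\po U)$ take a McShane extension $g$ with $\Lip_{\R^N}(g)=\Lip_{\po U}(h)$, truncate so that $\|g\|_{L^\infty}=\sup_{\po U}|h|$, and the bound $|F\cdot\nu(h)|\le|h|_{\Lip(\po U)}\|F\|_{\DM(U)}$ follows at once.

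Two secondary points. First, your claimed argument that the boundary-layer limit is independent of the admissible $m$ (``the difference of the two boundary-layer limits can be written as $\lim_\ve\ve^{-1}\int(\text{something vanishing on }\po U)\cdot F$, which is $0$ by the $L^1$ bound on $F$ together with $|L_\ve|\to0$'') does not stand as stated: $\ve^{-1}\int_{L_\ve}|F|$ need not tend to $0$ even though $\int_{L_\ve}|F|\to0$. The correct argument tests $g[\phi_\ve(m_1)-\phi_\ve(m_2)]\in C_0^1(U)$ against $\div F$ and lets $\ve\to0$, using the Gauss--Green identity rather than a crude size estimate. Second, the ``main obstacle'' you flag (possible boundary mass of $|\div F|$) is in fact a non-issue under the definition in the paper: $\div F$ is by construction a finite Radon measure on the \emph{open} set $U$, and dominated convergence with the bounded pointwise limit $\phi_\ve(m)\to\mathbf 1_U$ closes the argument with no further care — you correctly identify this, but it needn't be singled out as delicate.
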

  
A typical example of  such  $m$ is provided by $m(x)=\dist (x,\partial U)$, for $x\in U$, and $m(x)=0$, for $x\in\R^N\setminus U$. Another example of a function $m$ for which \eqref{e11} holds is given by a {\em level set boundary layer sequence}, provided that the domain has a {\em Lipschitz deformable boundary}; concepts whose definitions we recall subsequently.

\begin{definition}\label{D:2.1}  Let $U\subset\R^{N+1}$ be  an open set. We say that  $\po U$ is a {\em   Lipschitz deformable boundary}  if the following hold:

\begin{enumerate} 

\item[(i)] For each $x\in\po U$, there exist $r>0$ and a Lipschitz mapping $\g :\R^{N}\to\R$  such that, upon relabeling, reorienting and translation,    
$$
U\cap Q(x,r) = \{\,y\in\R^{N+1}\,:\, \g(y_1,\cdots,y_{N})<y_0\,\}\cap Q(x,r),
$$
where $Q(x,r)=\{\,y\in\R^{N+1}\,:\, |y_i-x_i|\le r,\ i=1,\cdots,N+1\,\}$. We denote by $\hat \g$ the map $ \hat y\mapsto (\g(\hat y),\hat y)$, $\hat y=(y_1,\cdots,y_{N})$.

\item[(ii)]  There exists a map $\Psi:[0,1]\X\po U\to \bar U$ such that $\Psi$ is a bi-Lipschitz homeomorphism over its image and $\Psi(0,x)=x$, for all $x\in\po U$.
For $s\in[0,1]$, we denote by $\Psi_s$ the mapping from $\po U$ to $\bar U$ given by $\Psi_s(x)= \Psi(s,x)$, and set $\po U_s:=\Psi_s(\po U)$. We call such map a Lipschitz deformation for $\po U$. 

\end{enumerate}
The {\em level set function associated with the deformation} $\Psi$ is the function $h:\bar U\to [0,1]$, defined by $h(x)=s$, if $x\in\po U_s$, $0\le s\le 1$, and $h(x)=1$, if $x\in U\setminus\Psi((0,1)\X\po U)$.  
 \end{definition}
 
 \begin{definition} \label{D:2.2}  Let $U\subset\R^{N+1}$ be an open set with a  Lipschitz deformable boundary $\po U$, and $\Psi:[0,1]\X\po U \to \bar \Om$ a Lipschitz deformation.
 \begin{enumerate}
 \item  The Lipschitz deformation is said to be {\em regular over} $\Gamma\subset\po U$,  if $D\Psi_s\to \operatorname{Id}$, as $s\to0$, in $L^1(\Gamma, \H^{N})$; 
 \item The Lipschitz deformation is said to be {\em strongly regular over} $\Gamma\subset\po U$ if it is regular over $\Gamma$ and  $J[\Psi_s]\to 1$ in $\Lip(\Gamma)$, as $s\to0$, that is,  given any Lipschitz diffeomorphism $\hat \g: \Om\subset\R^{N}\to \Gamma$, we have  $D\Psi_s\circ\hat\g\to D\hat\g$ in $L^1(\Om)$, as $s\to0$, and  $J[\Psi_s\circ\hat\g]/ J[\hat\g]\to 1$ in $\Lip(\Om)$, as $s\to0$ . Here, for a Lipschitz function $\a:\R^k\to\R^m$ we denote by $J[\a]$ the Jacobian of the map $\a$ (see, e.g., \cite{EG}). Observe that we do not need to require more regularity on $\Gamma$, it suffices that $J[\Psi_s]\in\Lip(\Gamma)$.  
 \end{enumerate}
  \end{definition}

The following two results have been proved  in \cite{Fr}; we refer to the latter or  \cite{FL} for the proofs.

\begin{theorem}[{\em cf.} \cite{Fr}]\label{T:2.3} Let $U\subset\R^{N+1}$ be a bounded open set with a deformable Lipschitz boundary and  $F\in\DM^1(U)$. Let $\Psi:\po U\X[0,1]\to\bar U$ be a Lipschitz deformation of $\po U$. Then, for almost all $s\in[0,1]$, and all $\phi\in C_0^\infty(\R^{N+1})$, 
\begin{equation}\label{e2.2'}
\int_{U_s}\phi\,\div F = \int_{\po U_s}\phi(\om) F(\om)\cdot\nu_s(\om)\,d\H^{N}(\om)-\int_{U_s}F(x)\cdot\nabla\phi(x)\,dx,
\end{equation}
where $\nu_s$ is the unit outward normal field defined $\H^{N}$-almost everywhere in $\po U_s$, and $U_s$ is the open subset of $U$ bounded by $\po U_s$.
\end{theorem}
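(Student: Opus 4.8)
The plan is to reduce the global statement to a local one via a partition of unity, then for each boundary chart to recognize the sliced integrals in \eqref{e2.2'} as slices of an $(N+1)$-dimensional integral that can be differentiated in the slicing parameter. First, using the Lipschitz deformation $\Psi$ and its level set function $h$, I would change variables so that the hypersurfaces $\po U_s$ become the level sets $\{h = s\}$; by the coarea formula, for $\phi \in C_0^\infty(\R^{N+1})$ and any $g \in L^1(U)$ one has $\int_U g\,|\nabla h|\,dx = \int_0^1 \big(\int_{\{h=s\}} g\,d\H^N\big)\,ds$, so the right-hand objects in \eqref{e2.2'} are genuinely defined for a.e.\ $s$ and depend measurably on $s$. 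The natural candidate identity to establish is: for every $0 \le s_1 < s_2 \le 1$,
\begin{equation*}
\int_{U_{s_2}\setminus U_{s_1}} \phi\,\div F \;=\; \int_0^1 \mathbf{1}_{[s_1,s_2]}(s)\Big(\int_{\po U_s}\phi\,F\cdot\nu_s\,d\H^N\Big)ds \;-\; \int_{U_{s_2}\setminus U_{s_1}} F\cdot\nabla\phi\,dx,
\end{equation*}
where $U_s$ is the region cut off by $\po U_s$. Differentiating this in $s_2$ (Lebesgue differentiation) then yields \eqref{e2.2'} for a.e.\ $s$.

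The key technical step is the displayed identity above, and this is where Theorem~\ref{T:3} enters. The set $U_{s_2}\setminus U_{s_1}$ is itself a bounded open set whose boundary consists of pieces of $\po U_{s_1}$ and $\po U_{s_2}$ (up to $\H^N$-null sets, for a.e.\ choice of the endpoints), and $F$ restricted there is still in $\DM^1$. I would apply the Gauss-Green functional $F\cdot\nu$ of Theorem~\ref{T:3} on this collar region, testing against $\phi$: \eqref{e8} gives $F\cdot\nu(\phi|\po(U_{s_2}\setminus U_{s_1})) = \int \nabla\phi\cdot F + \int \phi\,\div F$. The remaining task is to identify the boundary functional with the two surface integrals over $\po U_{s_1}$ and $\po U_{s_2}$ with the correct signs. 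For this I would use formula \eqref{e11}: taking $m(x) = \dist(x, \po(U_{s_2}\setminus U_{s_1}))$, or more conveniently $m$ built from the level set function $h$ near each of the two boundary components, the limit $-\lim_\ve \ve^{-1}\int_{L_\ve} \phi\,\nabla m\cdot F\,dx$ picks out precisely the flux of $F$ through $\po U_{s}$ against the outward normal, for a.e.\ $s$. The coarea formula again guarantees that the set of "good" values of $s$ — those for which $\po U_s$ is $\H^N$-rectifiable with a well-defined normal, $F\cdot\nu_s$ is $\H^N$-integrable on it, and the layer-cake limit computes the surface integral — has full measure in $[0,1]$.

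The main obstacle I expect is the careful justification that, for a.e.\ $s$, the "slice" of the Gauss-Green functional through the hypersurface $\po U_s$ really equals the classical surface integral $\int_{\po U_s}\phi\,F\cdot\nu_s\,d\H^N$. The functional $F\cdot\nu$ of Theorem~\ref{T:3} is a priori only an element of $\Lip(\po U)'$, not obviously represented by an $L^1(\H^N)$ density; it is exactly the bi-Lipschitz regularity of $\Psi$ together with \eqref{e11}–\eqref{e12} that promotes it to a measure and, via coarea, identifies that measure with $F\cdot\nu_s\,\H^N\lfloor\po U_s$ for a.e.\ $s$. One must also handle the orientation bookkeeping between the inner and outer components of $\po(U_{s_2}\setminus U_{s_1})$ and check that the exceptional null set of "bad" $s$ can be taken independent of $\phi$ by running the argument on a countable dense family of test functions in $C_0^\infty(\R^{N+1})$ and then passing to the limit. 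Everything else — the change of variables under $\Psi$, the coarea identities, and the Lebesgue differentiation in $s$ — is routine once this identification is in hand.
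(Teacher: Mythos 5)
The paper itself offers no proof of Theorem~\ref{T:2.3}; it explicitly defers to \cite{Fr} and \cite{FL}, so there is no in-text argument to compare against. Measured against the proof in those sources (essentially that of Chen--Frid), your plan has the right ingredients --- the level set function $h$ of the deformation, the coarea formula, and Lebesgue differentiation in the deformation parameter --- but it takes a more circuitous route than necessary. The standard argument never passes through the abstract normal-trace functional of Theorem~\ref{T:3} on a two-sided collar $U_{s_1}\setminus\overline{U_{s_2}}$. Instead it tests the defining relation $\div F\in\M(U)$ directly against the Lipschitz truncations $\chi_\ve^s\phi$, where $\chi_\ve^s = \min\bigl(1,\ve^{-1}(h-s)_+\bigr)$ (compactly supported in $U$ because $U_s\Subset U$). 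This gives at once
\begin{equation*}
\int_U \chi_\ve^s\bigl(\phi\,\div F + \nabla\phi\cdot F\bigr)\,dx = -\frac{1}{\ve}\int_{\{s<h<s+\ve\}}\phi\,\nabla h\cdot F\,dx,
\end{equation*}
and an application of coarea to the right-hand side followed by Lebesgue differentiation in $s$ produces \eqref{e2.2'} for a.e.\ $s$ directly, with $\nu_s = -\nabla h/|\nabla h|$. In particular, the concern you flag --- that the functional $F\cdot\nu$ is a priori only in $\Lip(\po U)^*$ and must be promoted to an $L^1(\H^N)$ density --- never arises in the direct approach; the surface integral on $\po U_s$ appears from the coarea formula and the identification \emph{is} the conclusion of the theorem rather than a prerequisite for it.

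Your collar argument can likely be made to work, but it multiplies bookkeeping: one must construct an admissible $m$ for \eqref{e11} vanishing on both $\po U_{s_1}$ and $\po U_{s_2}$, track the opposite orientations of the inner and outer boundary components, restrict to a full-measure set of \emph{pairs} $(s_1,s_2)$ before invoking Fubini and Lebesgue differentiation, and finally make the exceptional set independent of $\phi$ (as you correctly note, via a countable dense family of test functions). None of these steps is fatal, but each is extra work compared with the single-cutoff computation above, and the appeal to Theorem~\ref{T:3} and \eqref{e11} re-derives, in a different guise, exactly the coarea/differentiation estimate that the direct test-function argument performs in one pass. I would recommend dropping the collar and the appeal to Theorem~\ref{T:3} entirely in favor of the direct approach.
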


\begin{theorem}[{\em cf.} \cite{Fr}] \label{T:2.3'} Let $F\in\DM^1(U)$, where $U\subset\R^{N+1}$ is a bounded open set with a Lipschitz deformable boundary and Lipschitz   deformation $\Psi:[0,1]\X\po U\to\bar U$.
Denoting by $F\cdot\nu|_{\po U}$ the continuous linear functional $\Lip(\po U)\to\R$ given by the normal trace of $F$ at $\po U$, we have the formula
\begin{equation}\label{e2.3'}
F\cdot\nu|_{\po U}=\esslim_{s\to0} F\circ\Psi_s(\cdot)\cdot\nu_s(\Psi_s(\cdot))J[\Psi_s] ,
\end{equation}
with equality in the sense of $(\Lip(\po U))^*$, where on the right-hand side the  functionals are given by ordinary functions in $L^1(\po U)$. In particular, if $\Psi$ is strongly regular  over $\Gamma\subset\po U$   then, for all $\varphi\in \Lip(\po U)$ with $\supp\varphi\subset\Gamma$, we have
 \begin{equation}\label{e2.3''}
\la F\cdot\nu|_{\po U},\varphi\ra=\esslim_{s\to0} \int_{\Gamma} F\circ\Psi_s(\om)\cdot\nu_s(\Psi_s(\om))\varphi(\om)\,d\H^{N}(\om).
\end{equation}
\end{theorem}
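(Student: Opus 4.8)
The plan is to reconstruct the normal trace functional $F\cdot\nu|_{\po U}$ by slicing $U$ along the level surfaces $\po U_s=\Psi_s(\po U)$ of the deformation, applying Theorem~\ref{T:2.3} on each slice, transporting the resulting surface integrals back to $\po U$ by the area formula for the Lipschitz homeomorphisms $\Psi_s\colon\po U\to\po U_s$, and finally identifying the limit through the Gauss--Green formula \eqref{e8}. As a preliminary observation, since $F\in L^1(U;\R^{N+1})$ and $\Psi$ is bi-Lipschitz, the change of variables $y=\Psi(s,\om)$ together with Fubini's theorem yields $\int_0^1\int_{\po U_s}|F|\,d\H^{N}\,ds\lesssim\int_U|F|\,dx<\infty$; hence $s\mapsto\int_{\po U_s}|F|\,d\H^{N}$ lies in $L^1(0,1)$, so for a.e.\ $s$ the trace $F|_{\po U_s}$ is in $L^1(\po U_s)$, the functions $F\circ\Psi_s$ and $\nu_s\circ\Psi_s$ are defined $\H^{N}$-a.e.\ on $\po U$, and, by the area formula, $\om\mapsto\bigl(F\circ\Psi_s\cdot\nu_s(\Psi_s)\bigr)(\om)\,J[\Psi_s](\om)$ belongs to $L^1(\po U)$ — this is the sense in which the right-hand side of \eqref{e2.3'} is represented by an ordinary function.

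Next I would fix $\varphi\in\Lip(\po U)$, extend it to a bounded Lipschitz function $\Phi$ on $\R^{N+1}$, and choose $\phi_\eta\in C_0^\infty(\R^{N+1})$ (a mollification of $\Phi$ times a cutoff equal to $1$ near $\overline{U}$) with $\phi_\eta\to\Phi$ uniformly and $\sup_\eta\Lip(\phi_\eta)<\infty$. Applying Theorem~\ref{T:2.3} with $\phi=\phi_\eta$ and rewriting the integral over $\po U_s$ by the area formula for $\Psi_s$ gives, for a.e.\ $s$,
\begin{equation*}
\int_{\po U}(\phi_\eta\circ\Psi_s)\,\bigl(F\circ\Psi_s\cdot\nu_s(\Psi_s)\bigr)\,J[\Psi_s]\,d\H^{N}
=\int_{U_s}\phi_\eta\,\div F+\int_{U_s}F\cdot\nabla\phi_\eta\,dx .
\end{equation*}
Letting $\eta\to0$ — dominated convergence on $U_s$ using $|F|\in L^1$ and $\sup_\eta\|\nabla\phi_\eta\|_\infty<\infty$, and on $\po U_s$ using $F|_{\po U_s}\in L^1$ for the chosen $s$ — the same identity holds with $\phi_\eta$ replaced by $\Phi$; I will call it $(\star_s)$.

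Now I would let $s\to0$ through the full-measure set of $s$ for which $(\star_s)$ holds. Since $U_s\uparrow U$, the right-hand side of $(\star_s)$ tends to $\int_U\Phi\,\div F+\int_U F\cdot\nabla\Phi\,dx$, which by \eqref{e8} equals $F\cdot\nu(\Phi|_{\po U})=\la F\cdot\nu|_{\po U},\varphi\ra$. On the left-hand side I then replace $\Phi\circ\Psi_s$ by $\varphi=\Phi|_{\po U}$: as $\Psi$ is Lipschitz in its first variable with $\Psi_0=\operatorname{Id}$, one has $\|\Phi\circ\Psi_s-\varphi\|_{C(\po U)}\le\Lip(\Phi)\,\|\Psi_s-\operatorname{Id}\|_\infty\lesssim s$, so the error committed is at most $C\,s\int_{\po U_s}|F|\,d\H^{N}$. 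Since $s\mapsto\int_{\po U_s}|F|\,d\H^{N}$ is in $L^1(0,1)$, Chebyshev's inequality gives $\bigl|\{\,s<\delta:\ s\int_{\po U_s}|F|\,d\H^{N}>\eta\,\}\bigr|\le\frac{\delta}{\eta}\int_0^\delta\int_{\po U_s}|F|\,d\H^{N}\,ds$, which has density $0$ at the origin; so this error tends to $0$ in the essential sense. Combining, $\int_{\po U}\varphi\,\bigl(F\circ\Psi_s\cdot\nu_s(\Psi_s)\bigr)\,J[\Psi_s]\,d\H^{N}\to\la F\cdot\nu|_{\po U},\varphi\ra$ essentially as $s\to0$, with an exceptional set independent of $\varphi$; this is \eqref{e2.3'}. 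For the refinement \eqref{e2.3''}, assuming $\Psi$ strongly regular over $\Gamma$ and $\supp\varphi\subset\Gamma$, one removes the factor $J[\Psi_s]$ from the formula just obtained by an estimate of the same type, now additionally invoking $J[\Psi_s]\to1$ in $\Lip(\Gamma)$.

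The step I expect to be the main obstacle is precisely the legitimacy of these last replacements. Because the hypotheses do not include the finiteness condition \eqref{e12}, the normal trace $F\cdot\nu|_{\po U}$ is in general only a continuous linear functional on $\Lip(\po U)$ and not a measure, so the slice masses $\int_{\po U_s}|F\cdot\nu_s|\,d\H^{N}$ need not stay bounded as $s\to0$. Consequently the passage from the slice identity $(\star_s)$ to the genuine essential limit \eqref{e2.3'} cannot be a soft weak-$*$ compactness argument: it must exploit the quantitative bound $\|\Psi_s-\operatorname{Id}\|_\infty=O(s)$ (and, for \eqref{e2.3''}, the strong regularity of $\Psi$) to absorb those masses against the mere $L^1(0,1)$-integrability of $s\mapsto\int_{\po U_s}|F|\,d\H^{N}$, and one must check that every exceptional $s$-set appearing along the way — from Theorem~\ref{T:2.3}, from the $\eta\to0$ limit, and from the Chebyshev estimate — still has full measure, in fact density $1$ at the origin. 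By contrast, the area formula, the mollification limit, and the identification through \eqref{e8} are routine.
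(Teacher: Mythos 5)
Your overall strategy — slicing via Theorem~\ref{T:2.3}, transporting back to $\po U$ by the area formula, and identifying the $s\to0$ limit through the Gauss--Green formula \eqref{e8} — is the right one, and the preliminary observation that $s\mapsto\int_{\po U_s}|F|\,d\H^{N}$ lies in $L^1(0,1)$ (so that $F|_{\po U_s}\in L^1$ for a.e.\ $s$, and the right-hand side of \eqref{e2.3'} is represented by an $L^1(\po U)$ function for a.e.\ $s$) is correct and needed. The genuine gap is exactly at the step you flag as the main obstacle: the replacement of $\Phi\circ\Psi_s$ by $\varphi$ on the left-hand side of $(\star_s)$. Your Chebyshev inequality only shows that, for each fixed $\eta>0$, the set $\{s<\delta:\ s\int_{\po U_s}|F|\,d\H^{N}>\eta\}$ has \emph{density} $0$ at the origin; this is an approximate-limit statement and does not furnish a single null set off of which $s\int_{\po U_s}|F|\,d\H^{N}\to 0$, which is what the $\esslim$ in \eqref{e2.3'} requires. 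For instance, if $\int_{\po U_s}|F|\,d\H^{N}$ were $1/s$ on $\bigcup_{n\ge1}(2^{-n},2^{-n}+4^{-n})$ and $0$ elsewhere, this is a perfectly good $L^1(0,1)$ function, yet $s\int_{\po U_s}|F|\,d\H^{N}\equiv 1$ on a set accumulating at $0$ with positive measure at every scale, so it has no essential limit. As you yourself remark, $F\cdot\nu|_{\po U}$ need not be a measure without \eqref{e12}, so there is no hidden uniform mass bound to rescue the estimate.

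The clean repair is to eliminate the error term rather than estimate it: tailor the Lipschitz extension to the deformation. Define $\Phi$ on the collar $\Psi([0,1]\X\po U)$ by $\Phi(\Psi(t,\om)):=\varphi(\om)$; this is Lipschitz because $\Psi$ is a bi-Lipschitz homeomorphism onto its image, and by McShane's theorem it extends to a Lipschitz function on $\R^{N+1}$ with $\Phi|_{\po U}=\varphi$. With this choice $\Phi\circ\Psi_s=\varphi$ identically for every $s$, so the left-hand side of $(\star_s)$ is already $\int_{\po U}\varphi\,\bigl(F\circ\Psi_s\cdot\nu_s(\Psi_s)\bigr)\,J[\Psi_s]\,d\H^{N}$ with no residual term, and taking the essential limit $s\to0$ through the full-measure set where $(\star_s)$ holds gives \eqref{e2.3'} directly. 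The same obstacle recurs when you propose to remove $J[\Psi_s]$ for \eqref{e2.3''} by ``an estimate of the same type''; instead, write the integrand as $(\varphi/J[\Psi_s])\,F\circ\Psi_s\cdot\nu_s(\Psi_s)\,J[\Psi_s]$, use strong regularity to get $\varphi/J[\Psi_s]\to\varphi$ in $\Lip(\Gamma)$, extend each $\varphi/J[\Psi_s]$ to the collar in the same $\Psi$-adapted manner (obtaining Lipschitz $\Phi_s$ with uniformly bounded Lipschitz constants and $\Phi_s\to\Phi$ in Lipschitz norm), and pass to the limit in the right-hand side of $(\star_s)$ by dominated convergence; this is licit because the exceptional null set in $(\star_s)$ does not depend on the Lipschitz test function.
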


   \medskip

We also recall the following definition of boundary layer sequence ({\em cf.} \cite{MPT, FL}).

\begin{definition}
Let $\cU\subset \mathbb{R}^d$ be a smooth open set. We say that $\zeta_\delta$ is a boundary layer sequence if for each $\delta>0$, $\zeta_\delta\in \Lip(\cU)$, $0\le \zeta_\delta \le 1$, $\zeta_\delta(x) \to 1$ for every $x\in\cU$, as $\delta\to 0$ and $\zeta_\delta=0$ on $\po\cU$.
\end{definition}

Let us also recall  that if $\cO$ has a Lipschitz deformable boundary and given a Lipschitz deformation for $\po \cO$, $\Psi:[0,1]\times \po \CO\to \cO$, the associated level set function $h:\cO\to [0,1]$ is  given by $h(x)=s$ for $x\in \Psi(s,\po\cO)$ and $h(x)=1$ for $x\in \cO\setminus \Psi([0,1]\times \po\cO)$. Then we can also define an associated boundary layer sequence by
\[
\zeta_\delta(x)=\frac{1}{\delta}\min\{ \delta,h(x) \},\qquad 0<\delta<1,
\]
which we call the {\em level set boundary layer sequence} associated with the deformation $\Psi$. In this case, as in \cite{FL}, we note that if $\Psi$ is of class $C^{1,1}$, we have that
\begin{equation}\label{e2.102}
\begin{aligned}
\nabla \zeta_\delta(x) &= -\frac{1}{\delta}\chi_{\{ 0<\zeta_\delta(x) <1 \}}(x)N(x),\\
D^2\zeta_\delta(x)&=-\frac{N(x)\otimes\nu(x)}{\delta}d\mathcal{H}^{d-1}(x)\big|\Psi(\delta,\po\cO)\\
&\qquad -\frac{1}{\delta}\chi_{0<\zeta_\delta(x)<1}\nabla N(x),
\end{aligned}
\end{equation}
where $N(x)=\lambda(x)\nu(x)$, $\nu$ is the outward unit normal vector to $\Psi(\delta,\po\cO)$, $\lambda(x)$ is a positive Lipschitz function and $\mathcal{H}^{d-1}(x)\big|\Psi(\delta,\po\cO)$ denotes the $(d-1)$-dimensional Hausdorff measure restricted to the hyper-surface $\Psi(\delta,\po\cO)$. 
   
\begin{remark}\label{R:2.1}
Let $U$ be endowed with a Lipschitz boundary  and $\Gamma\subset\po U$ be an open  piece of $\po U$. Let $\om\in\Gamma$ and $\CR_\om:\R^{N+1}\to\R^{N+1}$ be a rigid motion in $\R^{N+1}$ such that, for some Lipschitz function  $\g:\R^{N}\to\R$, denoting $y=\CR_\om x$, $\hat y=(y_1,\cdots,y_{N})$,  and defining $\hat\g(\hat y):=(\g(\hat y),\hat y)$,  we have  that $\hat\g(\Om)=\Gamma$, for some open set  $\Om\subset\R^{N}$. Let also $\tilde U=\{y\in\R^{N+1}\,:\, \g(\hat y)<y_0\,\}$ and suppose $U\cap \tilde U\ne\emptyset$. If $F\in\DM^1(U)\cap\DM^1(\tilde U)$, it is immediate to check, using the Gauss-Green formula, that $F\cdot\nu|_{\po U}$ and $F\cdot\nu|_{\po\tilde U}$ coincide over $\Gamma$, that is,
$$
\la F\cdot\nu|_{\po U},\varphi\ra=\la F\cdot\nu|_{\po\tilde U},\varphi\ra,
$$
for all $\varphi\in \Lip_c(\Gamma)$, where $\Lip_c(\Gamma)$ denotes the subspace of  functions in $\Lip(\Gamma)$ with compact support in $\Gamma$.  Also recall that, to define 
$F\cdot\nu|_{\po U}$,    it is not necessary that $\po U$ be Lipschitz deformable.  In such cases, restricted to functions $\varphi\in \Lip_c(\Gamma)$ we will always view $\la F\cdot \nu|_{\po U},\varphi\ra$ as obtained, after translation, relabeling and reorienting coordinates, 
 through \eqref{e2.3''} by using the {\em canonical  deformation} of $\po \tilde U$ , defined as  $(s, (\gamma(\hat y),\hat y))\mapsto (\gamma(\hat y)+s, \hat y)$, evidently strongly regular over $\Gamma$, which is legitimate for $\tilde U$; we call that a {\em local canonical deformation of $\po U$}.

\end{remark}

\begin{remark}\label{R:2.2} Since the normal trace $F\cdot\nu|_{\po U}$, of a divergence measure field over an open set $U$, restricted to some open piece $\Gamma\subset \po U$, 
does not depend on $U$, but just on $\Gamma$, for the purpose of defining $F\cdot\nu$ over $\Gamma$, we may refer to a deformation  $\Psi: [0,1]\X\Gamma\to\bar U$, defined just on $[0,1]\X\Gamma$, which may be the restriction over $[0,1]\X\Gamma$ of a deformation $\tilde\Psi:[0,1]\X\po\tilde U\to\overline{\tilde U}$ such that $\Gamma\subset\po\tilde U$,
$U\cap\tilde U\ne\emptyset$ and $F$ may be extended somehow from $U\cap\tilde U$ to $\tilde U$ so as to be viewed as  a divergence-measure field over $\tilde U$.  
\end{remark}


 
Coming back to the main subject of the paper, we note that Theorem~\ref{T:3} can be used to deal with the boundary values of kinetic solutions of equations \eqref{e1.1}-\eqref{e1.4}, which is essential to guarantee their uniqueness. Indeed, this has ben successfully done in \cite{FL2} where the authors deal with the Dirichlet problem for a quasilinear degenerate parabolic stochastic partial differential equation on a bounded domain with multiplicative noise. The idea is to combine Theorem~\ref{T:3} with \eqref{e1.15} in order to guarantee the existence of the normal traces on $\cO\times\po\cO''$ for the field $K_{x''}(u(t,x),u_{B''}(t,x))$, where $u$ is a kinetic solution of equations \eqref{e1.1}-\eqref{e1.4}.  We refer to \cite{FL2} for the details.
  
\section{Strong trace property}\label{S:4}

In this section we establish  the strong trace property for stochastic parabolic-hyperbolic equations, which is a decisive point in the proof of the uniqueness of kinetic solutions to the problem \eqref{e1.1}-\eqref{e1.4}. Recall that $\Gamma':=\po\cO'\X\cO''$ and $\Gamma_T':=\Gamma'\X(0,T)$.  

 \begin{theorem}\label{T:5.1}  Let $u\in L^\infty(\Om\X(0,T)\X \cO)$ and for all $p\ge 1$, 
$$
u\in L^p(\Om\X(0,T), \cP, d\bbP\otimes dt  ;  L^p(\cO)). 
$$  
Assume that $u$ satisfies  $\nabla_{x''}b(u)\in L^2(\Om\X(0,T)\X\cO)$   and (ii') of Definition~\ref{D:1.3}. 
Then, there exists $u^\tau\in L^\infty(\Om\X\Gamma_T')$  such that, for any  deformation of $\po\cO'$,  $\Psi': [0,1]\X \po\cO'\to \bar\cO'$, strongly regular over $\po\cO'$,  if  $\Psi: [0,1]\X\Gamma'_T \to \bar \cO_T$, is defined by $\Psi(s,t,x',x'')=(t,\Psi'(s,x'),x'')$, we have 
\begin{equation}\label{eT5.1-1}
\esslim_{s\to0} \bbE \int_0^T\int_{\Gamma'} |u(\Psi(s,t,\wh x))-u^\tau(t,\wh x)|\, d \H^{d-1}(\wh x)\,dt=0.
\end{equation}
Moreover, we also have
\begin{equation}\label{eT5.1-2}  
\esslim_{s\to0}  \int_0^T\int_{\Gamma'} |u(\Psi(s,t,\wh x))-u^\tau(t,\wh x)|\, d \H^{d-1}(\wh x)\,dt=0,
\end{equation}
almost surely. 
\end{theorem}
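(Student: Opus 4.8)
The plan is to establish the strong trace property by adapting the blow-up technique for scalar conservation laws, now combined with a new averaging lemma for the class of anisotropic degenerate parabolic-hyperbolic equations at hand. First I would localize the problem: cover $\po\cO'$ by finitely many coordinate charts in which $\po\cO'$ becomes (after relabelling, reorienting and translating) the graph of a Lipschitz function, and use the canonical local deformation $\Psi'$ of Remark~\ref{R:2.1} to reduce to the model situation of a half-space $\{x_1'>0\}\times\R^{d'-1}\times\cO''$. Since the nondegeneracy condition \eqref{e1.4'''} and its consequences \eqref{e1.nondeg0}--\eqref{e1.nondeg} are the key structural inputs, I would work with the kinetic/entropy formulation (ii') via Proposition~\ref{P:1.1}, which gives a kinetic measure $m$ with $m\ge n_1$ and an equation of the form \eqref{e1.9} for $f=\mathbf 1_{u>\xi}$. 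The regularity $\nabla_{x''}b(u)\in L^2$ guarantees the parabolic dissipation term is controlled and that $f$ has the requisite spatial regularity in the $x''$ variables.

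Next I would perform the blow-up: for a sequence $s_n\downarrow0$ consider the rescaled kinetic functions $f_n(t,y_1,\bar y',x'')=f(t,s_n y_1,\bar y',x'')$ (in the straightened coordinates near the boundary), together with rescaled defect measures. Using the uniform $L^\infty$ bound on $u$ and the entropy inequality, one extracts (up to subsequences, and in expectation) weak-$*$ limits $\bar f$; the limiting object satisfies a kinetic equation on the half-space with \emph{no} source terms from the $t$, $\bar x'$, and $x''$ derivatives surviving at leading order, i.e. essentially $a_1(\xi)\,\po_{y_1}\bar f = \po_\xi\bar m$ in the blow-up limit, an ODE in $y_1$ in the kinetic variable. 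Here is where the new averaging lemma (Lemma~\ref{L:5.1}, referenced in the excerpt) enters: it upgrades the weak convergence of the velocity averages $\int \bar f\,d\xi$ to \emph{strong} convergence in $L^1_{loc}$, using precisely the quantitative nondegeneracy estimates \eqref{e1.4'''} for the symbol $\LL_0$ — the reduced symbol being the relevant one because in the blow-up only the tangential-to-$\po\cO'$ transport and the parabolic degeneracy in $x''$ persist. Strong convergence of the averages, combined with the structure $\bar f=\mathbf 1_{\bar u>\xi}$ being preserved in the limit (the limit is again an indicator function, by the standard equi-integrability/Young-measure argument), lets me conclude that $u(\Psi(s,t,\cdot))$ converges strongly as $s\to0$ to a trace $u^\tau$, first in $L^1(\Om\X\Gamma_T')$, which is \eqref{eT5.1-1}.

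For the almost-sure statement \eqref{eT5.1-2}, I would not re-run the argument pathwise (the stochastic integral in \eqref{e1.9} obstructs a clean $\omega$-by-$\omega$ kinetic formulation); instead I would argue that the deterministic trace-extraction machinery can be applied \emph{after} conditioning, or more simply: having \eqref{eT5.1-1}, the nonnegative quantity $I(s):=\int_0^T\int_{\Gamma'}|u(\Psi(s,\cdot))-u^\tau|\,d\H^{d-1}\,dt$ has $\bbE\,\esslim_{s\to0}I(s)=0$ by Fatou (the essential-liminf is $\bbE$-measurable and nonnegative), whence $\esslim_{s\to0}I(s)=0$ a.s. More precisely, one shows $s\mapsto I(s)$ is (after passing to the $\esslim$) monotone/controlled along the deformation — using that the normal trace of the relevant $\DM^1$ field $K_{x''}(u,\cdot)$ restricted to $\Gamma'$ exists (Theorem~\ref{T:3}, via \eqref{e1.9N}) so that $\esslim_{s\to0}I(s)$ is well-defined pathwise — and then the vanishing of its expectation forces its a.s. vanishing.

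The main obstacle I anticipate is the averaging lemma step: the symbol $\LL_0$ is genuinely anisotropic (first-order/hyperbolic in the $x'$ tangential directions, second-order/parabolic and degenerate in $x''$), and the $\sigma$-H\"older regularity \eqref{e1.4o} with $\g>1/2$ is exactly the threshold needed to make the parabolic defect measure $n_1$ and the microlocal commutator estimates close up; handling the interaction of the low-regularity diffusion coefficient with the blow-up rescaling — ensuring the rescaled measures do not concentrate in a way that destroys the nondegeneracy gain — is the delicate point. A secondary difficulty is checking that the blow-up limit $\bar f$ genuinely loses the boundary-layer contributions and the stochastic forcing (which scales subcritically, since the noise is $O(1)$ in $t$ while we rescale only in $x_1$), so that the limiting kinetic equation is the clean transport-diffusion equation to which Lemma~\ref{L:5.1} applies; this requires care with the mixed space-time scaling and the fact that $u_b$ and the extension $u_{B''}$ play no role on the hyperbolic boundary $\Gamma'$.
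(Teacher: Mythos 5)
Your overall plan (localize the boundary, pass to the kinetic formulation, blow up around a.e.\ boundary point, use an averaging lemma to upgrade weak to strong convergence, and conclude via the $\chi$-function criterion) follows the same skeleton as the paper. However, there are three concrete gaps that would cause the argument, as you have sketched it, to fail.

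\textbf{The scaling is wrong.} You propose to rescale only the normal coordinate $y_1$: $f_n(t,y_1,\bar y',x'')=f(t,s_ny_1,\bar y',x'')$. For the anisotropic degenerate parabolic-hyperbolic equation this does not work. The paper uses the parabolic scaling $S(\ve)$ which rescales $t$ and all of $x'$ (including the normal variable) by $\ve$ and $x''$ by $\sqrt{\ve}$; its Jacobian $\ve^{d'+d''/2}$ is exactly what makes the Lebesgue-point argument for the defect measure (Lemma~\ref{3.lemma1}) and the continuity of $\ff^\tau$ (Lemma~\ref{3.lemma2}) come out at a.e.\ boundary point. With your scaling the normal-transport term becomes dominant of order $s_n^{-1}$ while the diffusion in $x''$ stays $O(1)$ and the defect measure density does not vanish, so you cannot conclude the blow-up limit satisfies the free equation. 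The balanced anisotropic scaling is precisely what preserves the tangential hyperbolic transport, the normal hyperbolic transport, \emph{and} the parabolic diffusion in $x''$ at the same order while killing the kinetic dissipation.

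\textbf{The limiting equation is not an ODE in $y_1$.} You claim the blow-up limit satisfies $a_1(\xi)\po_{y_1}\bar f=\po_\xi\bar m$. This is not what happens: the limit $\widetilde\ff$ satisfies a genuine transport-diffusion equation in $(\ul t,\ul y',\ul y'')$ (equation~\eqref{3.eqfve3} in the paper), namely $\po_{\ul t}\widetilde\ff+\wh\abf'\cdot\nabla_{\wh{\ul y}'}\widetilde\ff+\wt a_{d'}\po_{\ul y_{d'}}\widetilde\ff-\bbf(\xi):\nabla^2_{\ul y''}\widetilde\ff=0$ with prescribed boundary trace $\ff^\tau(t_*,\wh y_*,\xi)$ on $\{\ul y_{d'}=0\}$. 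Two distinct ingredients are then needed: (a) the new averaging lemma (Lemma~\ref{L:5.1}), applied to the localized rescaled sequence $\wt\ff_{\ve_n}$ using the nondegeneracy of the reduced symbol $\tilde\LL_0$, to get strong $L^1_{\loc}$ compactness; and (b) a separate identification argument (Lemma~\ref{L:3.03}) showing, via a change of coordinates and a first-order ODE for the Fourier transform in $z''$, that any distributional solution with boundary data $\ff^\tau(t_*,\wh y_*,\xi)\d_0(\k'')$ is constant. If the limit were a pure ODE in $y_1$, the averaging lemma would be superfluous.

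\textbf{The a.s.\ statement cannot be obtained by Fatou alone.} You argue that from $\bbE\,I(s)\to 0$ one gets $\esslim_{s\to0}I(s)=0$ a.s.\ by Fatou. Fatou gives only $\bbE\,\essliminf_{s\to0}I(s)\le\essliminf_{s\to0}\bbE\,I(s)=0$, hence $\essliminf_{s\to0}I(s)=0$ a.s.; it says nothing about the essential $\limsup$. The paper's argument is different and relies on the $\chi$-function structure: from the $L^1(\Om\X\Sigma\X(-L,L))$ convergence extract a subsequence along which $\ff_\psi\to\ff^\tau$ a.s.\ in $L^1(\Sigma\X(-L,L))$, hence $\ff^\tau(\om,\cdot,\cdot)$ is a $\chi$-function for a.e.\ $\om$; then the pathwise weak-$*$ convergence (Lemma~\ref{L:3.02}) combined with Lemma~\ref{2.criterion0} upgrades this to pathwise essential strong convergence. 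It is the $\chi$-function criterion, not Fatou, that converts expectation-level convergence into pathwise essential convergence. Finally, your proposal never mentions the existence of the weak traces (Lemma~\ref{L:3.02}) as a preliminary step; this divergence-measure-field argument is what furnishes the candidate trace $\ff^\tau$ in the first place and is needed to set up the blow-up.
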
 
  
\begin{proof}   By \eqref{e1.8E-1}, given $\eta\in C^2(\R)$,  we have, in the sense of distributions on $(0,T)\X\cO$,
\begin{multline}\label{est.1}
\po_t\left(\eta(u) -\int_0^t\sum_{k\ge1} g_k(x,u)\frac{d\eta}{du}(u)\,d\beta_k(s)\right)+\div \Abf_\eta (u)-D_{x''}^2: \Bbf_\eta(u)\\
= -\int_{\R}\frac{d^2\eta}{d\xi^2}(\xi)\,dm(t,x,\xi) +\frac12 G^2(\cdot,u)\frac{d^2\eta}{du^2}(u).
\end{multline}
Defining
 $$
 \ff(t,x,\xi)=\chi(\xi; u(x,t)),\quad \text{where}\quad \chi(\xi;u):=\begin{cases} -1, \quad u\le \xi< 0,\\ 1,\quad  0<\xi \le u,\\ 0,\quad |\xi|>|u|,\end{cases}
 $$
 assuming that $\eta(0)=0$,  $\Abf_\eta(0)=0$, $\Bbf_\eta(0)=0$, using 
  \begin{gather*}
 \eta(u)=\int_{\R}\frac{d\eta}{d\xi}(\xi)\chi(\xi;u)\,d\xi,\quad \Abf_{\eta}(u)=\int_{\R}\frac{d\eta}{d\xi}(\xi)\abf(\xi)\chi(\xi;u)\,d\xi,\\
  \Bbf_{\eta}(u)=\int_{\R}\frac{d\eta}{d\xi}(\xi)\bbf (\xi)\chi(\xi;u)\,d\xi,\\
  \sum_{k\ge1} g_k(\cdot,u)\frac{d\eta}{du}(u)\,\dot\beta_k(t) = \int_{\R} \sum_{k\ge1} g_k(\cdot,\xi) \frac{d\eta}{d\xi}(\xi)\dot\beta_k(t)\,\d_u(\xi),\\
  \frac12 G^2(\cdot,u)\frac{d\eta}{du}(u)=\int_{\R} G^2(\cdot,\xi)\frac{d\eta}{d\xi}(\xi)\,\d_u(\xi),
   \end{gather*}
where $\d_u(\xi)$ is the Dirac measure concentrated at $u$, from \eqref{est.1} we get the following kinetic equation, valid for a.e.\ $\om\in\Om$, in the sense of the distributions on $(0,T)\X\cO\X\R$,
\begin{multline}\label{est.2}
\po_t \ff + \abf(\xi)\cdot \nabla_x \ff + \bbf(\xi):D_{x''}^2 \ff \\= \left(m-\frac12G^2(x,\xi)\,\d_{u(t,x)}(\xi)\right)_\xi 
+\sum_{k=1}^\infty g_k(x,\xi) \dot\beta_k(t)\, \d_{u(t,x)}(\xi)\\
=q_\xi-\sum_{k=1}^\infty g_k(x,\xi)(\po_\xi\ff) \dot\beta_k(t) +\sum_{k=1}^\infty\d_0(\xi) g_k(x,\xi)\dot\beta_k(t),
\end{multline}
where $q=m-\frac12G^2(x,\xi)\,\d_{u(t,x)}(\xi)$. In particular, $|q|\le m+\frac12G^2(x,\xi)\,\d_{u(t,x)}(\xi)$, where $|q|$ denotes the total variation measure associated with the measure $q$. 
Integrating \eqref{est.2} with respect to $\xi$ we may obtain an explicit formula for $m$ which show that, if $L>\|u\|_{L^\infty(\Om\X(0,T)\X\cO)}$, then $m$ is a.s.\ supported  on $(0,T)\X\cO\X(-L,L)$.
Besides, by using the explicit formula for $m$ obtained from \eqref{est.2} and applying it to suitable test functions we get, in a standard way, the following fact, whose detailed proof we omit.   

\begin{lemma}\label{L:3.01} For any $\cV''$ such that $\overline\cV''\subset\cO''$  and  $1\le p <\infty$, it holds
\begin{equation}\label{e03.5}
	\bbE\Vert m \Vert_{\M}^p = \bbE m\big( (0,T)\X \cO'\X\cV''\X[-L,L] \big)^p \leq C(p).	
	\end{equation}
\end{lemma}

We observe that if we can obtain a strong trace $u_{\cV''}^\tau$ on $(0,T)\X\po\cO'\X\cV''$, satisfying \eqref{eT5.1-1}  and \eqref{eT5.1-2} with $\po\cO'\X\cV''$ instead of $\Gamma'$, for any $\cV''$ with $\overline\cV''\subset\cO''$, 
then they must be consistent in the sense that, if $\cV_1''\subset\cV_2''$, then $u_{\cV_2''}^\tau=u_{\cV_1''}^\tau$, a.e.\ on $(0,T)\X\po\cO'\X\cV_1''$, for a.e.\ $\om\in\Om$. Therefore, it suffices to prove the result for an arbitrary $\cV''$, with 
$\overline\cV''\subset\cO''$. Hence, for simplicity,  we may assume that \eqref{e03.5} holds with $\cO''$ instead of $\cV''$, which we will do henceforth.

\bigskip
\centerline{\em \#1. The existence of weak traces.}
\bigskip

Since the boundary $\po\cO'$ is locally the graph of a Lipschitz function, we may fix an open subset of $\po\cO'$, $\S_0'$, which is the graph of a Lipschitz function. More specifically, we fix a neighborhood $\cO_0'$, of a point in $\po\cO'$, so that,
after translating, relabeling and possibly changing orientation (i.e., $x_i\mapsto -x_i$), $\cO'_0$, may be expressed by
\begin{equation}\label{est.3}
\cO'_0:=\{ x'=(\wh x',x_{d'})\in (-r,r)^{d'-1}\X(-r,r)\,:\, x_{d'}> \g_0(\wh x')\},
\end{equation}
where $r>0$ and $\g_0:(-r,r)^{d'-1}\to \R$ is a Lipschitz function satisfying $-r<\g_0(\wh x')<r$, everywhere. Hence, the part of the boundary of $\cO'$ that we are interested in is
\begin{equation}\label{eS0'}
\S_0'=\{x'=(\wh x',x_{d'})\in(-r,r)^{d'-1}\X(-r,r)\:\, x_{d'}=\g_0(\wh x')\}.
\end{equation}
So, let us define
$$
\cO_0= \cO'_0\X\cO'',
$$
and
$$
\Gamma_0=\S_0'\X\cO''.
$$  
 For any $\S_0'$-strongly regular Lipschitz deformation $\psi:[0,1]\X\S_0'\to \overline{\cO'}$, we may write
 \begin{equation}\label{eS0'2}
 \ff_\psi(t,\wh x',s, x'')=\ff(t,\psi(s,\wh x'), x''), \quad\text{for every $\wh x'\in (-r,r)^{d'-1}$}.
 \end{equation}
 Let us set
 $$
 \begin{cases}
 Q:=(0,T)\X(-r,r)^{d'}\X\cO'' ,\\
 \Sigma:=(0,T)\X(-r,r)^{d'-1}\X\cO''.
 \end{cases}
 $$
 Though in order to have $\cO_0'$ written in the form \eqref{est.3} we had to perform affine transformations such as translating, relabelling, and possibly a change  of orientation, the function $\ff$ in the new
 coordinates will satisfy an equation similar to \eqref{est.2}, only with ${\bf a}'(\xi)$ replaced with $\cR {\bf a}'(\xi)$, where, with some abuse of notation,  we write   $\bfa(\xi)=(\bfa'(\xi),\bfa''(\xi))$, 
 $\bfa':=\pi_{d'}\abf=(a_1,\cdots,a_{d'})$,  $\bfa'':=\pi_{d''}\abf=(a_{d'+1},\cdots,a_d)$, and $\pi_{d'},\pi_{d''}$ are the projections of $\R^d$ on $\R^{d'}$ and $\R^{d''}$, respectively, when viewing $\R^d$ as $\R^{d'}\X\R^{d''}$.   Also, we represent as $\cR$ the linear part of the affine transformation necessary to write $\cO'_0$ in the form \eqref{est.3}. For simplicity, we assume that $\ff$ in the new coordinates still satisfies \eqref{est.2}. 
 
The following result is the first step in the  scheme of the proof of the strong trace property in \cite{Va} and it is also the first step in this proof concerning the same property for the stochastic parabolic-hyperbolic equation \eqref{e1.1}. The proof is similar to the one for stochastic conservation laws given in \cite{FL1}. 

\begin{lemma}[Existence of the weak traces] \label{L:3.02}
	There exists a unique function $\ff^\tau \in L^\infty(\Omega\X\Sigma\X(-L,L))$ such that, for any $\Gamma_0$-strongly regular Lipschitz deformation, for a.e.\ $\om\in\Om$, we have
	\begin{equation}
	\esslim_{s \rightarrow 0} \ff_\psi(\om, \cdot, s, \cdot) = \ff^\tau(\om,\cdot,\cdot) \text{ in the weak-$\star$ topology in } L^\infty(\Sigma\X(-L,L)). \label{e03.7'}
	\end{equation}
	
Moreover,
\begin{equation}\label{e03.7''}
\operatornamewithlimits{ess\,lim}_{s \to 0} \ff_\psi(\cdot, s, \cdot) = \ff^\tau(\cdot,\cdot) \text{ in the weak-$\star$ topology of $L^\infty(\Omega\X\Sigma\X(-L,L))$}
\end{equation}
\end{lemma}

\begin{proof}  {\em Step 1:} Let $(h_n)_{n\in\N} \subset C_c^1((-L,L))$ be dense in $L^1((-L,L))$. We consider representatives in $L^\infty(\Om\X(0,T)\X\cO)$ for all functions of the forms 
$$
\int_{-L}^L h_n(\xi) \ff(t,x,\xi)\,d\xi,\quad \int_{-L}^L h_n(\xi)  \ff(t,x,\xi)\abf(\xi)\,d\xi,\quad   h_n'(u(t,x))G^2(x,u(t,x)),
$$
and in $L^2(\Om\X(0,T)\X\cO)$ for functions of the form
$$
  \nabla_{x''}\cdot \-\int_{-L}^L h_n(\xi)\ff(t,x,\xi)\bbf(\xi) \,d\xi,\quad         \sum_{k=1}^\infty  \int_0^t h_n(u(t,x))g_k(x,u(s,x)) \,d\b_k(s) .
$$
Observe that  the $x''$-divergence of the matrix function in the first of the two expressions above actually belongs to $L^2(\Om\X(0,T)\X\cO)$  because of the hypothesis that $\nabla_{x''} b(u)\in L^2( \Om\X(0,T)\X\cO)$, \eqref{e1.4'}, and the fact that  the function resulting 
from the corresponding integral is $\bbf_{h_n}(u(t,x)):=\int_0^{u(t,x)} h_n(\xi)\bbf(\xi)\,d\xi$, and 
$\bbf_{h_n}(u)$ is a Lipschitz function of $b(u)$ due to \eqref{e1.4'}.   
 Let $\Om_0\subset\Om$ be a subset of total measure such that for all $\om\in\Om_0$, the corresponding paths of these functions, viewed as Banach 
 space-valued stochastic processes,  are well defined functions 
 in $L^\infty((0,T)\X\cO_0)$ and $L^2((0,T)\X\cO_0)$, respectively.  We also assume that for all $\om\in\Om_0$ there exists $C(\om)>0$ such that 
 \begin{align*}
m \big((0,T)\X\cO \X [-L,L]\big) \leq C(\om), 
\end{align*}
which is possible by \eqref{e03.5}.  So, let us fix for the moment  $\om\in\Om_0$. 

Let us consider the vector fields $F_{n}$ given by
\begin{multline}\label{e03.6}
F_{n}(t,x) = \Bigl( \int_{-L}^L h_n(\xi) \ff(t,x,\xi) \, d\xi - \sum_{k=1}^\infty \int_0^t h_n(u(s,x)) g_k(x, u(s,x)) \, d\beta_k(s),\Bigr.  \\
 \int_{-L}^L h_n(\xi) \ff(t,x,\xi) \abf'(\xi) \, d\xi,  \\ \Bigl. \int_{-L}^L h_n(\xi) \ff(t,x,\xi) \abf''(\xi) \, d\xi - \nabla_{x''}\cdot \-\int_{-L}^L h_n(\xi)\ff(t,x,\xi)\bbf(\xi) \,d\xi   \Bigr) . 
\end{multline}
We see that  $F_{n} \in L^2((0,T)\X\cO_0)\X  L^\infty( (0,T)\X\cO_0; \R^{d'}) \X  L^2 ((0,T)\X\cO_0; \R^{d''})$. Moreover, from \eqref{est.2},  it is not hard to check that
$$
\operatorname{div}_{t,x} F_{n} = -  \int_{-L}^L h_n'(\xi) \qq(t,x,\xi)\,d \xi  \in \M((0,T)\X\cO_0).
$$
Now, $\Gamma_0=\S_0'\X\cO''$ and $\S_0'$ is a strongly regular deformable Lipschitz boundary. For  any strongly regular deformation 
$\psi:[0,1]\X\po\cO'\to \overline\cO'$, let $\Psi:[0,1]\X(0,T)\X\Gamma_0\to \overline\cO$ be given by $\Psi(s,t,x',x'')=(t,\psi(s,x'),x'')$. We have that, for almost all $s\in[0,1]$, $F_n\circ\Psi(s,\cdot) \cdot \nu_s=F_n^{1'}\circ\Psi(s,\cdot)\cdot\nu_s\in L^\infty((0,T)\X\Gamma_0)$, where $\nu_s$ is the outward unit normal to $\psi(s,\po\cO')$ and $F_n^{1'}$ is the component of $F_n$ corresponding to the space of the $x'$-coordinates. Therefore,  using the parametrization of $\S_0'$ given by \eqref{eS0'2}, Theorem~\ref{T:2.3'} implies that there exists a set $\S_{n} \subset [0,1]$ of total measure and some $F_{n}^{1',b}\cdot \nu \in L^\infty((0,T)\X(-r,r)^{d-1})$, which does not depend on $\psi$, such that
\begin{align}
F_{n}^{1'}(\cdot,\psi(s,\cdot), x'')\cdot\nu_s(\cdot) \stackrel{\star}{\rightharpoonup} F_{n}^{1',b}\cdot\nu &\text{ $\star$-weakly in $L^\infty(\Sigma)$} \nonumber \\ &\text{ as $s \rightarrow 0$ along $s \in \S_{n}$}. \label{e03.7}
\end{align}
Write $\S = \cap_{n=1}^\infty \S_{n}$ so that $\S$ also has total measure in $[0,1]$. From this point the proof follows the lines of the proof of the corresponding result in \cite{FL1} and we include these lines for the sake of convenience of the reader.  

Let us now check that $F_{n}^{1',b}\cdot\nu$ depends linearly on  $h_n$. For any integer $M \geq 1$ and $\varphi_p \in L^1(\Sigma)$, $1 \leq p \leq M$, 
the relations  \eqref{e03.6} and \eqref{e03.7}, the latter taking $s \in \S$,  say that
\begin{align*}
\Big|\int_\Sigma&  \sum_{n,p=1}^M  (F_{n}^{1',b} . \nu)(t,\widehat{x}) \varphi_p(t,\widehat x) \,dt d\widehat{x}    \Big| \\ &\leq \Vert \abf' \Vert_{L^\infty(-L,L)} \int_\Sigma \int_{-L}^L \Bigg|\sum_{n,p=1}^M    h_n(\xi) \varphi_p(t,x) \Bigg| \, d\xi d\widehat{x} dt \\
						 &= \text{(const.)} \Bigg\Vert \sum_{n,p=1}^M  h_n \otimes \varphi_p \Bigg\Vert_{L^1(\Sigma\X(-L,L))}.
\end{align*}
Thus, as $(L^1)^* = L^\infty$, there exists some $H \cdot\nu \in L^\infty(\Sigma\X(-L,L))$ such that, for  all $h \in L^1(-L,L)$ and all $\varphi \in C_c^\infty(\Sigma),$
\begin{align}
 \int_\Sigma \int_{-L}^L h(\xi) &\varphi(t,\widehat{x}) \ff(t,\widehat{x},\psi(\widehat{x},s)) \abf(\xi)\cdot \nu_s(\widehat{x}) \, d\xi d\widehat{x}\, dt \nonumber \\
&\rightarrow \int_\Sigma \int_{-L}^L h(\xi) \varphi(t,x) (H\cdot\nu)(t,\widehat{x}) \, d\xi d\widehat{x}\, dt  \label{e03.8}
\end{align}
as $s \rightarrow 0$ along $s \in \S$. Notice that $H\cdot\nu$ does not depend on $\psi$.

{\em Step 2:} To conclude, let us observe that, since $\Vert \ff_\psi(\cdot,s,\cdot) \Vert_{L^\infty} \leq 1$, the Banach--Alaoglu theorem asserts that, for every strongly regular Lipschitz deformation $\psi$ and every sequence $s_n$ in $\S$ converging to $0$, there exists a subsequence $s_{n_k}$ and some $\ff_\psi^\tau \in L^\infty(\Sigma\X(-L,L))$  such that
$$
\ff_\psi(\cdot,s_{n_k},\cdot) \stackrel{\star}{\rightharpoonup} \ff_\psi^\tau \text{ $\star$-weakly in $L^\infty(\Sigma\X(-L,L))$ as $k \rightarrow \infty$}.
$$
Thus, by \eqref{e03.8}, we deduce from the fact that $\nu_s \rightarrow \nu$ in $L^1((-r,r)^{d-1};\R^d)$ that
\begin{align*}
 \int_\Sigma \int_{-L}^L h(\xi) &\varphi(t,\widehat{x}) \ff_\psi^\tau (t,\widehat{x}) \abf(\xi)\cdot \nu(\widehat{x}) \, d\xi\, d\widehat{x}\, dt    \nonumber \\
&=  \int_\Sigma \int_{-L}^L h(\xi) \varphi(t,x) (H\cdot\nu)(t,\widehat{x}) \, d\xi\, d\widehat{x}\, dt, 
\end{align*}
for every  $h \in L^1(-L,L)$ and $\varphi \in C_c^\infty(\Sigma)$. Since the right-hand term is independent of $\psi$ and $s_n$, so is $\abf(\xi)\cdot\nu(\widehat{x}) \ff_\psi^\tau(t,\widehat{x},\xi)$. On the other hand, remembering that $\abf(\xi)\cdot \nu=\abf'(\xi)\cdot\nu'$,
if $\nu$ is the outward normal to $\Gamma'$,  because the nondegeneracy condition implies that
$$
\LL^1\big\{ \xi \in (-L,L)\,:\, \abf(\xi) \cdot \nu(\widehat{x}) = 0 \big\} = 0,
$$
we conclude that $\ff_\psi^\tau$ also does not depend on $\psi$ and $s_n$, hence we may denote it by $\ff^\tau$. 


{\em Step 3:} Arguing as before, but considering now the vector fields
	\begin{multline*}
	F_{m,n}(t,x) = \bbE \Big[ X_m \, \Big( \int_{-L}^L h_n(\xi) \ff(t,x,\xi) \, d\xi - \sum_{k=1}^\infty \int_0^t h_n(u(t,x)) g_k(x, u(s,x)) \, d\beta_k(s) , 
	 \\ \int_{-L}^L h_n(\xi) \ff(t,x,\xi) \abf'(\xi) \, d\xi  ,\\
	\Bigl.\Bigl. \int_{-L}^L h_n(\xi) \ff(t,x,\xi) \abf''(\xi) \, d\xi - \nabla_{x''}\cdot \int_{-L}^L h_n(\xi)\ff(t,x,\xi)\bbf(\xi) \,d\xi   \Bigr) \Bigr],
	\end{multline*}
	where $(X_m)_{m \in \N}$ is a sequence in $L^\infty(\Omega)$ that is dense in $L^1(\Omega)$ (notice that we can always suppose that $\Omega$ is countably generated), we can deduce the existence of some $\ff^b \in L^\infty(\Omega \X \Sigma \X (-L,L))$ such that 
	$$\operatornamewithlimits{ess\,lim}_{s \to 0} \ff_\psi(\cdot, s, \cdot) = \ff^b \text{ in the $\star$-weak topology of $L^\infty(\Omega\X\Sigma\X(-L,L))$.}$$
	
{\em Step 4:} It remains to show that $\ff^b(\omega, \cdot, \cdot) = \ff^\tau(\om, \cdot, \cdot, \cdot)$ for almost all $\om \in \Omega$ in the $L^1$-sense. This, however, can be seen from the fact that both are the weak-$\star$ limit of $\frac{1}{s} \int_0^s \ff_\psi(\cdot, \s, \cdot)\, d\s$ in $L^\infty(\Om \X \Sigma \X (-L,L))$ as $s \to 0$. Observe that this also shows that $\ff^\tau$ is measurable and $\ff^\tau\in L^\infty(\Om\X\Sigma\X(-L,L))$.

\end{proof}

Next, we need to convert the weak-$\star$ convergence in $L^\infty(\Sigma\X(-L,L))$ in the statement of the previous lemma to a strong convergence in $L^1(\Sigma\X(-L,L))$. 
For that we recall the following criterion from \cite{Va}, to which we refer for the proof. We first recall that, for a measure space $S$,  a function $z\in L^\infty(S\X\R)$ is a $\chi$-function if for almost all
$x\in S$, there exists $a(x)\in \R$ such that 
$$
z(x,\xi)= 1_{(-\infty, a(x))}(\xi)-1_{(-\infty,0)}(\xi).
$$ 
Note that, in this case, $a(x)=\int_{-\infty}^\infty z(x,\xi)d\xi$. Furthermore, $\| a\|_{L^\infty(S)}\leq L$ if and only if the corresponding $\chi$-function satisfies $z(x,\xi)=0$, for a.e. $x\in S$ and $|\xi|\geq L$. In this case, we may simply consider $z$ as an element of $L^\infty(S\times (-L,L))$.

\begin{lemma}\label{2.criterion0}
Let $S$ be a finite measure space and let $g_n\in L^\infty(S\times (-L,L))$ be a sequence of $\chi$-functions converging weakly to $g\in L^\infty(S\times (-L,L))$. Define $v_n(\cdot)=\int_{-L}^Lg_n(\cdot,\xi)d\xi$ and $v=\int_{-L}^Lg(\cdot,\xi)d\xi$. Then, the three following propositions are equivalent:
\begin{itemize}
\item[(i)] $g_n$ converges strongly to $g$ in $L^1(S\times \R)$,
\item[(ii)] $v_n$ converges strongly to $v$ in $L^1(S)$,
\item[(iii)] $g$ is a $\chi$-function.
\end{itemize}
\end{lemma}

As a direct consequence of Lemma~\ref{2.criterion0} we have the following.

\begin{lemma} \label{2.criterion}
	For  every  regular Lipschitz deformation $\psi$,
	$$
	\esslim_{s \rightarrow 0} \ff_\psi(\cdot,\cdot, s,\cdot) = \ff^\tau(\cdot,\cdot,\cdot) \text{ strongly in $L^1(\Om\X\Sigma\X(-L,L))$},
	$$
	if, and only if, $\ff^\tau(\cdot,\cdot,\cdot)$ is a $\chi$-function a.e.\ in $\Om\X\Sigma\X(-L,L)$.  
\end{lemma}

We now pass to the verification that $\ff^\tau$ is indeed a $\chi$-function. 

\bigskip

\centerline{\em \# 2. The blow-up procedure.}

\bigskip

Let us keep $\cO_0$ fixed. Since $\ff^\tau$ is independent on the $\S_0'$-strongly regular Lipschitz deformation, we may choose the special deformation $\psi(s,\widehat{x'}) = (\widehat{x'}, \gamma(\widehat{x'}) + s)$, which is trivially strongly regular. 
Identifying $y_{d'} = s$ 
and $\widehat{y'} = \widehat{x'}$,  $y''=x''$,  define
$$
\widetilde{\ff}(t,y,\xi) = \ff_{\psi}(t,\widehat{y'}, y_{d'},y'', \xi) = \ff(t, \psi(y_{d'},\widehat{y'}),y'', \xi).
$$
For simplicity, let us write $\psi(y')=\psi(y_{d'},\wh{y'})$. Notice that there exists an $r_0>0$ such that $\psi(y') \in \cO_0$ provided that $(\widehat{y'}, y_{d'}) \in (-r,r)^{d'-1} \X (0, r_0)$. As a result, we see from \eqref{est.2} that $\widetilde{\ff}$ is a solution to
\begin{multline}\label{3.eqftil}
\frac{\partial \widetilde \ff}{\partial t} + \widehat{\abf'}(\xi) \cdot \nabla_{\widehat{y'}} \wt \ff + 
\widetilde{a}_{d'}(\widehat{y'},\xi) \frac{\partial \widetilde{\ff}}{\partial y_{d'}} +
\bbf(\xi):\nabla_{y''}^2\wt \ff= \\
=-\abf''(\xi)\cdot\nabla_{y''}\wt\ff+ \qq_\xi-\sum_{k=1}^\infty g_k(x,\xi)(\po_\xi\ff) \dot\beta_k(t) +\sum_{k=1}^\infty\d_0(\xi) g_k(x,\xi)\dot\beta_k(t)
\\ \quad\text{a.s.\ in the sense of the distributions in $(0,T)\X(-r,r)^{d'-1}\X(0,r_0)\X(-r,r)^{d''}$}.
\end{multline}
In the equation above, we have denoted $\abf(\xi) = (\widehat \abf'(\xi), a_{d'}(\xi),\abf''(\xi))$ and
\begin{equation}
\widetilde{a}_{d'}(\widehat{y'}, \xi) = a_{d'}(\xi) - \nabla \gamma_0(\widehat{y'})\cdot\widehat{\abf'}(\xi) = \lambda(\widehat{y'})\abf'(\xi) \cdot \nu'(\widehat{y'}), \label{3.eqa}
\end{equation}
for some $\lambda(\widehat{y'}) < 0$, and $\nu'$ is the outward unit normal to $\po\cO'$, due to \eqref{est.3}. 

Moreover, we have also written $\widetilde{\qq} = \widetilde{m} - \frac{1}{2} \widetilde{G}^2 \delta_{\xi = \widetilde u(t,y)}$, where
\begin{equation*}
\begin{aligned}
&\widetilde u (t,y) = u(t,\psi(y'),y'') = \int_{-L}^L \widetilde{\ff}(t,y,\xi)\, d\xi, \\
&\widetilde{m}(t,y,\xi) = m(t, \psi(y'),y'', \xi), \\
&\widetilde{g}_k(y, z)  = g_k(\psi(y'),y'', z) \text{ for all $k \geq 1$}, \text{ and}\\
&\widetilde{G}^2(y,z)   = \sum_{k=1}^\infty \widetilde{g}_k(y,z)^2.
\end{aligned}
\end{equation*}

\bigskip
Before we rescale $\widetilde{\ff}$, let us recall some lemmas in \cite{Va} slightly adapted to our setting, to which we refer for the proofs. Let $\Om_0$ be as in the proof of Lemma~\ref{L:3.02} and 
$$
S(\ve):\R\X\R^{d'}\X(0,\infty)\X\R^{d''}\to \R\X\R^{d'}\X(0,\infty)\X\R^{d''}
$$
be defined by 
$$
S(\ve)(\ul t, \ul y_1,\cdots,\ul y_{d'}, \ul y_{d'+1},\cdots,\ul y_d)
=(\ve \ul t, \ve \ul y_1,\cdots,\ve \ul y_{d'},\sqrt{\ve}\ul y_{d'+1},\cdots,\sqrt{\ve} \ul y_d),
$$
and let us denote $S_{(t_*, y_*)}(\ve)(\ul t,\ul y):=(t_*,y_*)+S(\ve)(\ul t,\ul y)$. 
 
\begin{lemma} \label{3.lemma1}
        There exists  a sequence    $0<\ve_n\to 0$   and a set of total measure $\E \subset \Sigma$  such that, 
	for every $(t_*,\wh{y}_*):=(t_*,\widehat{y'}_*, y_*'')\in \E$, 
	 and every $R>0$, denoting $y_*:=(\wh y'_*,0,y_*'')$, 
	\begin{align}
	&\lim\limits_{n \rightarrow \infty} \bbE \frac{1}{\ve_n^{d'+\frac{d''}2}} \widetilde{m} \Big(   
	S_{(t_*,y_*)}(\ve_n)\left[(-R, R)^{d'} \X (0, R)\X(-R,R)^{d''}\right] \X [-L,L] \Big) = 0,  \label{3.limm}\\
	&\lim\limits_{n \rightarrow \infty} \bbE \frac{1}{\ve_n^{d'+\frac{d''}2}} 
	\iint_{S_{(t_*,y_*)}(\ve_n)\left[(-R, R)^{d'} \X (0, R)\X(-R,R)^{d''}\right] } \frac{1}{2} \widetilde{G^2}(y,\widetilde{u}(t,y))\, dy\, dt = 0. \label{3.limG}
	\end{align}
	Consequently, for every  $(t_*,\widehat{y}_*) \in \E$  and every $R>0$,
	$$
	\lim\limits_{n \rightarrow \infty} \bbE \frac{1}{\ve_n^{d'+\frac{d''}2}} |\widetilde{\qq} | \Big(  
	S_{(t_*,y_*)}(\ve_n)\left[(-R, R)^{d'}\X(0,R)\X(-R,R)^{d''}\right]\X [-L,L] \Big) = 0,
	$$
	where, as usual, $|\tilde\qq|(A)$ denotes the total variation of $\tilde\qq$ on the set $A$.  
	
	Therefore,  given $(t_*,\wh y_*)\in\E$, there exists a subsequence of $\ve_n$, still denoted $\ve_n=\ve_n(t_*,\wh y_*)$, and
	a subset of total measure $\Om_1=\Om_1(t_*, \wh{y}_*)\subset\Om_0$, such that, for all $\om\in\Om_1$,
	  \begin{align}
	&\lim\limits_{n \rightarrow \infty}  \frac{1}{\ve_n^{d'+\frac{d''}2}} 
	\widetilde{m} \Big( S_{(t_*,y_*)}(\ve_n)\left[(-R, R)^{d'} \X (0, R)\X(-R,R)^{d''}\right]\X [-L,L] \Big) = 0, \label{3.limm'}\\
	&\lim\limits_{n \rightarrow \infty}  \frac{1}{\ve_n^{d'+\frac{d''}2}} 
	\iint_{ S_{(t_*,y_*)}(\ve_n)\left[(-R, R)^{d}\X (0, R)\X(-R,R)^{d''}\right]} \frac{1}{2} \widetilde{G^2}(y,\widetilde{u}(t,y))
	\, dy\, dt = 0. \label{3.limG'}\\
	&\lim\limits_{n \rightarrow \infty}  \frac{1}{\ve_n^{d'+\frac{d''}2}} |\widetilde{\qq} 
	| \Big( S_{(t_*,y_*)}(\ve_n)\left[(-R, R)^{d'}\X (0, R)\X(-R,R)^{d''}\right]\X [-L,L] \Big) = 0.\label{3.qq}
	\end{align}
\end{lemma}

\begin{lemma} \label{3.lemma2} 
	There exists a subsequence of $\ve_n$, still denoted by $\ve_n$, and  a subset  of  $\E\subset \Sigma$,  also of total measure and still denoted by $\E$,  such that, for every $(t_*, \widehat{y}_*) \in \E$,
	 every $R > 0$, and every $1 \leq p < \infty$,
	\begin{align}
	&  \int_{(-R,R)^{d-1}} \int_{-L}^L |\wt a_{d'}(\widehat{y'}_*, \xi) - \wt a_{d'}(\widehat{y'}_* + \ve_n\widehat{\ul y'}, \xi) |^p \, d\xi \, d\widehat{\ul y'}  \to 0 \label{3.lima}  \\
	& \bbE \int_{-R}^R \int_{(-R,R)^{d-1}} \int_{-L}^L |\ff^\tau(\wh S_{(t_*,\wh{y}_*)}(\ve_n)(\ul t,\wh{\ul y'},\ul y''), \xi) - \ff^\tau(t_*,\widehat{y}_*, \xi) | \, d\xi \,d\widehat{\ul y}\, d\ul t \to 0,\label{3.limfb}
	\end{align}
	as $n \to \infty$, where $\wh S_{(t_*,\wh{y}_*)}(\ve_n)(\ul t,\wh{\ul y'},\ul y'')=(t_*,\wh{ y'}_*,y_*'')+(\ve_n\ul t, \ve_n \wh{\ul y}', \sqrt{\ve_n}\ul y'')$. 
	
	Again, it follows that, given $(t_*,\wh {y}_*)\in\E$ there exists a subsequence of $\ve_n(t_*, \wh{y}_*)$ also denoted 
	$\ve_n=\ve_n(t_*, \wh{y}_*)$, and a subset of $\Om_1(t_*, \wh{y}_*)$, also of total measure, and
	also denoted $\Om_1=\Om_1(t_*,\wh{y}_*)$, such that, for all $\om\in\Om_1$,
	\begin{equation}
	   \int_{-R}^R \int_{(-R,R)^{d-1}} \int_{-L}^L |\ff^\tau(\wh S_{(t_*,\wh{y}_*)}(\ve_n)(\ul t,\wh{y'},\ul y''),  \xi) - \ff^\tau(t_*,\widehat{y}_*, \xi) | \, d\xi \,d\widehat{\ul y'}\, d\ul y'' d\ul t \to 0. \label{3.limfb'}
	\end{equation}
\end{lemma}

\bigskip

 Let   $(t_*, \widehat{y}_*) \in \E$, which will be kept fixed until the end of the proof.  Our goal now is to show that $\ff^\tau(t_*,\wh y_*, \xi)$ is a $\chi$-function. 
 
 Let  $R = R(t_*,\widehat{y}_*)$ be the least number between $r$, $r_0$, $T - t_*$ and $t_*$. As before we denote 
 $y_*=(\widehat{y'}_*,0,y_*'')$, $\wh y_*=(\widehat{y'}_*, y_*'')$. For any $\ve > 0$, consider
\begin{equation}
\widetilde{\ff}_\ve(\underline t, \underline y, \xi) = \widetilde{\ff}\left(S_{(t_*,y_*)}(\ve)(\ul t,\ul y), \xi\right) \label{trace.chive}
\end{equation}
for $\omega \in \Omega$, $-L < \xi < L$, and  
\begin{multline*}
(\underline t, \underline y) = (\underline t, \underline{\widehat{y'}}, \underline{y}_{d'}, \ul{y}'') \\
 \in (-R/ \ve, R/ \ve ) \X (-R/\ve, R/\ve)^{d'-1} \X 
(0,R/\ve)\X(-R/\sqrt{\ve},R/\sqrt{\ve})^{d''}\stackrel{\text{def}}{=} Q_\ve .
\end{multline*}
Cearly, $\widetilde{\ff}_\ve$ depends on $(t_*, \widehat{y}_*)$, but, since this point will be fixed henceforth,  we will omit this dependence.

Each $\widetilde{\ff}_\ve$ is still a $\chi$-function, and, in the sense of weak traces, 
\begin{equation}
\widetilde{\ff}_\ve(\ul{t}, \wh {\ul y}, 0, \xi) = \ff^\tau( \wh S_{(t_*,\wh y_*)}(\ve)(\ul t, \wh{\ul y}), \xi), \label{3.chive2} 
\end{equation} 
for $-L<\xi<L$ and 
$$
(\ul t, \ul{\wh y}) \in (-R/\ve, R/\ve)\X(-R/\ve , R/\ve)^{d'-1}\X(-R/\sqrt{\ve},R/\sqrt{\ve})^{d''} \stackrel{\text{def}}{=} \Sigma_\ve,
$$ 
where $\ul{\wh y}:=(\ul y_1,\cdots,\ul y_{d'-1},\ul y_{d'+1},\cdots,\ul y_d)$. 

If $X(t)$ is a predictable  stochastic process, we denote $\int_a^b X(t)\, d\b_k(t):=\int_0^b X(t)\, d\b_k(t)-\int_0^a X(t)\,d\b_k(t)$, for all $k\in\N$. Observe that $\int_a^b X(t)\,d\b_k(t)=-\int_b^a X(t)\,d\b_k(t)$.

{}From  \eqref{3.eqftil} we get that $\wt \ff_\ve$ satisfies  the equation
\begin{multline}\label{3.eqfve0}
\frac{\partial \widetilde \ff_\ve}{\partial \ul t} + \widehat{\abf'}(\xi) \cdot \nabla_{\widehat{\ul y'}} \widetilde \ff_\ve + 
\widetilde{a_{d'}}(\widehat{y}'_* + \ve \ul {\wh y}', \xi) \frac{\partial \widetilde \ff_\ve}{\partial \ul y_{d'}} -\bbf(\xi):\nabla_{\ul{y}''}\wt \ff_\ve= 
-\sqrt{\ve}\abf''(\xi)\cdot\nabla_{\ul y''}\wt \ff_\ve+ \frac{\partial \widetilde{\qq}_\ve}{\partial \xi} \\
- \sum_{k=1}^\infty  \frac{\po}{\po \ul t} \left(\int_{t_*}^{t_*+\ve\ul t} \frac{\po \wt \ff}{\po \xi}(t,S_{y_*}^1(\ve)(\ul y), \xi)\widetilde{g}_{k,\ve}(\ul y,\xi)\, d\b_k(t)\right) 
+ \sum_{k=1}^\infty \d_{\xi=0}  \frac{\po}{\po \ul t}   \left(\int_{t_*}^{t_*+\ve\ul t}  \widetilde{g}_{k, \ve} \,d\b_k(t)\right), 
\end{multline}
where $S_{y_*}^1(\ve)(\ul y)=( y_*'+\ve\ul y', y_*''+\sqrt{\ve}\ul y'')$, and we use the notations 
$$
\begin{aligned}
&\wt g_{k,\ve}(\ul y,\xi):= \wt g_k\left(S_{y_*}^1(\ul y), \xi\right),  \\
&\widetilde{\qq}_\ve :=\widetilde{m}_\ve - \widetilde{G}_\ve^2(\ul y,\xi)\d_{\wt u_\ve(\ul t,\ul y) },\\
&\wt u_\ve(\ul t, \ul y) := \int_{-L}^L \wt \ff_\ve(\ul t, \ul y, \xi) \, d\xi = \wt u\left(S_{(t_*,y_*)}(\ve)(\ul t,\ul y)\right), \\
&\widetilde{G}^2_{\ve}(\ul y, \xi) := \sum_{k=1}^\infty \wt g_{k,\ve}^2( \ul y, \xi)=  \wt G^2(S^1_{y_*}(\ve) (\ul y) , \xi).\\
\end{aligned}
$$
Regarding  $\widetilde{m}_\ve$,  it is, almost surely, the measure such that, for every $a_0<b_0$, $\ldots$, $a_d < b_d$ and $L_1 < L_2$,
\begin{multline*}
\widetilde{m}_\ve \Big( \operatornamewithlimits{\Pi}_{j=0}^d [a_j, b_j] \X [L_1, L_2]  \Big) \\ =
 \frac{1}{\ve^{d'+\frac{d''}2}} \widetilde{m} \Big( S_{(t_*,y_*)}(\ve) \Big[ [ a_0, b_0] \X \cdots\X [a_{d},  b_{d}]\Big] \X [L_1, L_2]  \Big).
\end{multline*}
Therefore, also, almost surely,  for every $a_0<b_0$, $\ldots$, $a_d < b_d$ and $L_1 < L_2$, we have
\begin{multline*}
\widetilde{\qq}_\ve \Big( \operatornamewithlimits{\Pi}_{j=0}^d [a_j, b_j] \X [L_1, L_2]  \Big) \\ = 
\frac{1}{\ve^{d'+\frac{d''}2}} \widetilde{\qq} \Big(S_{(t_*,y_*)}(\ve) \Big[ [ a_0  , b_0] \X\cdots\X [ a_d, b_d]\Big] \X [L_1, L_2]  \Big).
\end{multline*}

Equation \eqref{3.eqfve0} can also be written in the following sometimes more convenient  form 
\begin{multline}\label{3.eqfve}
\frac{\partial \widetilde \ff_\ve}{\partial \ul t} + \widehat{\abf}'(\xi) \cdot \nabla_{\widehat{\ul y}'} \widetilde \ff_\ve + \widetilde{a}_{d'}(\widehat{y}'_*, \xi) \frac{\partial \widetilde \ff_\ve}{\partial \ul y_{d'}} -\bbf(\xi):\nabla_{\ul y''}^2\wt \ff_\ve
= -\sqrt{\ve}\abf''(\xi)\cdot\nabla_{\ul y''}\wt\ff_\ve\\
+\frac{\partial}{\partial \ul y_{d'}} \Big( \big(\widetilde{a_d}(\widehat{y}'_*, \xi) - \widetilde{a_d}(\widehat{y}'_* 
+ \ve \widehat{\ul y}', \xi) \big) \widetilde \ff_\ve \Big) \\
 +  \frac{\partial \widetilde{\qq}_\ve}{\partial \xi} 
- \sum_{k=1}^\infty \frac{\po^2}{\po \ul t\po \xi} \left(\int_{t_*}^{t_*+\ve\ul t} 
 \widetilde{g}_{k,\ve}(\ul y,\xi) \wt \ff(t,S_{y_*}^1(\ve)(\ul y), \xi ) \, d{\beta}_{k}(t)\right)
\\ + \sum_{k=1}^\infty \frac{\po}{\po \ul t} \left(\int_{t_*}^{t_*+\ve\ul t} 
\frac{\partial \widetilde{g}_{k,\ve}}{\po \xi}(\ul y,\xi) \wt \ff(t, S_{y_*}^1(\ve)(\ul y), \xi ) \,d\beta_{k}(t)\right)
\\
+ \sum_{k=1}^\infty  \d_{\xi=0} \frac{\po}{\po \ul t}  \left(\int_{t_*}^{t_*+\ve\ul t}  \widetilde{g}_{k, \ve} \,d\b_k(t)\right).
\end{multline}

In what follows, motivated by \cite{Va}, we are going to prove that $\ff^\tau$ is a $\chi$-function  by proving that, along a suitable subsequence,    
$\wt \ff_\ve\to \ff^\tau(t_*,\wh{y}_*, \cdot )$ in $L_\loc^1(\R^d\X(0,\infty)\X\R)$, for all $\om$ in a subset of total measure of $\Om$.  Here the subsequence and the subset of $\Om$ will depend, in general, on $(t_*,\wh y_*)$, 
as opposed to the deterministic case  in \cite{Va}, where the subsequence does not depend on $(t_*,\wh y_*)$. Nevertheless, this dependence does not have any effect in the conclusion. 
More specifically, keeping the notation in Lemma~\ref{3.lemma1} and Lemma~\ref{3.lemma2}, we will  first obtain a  set of total measure $\Om_2(t_*,\wh y_*)\subset \Om_1(t_*,\wh y_*)$ and a subsequence of $\ve_n(t_*,\wh y_*)$, also denoted $\ve_n(t_*,\wh y_*)$,  so that for each $\om\in\Om_2$, 
$\wt \ff_{\ve_n}\wto \ff^\tau(t_*,\wh y_*, \cdot )$
in the sense of the distributions on $\R^{d'}\X(0,\infty)\X\R^{d''}\X\R$. Then, second, we will obtain another subset of total measure $\Om_3(t_*,\wh y_*)\subset  \Om_2(t_*,\wh y_*)$ and a subsequence $\ve_{n_k}(t_*,\wh y_*)$ of 
$\ve_n(t_*,\wh y_*)$ 
so that, for any $\om\in\Om_3(t_*,\wh y_*)$,      $\wt \ff_{\ve_{n_k}}\to \ff^\tau(t_*,\wh y_*, \cdot)$ in $L_\loc^1(\R^{d'}\X(0,\infty)\X\R^{d''}\X\R)$.
For simplicity, henceforth we will denote $\R^d\X(0,\infty)$ or $\R_+^d$ instead of $\R^{d'}\X(0,\infty)\X\R^{d''}$. 

\begin{lemma}\label{L:3.03} There exists a subset of total measure $\Om_2(t_*,\wh y_*)\subset \Om_1(t_*, \wh y_*)$ and a sequence $\ve_n=\ve_n(t_*,\wh y_*)\to0$, such that for all $\om\in\Om_2(t_*,\wh y_*)$, $\wt \ff_{\ve_n}\wto \ff^\tau(t_*,\wh y_*,\cdot )$ in the sense of the distributions on $\R^d\X(0,\infty)\X(-L,L)$. 
\end{lemma} 

\begin{proof} Let  $\frak D\subset  C_c^\infty(\R^d\X(0,\infty)\X(-L,L))$ be countable and dense in $C_c^2(\R^d\X(0,\infty)\X(-L,L))$.  Let $\varphi\in \frak D$,  and let 
$\Lambda_{\ve}$ denote the distribution corresponding to the stochastic Wiener processes 
in \eqref{3.eqfve0},  That is,
\begin{multline*}
\Lambda_{\ve}:=- \sum_{k=1}^\infty  \frac{\po}{\po \ul t} \left(\int_{t_*}^{t_*+\ve\ul t} \frac{\po \wt \ff}{\po \xi}(t, S_{y_*}^1(\ve)(\ul y), \xi)\wt g_{k,\ve}(\ul y,\xi)\, d\b_k(t)\right) \\
+ \sum_{k=1}^\infty  \d_{\xi=0} \frac{\po}{\po \ul t}   \left(\int_{t_*}^{t_*+\ve\ul t}  \widetilde{g}_{k, \ve} \,d\b_k(t)\right).
\end{multline*}
It is not hard to verify that, for $\ve$ sufficiently small,
$$
\la \Lambda_{\ve} , \varphi\ra= -\sum_{k=1}^\infty \int_\R \int_{\R_+^{d}}\int_{t_*}^{t_*+\ve \ul t} \wt{g}_{k,\ve}(\ul y,\wt u(t, S_{y_*}^1(\ve)(\ul y)))\varphi_{\ul t} (\ul t,\ul y, \wt u(t, S_{y_*}^1(\ve)( \ul y)))d\b_k(t) \, d\ul y\,d \ul t,
$$
where we denote $\R_+^d=\R^{d-1}\X(0,\infty)$.  We have
\begin{multline*}
\bbE |\la \Lambda_{t_0,\wh y_0,\ve}, \varphi\ra| \\
\le \bbE \int_\R \int_{\R_+^{d}}\left| \sum_{k=1}^\infty  \int_{t_0}^{t_0+\ve \ul t} \wt{g}_{k,\ve}(\ul y,\wt u(t, S_{y_*}^1(\ve)(\ul y)))\varphi_{\ul t} (\ul t,\ul y, \wt u(t,S_{y_*}^1(\ve)(\ul y)))d\b_k(t)\right|  \, d\ul y\,d \ul t\\
\le \bbE \int_\R \int_{\R_+^{d}}\left\{ \sum_{k=1}^\infty \left|\int_{t_*}^{t_*+\ve \ul t} \big|\wt{g}_{k,\ve}(\ul y,\wt u(t, S_{y_*}^1(\ve)(\ul y)))\varphi_{\ul t} (\ul t,\ul y, \wt u(t,S_{y_*}^1(\ve)(\ul y))) \big|^2\,dt \right|\right\}^{1/2}   \, d\ul y\,d \ul t\\
\le \bbE \int_\R \int_{\R_+^{d}}\sup_\xi |\varphi_{\ul t}(\ul t,\ul y,\xi)| \left\{ \sum_{k=1}^\infty \left| \int_{t_*}^{t_*+\ve \ul t} 
\big|\wt{g}_{k,\ve}(\ul y,\wt u(t, S_{y_*}^1(\ve)(\ul y))) \big|^2\,dt\right| \right\}^{1/2}   \, d\ul y\,d \ul t\\
\le  \bbE \int_\R \int_{\R_+^{d}}\sup_\xi |\varphi_{\ul t}(\ul t,\ul y,\xi)| \left\{ D \left| \int_{t_*}^{t_*+\ve \ul t} (1+ | \wt u(t, S_{y_*}^1(\ve)(\ul y))|^2)\,dt\right| \right\}^{1/2}   \, d\ul y\,d \ul t\\
\le D^{1/2} \text{diam}\,\{\supp \varphi_{\ul t} \}^{d+1} \|\varphi_{\ul t}\|_\infty (1+\|u\|_\infty^2)^{1/2} \ve^{1/2}\to 0\quad\text{as $\ve\to0$},
\end{multline*} 
where we have used Burkholder inequality (see, e.g., \cite{O}) in the second inequality above, and \eqref{e1.4} in the fourth inequality  above. 
Therefore, using a diagonal process, we can obtain a subsequence of $\ve_n(t_*,\wh y_*)$, obtained from Lemma~\ref{3.lemma1} and Lemma~\ref{3.lemma2},
 which we still denote $\ve_n(t_*,\wh y_*)$ and  a set of total measure $\Om_2(t_*,\wh y_*)\subset \Om_1(t_*,\wh y_*)$ such that, for all $\om\in\Om_2$,  
$$
|\la \Lambda_{\ve_n}, \varphi\ra| \to 0\quad \text{for all $\varphi\in C_c^\infty(\R^d\X(0,\infty)\X(-L,L))$}.
$$ 

Now, let us fix $\om\in\Om_2(t_*,\wh y_*)$. Let $\varphi\in C_c^\infty(\R^d\X(0,\infty)\X(-L,L))$ be of the form $\varphi=\rho_h(\ul y_{d'})\tilde\varphi$, with $\tilde \varphi\in C_c^\infty(\R^d\X[0,\infty)\X(-L,L))$  and
$\rho_h(\ul y_{d'})=\int_0^{\ul y_{d'}}\z_h(s)\,ds$, where $\z_h(s)=h^{-1}\z(h^{-1}s)$ and $\z\in C_c^\infty((0,1))$, with $\z\ge0$ and $\int_0^1 \z(s)\,ds=1$.  Applying \eqref{3.eqfve0} to $\varphi$
of this form, we get, after letting $h\to0$, using Lemma~\ref{L:3.02},
\begin{multline}\label{3.eqfve2}
\int_{\R}\int_0^\infty\int_{\R^{d-1}}\int_\R  \Bigl\{\widetilde \ff_\ve \frac{\po \wt \varphi}{\partial \ul t} +  \widetilde{\ff}_\ve\, \wh {\abf'}(\xi) \cdot  \nabla_{\widehat{\ul y'}}\wt \varphi
+\wt\ff_\ve \,  \widetilde{a_{d'}}(\widehat{y'}_* + \ve \ul {\wh y'}, \xi) \frac{\po\wt \varphi}{\po\ul y_{d'}}\\
-\wt\ff_\ve\,\bbf(\xi):\nabla_{\ul y''}\wt\varphi \Bigr\} \,d\ul t\, d\wh {\ul y}\,d\ul y_{d'}\,d\xi \\ 
+\int_{\R}\int_{\R^{d-1}}\int_\R   \widetilde{a_d}(\widehat{y'} + \ve \ul {\wh y'}, \xi) \ff^\tau\left(\wh S_{(t_*,\wh y_*)}(\ve)(\ul {\wh y}), \xi\right) 
\wt \varphi (\ul t, \wh{\ul y},0,\xi)\, d\ul t\,d\wh{\ul y}\, d\xi\\
=-\sqrt{\ve}\int_{\R}\int_0^\infty\int_{\R^{d-1}}\int_\R \abf''(\xi)\cdot\nabla_{\ul y''} \wt \varphi\, d\ul t\, d\wh {\ul y}\,d\ul y_{d'}\,d\xi    \\
+ \la \wt \qq_\ve,\frac{\po\wt\varphi}{\po \xi}\ra-\la\Lambda_\ve, \wt\varphi\ra.
\end{multline}
Taking $\ve=\ve_n(t_*,\wh y_*)$. Passing to a subsequence of $\ve_n(t_*,\wh y_*)$ if necessary so that $\wt \ff_{\ve_n}\wto \wt\ff$ in the weak-$\star$ topology of $L^\infty(\R^d\X(0,\infty)\X(-L,L)))$, 
for some $\wt \ff \in L^\infty(\R^d\X(0,\infty)\X(-L,L))$, and making $\ve_n(t_*,\wh y_*)\to0$ we get for $\wt\ff$,
\begin{multline}\label{3.eqfve3} 
\int_{(-L,L)} \int_{\R_+^d}\int_\R  \Big\{\widetilde \ff \,\frac{\po \wt \varphi}{\partial \ul t} +  \widetilde{\ff}\, \wh {\abf}(\xi) \cdot  \nabla_{\widehat{\ul y}}\wt \varphi
+\wt\ff \,  \widetilde{a_{d'}}(\widehat{y'}_*, \xi) \frac{\po\wt \varphi}{\po\ul y_{d'} } \\
-\wt\ff\,\bbf(\xi):\nabla_{\ul y''}\wt\varphi \Big\} \, d\ul t \,d\ul y \,d\xi  \\
+\int_{\R} \int_{\R^{d-1}} \int_\R  \widetilde{a_{d'}}(\widehat{y}_*, \xi)\, \ff^\tau(t_*,\wh y_*, \xi) \wt \varphi (\ul t, \wh{ \ul y},0,\xi)\,d\ul t \,d\wh{\ul y}\, d\xi=0. 
\end{multline}
Now, since  $\wt \ff$ and $\ff^\tau$ vanish for $\xi\notin (-L,L)$ and $a_{d'}(\wh y_*',\xi)\neq 0$, for a.e.\ $\xi\in(-L,L)$, by choosing $\tilde \varphi(\ul t, \ul y, \xi)=\rho(\xi)\tilde \phi(\ul t,\ul y)$, with $\rho\in C_c^\infty(\R)$, $ \tilde\phi\in C_c^\infty(\R\X\R^{d-1}\X[0,\infty))$,
we get that for almost every $\xi$, $\wt \ff(\cdot,\cdot,\xi)$ satisfies
 \begin{multline}\label{3.eqfve4} 
 \int_{\R_+^d}\int_\R  \widetilde \ff \,\frac{\po \wt \varphi}{\partial \ul t} +  \widetilde{\ff}\, \wh {\abf}'(\xi) \cdot  \nabla_{\widehat{\ul y}'}\wt \varphi
+\wt\ff \,  \widetilde{a}_{d'}(\widehat{y}_*', \xi) \frac{\po\wt \varphi}{\po\ul y_{d'} } \\
-\wt\ff\,\bbf(\xi):\nabla_{\ul y''}\wt\varphi  \,d\ul t \,d\ul y \\ 
+ \int_{\R^{d-1}} \int_\R  \widetilde{a}_{d'}(\widehat{y}_*', \xi)\, \ff^\tau(t_*,\wh y_*, \xi) \wt \varphi (\ul t, \wh{ \ul y},0,\xi)\,d\ul t \,d\wh{\ul y} =0. 
\end{multline}  
Now,  we make the change of coordinates 
$$
\ul y_{d'}= a_{d'}(\wh y_*',\xi)\, \ul z_d,\quad  \ul t  =\ul \tau+ \ul y_{d'},\quad \wh{\ul y'}= \wh{\ul z'} + \ul z_{d'}\, \wh a(\xi), \quad \ul y''=\ul z''.
$$   
We assume for the moment that $a_{d'}(\wh y_*',\xi)>0$. We get that, in this new system of coordinates, $\wt \ff$ satisfies 
\begin{equation}\label{e.endL}
\int_{\R_+^d} \int_\R \wt \ff \frac{\po \wt \phi}{\po \ul z_d} -\wt \ff\bbf(\xi):\nabla_{\ul z''}\wt\phi\, d\ul \tau\, d\wh{\ul z}\, d\ul z_{d'} + \int_{\R^{d-1}} \int_\R   \ff^\tau(t_*,\wh y_*, \xi)\wt \phi (\ul t, \wh{ \ul z},0)\,d\ul \tau \,d\wh{\ul z} =0.
\end{equation}
for all $\tilde \phi\in C_c^\infty(\R^d\X[0,\infty))$, and we denote $\wh{\ul z}=(\wh{\ul z'},\ul z'')$. Using a test function of the form 
$\wt \phi(\tau,\wt{\ul z'}, \ul z_{d'}, \ul z'')= \phi_1(\tau, \wh{\ul z'})\phi (\ul z_{d'}, \ul z'')$, we then see that, for a.e.\ $(\tau,\wh{\ul z'})\in \R^{d'}$,
 $\wt \ff$ satisfies 
 \begin{equation}\label{e.endL2}
\int_0^\infty \int_{\R_+^{d''}}  \wt \ff \frac{\po \phi}{\po \ul z_{d'}} -\wt \ff\bbf(\xi):\nabla_{\ul z''} \phi\, d\ul z'' \, d\ul z_{d'} + \int_{\R^{d''}}   \ff^\tau(t_*,\wh y_*, \xi) \phi (0,\ul z'')\, d\wh{\ul z''} =0.
\end{equation}       
Let us fix $(\ul \tau, \wh {\ul z'})$ for which \eqref{e.endL2} holds and let us see $\wt \ff$ as a function of $(\ul z_{d'},\ul z'')$ only, for simplicity. Since $\wt \ff\in L^\infty((0,\infty)\X\R^{d''})$, for a.e.\ $\ul z_{d'}\in(0,\infty)$, $\wt \ff(\ul z_{d'}, \cdot)$ is a tempered distribution and so we may use as test function a $\phi$ of the form $\phi( \ul z_{d'},\ul z'')= \z(\ul z_{d'})[ \FF_{\ul z''}\theta](\ul z'')$, where $\z\in C^\infty([0,\infty))$ and 
$\theta \in \S(\R^{d''})$, where the latter is the Schwarz space of fast smooth decaying functions.  Therefore we conclude that  
$\FF_{\ul z''}[\wt \ff](\cdot, \ul z_{d'})$ satisfies the following ODE with prescribed initial value
$$
\begin{cases}    \frac{d \FF_{\ul z''}[\wt \ff]}{d\ul z_{d'}}= {\k''}^\top \bbf(\xi){\k''}\FF_{\ul z''}[\wt \ff], \\
                             \FF_{\ul z''}(0)=\ff^\tau(t_*,\wh y_*, \xi) \d_0(\k''), 
                             \end{cases}
$$                              
where $\d_0(\k'')$ is the Dirac measure concentrated at 0 in the space of frequencies $\k''\in \R^{d''}$.
Hence, we conclude that 
$$
\wt \ff(\ul z_d, \ul z'')= \ff^\tau(t_*, \wh y_*,\xi), \qquad\text{for a.e.\ $(\ul z_{d'}, \ul z'') \in (0,\infty)\X\R^{d''}$}.
$$  
The case where $a_{d'}(\wh y_0,\xi)<0$ is treated exactly in the same way. Hence, bringing  back  the variables $(\ul t,\ul y, \xi)$,  we finally 
arrive at the  desired conclusion. 
\end{proof}

\begin{lemma}\label{L:03.4} There exists a subset of total measure $\Om_3(t_*,\wh y_*)\subset \Om_2(t_*, \wh y_*)$ and a subsequence 
of $\ve_n(t_*,\wh y_*)$, still denoted by $\ve_n\to0$,  such that for all $\om\in\Om_2(t_*,\wh y_*)$, $\tilde \ff_{\ve_n}$ converges  strongly to $\tilde \ff$ in  $L_\loc^1(\R^d\X(0,\infty)\X(-L,L))$  which satisfies \eqref{3.eqfve3}. Consequently, $ \ff^\tau$ is a $\chi$-function.
Here we keep denoting $\R^d\X(0,\infty)=\R^{d'}\X(0,\infty)\X\R^{d''}$. 
\end{lemma}
 
\begin{proof}  

First, we localize the equation \eqref{3.eqfve} by multiplying it by a bump function $\Phi(\ul y,\xi)=\Phi_1(\ul y) \Phi_2(\xi)$, with 
 $\Phi_1\in C_c^\infty(\R^d\X(0,\infty))$, $\Phi_2\in C_c^\infty((-L,L))$,  with $\Phi_1\equiv 1$, for $(\ul t,\wh{ \ul y}', \ul y_{d'},\ul y'')\in (-R,R)^{d'}\X(1/R,R)\X(-R,R)^{d''}$, $\Phi_1\equiv0$,  for $(\ul t,\wh{ \ul y}', \ul y_{d'},\ul y'')\notin (-2R,2R)^{d'}\X(1/(2R),2R)\X
 (-2R,2R)^{d''}$,  $\Phi_2\equiv 1$, for $\xi\in (-L_0,L_0)$.   Let us denote  
 $\tilde \ff_\ve^\Phi=\Phi \tilde \ff_\ve$.  We then get the following equation for $\tilde \ff_\ve^\Phi$,
\begin{multline}\label{e5.11''}
\po_{\un{t}} \tilde \ff_\ve^\Phi+\hat \abf'(\xi)\cdot\nabla_{\un{\hat y'}}\tilde \ff_\ve^\Phi+\tilde a_{d'}(\hat y_*' ,\xi)\po_{\un{y}_{d'}} \tilde \ff_\ve^\Phi  -\bbf(\xi):\nabla_{\un{y}''}^2\tilde \ff_\ve^\Phi=\\
-\ve^{1/2} \Phi \abf''(\xi)\cdot\nabla_{\un{y}''}\tilde \ff_\ve  
+\frac{\partial}{\partial \ul y_{d'}} \Big( \big(\widetilde{a}_{d'}(\widehat{y}'_*, \xi) - \widetilde{a}_{d'}(\widehat{y}'_* 
+ \ve \widehat{\ul y}', \xi) \big) \widetilde \ff_\ve^\phi \Big) 
+\po_\xi(\Phi \tilde m_\ve)\\
+ \tilde \ff_\ve \tilde a_{d'}(\hat y_*',\xi)  \po_{\ul{y}_{d'}}\Phi+ \tilde \ff_\ve \hat \abf'(\xi)\cdot\nabla_{\un{\hat y}'}\Phi
-\tilde \ff_\ve\bbf(\xi):\nabla_{\un{\hat y}''}^2\Phi \\
+\s(\xi)\nabla_{\ul{y}''}\Phi\otimes \s(\xi)\nabla_{\ul{y}''}\tilde \ff_\ve\\
-  \left((\tilde a_{d'}(\hat y_*',\xi)- \tilde a_{d'}(\widehat{y}'_* 
+ \ve \widehat{\ul y}',\xi)) \tilde \ff_\ve \right)\po_{\un{y}_{d'}}\Phi -\tilde m_\ve\po_\xi \Phi+ \Phi \Lambda_\ve,
\end{multline} 
 where we denote again by $\Lambda_\ve$ the distribution composed with the three stochastic integrals in \eqref{3.eqfve}.  Concerning equation \eqref{e5.11''} we observe first observe that 
 \begin{equation}\label{e5.11''a}
 -\ve^{1/2} \Phi \abf''(\xi)\cdot\nabla_{\un{y}''}\tilde \ff_\ve  
+\frac{\partial}{\partial \ul y_{d'}} \Big( \big(\widetilde{a}_{d'}(\widehat{y}'_*, \xi) - \widetilde{a}_{d'}(\widehat{y}'_* 
+ \ve \widehat{\ul y}', \xi) \big) \widetilde \ff_\ve^\phi \Big) 
  \end{equation}
  clearly converges to zero in $W^{-1,2}(\R^d\X(0,\infty)\X(-L,L))$, for all $\om\in \Om_2(t_*,\wh y_*)$. Moreover, 
 except for \eqref{e5.11''a}, which comprises the first two terms on the second line in \eqref{e5.11''},   and  the stochastic integrals, $ \Phi \Lambda_\ve$,
 all other terms on the right-hand side of this equation may be rendered as
 $\po_\xi \mu_\ve$ for some measure $\mu_\ve\in \M(\R^d\X(0,\infty)\X(-L,L)))$ for all $\om\in \Om_2(t_*,\wh y_*)$. Indeed, 
 this is obviously the case for the last term in the second line of \eqref{e5.11''}. Also, 
  the terms  in the third line of \eqref{e5.11''} altogether can clearly be put in the form $\po_\xi\mu_\ve$ for $\mu_\ve\in \M(\R^d\X(0,\infty)\X(-L,L))$  weakly-$\star$ converging to 
  \begin{multline*}
 \int_0^{\xi}\Bigl \{ \ff^\tau(t_*, \wh y_*,\wt\xi) \tilde a_{d'}(\hat y_*',\wt\xi)  \po_{\ul{y}_{d'}}\Phi+ \ff^\tau(t_*,\wh y_*,\wt\xi)  \hat \abf'(\wt \xi)\cdot\nabla_{\un{\hat y}'}\Phi \\
- \ff^\tau(t_*,\wh y_*, \wt\xi)\bbf(\wt \xi):\nabla_{\un{\hat y}''}^2\Phi \Bigr\}\, d\wt \xi,
\end{multline*}
as $\ve\to0$, along a suitable subsequence $\ve_n(t_*,\wh y_*)$, for all $\om\in\Om_2(t_*,\wh y_*)$,  due to Lemma~\ref{L:3.03}.  

 As for the term in the fourth line of \eqref{e5.11''},  we first observe that, formally, we have
  \begin{multline*} 
   \s(\xi)\nabla_{\ul{y}''}\tilde \ff_\ve=\s(\xi)\nabla_{\ul{y}''}\tilde u_\ve \d_{\tilde u_\ve(\ul y)}= \nabla_{\ul{y}''}\int_0^{\tilde u_\ve(\ul{y}'')}\s(\z)\,d\z\,\d_{\tilde u_\ve(\ul{y}'')}(\xi) \\
   =-\ve^{1/2}\nabla_{x''}\int_0^{\tilde u_\ve(\ul{y}'')}\s(\z)\,d\z\,\po_{\xi} 1_{(0, \tilde u_\ve(\ul{y}''))}(\xi)\\
   =-\po_{\xi}\left(\ve^{1/2}\nabla_{x''}\int_0^{\tilde u_\ve(\ul{y}'')}\s(\z)\,d\z\, 1_{(0, \tilde u_\ve(\ul{y}''))}(\xi) \right).
 \end{multline*}
 Now the primitive $\Sigma(u)=\int_0^u\s(\z)\,d\z$ is a Lipschitz function of the function $b(u)$, that is, $\Sigma (u)=\tilde \Sigma(b(u))$ for some Lipschitz $\tilde \Sigma$ as follows from \eqref{e1.4''}.
 The formal calculation may be easily made rigorous. Using these facts  and other trivial rearrangements we may at last also render the  term 
 in the fourth line of \eqref{e5.11''} in the form $\po_\xi\mu_\ve$ 
 for some measure $\mu_\ve$ converging to zero as $\ve\to0$. Further, it is immediate that the first two terms in the last line of \eqref{e5.11''} can also be put in the form $\po_\xi \mu_\ve$ for some measure $\mu_\ve\wto0$ as $\ve\to0$.

Concerning the term  $\Phi \Lambda_\ve$ in \eqref{e5.11''}, let us denote
$$
\begin{aligned}
&\ell_\ve^{(1)} (\ul t,\ul y,\xi):= \sum_{k=1}^\infty \int_{t_0}^{t_0+\ve \ul t} \wt g_{k,\ve}(\ul y,\xi)\wt \ff(t,S_{y_*}^1(\ve)(\ul y), \xi )\, d\b_k(t),\\
& \ell_\ve^{(2)} (\ul t,\ul y,\xi):= \sum_{k=1}^\infty \int_{t_0}^{t_0+\ve \ul t} \frac{\po\wt g_{k,\ve}}{\po \xi}(\ul y,\xi)\wt \ff(t,S_{y_*}^1(\ve)(\ul y), \xi )\, d\b_k(t),\\
& \ell_\ve^{(3)}(\ul t,\ul y) := \sum_{k=1}^\infty \int_{t_0}^{t_0+\ve \ul t} \wt g_{k,\ve}(\ul y,0)\, d\b_k(t).
\end{aligned}
$$
Clearly, $\Lambda_\ve=-\po_\xi\po_{\ul t}\ell_\ve^{(1)}+\po_{\ul t}\ell_\ve^{(2)}+\d_{\xi=0}\po_{\ul t}\ell_\ve^{(3)}$. Let $\Om_2(t_*,\wh y_*)$ and $\ve_n(t_*,\wh y_*)$ be the set of total measure and the subsequence obtained in Lemma~\ref{L:3.03}. 

We claim that, for any bounded open set $V\subset\subset \R^d\X(0,\infty)$, there is a set of total measure 
$\Om_3(t_*,\wh y_*)\subset \Om_2(t_*,\wh y_*)$ and a subsequence of $\ve_n((t_*,\wh y_*)$, also denoted $\ve_n(t_*,\wh y_*)$,  such that, for all  $\om\in\Om_3$, $\ell_{\ve_n}^{(1)}, \ell_{\ve_n}^{(2)}\in L^2(V\X(-L,L))$,
	 $\ell_{\ve_n}^{(3)}\in L^2(V)$, and $\ell_{\ve_n}^{(1)}, \ell_{\ve_n}^{(2)}\to  0$ in $L^2(V\X(-L,L))$ and $\ell_{\ve_n}^{(3)}\to 0$ in $L^2(V)$ as $n\to\infty$.   
	 Moreover, by a standard diagonal argument, we can find a set of total measure $\Om_3(t_*,\wh y_*)\subset\Om_2(t_*,\wh y_*)$ and a subsequence of $\ve_n(t_*,\wh y_*)$, also denoted $\ve_n(t_*,\wh y_*)$ such that the assertion is true for any $V\subset\subset \R^d\X(0,\infty)$.
	 
	 Indeed, it suffices to prove the assertion for $\ell_\ve^{(1)}$ since the proof for the others is similar. By It\^o isometry, we have
	  \begin{multline*}
	  \bbE \int_{-L}^L\int_V |\ell_{\ve}^{(1)}|^2 \,d\ul t\,d\ul y\, d\xi \\ 
	  =\bbE\int_{-L}^L\int_V\sum_{k=1}^\infty\left| \int_{t_*}^{t_*+\ve\ul t} | \wt g_{k,\ve}(\ul y,\xi)|^2 |\wt \ff (t, S_{y_*}^1(\ve)(\ul y),\xi)|^2\, dt\right| \,d\ul t\,d\ul y\,d\xi\\
	  \le \bbE\int_{-L}^L\int_V \left| \int_{t_*}^{t_*+\ve\ul t} D(1+|\xi|^2)\, dt\right| \,d\ul t\,d\ul y\,d\xi\\
	  \le C(V,L, D)\ve\to 0,\quad \text{as $\ve\to0$}. 
	 \end{multline*}
	 Therefore, making $\ve=\ve_n(t_*,\wh y_*)$, we deduce that we can obtain a set of total measure $\Om_3(t_*,\wh y_*)\subset \Om_2(t_*,\wh y_*)$ , and a subsequence of $\ve_n(t_*,\wh y_*)$ also
	 denoted $\ve_n(t_*,\wh y_*)$ such that the claim for $\ell_\ve^{(1)}$ holds. The proof for $\ell_\ve^{(2)}, \ell_\ve^{(3)}$ follows the same lines and that the claim holds for all $V\subset\subset \R_+^d$ follows trivially
	 by a standard diagonal argument. 

As a consequence of the claim just proven, for all $\om\in\Om_3(t_*,\wh y_*)$, we may write the term  $\Phi \Lambda_\ve$ in \eqref{e5.11''} in the form $(1-\po_\xi^2)(1-\Delta_{\ul t,\ul y})^{1/2} k_\ve$, for some $k_\ve$ compact in $L^2(\R^d\X(0,\infty)\X\R)$. Also observe that, if the reduced symbol $\LL_0(\tau, \k,\xi)$ satisfies \eqref{e1.nondeg}, then the reduced symbol
 $$
\tilde \LL_0(\tau,\k, \xi):=\tau + \wt\abf'(\xi) \cdot \k' 
-\bbf(\xi): (\k''\otimes\k''),
$$
where we set $\wt a_{d'}(\xi)=\widetilde{a_{d'}}(\widehat{y}'_*, \xi)$  satisfies: for all $(\tau,\k',\k'')\in\R^{d+1}$, with $\tau^2+|\k'|^2+|\k''|^2=1$, we have
 \begin{equation}\label{e5.nondeg0}
 \operatorname{meas}\left\{\xi\in[-L_0,L_0]\,:\, |\tau+ \wh \abf'(\xi)\cdot \k'|^2+\left(\bbf(\xi):(\k''\otimes\k'')\right)^2=0\right\}=0,
 \end{equation}
 as it is easy to verify. We also observe that, in view of the compactness of the embedding of the space of signed measures $\M_\loc(\R^d\X(0,\infty)\X\R)$ in $W_\loc^{-1,p}(\R^d\X(0,\infty)\X\R)$, for some $1<p<2$, the terms that can be written in the form $\po_\xi\mu_\ve$ can be cast in the form $(1-\po_\xi^2)(1-\Delta_{\ul t,\ul y})^{1/2} \tilde k_\ve$, with $\tilde k_\ve$ in a compact in $L^p(\R^d\X(0,\infty)\X\R)$.    Therefore, the lemma will follow from the following new averaging lemma, that we state and prove subsequently. This result can also be obtained as a consequence of a more general 
 result recently established by the fourth author in \cite{Na}.
 \end{proof}
 
 \begin{lemma}\label{L:5.1} Let $N,N',N''$ be positive integers with $N=1+ N'+N''$, $f_n(y,\xi)$ be a  bounded sequence in $L^\infty(\R^N\X\R)$,  such that $f_n(y,\xi)=0$, if $(y,\xi)\notin K\X[-L,L]$, where $K\subset \R^{N}$ is compact.  
 Let  $h_n$  be compact in $L^p(\R^N\X\R)$, $1<p<2$.   For $y\in\R^N$ we write $y=(y_0,y',y'')$, $y_0\in\R$, $y'\in\R^{N'}$, $y''\in\R^{N''}$. 
 Let  $\CS^{N''\X N''}$ denote the space of the $N''\X N''$ symmetric matrices, and let $(\a_0,\a')\in C^2([-L_0,L_0];\R^{1+N'})$, $\b\in C^2([-L_0,L_0];\CS^{N''\X N''})$, for some $L_0>L$.  Assume
  \begin{equation}\label{e5.14}
 \a_0(\xi)\po_{y_0}f_n+   \a'(\xi)\cdot\nabla_{y'}f_n-\b(\xi): \nabla_{y''}^2f_n= (1-\po_\xi^2)(1-\Delta_y)^{1/2} h_n, 
 \end{equation}
where  $\a_0(\xi)\ne0$ and $\b(\xi)>0$, for a.e.\ $\xi\in [-L,L]$, and  the symbol $\LL(\tau,\k',\k'',\xi):=i( \a_0(\xi) \tau+\a'(\xi)\cdot \k')+\b (\xi)(\k'',\k'')$ satisfies: for all $\k:=(\tau,\k',\k'')\in\R^N$, with $\tau^2+|\k'|^2+|\k''|^2=1$, we have
\begin{equation}\label{e5.nondeg}
\operatorname{meas}\{\xi\in[-L,L]\,:\, |\a_0(\xi)\tau+\a'(\xi)\cdot\k'|^2+(\k'' \b(\xi)\k'')^2=0\}=0.
\end{equation} 
Then, the average $u_n(y)=\int_{\R} f_n(y,\xi)\,d\xi$ is relatively compact in $L_\loc^1(\R^N)$. 
\end{lemma}

 \begin{proof}  Since $f_n$ is uniformly bounded with compact support, we may, with no loss of generality, assume that $f_n\wto 0$, weakly-star in $L^\infty(\R^N\X\R)$. Similarly, since $h_n$ is compact in $L^p(\R^N\X\R)$, $1<p<2$, we may assume that $h_n\to0$ in $L^p(\R^N\X\R)$.     
 Let $\z\in C_c^\infty(\C)$ be radially symmetric and such that $\z(z)=1$ for $|z|<1$ and $\z(z)=0$, for $|z|>2$, and $\psi(z)=1-\z(z)$, $z\in\C$. Let $\k\in \R^N$, $\k=(\tau, \k',\k'')$, $\tau\in\R$, $\k'\in\R^{N'}$, $\k''\in\R^{N''}$. Let us denote  $\tilde \a'(\xi)=(\a_0(\xi),\a'(\xi))$
 and $\tilde \k'=(\tau,\k')$. 
 For $\d>0$ and $\g>0$, let us denote
 \begin{equation*}
 \begin{aligned}
 &\z^{(1)}(\k):=\z\left(\frac{|\k|}{\g}\right),\\
 &\z^{(2)}( \k,\xi):=\z \left(\frac{i\tilde \a'(\xi)\cdot \tilde\k'}{\d|\tilde \k'|}\right),\\
 &\z^{(3)}(\k,\xi):=\z\left(\frac{\b(\xi)(\k'',\k'')}{\d |\k''|^2}\right),\\
 &\psi^{(1)}(\k):=1-\z^{(1)}(\k),\\
 &\psi^{(i)}(\k,\xi)=1-\z^{(i)}(\k,\xi),\qquad i=2,3.
 \end{aligned}
 \end{equation*}
 Let  $\fF$ denote the Fourier transform in $\R^N$. We have 
 \begin{equation} \label{e5.16'}
 \begin{aligned}
 \fF f_n&=\z^{(1)} \fF f_n+\psi^{(1)}\fF f_n\\
            &=\z^{(1)} \fF f_n+\psi^{(1)} \z^{(2)}\fF f_n+\psi^{(1)}\psi^{(2)} \fF f_n\\
            &=\z^{(1)}\fF f_n+  \psi^{(1)} \z^{(2)}\fF f_n+\psi^{(1)}\psi^{(2)} \z^{(3)} \fF f_n+ \psi^{(1)}\psi^{(2)} \psi^{(3)} \fF f_n\\
            &=: \fF f_n^{(1)} +\fF f_n^{(2)}+ \fF f_n^{(3)} +\fF f_n^{(4)}. 
    \end{aligned}
\end{equation}
Observe that $\fF f_n(\k,\xi)=0$, is $\xi\notin [-L,L]$, for all $\k\in\R^N$.  Let us also denote
\begin{equation} \label{e5.16''}
\begin{aligned}
&\int_{-L}^L  f_n^{(1)}\,d\xi=: v^{(1)},\\
&\int_{-L}^L  f_n^{(2)}\,d\xi=: v^{(2)},\\
&\int_{-L}^L  f_n^{(3)}\,d\xi=: v^{(3)},\\
&\int_{-L}^L  f_n^{(4)}\,d\xi=: v^{(4)}.
\end{aligned}
\end{equation}
Thus, 
\begin{equation}\label{e5.16'''}
\int_{-L}^L f_n\, d\xi= v^{(1)}+  v^{(2)}  +  v^{(3)} +  v^{(4)}.
\end{equation}

 Concerning $v^{(1)}$, we first observe that, since $ \sup_{\xi\in\R}  \|f_n(\cdot,\xi)\|_{L^1(\R^N)}\le C$, for some constant $C>0$ independent of $n$. Therefore, it follows that $ \sup_{\xi\in\R}\|\fF f_n(\cdot,\xi)\|_{L^\infty(\R^N)}\le C$. Thus, by Cauchy-Schwarz inequality,   we have
 \begin{equation*}
 \begin{aligned}
 \int_{\R^N} |v_n^{(1)} (x)|^2\,dx &\le 2L\int_{-L}^L\int_{\R^N} |\z^{(1)}(\k)|^2 |\fF f_n(\k,\xi)|^2\,d\k\,d\xi\\
                                           &\le 4L^2  C \int_{\R^N}  |\z^{(1)}(\k)|^2   \,d\k \\
                                           &\le  \tilde C \g^{N},\\
                                           \end{aligned}
  \end{equation*}
 for some constant $\tilde C>0$ independent of $n$ and $\g$. 
 
  Concerning $v^{(2)}$, we have
  \begin{equation*}
  \begin{aligned}
  \fF v_n^{(2)} (\k) &= \int_{\R}  \fF f_n^{(2)}(\k,\xi)\,d\xi\\
  &=\int_{\R}  \psi^{(1)}(\k)\z^{(2)}(\k,\xi) \fF f_n (\k,\xi)\,d\xi.
  \end{aligned}
  \end{equation*}
  By Plancherel identity and Cauchy-Schwarz 
  inequality, we have 
 \begin{equation*}
 \begin{aligned}
 \|v_n^{(2)}\|_{L^2(\R^N)}^2 &=\|\fF v_n^{(2)}\|_{L^2(\R^N)}^2\\
 &\le \int_{\R^N}\left(\int_{-L}^L  1_{\{|\tilde\a'(v)\cdot \tilde\k'|\le 2\d|\tilde\k'| \}}(\xi)\,d\xi\right)\\
 &\qquad\qquad \left(\int_\R \left|\psi^{(1)}(\k) \z^{(2)}(\k,\xi)\fF f_n(\k,\xi)\right|^2\,d\xi\right)\, d\k\\
 &\le \sup_{|\tilde\k'|=1}\left|\{\xi\in[-L,L] \,:\,|\tilde\a'(\xi)\cdot\tilde\k'| \le  2\d\}\right| \|f_n\|_{L^2(\R^N\X\R)}^2,
 \end{aligned}
 \end{equation*}
 where we denote by $|\{\cdots\}|$ the Lebesgue measure of $\{\cdots\}$.  Now, define the functions $h_\d: \mathbb{S}^{d'}:=
 \{|\tilde\k'|=1\}\to \R$  by
 $$
 h_\d(\tilde\k')= \left|\{\xi\in [-L,L]  \,:\,|\tilde\a'(\xi)\cdot\tilde\k'| \le  2\d\}\right|.
 $$ 
It is easy to check that $h_\d$ is continuous on $\mathbb{S}^{d'}$ and that $h_{\d_1}(\tilde\k')\le h_{\d_2}(\tilde\k')$, if $\d_1\le \d_2$, for all $\tilde\k'\in\mathbb{S}^{d'}$. Therefore, because of
the nondegeneracy condition \eqref{e5.nondeg} we deduce that $\sup_{|\tilde\k'|=1}  h_d(\tilde\k')\to 0$, as $\d\to0$. Therefore, we may write
\begin{equation}\label{e5.16iv}
 \|v_n^{(2)}\|_{L^2(\R^N)}^2 \le O(\d),
 \end{equation}
 where $O(\d)\to0$ as $\d\to0$, uniformly with respect to $n$.

 Similarly, for $v^{(3)}$, we have 
  \begin{equation*}
 \begin{aligned}
 \|v_n^{(3)}\|_{L^2(\R^N)}^2 &=\|\fF v_n^{(3)}\|_{L^2(\R^N)}^2\\
 &\le \int_{\R^N}\left(\int_{-L}^L 1_{\{\b(v)(\k'',\k'')\le 2\d|\k''|^2 \}}(\xi) \,d\xi\right)\\
 &\qquad\qquad \left(\int_\R \left|\psi^{(1)}(\k)\psi^{(2)}(\k,\xi) \z^{(3)}(\k,\xi)\fF f_n(\k,\xi)\right|^2\,d\xi\right)\, d\k\\
  &\le \sup_{|\k''|=1}\left|\{\xi\in[-L,L] \,:\,\b(\xi)(\k'',\k'') \le  2\d\}\right| \|\phi\|_{L^\infty(\R)}^2 \|f_n\|_{L^2(\R^N\X\R)}^2,
 \end{aligned}
 \end{equation*}  
  where we  have used again Plancherel identity and Cauchy-Schwarz inequality, and again by a reasoning similar to that used for $v_n^{(2)}$ we arrive at 
  \begin{equation}\label{e5.16v}
   \|v_n^{(3)}\|_{L^2(\R^N)}^2 \le O(\d),
 \end{equation}
 where $O(\d)\to0$ as $\d\to0$,  uniformly with respect to $n$.
 
 Let us now consider $v_n^{(4)}$.  Let $\z\in C_c^\infty(\R)$ be such that $\z(\xi)=1$, for $\xi\in[-L,L]$, and $\z(\xi)=0$ for $\xi>L_0$, for some $L_0>L$.  We then have
 \begin{equation}\label{e5.16vi}
 v_n^{(4)}= \int_{\R} \z(\xi) \fF^{-1}\Big( \psi^{(1)}(\k)\psi^{(2)}(\k,\xi)\psi^{(3)}(\k,\xi)\fF f_n\Big) (y,\xi) \, d\xi
 \end{equation}
 Since $\psi^{(2)}\psi^{(3)}$ vanishes on the null set  of the symbol 
 $$
 \LL(\k,\xi):= i\tilde\a'(\xi)\cdot \tilde\k' +\b(\xi)(\k'',\k'')
 $$ 
 we may use the equation \eqref{e5.14} to write
\begin{equation*}
  \psi^{(1)}(\k)\psi^{(2)}(\k,\xi)\psi^{(3)}(\k,\xi)\fF f_n= \frac{\psi^{(1)}(\k)\psi^{(2)}(\k,\xi)\psi^{(3)}(\k,\xi) (1+|\k|^2)^{1/2} }{\LL(\k,\xi)} (1-\po_\xi^2) \fF h_n.
  \end{equation*}  
      
 We now prove that $v_n^{(4)}$ is relatively compact in $L_\loc^1$. We have
 \begin{equation}\label{e5.16vi'}
v_n^{(4)}=  \int_{\R} \z(\xi) \fF^{-1}\Big( \frac{\psi^{(1)}(\k)\psi^{(2)}(\k,\xi)\psi^{(3)}(\k,\xi) (1+|\k|^2)^{1/2} }{\LL(\k,\xi)} (1-\po_\xi^2) \fF h_n\Big) (y,\xi) \, d\xi.
\end{equation} 
Performing an integration by parts in \eqref{e5.16vi'} we obtain
\begin{equation} \label{e5.16vii}
\begin{aligned}
 & v_n^{(4)}= \\
&\int_{\R} (\z(\xi)-\z''(\xi)) \fF^{-1}\Big( \frac{\psi^{(1)}(\k)\psi^{(2)}(\k,\xi)\psi^{(3)}(\k,\xi) (1+|\k|^2)^{1/2}}{\LL(\k,\xi)}  \fF h_n \Big) \,d\xi \\
&+\int_{\R}\z'(\xi) \fF^{-1} \Big(  \po_\xi \left[ \frac{\psi^{(1)}(\k)\psi^{(2)}(\k,\xi)\psi^{(3)}(\k,\xi) (1+|\k|^2)^{1/2}}{\LL(\k,\xi)}\right] \fF h_n \Big) \,d\xi\\
&-\int_{\R}\z(\xi) \fF^{-1} \Big(  \po_\xi^2 \left[ \frac{\psi^{(1)}(\k)\psi^{(2)}(\k,\xi)\psi^{(3)}(\k,\xi) (1+|\k|^2)^{1/2}}{\LL(\k,\xi)}\right] \fF h_n \Big) \,d\xi
\end{aligned}
\end{equation}
So, let us define
$$
\begin{aligned}
m_1(\k,\xi)&:=  \frac{\psi^{(1)}(\k)\psi^{(2)}(\k,\xi)\psi^{(3)}(\k,\xi) (1+|\k|^2)^{1/2}}{\LL(\k,\xi)} \\
m_2(\k,\xi)&:= \po_\xi \left[ \frac{\psi^{(1)}(\k)\psi^{(2)}(\k,\xi)\psi^{(3)}(\k,\xi) (1+|\k|^2)^{1/2}}{\LL(\k,\xi)}\right] \\
m_3(\k,\xi)&:= \po_\xi^2 \left[ \frac{\psi^{(1)}(\k)\psi^{(2)}(\k,\xi)\psi^{(3)}(\k,\xi) (1+|\k|^2)^{1/2}}{\LL(\k,\xi)}\right] 
\end{aligned}
$$ 
We are going to show that $m_1(\k,\xi)$, $m_2(\k,\xi)$  and $m_3(\k,\xi)$ are $L^p$ multipliers in $\R^N$, uniformly in $\xi\in[-L_0,L_0]$. 
For that, we are going to apply the multidimensional extension of Marcinkiewicz multiplier theorem as stated in \cite{Duo}, chapter 8:
Let $m$ be differentiable in all quadrants of $\R^n$ and satisfy
$$
\sup_{i_1,\cdots,i_k}\int_{I_{i_1}\X\cdots\X I_{i_k}}\left|\frac{\po^k m}{\po\k_{i_1}\cdots\po\k_{i_k}}(\k)\right|\,d\k_{i_1}\cdots\,d\k_{i_k}<\infty,
$$
where the $I_j$'s are dyadic intervals in $\R$ and the set $\{i_1,\cdots,i_k\}$ runs over all the subsets $\{1,\cdots, n\}$ containing $k$ elements, $1\le k\le n$.  Clearly, it suffices to show that
$$
\sup_{\k\in\R^n} \k_{i_1}\cdots\k_{i_k} \left|\frac{\po^k m}{\po\k_{i_1}\cdots\po\k_{i_k}}(\k)\right| <\infty,
$$
for all such $\{i_1,\cdots,i_k\}$. 

First, we observe that 
$$
|m_1(\k,\xi)|\le C,
$$
with $C>0$ independent of $(\k,\xi)$. This follows from the fact that in the region where $|\k|>\g$,  $|\tilde\a'(\xi)\cdot \tilde\k'|> \d|\tilde\k'|$ and $\b (\xi)(\k'',\k'')>\d|\k''|^2$, it is not difficult to check that
$|\LL(\k,\xi)|\ge C|\k|$, for some constant $C>0$. So the boundedness for $m_1$ follows from the boundedness of $\psi^{(1)}\psi^{(2)}\psi^{(3)}$. Similarly, using the same reasoning, 
and the fact that the $\xi$-derivatives of $\psi^{(2)}(\k,\xi)$ and $\psi^{(3)}(\k,\xi)$ are uniformly bounded in $(\k,\xi)$, we also deduce that   
$$
|m_2(\k,\xi)|\le C,\quad |m_3(\k,\xi)|\le C,
$$
with $C>0$ independent of $(\k,\xi)$.  
 
 Now, let us analyze $\k_{i} \po_{\k_i}m_1$, for $i\in \{0,1,\cdots,N'+N''\}$. We claim that these expressions are bounded. First, if the derivative hits $\psi^{(1)}$ then the boundedness is clear since $\po_{\k_i}\psi^{(1)}$ has support in $|\k|\le 2\d$. On the other hand,
 if $\k_i\in \{0,\cdots,N'\}$ and the derivative hits $\psi^{(2)}$, then it is easy to see also that the expression is bounded since the derivation of the argument of $\psi^{(2)}$ multiplied by $\k_i$ is bounded. Also, if the derivative $\po_{\k_i}$ hits $1/\LL(\k,\xi)$ then it is clear that 
 the derivative of $1/\LL(\k,\xi)$ multiplied by $\k_i$ is bounded, where  we use that the support of $m_1$ is in a region where 
 $|\k|>\g$,   $|\tilde\a'(\xi)\cdot \tilde\k'|> \d|\tilde\k'|$ and $\b (\xi)(\k'',\k'')>\d|\k''|^2$. Analogously, we see that if $i\in\{N'+1,\cdots, N'+N''\}$,
 $\k_{i} \po_{\k_i}m_1$ is bounded uniformly in $(\k,\xi)\in\R^N\X\R$.  Similarly,  we prove $\k_{i} \po_{\k_i}m_1$, $i=0,\cdots,N$,  is bounded uniformly in $(\k,\xi)\in\R^N\X\R$.  In this way, we may check the hypotheses of the extended Marcinkiewcz multiplier theorem for $m_i$, $i=1,2,3$,  and conclude that they are satisfied uniformly for $\xi\in[-L_0,L_0]$. Hence, we deduce that
 \begin{equation}\label{efinal}
 \|v_n^{(4)}\|_{L^p(\R^N)}\le C\|\z\|_{C^2([-L_0,L_0])} \|h_n\|_{L^p(\R^N\X\R)}.
 \end{equation}
 Therefore, we conclude the proof of the lemma as follows. Given $\ve>0$ we may choose $\gamma>0$ and $\d>0$ such that 
 $\|v_n^{(i)}\|_{L^2(\R^N)}<\ve$, $i=1,2,3$, uniformly in $n\in\N$.  Then, making $n\to\infty$ we get that $v_n^{(4)}$ converges to 0 in $L^p(\R^N)$. Then, since $\ve>0$  is arbitrary, we see that $v^{(i)}\to0$, in $L_\loc^1(\R^N)$, for $i=1,2,3$, which concludes the proof.              
 
 \end{proof}

{\em Conclusion of the Proof of Theorem~\ref{T:5.1}.}
By Lemma~\ref{L:03.4}, it follows that for all $(t_*,\wh y_*)\in\E$, $\ff^\tau(t_*,\wh y_*, \xi)$ is a $\chi$-function a.e.\  in $\Om\X(-L,L)$, and $\E\subset\Sigma$ has total measure, by Lemma~\ref{3.lemma1} and Lemma~\ref{3.lemma2}. Thus, $\ff^\tau(\cdot,\cdot, \cdot)$ is a $\chi$-function a.e.\ in $\Om\X\Sigma\X(-L,L)$.  Hence, from Lemma~\ref{2.criterion} we conclude that $\ff^\tau$ is a strong trace and integrating in $\xi$ we  arrive at the desired conclusion for $u$ on $\cO_0$ and $\Gamma_0$.  Covering $\po\cO'$ with a finite set 
$\{\S_\a'\}_{\a\in I_0}$,  each $\S_\a'$  being the graph of a Lipschitz function, we then  finally deduce \eqref{eT5.1-1}. 

It remains to prove \eqref{eT5.1-2}.  From the essential strong convergence of $ \ff_\psi(\cdot, \cdot, s,\cdot)$ in $ L^1(\Om\X\Sigma\X(-L,L))$,  it follows that, given any sequence $s_n\to0$  in $(0,1)\setminus\mathcal{N}$, with $\mathcal{N}$ of null measure,  we can obtain
a subsequence still denoted $s_n$  such that  $ \ff_\psi(\om, \cdot, s_n,\cdot)\to \ff^\tau(\om,\cdot,\cdot)$ in $ L^1(\Sigma\X(-L,L))$ for $\om$ in a subset of total measure of $\Om$. Thus, by Lemma~\ref{2.criterion0}, we deduce that $\ff^\tau(\om,\cdot,\cdot)$ is
a $\chi$-function for $\om$ in a subset of total measure of $\Om$. By Lemma~\ref{L:3.02} we conclude, using again Lemma~\ref{2.criterion0}, that   $\esslim \ff_\psi(\om, \cdot, s,\cdot)=\ff^\tau(\om,\cdot,\cdot)$ in $L^1(\Sigma\X(-L,L))$  
for $\om$ in a subset of total measure of $\Om$.  Again integrating in $\xi$, we arrive at the desired conclusion for $u$ on $\cO_0$ and $\Gamma_0$, and so by   covering $\po\cO'$ with a finite set 
$\{\S_\a'\}_{\a\in I_0}$ as above, we then  finally deduce \eqref{eT5.1-2}.

\end{proof}

\section{Uniqueness}\label{S:5}

 \begin{theorem}[Uniqueness]\label{T:2.2}  If $u_1$, $u_2$ are kinetic solutions to \eqref{e1.1}--\eqref{e1.4}
 with initial data $u_{1,0}$ and $u_{2,0}$, respectively, then for all $t\in[0,T]$ we have 
 \begin{equation}\label{e2.1}
\bbE \int_{ \cO}|u^\pm(t,x)-v^\pm(t,x)|\,dx\le \bbE \int_{\cO}|u_0(x)-v_0(x)|\,dx,
\end{equation}
for some $C>0$ depending only on the data of the problem.  
\end{theorem}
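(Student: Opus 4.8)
The plan is to pass to the limit in the Kruzhkov inequality of Theorem~\ref{T:2.1} after localizing its test function, in the hyperbolic variables $x'$, to a thin layer around $\po\cO'$, and to kill the resulting boundary contribution by combining the strong trace property of Theorem~\ref{T:5.1} with the Neumann condition \eqref{e1.9N}. Write $u,v$ for $u_1,u_2$ and $u_0,v_0$ for their initial data; by Proposition~\ref{P:1.1} these are weak entropy solutions, so Theorem~\ref{T:5.1} applies to each and yields strong traces $u^\tau,v^\tau\in L^\infty(\Om\X\Gamma'_T)$ on $\Gamma'_T$. First I would record the variant of Theorem~\ref{T:2.1} that retains the initial datum: running its proof from the doubling-of-variables inequality \eqref{e4.1*} without discarding the term $\bbE\int_{\cO^2}(u_0-v_0)_+\theta(0)\phi$, and then symmetrizing in $(u,v)$, gives, for all nonnegative $\theta\in C_c^\infty([0,T))$ and $\psi_1\in C_c^\infty(\cO'\X\R^{d''})$,
\begin{multline}\label{eplan1}
-\bbE\int_0^T\int_\cO|u^\pm(s,x)-v^\pm(s,x)|\,\theta'(s)\psi_1(x)\,dx\,ds
\le \bbE\int_\cO|u_0-v_0|\,\theta(0)\psi_1\,dx\\
+\bbE\int_0^T\int_\cO\Bigl(F(u(s,x),v(s,x))\cdot\nabla\psi_1(x)-\bigl[\nabla_{x''}\!\cdot\bbB(u(s,x),v(s,x))\bigr]\cdot\nabla_{x''}\psi_1(x)\Bigr)\,dx\,ds.
\end{multline}

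Next I would insert $\psi_1(x)=\zeta_\delta'(x')\,w(x'')$, where $\zeta_\delta'$ is the level-set boundary layer sequence associated with a fixed strongly regular $C^{1,1}$ deformation $\Psi'$ of the smooth hypersurface $\po\cO'$ (e.g.\ the inward normal flow) and $w\in C_c^\infty(\R^{d''})$ satisfies $w\equiv1$ near $\overline{\cO''}$. On $\cO$ one has $\nabla_{x''}\psi_1=\zeta_\delta'\nabla_{x''}w=0$ and $\nabla\psi_1=w(x'')\nabla_{x'}\zeta_\delta'(x')$, which points only in the $x'$-directions; hence the $\bbB$-term in \eqref{eplan1} disappears and the $F$-term reduces to $\bbE\int_0^T\int_\cO F'(u,v)\cdot\nabla_{x'}\zeta_\delta'\,w$, with $F'(u,v):=\sgn(u-v)(\Abf'(u)-\Abf'(v))$ the $x'$-part of $F$. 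Using the structure \eqref{e2.102} of $\nabla_{x'}\zeta_\delta'$ (a negative multiple of $\tfrac1\delta\chi_{\{0<\zeta_\delta'<1\}}\nu'$ with positive weight) together with the a.s.\ strong convergence $u,v\to u^\tau,v^\tau$ along $\Psi'$, this term converges as $\delta\to0$ to $-\bbE\int_0^T\int_{\Gamma'}\sgn(u^\tau-v^\tau)\bigl(\Abf'(u^\tau)-\Abf'(v^\tau)\bigr)\cdot\nu'\,d\mu$ for a positive surface measure $d\mu$ on $\Gamma'_T$ (recall $w\equiv1$ on $\cO''$). The decisive point is that this limit is $0$, because $\Abf'(u^\tau)\cdot\nu'=0$ and $\Abf'(v^\tau)\cdot\nu'=0$ a.s.\ and a.e.\ on $\Gamma'_T$.

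To obtain $\Abf'(u^\tau)\cdot\nu'=0$ I would test the weak Neumann identity \eqref{e1.9N} with $\tilde\phi=(1-\zeta_\delta'(x'))\,\varrho(t,x',x'')$, $\varrho\in C_c^\infty((0,T)\X\R^{d'}\X\cO'')$: every term carrying the factor $(1-\zeta_\delta')$ tends to $0$ as $\delta\to0$ — the deterministic ones by dominated convergence (using the regularity (i) of Definition~\ref{D:1.2}) and the stochastic one $\int_0^T\int_\cO(1-\zeta_\delta')\varrho\,\Phi(u)\,dW$ in $L^2(\Om)$ by It\^o's isometry and \eqref{e1.4**} — while $-\varrho\,\Abf'(u)\cdot\nabla_{x'}\zeta_\delta'$ converges, along a subsequence and $\bbP$-a.s., to $\int_0^T\int_{\Gamma'}\Abf'(u^\tau)\cdot\nu'\,\varrho\,d\mu$ by the strong trace; hence $\bbE\int_0^T\int_{\Gamma'}\Abf'(u^\tau)\cdot\nu'\,\varrho\,d\mu=0$ for all such $\varrho$, giving $\Abf'(u^\tau)\cdot\nu'=0$, and likewise for $v^\tau$. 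Consequently, letting $\delta\to0$ in \eqref{eplan1} (with $\zeta_\delta'\to1$ and dominated convergence on the remaining terms) yields $-\bbE\int_0^T\int_\cO|u^\pm-v^\pm|\theta'(s)\,dx\,ds\le\theta(0)\,\bbE\int_\cO|u_0-v_0|\,dx$ for every nonnegative $\theta\in C_c^\infty([0,T))$. Finally, taking $\theta=\theta_n$ with $\theta_n\equiv1$ on $[0,t]$, affine and decreasing from $1$ to $0$ on $[t,t+\tfrac1n]$ and $\equiv0$ afterwards — so that $-\theta_n'$ converges weakly-$\star$ to the Dirac mass at $t$ and $\theta_n(0)=1$ — and using the right-continuous representative $u^+$ of Proposition~\ref{P;2.1} with $u^+(0)=u_0$, $v^+(0)=v_0$, I would let $n\to\infty$ to get $\bbE\int_\cO|u^+(t,x)-v^+(t,x)|\,dx\le\bbE\int_\cO|u_0-v_0|\,dx$; the argument for $u^-$ is identical, which is \eqref{e2.1}.

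The step I expect to be the main obstacle is the identification $\Abf'(u^\tau)\cdot\nu'=0$, i.e.\ that the strong trace genuinely satisfies the Neumann condition: one must organize, along one common sequence $\delta_k\to0$ and on one common event of full probability, the a.s.\ strong-trace convergence \eqref{eT5.1-2}, the $L^2(\Om)$-decay of the layer-localized stochastic integral in \eqref{e1.9N}, and the fact that \eqref{e1.9N} holds only $\bbP$-a.s. This is precisely where the techniques of \cite{FL2} for normal weak traces of divergence-measure fields in the stochastic setting are needed, now transposed from the parabolic boundary $\Gamma''$ to the hyperbolic boundary $\Gamma'$.
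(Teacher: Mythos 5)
Your proposal is correct and follows essentially the same route as the paper: localize the Kruzhkov inequality of Theorem~\ref{T:2.1} with an $\cO'$-boundary layer sequence in the hyperbolic variables, identify the boundary term using the strong trace property of Theorem~\ref{T:5.1}, and kill it with the Neumann condition \eqref{e1.4}, then pass $\theta\to\mathbf{1}_{(0,t)}$ using Proposition~\ref{P;2.1}. The only difference is one of detail: where the paper simply invokes ``by virtue of \eqref{e1.4}'', you make explicit the verification that the strong trace actually satisfies $\Abf'(u^\tau)\cdot\nu'=0$ by testing the weak Neumann identity \eqref{e1.9N} with $(1-\zeta_\delta')\varrho$ and passing to the limit (including the $L^2(\Om)$ decay of the localized stochastic integral via It\^o's isometry), a step that indeed deserves to be spelled out.
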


\begin{proof} The proof combines Theorem~\ref{T:2.1} and Theorem~\ref{T:5.1}. Indeed, by Theorem~\ref{T:2.1} for any nonnegative test functions $\theta\in C_c^\infty((0,T))$ and $\psi\in C_c^\infty(\cO'\times\mathbb{R}^{d''})$, we have a.s. that
\begin{multline}\label{e3.unique1*}
  -\bbE\int_0^T\int_{\cO}\left|u^{\pm}(s,x)-v^{\pm}(s,x)\right|\theta'(s)\psi(x)\, dx\, dy \\
  \le  \bbE\int_0^T\int_{\cO}K_{x''}(u(s,x),v(s,x))\cdot \nabla\psi_1(x)\, dx\, dy\, ds 
 \end{multline} 
Then, it suffices to take a suitable sequence of test functions $\theta_n(s)$ and $\psi_\delta(x)=\zeta_\delta'(x')$ in inequality \eqref{e3.unique1*}, where $\zeta_\delta'(x')$ is a $\cO'$-boundary layer sequence and $\theta_n$ converges to the indicator function of $(0,t)$, so that, by virtue of \eqref{e1.4}, we may use the strong trace property established in Section~\ref{S:4} in order to send $\delta\to 0$ first and then, recalling Proposition~\ref{P;2.1}, also also take $n\to\infty$ to obtain \eqref{e2.1}.
\end{proof}

As in \cite{DV, DHV} we have the following corollary whose proof follows by the same lines as in the mentioned references.

\begin{corollary}[Continuity in time] \label{C:2.1} Let $u$ be a kinetic solution to \eqref{e1.1}--\eqref{e1.3}. Then there exists a representative of $u$ which has almost surely continuous 
trajectories in $L^p(\Om)$, for all $p\in[1,\infty)$.
\end{corollary}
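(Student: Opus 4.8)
The plan is to follow the scheme of \cite{DV,DHV}, the only genuinely new ingredient being that the boundary terms have to be disposed of exactly as in the proof of uniqueness. First I would record the two structural facts already available: by the definition of a kinetic solution $u\in L^p(\Om;L^\infty(0,T;L^p(\cO)))$ for every $p<\infty$, and since $u_0,u_b\in[a,b]$ the solution is a.s.\ bounded, $\|u\|_{L^\infty(\Om\X(0,T)\X\cO)}<\infty$. It therefore suffices to produce a representative that is $\bbP$-a.s.\ continuous from $[0,T]$ into $L^1(\cO)$: continuity into $L^p(\cO)$ for every $p\in[1,\infty)$ then follows from the uniform $L^\infty$ bound by interpolation. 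Next I would invoke Proposition~\ref{P;2.1}: $f=\mathbf 1_{u>\xi}$ has left- and right-continuous representatives $f^\pm$, equal off an at most countable set $\mathcal N\subset[0,T]$; writing $u^\pm(t^*,\cdot)=\int_{-L}^L(f^{*,\pm}(\cdot,\xi)-\mathbf 1_{\xi<0})\,d\xi$ with $L>\|u\|_{L^\infty}$ and using the $L^\infty$ bound, this yields one-sided weak limits $u(t^*\pm\ve)\rightharpoonup u^\pm(t^*)$ in every $L^q(\cO)$ as $\ve\downarrow0$, coinciding off $\mathcal N$.

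The core of the argument is to upgrade this to $u^+(t^*)=u^-(t^*)$ for \emph{every} $t^*$ and to replace the weak convergence by strong $L^1(\cO)$ convergence. For the first point I would observe that, since $f^+$ and $f^-$ agree with $f$ for a.e.\ $t$, each of them is again a kinetic solution with the same initial datum $u_0$, the same Dirichlet datum $u_b$ and the same kinetic measure $m$, and that one-sided time-limits are representative independent. Running the doubling of variables of Theorem~\ref{T2.100} between these two solutions with a temporal cut-off $\theta$ that isolates the right limit of the one and the left limit of the other at $t^*$ — and discarding, exactly as in the proofs of Theorem~\ref{T:2.1} and Theorem~\ref{T:2.2}, the boundary contributions on $\Gamma'$ via the strong trace property of Theorem~\ref{T:5.1} and those on $\Gamma''$ via the normal weak traces of the fields $K_{x''}(u,u_{B''})$ associated with \eqref{e1.15} — I expect to arrive at $\bbE\int_\cO(u^+(t^*,x)-u^-(t^*,x))^+\,dx=0$ for every $t^*$, and, symmetrically, the reverse, hence $u^+(t^*)=u^-(t^*)$ in $L^1(\cO)$ a.s. The fact that these one-sided limits are in addition $\chi$-functions for every $t^*$ I would obtain, as in \cite{DV,DHV}, from the energy balance produced by applying \eqref{e1.8E-1} with $\eta(\xi)=\xi^2/2$ and a spatial test function increasing to $1$: this shows that $t\mapsto\|u(t)\|_{L^2(\cO)}^2+2\,m([0,t]\X\cO\X\R)$ has an a.s.\ continuous modification (here Lemma~\ref{L:3.01} and, once again, the boundary analysis are used), so the non-decreasing process $t\mapsto m([0,t]\X\cO\X\R)$ carries no atom, $\mathcal N=\emptyset$, and each $f^\pm(t^*)$ is a $\chi$-function a.e.\ in $\Om\X\cO\X(-L,L)$.

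Once this is in place, the passage from weak to strong convergence is immediate: for fixed $t^*$ and $\om$ in a set of full measure, $f(t^*\pm\ve)$ converges weakly-$\star$ in $L^\infty(\cO\X(-L,L))$ to the $\chi$-function $f^\pm(t^*)$ while each $f(t^*\pm\ve)$ is itself a $\chi$-function, so Lemma~\ref{2.criterion0} gives strong convergence in $L^1(\cO\X(-L,L))$; taking first moments in $\xi$, $u(t^*\pm\ve)\to u^\pm(t^*)=u^+(t^*)$ strongly in $L^1(\cO)$. This is precisely the a.s.\ continuity of $t\mapsto u(t)$ into $L^1(\cO)$, and the uniform $L^\infty$ bound promotes it to continuity into $L^p(\cO)$ for every $p\in[1,\infty)$.

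I expect the main obstacle to be exactly the step that excludes temporal jumps: one must reproduce the doubling-of-variables and energy-balance machinery of \cite{DV,DHV} in the presence of the mixed Neumann--Dirichlet boundary, that is, simultaneously control, near $\po\cO'$, the relevant fields through their strong traces (Theorem~\ref{T:5.1}) and, near $\po\cO''$, the fields $K_{x''}(u,u_{B''})$ through their normal weak traces (cf.\ \eqref{e1.15} and Section~\ref{S:3}). Everything else — the reduction to $L^1$-continuity, the use of Proposition~\ref{P;2.1}, and the final application of Lemma~\ref{2.criterion0} — is routine and follows the cited references line by line.
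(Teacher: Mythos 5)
Your overall scheme is the right one and does follow \cite{DV,DHV}: reduce to $L^1(\cO)$-continuity via the $L^\infty$-bound, invoke Proposition~\ref{P;2.1} for the one-sided limits $f^\pm$, use the doubling machinery with the boundary terms disposed of through the strong traces of Theorem~\ref{T:5.1} on $\Gamma'$ and the normal weak traces of $K_{x''}(u,u_{B''})$ on $\Gamma''$, and finish with Lemma~\ref{2.criterion0} to upgrade weak-$\star$ one-sided convergence to strong $L^1$-convergence. That part is consistent with the paper's (one-line) proof, which simply cites \cite{DV,DHV}.

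There is, however, a genuine gap in the step where you assert that each $f^\pm(t^*)$ is a $\chi$-function. First, the inference you propose is not valid as stated: continuity of $t\mapsto\frac12\|u(t)\|_{L^2(\cO)}^2+m([0,t]\times\cO\times\R)$ does not imply that the nondecreasing process $t\mapsto m([0,t]\times\cO\times\R)$ has no atoms, because nothing prevents $\|u(t)\|_{L^2}^2$ from having a downward jump that exactly compensates an atom of $m$ (indeed the kinetic equation forces $\frac12\Delta\|u\|^2=-m(\{t^*\})\le 0$, which is compatible with atoms). Second, even if $m$ had no atoms, you would only obtain $f^+(t^*)=f^-(t^*)$; a weak-$\star$ limit of $\chi$-functions need not itself be a $\chi$-function, so the equilibrium structure at the exceptional times $t^*\in\mathcal N$ would remain unproved — and without it Lemma~\ref{2.criterion0} cannot be applied. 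There is also an order-of-logic issue: you invoke Theorem~\ref{T2.100} with $f^+$ and $f^-$ viewed as ``kinetic solutions,'' but Definition~\ref{D:1.2} requires $f=\mathbf 1_{u>\xi}$, which is precisely the $\chi$-function structure you have not yet established at $t^*\in\mathcal N$. The argument of \cite{DV,DHV} does not pass through any energy balance here: one runs the doubling of variables in its $f\bar f$ form (with the same solution and the same initial datum, and with the boundary contributions handled exactly as you describe) and obtains, for \emph{every} $t\in[0,T]$ and both signs,
\[
\bbE\int_{\cO}\int_{\R} f^{\pm}(t,x,\xi)\,\bigl(1-f^{\pm}(t,x,\xi)\bigr)\,d\xi\,dx
\ \le\ \bbE\int_{\cO}\int_{\R} f_0(x,\xi)\bigl(1-f_0(x,\xi)\bigr)\,d\xi\,dx \ =\ 0,
\]
since $f_0=\mathbf 1_{u_0>\xi}$ is already a $\chi$-function; as $0\le f^\pm\le 1$, this forces $f^\pm(t)\in\{0,1\}$ a.e., i.e.\ $f^\pm(t)$ is a $\chi$-function for every $t$, $\bbP$-a.s. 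This is a byproduct of the very same doubling computation behind Theorems~\ref{T2.100}, \ref{T:2.1} and \ref{T:2.2}, not a separate energy consideration. Once this is in place, the identity $u^+(t^*)=u^-(t^*)$ for every $t^*$, the application of Lemma~\ref{2.criterion0}, and the promotion from $L^1(\cO)$ to $L^p(\cO)$ all go through exactly as you wrote. (Non-atomicity of $m$ is then a \emph{consequence} of continuity, not a tool to prove it.)
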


\section{Existence, part one: The first approximate problem}\label{S:6}

In order to tackle the question of existence of solutions to \eqref{e1.1}--\eqref{e1.4}, we approximate it with solve the following  approximation of problem \eqref{e1.1}--\eqref{e1.3},
\begin{align}
& du^\ve+\nabla\cdot \Abf^\ve(u^\ve)\,dt - D_{x''}^2:\Bbf^\ve(u^\ve)\,dt-\ve\Delta u^\ve\,dt = \Phi^\ve(u^\ve)\,dW(t), \label{e6.1}\\
&u^\ve(0,x)=u_0^\ve(x),\qquad x\in\cO,\label{e6.2}\\
& u^\ve=u_b^\ve(t,x),\qquad t>0,\  x\in \cO'\X\po\cO'', \label{e6.3}\\
&\ve \po_\nu  u^\ve=\Abf^\ve(u^\ve)\cdot\nu, \qquad t>0, x\in \po\cO'\X\cO'', \label{e6.3'}
\end{align}
where $u_0^\ve$ is a smooth approximation of $u_0$, $u_0^\ve\in L^\infty(\Om, C_c^\infty(\cO))$, $u_\text{min}\le u_0^\ve\le u_\text{max}$, a.s., $u_b^\ve$ is a smooth approximation of $u_b$, 
$\Phi^\ve$ is a suitable Lipschitz approximation of $\Phi$ satisfying \eqref{e1.4*}  uniformly, with
 $g_ k^\ve$ and $G^\ve$  as in the case $\ve=0$, $g_k^\ve$ smooth satisfying \eqref{e1.8'}.  Moreover, $g_k^\ve\equiv 0$ for $k\ge 1/\ve$. 
Finally, $\Abf^\ve\in C^2(\R;\R^d)$, $\Bbf^\ve\in C^2(\R;\bbM^{d''})$, $\Abf^\ve(u)=\Abf(u)$, $\Bbf^\ve(u)=\Bbf(u)$ for $u\in[u_\text{min},u_{\text{max}}]$,  and, setting $\abf^\ve=(\Abf^\ve)'$, $\bbf^\ve=(\Bbf^\ve)'$ we assume that $\abf^\ve \in L^\infty(\R;\R^d)$, $\bbf^\ve\in L^\infty(\R;\bbM^{d''})$. The latter assumption will be justified later on when we will prove that the solution of \eqref{e6.1}--\eqref{e6.3'},
$u^\ve(t,x)$, satisfies  $u_{\text{min}}\le u^\ve(t,x)\le u_{\text{max}}$, $(t,x)\in (0,T)\X\cO$. 

Nevertheless, the establishment of existence of solutions to \eqref{e6.1}--\eqref{e6.3'} presents also its problems due to its quasilinear character. Thus,  we approximate again \eqref{e6.1}--\eqref{e6.3'} by the following forth order nonlinear SPDE problem
\begin{align}
& du^{\ve,\mu}+\nabla\cdot \Abf^\ve(u^{\ve,\mu})\,dt - D_{x''}^2:\tilde \Bbf^\ve(u^{\ve,\mu})\,dt-\ve\Delta_{x'} u^{\ve,\mu}  +\mu\Delta_{x''}^2 u^{\ve,\mu}  \label{e6.4}\\
&\qquad\qquad\qquad\qquad\qquad= \Phi^\ve(u^{\ve,\mu})\,dW(t),\nonumber\\
&u^{\ve,\mu}(0,x)=u_0^\ve(x),\qquad x\in\cO,\label{e6.5}\\
& u^{\ve,\mu}=u_b^\ve(t,x),\qquad t>0,\  x\in \cO'\X\po\cO'', \label{e6.6}\\
&\ve\po_{\nu}u^{\ve,\mu}=\Abf(u^{\ve,\mu}),\qquad t>0,\ x\in\po\cO'\X\cO'', \label{e6.7}\\
&\po_\nu u^{\ve,\mu}=0,\qquad t>0, \ x\in\cO'\X\po\cO'', \label{e6.8}
\end{align}
where $\mu > 0$ is an artificial viscosity. Notice that we needed to introduce a new boundary condition \eqref{e6.8} due to the fourth order of the biharmonic operator $\mu\Delta_{x''}^2$.

For the sake of clarity, we will call the problem \eqref{e6.4}--\eqref{e6.8} \textit{the first approximate problem}, whereas \eqref{e6.1}--\eqref{e6.3'} \textit{the second approximate problem}.

 We are going to establish the existence and uniqueness of solutions \eqref{e6.4}--\eqref{e6.8} by means of a fixed-point argument envolving the Duhamel formula. More precisely, let $A: D(A) \subset L^2(\cO'\times \cO'') \rightarrow L^2(\cO' \times \cO'')$ be the operator given by
$$
\begin{cases}
D(A) &= \{ u \in L^2(\cO' \times \cO'')\,:\, u\in L^2(\cO''; H^2(\cO')) \cap L^2(\cO'; (H_0^2 \cap H^4)(\cO''))  \\  &\quad\quad\quad\quad\quad\quad\quad\quad\quad \text{ and, in the sense of traces, } \partial_\nu u = 0 \text{ on } \partial \cO' \times \cO'' \}, \\
A &= -\ve\Delta_{x'+x''} + \mu\Delta_{x''}^2,
\end{cases}
$$
and $S(t) = \exp\{ - t A \}$.  

Let $\cP$ be the predictable $\s$-algebra in $\Omega\X[0,T]$. Let
$$
 \E = L^2(\Omega; C([0,T];L^2(\cO)) \cap L^2(\Omega \times [0,T], \cP; L^2(\cO'; H^2(\cO'')) \cap L^2(\cO'';H^1(\cO'))).
 $$
For $v\in\cE$, we define 
\begin{multline}
  K[v](t)=K^{\ve,\mu}[v](t) =\, w^v(t) + \widetilde{u_b^\ve}(t) + S(t)u_0^\ve- \int_0^t S(t-s) \nabla\cdot \Abf^\ve(v(s))\,ds \\
   + \int_0^t S(t-s) D_{x''}^2:\tilde \Bbf^\ve(v(s))\,dt + \int_0^t S(t-s) \Phi^\ve (v(s)) dW(s). \label{e6:uvemu}
\end{multline}
Here $w^v(t,x)$ is the solution  to 
\begin{align}
\frac{\partial w^v}{\partial t}(t,x) &= \ve \Delta_{x',x''} w^v(t,x) - \mu \Delta_{x''}^2 w^v(t,x)\quad \text{ in } \{ 0 < t < T \} \times \cO, \label{e6:1wu} \\
\ve \frac{\partial w^v}{\partial \nu}(t,x) &= \Abf^\ve(v(t,x)) \cdot \nu\qquad\qquad \text{ on } \{ 0 < t < T \} \X \partial \cO' \X \cO'', \label{e6:2wu} \\
w^v(t,x) &= \frac{\partial w^v}{\partial \nu}(t,x) = 0 \qquad\qquad \text{ on }  \{ 0 < t < T \} \X  \cO' \X \partial\cO'', \label{e6:3wu}\\
w^v(0, x) &= 0 \qquad\qquad\qquad\qquad\quad  \text{ on }  \{ 0 = t \} \X  \cO, \label{e6:4wu}
\end{align}
and $\widetilde{u_b^\ve}(t,x',x'')$ is the solution to
\begin{align}
\frac{\partial \widetilde{u_b^\ve}}{\partial t}(t,x) &= \ve \Delta_{x',x''} \widetilde{u_b^\ve}(t,x) - \mu \Delta_{x''}^2 \widetilde{u_b^\ve}(t,x) \quad \text{ in } \{ 0 < t < T \} \times \cO' \times \cO'', \label{e6:1ub} \\\
\frac{\partial \widetilde{u_b^\ve}}{\partial \nu}(t,x) &= 0  \qquad\qquad\text{ on } \{ 0 < t < T \} \X \partial \cO' \X \cO'', \label{e6:2ub} \\\
\widetilde{u_b^\ve}(t,x) &= u_b^\ve(t,x)   \qquad\qquad                      \text{ on }  \{ 0 < t < T \} \X  \cO' \X \partial\cO'',\label{e6:3ub} \\\
\frac{\partial \widetilde{u_b^\ve}}{\partial \nu}(t,x) &= 0   \qquad\qquad        \text{ on }  \{ 0 < t < T \} \X  \cO' \X \partial\cO'',\label{e6:4ub} \\\
\widetilde{u_b^\ve}(0, x) &=0 \qquad\qquad \text{ on }  \{ 0 = t \} \X  \cO. \label{e6:5ub} 
\end{align}
The sense in which the solutions of the problems \eqref{e6:1wu}-\eqref{e6:4wu} and \eqref{e6:1ub}-\eqref{e6:5ub} should be understood is explained subsequently.

First of all, as we have shown in Theorem \ref{thmspectralA}, $A$ is indeed a nonnegative self-adjoint operator, and consequently $S(t)$ is well-defined and possesses several good smoothing properties. 

\subsection{On $w^v(t)$}\label{SS:6.1}

 Henceforth $H_A^{1/2}$ will denote the space $L^2(\cO'; H_0^2(\cO''))$ $\cap$ $L^2(\cO'';$ $H^1(\cO')))$  (see Appendix \ref{appendixA}).

\begin{definition}\label{D:PWS}

Let $v \in L^2(\Om\X(0,T); H_A^{1/2})$ be predictable. We say that $w^v$ is a \textit{pathwise weak solution} of \eqref{e6:1wu}--\eqref{e6:4wu}, if 
$$
w^v \in L^2(\Om\X(0,T); H_A^{1/2}) \cap L^2(\Om;C([0,T];L^2(\cO))) \cap L^2(\Om;H^1((0,T); H_A^{-1/2})),
$$
is predictable and,  for every $v \in H_A^{1/2}$ and  almost every $0<t<T$, a.s.
\begin{align*}
\Big\langle \frac{\partial w^v}{\partial t}&(t), \varphi \Big\rangle_{H_A^{-1/2},H_A^{1/2}} + \ve \int_\cO \nabla w^v(t,x) \cdot \nabla \varphi(x) dx + \mu \int_\cO \Delta_{x''} w^v(t,x) \Delta_{x''} \varphi(x) dx \\&= \int_{\cO''}\int_{\partial \cO'} \Abf^\ve(v(t,x',x'')) \cdot \nu(x',x'') \varphi(x',x'') \,d\mathcal{H}^{d'-1}(x') dx''.
\end{align*}
\end{definition}

 We are going to use the following result by J.-L. Lions, whose proof may be found in \cite{LM} and statement we take from \cite{Br}.

\begin{theorem} \label{6:thmLions}
  Let $H$ be Hilbert space with scalar product $(\quad,\quad)$ and norm $\|\quad\|_H$, and identify $H^*$ with $H$. Let also $V$ be another Hilbert space with norm $\Vert \quad \Vert_V$, for which $V \subset H$ with dense and continuous injection, so that we have the triplet
  $$V \subset H \subset V^*.$$
  
  Let $T>0$ be fixed; and suppose that for a.e.\  $0<t<T$ we are given a bilinear form  $a(t;u,z) : V \times V \rightarrow \R$ satisfying the following properties:
  \begin{enumerate}
    \item For every $u, z \in V$, the function $t \mapsto a(t;u,z)$ is measurable;
    \item $|a(t;u,z)| \leq M \Vert u \Vert_V \Vert z \Vert_V$ for a.e. $0<t<T$, $\forall\,u,z \in V$, and where $M$ is a constant;
    \item $a(t;u,u) \geq \alpha \Vert u \Vert_V^2 - C\Vert u \Vert_H^2$ for a.e. $0<t<T$, $\forall \, u \in V$, and where $\alpha$ and $C$ are positive constants.
  \end{enumerate}
  
  Then for every $F \in L^2(0,T;V^*)$ and $u_0 \in H$, there exists a unique function 
  $$
  u \in L^2(0,T; V) \cap C([0,T];H) \cap H^1(0,T;V^*),
  $$
  such that $u(0) = u_0$ and
  \begin{equation}\label{e.bilform}
  \Big\langle \frac{du}{dt}(t), z \Big\rangle_{V^*,V} + a(t;u(t),z) = \langle F(t), z\rangle_{V^*, V} \text{ for a.e. } 0<t<T \text{ and } \forall z \in V.
  \end{equation}
\end{theorem}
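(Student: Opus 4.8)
The statement is the classical theorem of J.-L.\ Lions on abstract linear parabolic equations in a Gelfand triple, and the plan is to reproduce the Galerkin proof (the one carried out in \cite{LM}). First I would remove the zeroth-order term in hypothesis (3): putting $v(t):=e^{-Ct}u(t)$ turns \eqref{e.bilform} into a problem of exactly the same form with $a(t;\cdot,\cdot)$ replaced by $a_C(t;\cdot,\cdot):=a(t;\cdot,\cdot)+C(\cdot,\cdot)$ and $F$ by $e^{-Ct}F$, and $a_C$ is genuinely coercive, $a_C(t;w,w)\ge\alpha\|w\|_V^2$. So henceforth one may assume $C=0$.

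Next, the Galerkin scheme. Fix a sequence $(w_j)_{j\ge1}\subset V$, linearly independent, with dense span in $V$ and orthonormal in $H$; set $V_m=\mathrm{span}\{w_1,\dots,w_m\}$ and let $P_m$ be the $H$-orthogonal projection onto $V_m$. I would seek $u_m(t)=\sum_{j=1}^m g_j^m(t)w_j$ with $\langle u_m'(t),w_j\rangle+a(t;u_m(t),w_j)=\langle F(t),w_j\rangle$ for $1\le j\le m$ and $u_m(0)=P_mu_0$. Since $(w_i,w_j)_H=\delta_{ij}$, this is a linear system $\dot g^m=M(t)g^m+f^m$ with $M\in L^1(0,T)$ (by measurability of $t\mapsto a(t;w_i,w_j)$ and the bound $|a(t;u,z)|\le M\|u\|_V\|z\|_V$) and $f^m\in L^2(0,T)$, so Carath\'eodory's theorem gives a unique absolutely continuous global solution. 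Testing the Galerkin equations with $u_m(t)$ itself gives $\tfrac12\tfrac{d}{dt}\|u_m\|_H^2+a(t;u_m,u_m)=\langle F,u_m\rangle$, and coercivity plus Young's inequality yield $\tfrac12\tfrac{d}{dt}\|u_m\|_H^2+\tfrac{\alpha}{2}\|u_m\|_V^2\le\tfrac1{2\alpha}\|F\|_{V^*}^2$; integrating and using $\|u_m(0)\|_H\le\|u_0\|_H$ produces a bound for $(u_m)$ in $L^\infty(0,T;H)\cap L^2(0,T;V)$ independent of $m$.

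Passing to a subsequence with $u_m\rightharpoonup u$ in $L^2(0,T;V)$, I would let $m\to\infty$ in the Galerkin identity tested against $\psi(t)w_j$ with $\psi\in C_c^1([0,T))$, integrating by parts in time and using $P_mu_0\to u_0$ in $H$, to obtain, for every $v\in V$ and every such $\psi$,
\begin{equation*}
-\int_0^T(u,v)_H\psi'\,dt-(u_0,v)_H\psi(0)+\int_0^T a(t;u,v)\psi\,dt=\int_0^T\langle F,v\rangle\psi\,dt .
\end{equation*}
Letting $\mathcal A(t)\in\mathcal L(V,V^*)$ be defined by $\langle\mathcal A(t)w,v\rangle=a(t;w,v)$, one has $\|\mathcal A(t)u(t)\|_{V^*}\le M\|u(t)\|_V$, so $\mathcal A(\cdot)u\in L^2(0,T;V^*)$ and the identity above says $u'=F-\mathcal A(\cdot)u\in L^2(0,T;V^*)$ (as a $V^*$-valued distribution) and $u(0)=u_0$. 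The regularity $u\in L^2(0,T;V)$, $u'\in L^2(0,T;V^*)$ then gives, by the Lions--Magenes interpolation/trace lemma, a representative $u\in C([0,T];H)$ for which $t\mapsto\|u(t)\|_H^2$ is absolutely continuous with $\tfrac{d}{dt}\|u(t)\|_H^2=2\langle u'(t),u(t)\rangle_{V^*,V}$; hence \eqref{e.bilform} holds for a.e.\ $t$ and the initial condition makes sense in $H$. Uniqueness follows by applying the same product rule to the difference of two solutions (with $F=0$, $u_0=0$): $\tfrac12\tfrac{d}{dt}\|u\|_H^2+a(t;u,u)=0$, hence $\tfrac{d}{dt}\|u\|_H^2\le0$ after the reduction, so $u\equiv0$.

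\emph{The main obstacle} is the Lions--Magenes lemma itself, namely that $L^2(0,T;V)\cap H^1(0,T;V^*)\hookrightarrow C([0,T];H)$ together with the integration-by-parts formula $\tfrac{d}{dt}\|u\|_H^2=2\langle u',u\rangle$: it is proved by a mollification-in-time argument in the Gelfand triple and is precisely what makes both the initial condition and the uniqueness estimate rigorous. The remaining ingredients---solvability of the Galerkin ODE, the energy estimate, and the weak passage to the limit---are routine.
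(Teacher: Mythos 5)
The paper does not prove Theorem~\ref{6:thmLions}: immediately before its statement the text reads ``We are going to use the following result by J.-L.~Lions, whose proof may be found in \cite{LM} and statement we take from \cite{Br}.'' So there is no in-paper argument to compare against. Your Galerkin proof is the standard one found in those references (and in \cite{Br}, Chapter~10): the exponential change of unknown to absorb the $-C\|u\|_H^2$ defect of coercivity, finite-dimensional Galerkin approximation with Carath\'eodory solvability, the energy estimate giving uniform bounds in $L^\infty(0,T;H)\cap L^2(0,T;V)$, weak passage to the limit, identification of $u'\in L^2(0,T;V^*)$, and then the Lions--Magenes embedding $L^2(0,T;V)\cap H^1(0,T;V^*)\hookrightarrow C([0,T];H)$ with the product rule $\frac{d}{dt}\|u\|_H^2=2\langle u',u\rangle_{V^*,V}$ for continuity and uniqueness. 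You correctly single out this last lemma as the nontrivial ingredient. One small point worth flagging for completeness: in the passage to the limit you fix $v=w_j$ and let $m\to\infty$; since the Galerkin identity only holds for $v\in V_m$ for the $m$-th approximant, one should first fix $j$, take $m\ge j$, pass to the limit, and then use density of $\mathrm{span}\{w_j\}$ in $V$ — this is implicit in your write-up but should be made explicit. With that caveat the argument is complete and is exactly the one the paper delegates to \cite{LM} and \cite{Br}.
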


We apply this theorem as follows. Let $H = L^2(\cO)$ and $V = H_A^{1/2}$, so that $V^* = H_A^{-1/2}= L^2(\cO'; H^{-2}(\cO'')) + L^2(\cO''; H^1(\cO')^*)$. Our bilinear form $a(t,u,z)$ will be 
$$
a(t; u, z) = \ve \int_\cO \nabla u \cdot \nabla z\,dx + \mu \int_\cO \Delta_{x''} u \Delta_{x''} z \, dx,
$$
which is actually time-independent. Finally, let $F(t) \in L^2(0,T; H_A^{-1/2})$ be
\begin{equation}\label{e.f100}
\langle F(t), \varphi\rangle_{H_A^{-1/2}, H_A^{1/2}} = \int_{\cO''} \int_{\partial \cO'} \Abf^\ve(v(t,x',x'')) \cdot \nu(x',x'') \varphi(x',x'') \,d\mathcal{H}^{d'-1}(x') dx'', 
\end{equation}
which makes sense because of the trace properties of Sobolev functions. 

For each fixed $\om\in\Om$,  the conditions in Theorem \ref{6:thmLions} are immediately verified, so  existence and uniqueness of $w^v$ for each fixed $\om\in\Om$ follows.
Now the proof of Theorem~\ref{6:thmLions} can be made by the Garlerkin method.  So, since $v \in L^2(\Omega \times [0,T], \cP; H_A^{1/2})$ is predictable, $f$ given by \eqref{e.f100} is also predictable and so are its finite dimensional projections. Therefore, the Galerkin approximations, which are solutions of finite dimensional ODEs obtained as projections of \eqref{e.bilform},
are also predictable. The convergence of the Galerkin approximations is obtained alongside a uniform estimate in  $L^2(\Om\X(0,T), \cP; H_A^{1/2})$ so that in the limit we obtain a pathwise weak solution in the sense of Definition~\ref{D:PWS}.  

Moreover, if $v_1$ and $v_2 \in L^2(\Omega \times [0,T], \cP; H_A^{1/2})$ are predictable, then, a.s. and for a.e. $0 < t < T$,
\begin{align*}
  \Big\langle& \frac{\partial w^{v_1}}{\partial t}(t) - \frac{\partial w^{v_2}}{\partial t}(t), w^{v_1}(t) - w^{v_2}(t) \Big\rangle_{H_A^{-1/2},H_A^{1/2}} \\&+ \ve \int_\cO |\nabla w^{v_1}(t,x) - \nabla w^{v_2}(t,x)|^2 \,dx + \mu \int_\cO | \Delta_{x''} w^{v_1}(t,x) -  \Delta_{x''} w^{v_2}(t,x) |^2\, dx \\&= \int_{\cO''}\int_{\partial \cO'} (\Abf^\ve(v_1(t,x)) - \Abf^\ve(v_2(t,x)))\cdot \nu(x) \\ &\quad\quad\quad\quad\quad\quad\quad\quad\quad\quad\quad\quad\quad\quad\quad\quad\quad\quad(w^{v_1}(t,x) - w^{v_2}(t,x)) \,d\mathcal{H}^{d'-1}(x')\, dx''.
\end{align*}
Thus, a.s.,
\begin{align*}
  \frac{1}{2} \sup_{0\leq t \leq T} \| &w^{v_1}(t) - w^{v_2}(t) \|_{L^2(\cO)}^2 + \ve \int_0^T   \| \nabla w^{v_1}(t) - \nabla w^{v_2}(t) \|_{L^2(\cO)}^2\, dt \\ + &\mu\int_0^T   \| \Delta_{x''} w^{v_1}(t) - \Delta_{x''} w^{v_2}(t) \|_{L^2(\cO)}^2\, dt \\ &\quad\quad \leq \Lip(\Abf^\ve) \int_0^T\int_{\partial \cO' \times \cO''} |w^{v_1}(t,x) - w^{v_2}(t,x) | \\ &\quad\quad\quad\quad\quad\quad\quad\quad\quad\quad\quad |v_1(t,x) - v_2(t,x)| \, d\mathcal{H}^{d'-1}(x')dx'' dt.
\end{align*}
As $\Vert f \Vert_{L^2(\partial \cO')} \leq C \Vert f \Vert_{H^1(\cO')}$ for some $C>0$, Young and Gronwall inequalities yield
$$\bbE\Vert w^{v_1} - w^{v_2} \Vert_{C([0,T];L^2(\cO)) \cap L^2(0,T; H_A^{1/2})}^2 \leq C \bbE \Vert v_1 - v_2 \Vert_{L^2(0,T;H_A^{1/2})}^2.$$

In conclusion, we summarise the results above as follows.

\begin{lemma} \label{6:lemmawv1}
For any predictable $v \in L^2(\Omega \times [0,T], \cP; H_A^{1/2})$, there exists a unique pathwise weak solution $w^v \in L^2(\Omega; C([0,T]; L^2(\cO))) \cap L^2(\Omega \times [0,T], \cP; H_A^{1/2})$ of \eqref{e6:1wu}--\eqref{e6:4wu}. Moreover,  the mapping $v \mapsto w^v$ is continuous.
\end{lemma}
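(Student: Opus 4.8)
The plan is to obtain the lemma by assembling the discussion preceding its statement: apply the J.-L.\ Lions existence-uniqueness theorem (Theorem~\ref{6:thmLions}) pathwise on a suitable Gelfand triple, recover predictability from a Galerkin approximation, and then derive the continuity of $v\mapsto w^v$ from an energy estimate for differences. First I would fix the triple $V=H_A^{1/2}\subset H=L^2(\cO)\subset V^*=H_A^{-1/2}$ and the time-independent bilinear form $a(u,z)=\ve\int_\cO\nabla u\cdot\nabla z\,dx+\mu\int_\cO\Delta_{x''}u\,\Delta_{x''}z\,dx$. Boundedness $|a(u,z)|\le M\|u\|_V\|z\|_V$ is immediate, and a G\aa rding-type coercivity $a(u,u)\ge\alpha\|u\|_V^2-C\|u\|_H^2$ follows from the characterization of $H_A^{1/2}$ in Appendix~\ref{appendixA}. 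For predictable $v\in L^2(\Om\X(0,T),\cP;H_A^{1/2})$ the functional $F(t)$ defined by \eqref{e.f100} lies in $L^2(0,T;H_A^{-1/2})$ for a.e.\ $\om$, by the continuity of the trace operator $H^1(\cO')\to L^2(\po\cO')$ and the Lipschitz bound on $\Abf^\ve$. Theorem~\ref{6:thmLions} applied for each fixed $\om$ then produces a unique $w^v(\om,\cdot)\in L^2(0,T;H_A^{1/2})\cap C([0,T];L^2(\cO))\cap H^1(0,T;H_A^{-1/2})$ satisfying \eqref{e.bilform}, i.e.\ the pathwise weak formulation of Definition~\ref{D:PWS}.

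Next I would argue predictability. Since the proof of Theorem~\ref{6:thmLions} proceeds by the Galerkin method and the finite-dimensional projections of $F$ inherit predictability from $v$, the Galerkin approximations solve finite-dimensional linear ODEs with predictable coefficients and are thus predictable processes with values in the corresponding finite-dimensional subspaces. The a priori bound in the Galerkin scheme is uniform in the truncation parameter and, after taking expectations, gives a bound in $L^2(\Om\X(0,T),\cP;H_A^{1/2})\cap L^2(\Om;C([0,T];L^2(\cO)))$; passing to the weak (resp.\ weak-$\star$) limit preserves predictability, so $w^v$ is a predictable pathwise weak solution, and the pathwise uniqueness shows it is independent of the extracted subsequence. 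This establishes the first assertion of the lemma.

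For the continuity statement I would test the equation satisfied by $w^{v_1}-w^{v_2}$ against $w^{v_1}-w^{v_2}$ itself, obtaining a.s.\ and for a.e.\ $t$ the energy identity displayed before the lemma, then bound the right-hand boundary integral using $\Lip(\Abf^\ve)$ together with the trace inequality $\|f\|_{L^2(\po\cO')}\le C\|f\|_{H^1(\cO')}$, absorb the resulting $\|w^{v_1}-w^{v_2}\|_{H^1(\cO')}$ factor into the coercive terms via Young's inequality, and close with Gronwall's inequality. Taking expectations then yields
$$
\bbE\|w^{v_1}-w^{v_2}\|_{C([0,T];L^2(\cO))\cap L^2(0,T;H_A^{1/2})}^2\le C\,\bbE\|v_1-v_2\|_{L^2(0,T;H_A^{1/2})}^2,
$$
which gives the asserted (in fact Lipschitz) continuity of $v\mapsto w^v$.

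The step I expect to require the most care is the transfer of predictability through the Galerkin limit, since one must be sure that the limiting object is genuinely $\cP$-measurable and not merely a path-by-path solution; coupled with this is the fact that the coercivity is only of G\aa rding type, so the correction $C\|u\|_H^2$ must be tracked carefully both in the existence argument and in the Gronwall step. Everything else is routine.
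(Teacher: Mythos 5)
Your proposal follows the paper's own proof essentially verbatim: the same Gelfand triple $V=H_A^{1/2}\subset H=L^2(\cO)\subset V^*=H_A^{-1/2}$, the same time-independent bilinear form, pathwise application of Theorem~\ref{6:thmLions}, predictability recovered from the Galerkin scheme, and Lipschitz continuity of $v\mapsto w^v$ from the energy identity for $w^{v_1}-w^{v_2}$ closed by the trace inequality, Young and Gronwall. There is no substantive difference between your argument and the one in the paper.
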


\subsection{On $\widetilde{u_b^\ve}$} \label{SS:6.2} 

Let us now analyze $\widetilde{u_b^\ve}$. At this stage of approximation, we may and will assume that $u_b^\ve \in L^2(\Omega; C_c^\infty((0,T) \times \cO' \times \partial\cO''))$ is predictable. Thus, straightening out the boundary, using partition of unity,  etc., one can extend $u_b^\ve$ to some predictable $f^\ve \in L^2(\Omega; C_c^4((0,T) \times \cO' \times \R^{d''}))$, in a way that a.s.
$$\partial_\nu f^\ve  = 0 \text{ on } (0,T) \times \partial \cO' \times \cO'' \text{ and on }  (0,T) \times  \cO' \times \partial\cO''.$$
Hence, $z = \widetilde{u_b^\ve} - f^\ve$ should satisfy
\begin{equation}\label{e.z100}
\begin{aligned}
\frac{\partial z}{\partial t} - \ve \Delta z + \mu \Delta_{x''}^2 z = -\Big(\frac{\partial}{\partial t} - \ve \Delta + \mu \Delta_{x''}^2 \Big) f^\ve  &\text{ in } (0,T) \times \cO, \\
\frac{\partial z}{\partial \nu}(t,x) = 0 &\text{ on } (0,T)\X \partial \cO' \X \cO'', \\
z(t,x) = \frac{\partial z}{\partial \nu}(t,x) = 0 &\text{ on }  (0,T)\X  \cO' \X \partial\cO'',\\
z(0, x) =  0 &\text{ on }  \{ t= 0 \} \X  \cO.
\end{aligned}
\end{equation}

Existence and uniqueness of a solution to  problem \eqref{e.z100} is obtained from Theorem~\ref{6:thmLions} with $H=L^2(\cO)$, $V=H_A^{1/2}$, 
$$
a(t; u, z) = \ve \int_\cO \nabla u \cdot \nabla z\,dx + \mu \int_\cO \Delta_{x''} u \Delta_{x''} z \, dx,
$$
as before, and  
$$
\langle F(t),\varphi\rangle_{H_A^{-1/2},H_A^{1/2}} := \langle -\Big(\frac{\partial}{\partial t} - \ve \Delta + \mu \Delta_{x''}^2 \Big) f^\ve , \varphi\rangle_{L^2(\cO)}.
$$
 Therefore, we have proved the following result, where, the assertion about the predictability, as before, is a consequence of  the proof of Theorem~\ref{6:thmLions} through Garlerkin method.
 
 \begin{lemma} \label{6:lemmau_b}
 Assume that $u_b^\ve \in L^2(\Omega; C_c^\infty((0, T) \times \cO' \times \partial\cO''))$ is predictable. Then there is a unique solution of problem \eqref{e6:1ub}--\eqref{e6:5ub}, 
 $\widetilde{u_b^\ve} \in L^2(\Omega \times [0,T], \cP;$ $L^2(\cO';H^2(\cO''))$ $\cap$ $L^2(\cO'';H^1(\cO'))$.
 \end{lemma}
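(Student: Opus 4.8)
The plan is to recast \eqref{e6:1ub}--\eqref{e6:5ub} as an abstract linear parabolic evolution equation with homogeneous boundary data and invoke Theorem~\ref{6:thmLions}, exactly along the lines of the paragraph preceding the statement. First I would fix a predictable extension $f^\ve\in L^2(\Om;C_c^4((0,T)\X\cO'\X\R^{d''}))$ of $u_b^\ve$, obtained by straightening $\po\cO''$ locally, multiplying by a partition of unity and using the standard Whitney--Sobolev extension, and chosen so that a.s.\ $\po_\nu f^\ve=0$ on $(0,T)\X\po\cO'\X\cO''$ and on $(0,T)\X\cO'\X\po\cO''$; predictability of $f^\ve$ is automatic because the extension operator is linear and deterministic. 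Then $z:=\widetilde{u_b^\ve}-f^\ve$ must solve \eqref{e.z100}, whose forcing $F(t)=-(\po_t-\ve\Delta+\mu\Delta_{x''}^2)f^\ve$ lies in $L^2(\Om\X(0,T),\cP;L^2(\cO))\hookrightarrow L^2(\Om\X(0,T),\cP;H_A^{-1/2})$ since $f^\ve$ is $C^4$ in space and $C^1$ in time with compact support.

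Next I would check the three hypotheses of Theorem~\ref{6:thmLions} with $H=L^2(\cO)$, $V=H_A^{1/2}=L^2(\cO';H_0^2(\cO''))\cap L^2(\cO'';H^1(\cO'))$ and the time-independent bilinear form $a(u,z)=\ve\int_\cO\nabla u\cdot\nabla z\,dx+\mu\int_\cO\Delta_{x''}u\,\Delta_{x''}z\,dx$. Measurability in $t$ is trivial since $a$ is constant in $t$; boundedness $|a(u,z)|\le M\|u\|_V\|z\|_V$ follows at once from Cauchy--Schwarz and the definition of the $H_A^{1/2}$-norm. The coercivity estimate $a(u,u)\ge\alpha\|u\|_V^2-C\|u\|_H^2$ is the only point needing a short argument: the term $\ve\|\nabla_{x'}u\|_{L^2(\cO)}^2$ controls $\|u\|_{L^2(\cO'';H^1(\cO'))}^2$ up to $\|u\|_{L^2(\cO)}^2$ by the slicewise Poincar\'e inequality on $\cO'$, while $\mu\|\Delta_{x''}u\|_{L^2(\cO)}^2$ controls $\|u\|_{L^2(\cO';H_0^2(\cO''))}^2$ because, slicewise in $x'$, $u(x',\cdot)\in H_0^2(\cO'')$ and on $H_0^2(\cO'')$ the full $H^2$-norm is equivalent to $\|\Delta_{x''}\cdot\|_{L^2(\cO'')}$; this is precisely the content of the spectral description of $A$ and the identification of $H_A^{1/2}$ recorded in Theorem~\ref{thmspectralA} and Appendix~\ref{appendixA}.

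With the hypotheses verified, Theorem~\ref{6:thmLions} applied $\om$-by-$\om$ produces, for a.e.\ $\om$, a unique $z(\om)\in L^2(0,T;H_A^{1/2})\cap C([0,T];L^2(\cO))\cap H^1(0,T;H_A^{-1/2})$ with $z(\om,0)=0$ solving \eqref{e.bilform}; setting $\widetilde{u_b^\ve}:=z+f^\ve$ yields the claimed solution of \eqref{e6:1ub}--\eqref{e6:5ub}, and from $z\in L^2(0,T;H_A^{1/2})$ together with the smoothness of $f^\ve$ one reads off $\widetilde{u_b^\ve}\in L^2(\Om\X[0,T],\cP;L^2(\cO';H^2(\cO''))\cap L^2(\cO'';H^1(\cO')))$. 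Predictability is obtained, exactly as for $w^v$ in Subsection~\ref{SS:6.1}, by running the proof of Theorem~\ref{6:thmLions} through the Galerkin method: since $F$ is predictable and the Galerkin basis is deterministic, each finite-dimensional approximation solves a linear ODE with predictable coefficients and is therefore predictable, and the weak limit, identified via the uniform energy bound in $L^2(\Om\X(0,T),\cP;H_A^{1/2})$, inherits predictability. Uniqueness follows by testing the equation for the difference of two solutions against the difference itself and applying coercivity and Gronwall. I expect the coercivity (G\aa rding) inequality, i.e.\ showing that $\ve\|\nabla u\|_{L^2}^2+\mu\|\Delta_{x''}u\|_{L^2}^2$ dominates the full $H_A^{1/2}$-norm modulo an $L^2$ term, to be the only non-routine step; everything else is a direct transcription of the $w^v$ analysis already carried out above.
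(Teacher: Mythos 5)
Your proof follows exactly the paper's argument: extend $u_b^\ve$ to a predictable $f^\ve\in L^2(\Om;C_c^4((0,T)\X\cO'\X\R^{d''}))$ with the normal-derivative conditions, reduce to the homogeneous problem \eqref{e.z100} for $z=\widetilde{u_b^\ve}-f^\ve$, and solve it $\omega$-by-$\omega$ via Theorem~\ref{6:thmLions} with $H=L^2(\cO)$, $V=H_A^{1/2}$ and the same bilinear form, with predictability read off from the Galerkin scheme. The only addition is that you spell out the verification of coercivity and boundedness, which the paper leaves implicit; this is correct (and in fact the $L^2(\cO'';H^1(\cO'))$ part does not even need Poincar\'e, since its norm is just $\|u\|_{L^2}^2+\|\nabla_{x'}u\|_{L^2}^2$), so the proposal is a faithful, slightly more detailed version of the paper's proof.
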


 \subsection{On the mapping $K[v]$.} \label{SS:6.3}
 
In order to avoid cumbersome notation,  in this subsection we drop the superscripts $\ve$ and $\mu$. 

\begin{lemma} \label{6:Kcontinuity}
   $K : \E \to \E$ is well-defined and continuous.
 \end{lemma}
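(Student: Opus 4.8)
The plan is to verify, term by term, that each of the six summands in the definition \eqref{e6:uvemu} of $K[v](t)$ lies in $\cE$ whenever $v \in \cE$, and that the map $v \mapsto K[v]$ is (Lipschitz) continuous from $\cE$ to $\cE$. First I would handle the two ``boundary-driven'' terms $w^v(t)$ and $\widetilde{u_b^\ve}(t)$: by Lemma~\ref{6:lemmawv1} and Lemma~\ref{6:lemmau_b} these already belong to $L^2(\Om;C([0,T];L^2(\cO))) \cap L^2(\Om\X[0,T],\cP; H_A^{1/2})$, and since $H_A^{1/2} = L^2(\cO';H^2(\cO'')) \cap L^2(\cO'';H^1(\cO'))$ embeds continuously in $L^2(\cO';H^2(\cO'')) \cap L^2(\cO'';H^1(\cO'))$ with constant $1$, these terms are in $\cE$; moreover Lemma~\ref{6:lemmawv1} already gives the continuity of $v \mapsto w^v$, and $\widetilde{u_b^\ve}$ does not depend on $v$ at all, so that term contributes nothing to the continuity estimate.

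Next I would treat the deterministic Duhamel terms $S(t)u_0^\ve$, $\int_0^t S(t-s)\nabla\cdot\Abf^\ve(v(s))\,ds$ and $\int_0^t S(t-s)D_{x''}^2:\tilde\Bbf^\ve(v(s))\,ds$ using the smoothing properties of the analytic semigroup $S(t) = e^{-tA}$ established in Theorem~\ref{thmspectralA} (and the appendices). Since $A$ is a nonnegative self-adjoint operator, $S(t)$ maps $L^2(\cO)$ into $D(A^{1/2}) = H_A^{1/2}$ with $\|A^{1/2}S(t)\|_{L^2\to L^2}\lesssim t^{-1/2}$, and it maps $H_A^{-1/2}$ into $L^2$ with norm $\lesssim t^{-1/2}$ as well. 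Because $\Abf^\ve$ and $\tilde\Bbf^\ve$ have bounded derivatives $\abf^\ve, \bbf^\ve \in L^\infty$ (by construction of the approximation), the maps $v \mapsto \Abf^\ve(v)$ and $v \mapsto \tilde\Bbf^\ve(v)$ are globally Lipschitz on $L^2(\cO)$, and $\nabla\cdot\Abf^\ve(v)$, $D_{x''}^2:\tilde\Bbf^\ve(v)$ make sense as elements of $H_A^{-1/2}$ with norm controlled by $\|v\|_{L^2}$; feeding these into the Duhamel integral and using the integrable singularity $t^{-1/2}$ one gets, after a standard convolution/Young estimate in time together with the maximal-regularity-type bound for analytic semigroups, control of the $C([0,T];L^2)$ and $L^2([0,T];H_A^{1/2})$ norms by $\|v\|_{L^2(0,T;L^2)} \le \|v\|_\cE$, linearly. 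The continuity in $v$ follows identically by replacing $\Abf^\ve(v)$ with $\Abf^\ve(v_1)-\Abf^\ve(v_2)$ and using the Lipschitz bounds.

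For the stochastic term $\int_0^t S(t-s)\Phi^\ve(v(s))\,dW(s)$ I would invoke the stochastic (maximal) regularity / factorization argument: by \eqref{e1.4*}–\eqref{e1.4**} and the fact that $\Phi^\ve$ satisfies these uniformly, $\Phi^\ve(v) \in L^2(\Om\X[0,T],\cP; L_2(\fkU;L^2(\cO)))$ with Hilbert–Schmidt norm controlled by $1+\|v\|_{L^2}$, and \eqref{e1.5*} gives the Lipschitz-in-$v$ estimate $\sum_k\|g_k^\ve(v_1)-g_k^\ve(v_2)\|_{L^2}^2 \le D\|v_1-v_2\|_{L^2}^2$. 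Using It\^o's isometry together with the smoothing of $S$ one bounds $\bbE\int_0^T\|A^{1/2}\int_0^t S(t-s)\Phi^\ve(v)\,dW(s)\|_{L^2}^2\,dt \lesssim \bbE\int_0^T(1+\|v(s)\|_{L^2}^2)\,ds$, giving the $L^2([0,T];H_A^{1/2})$ piece, and a stochastic-convolution maximal inequality (or the factorization method of Da Prato–Kwapie\'n–Zabczyk, exploiting that $t^{-1/2}$ is $L^p$ for $p<2$) gives the $L^2(\Om;C([0,T];L^2(\cO)))$ piece; predictability of the stochastic convolution is automatic. The main obstacle — and the one requiring the most care — is this stochastic convolution estimate: one must verify that the analytic semigroup generated by $A = -\ve\Delta_{x'+x''}+\mu\Delta_{x''}^2$ on the anisotropic domain $D(A)$ has enough smoothing (i.e.\ $D(A^{1/2}) = H_A^{1/2}$ with the stated interpolation-space identification, which is exactly the content deferred to Theorem~\ref{thmspectralA} and Appendix~\ref{appendixA}) for the factorization method to land the stochastic convolution in $C([0,T];L^2)\cap L^2([0,T];H_A^{1/2})$, and that all constants are $\ve,\mu$-dependent but $v$-independent. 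Once all six terms are estimated, adding the bounds shows $K$ maps $\cE$ into $\cE$ and is Lipschitz, completing the proof of Lemma~\ref{6:Kcontinuity}.
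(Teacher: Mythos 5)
Your overall decomposition of $K$ into six pieces and your treatment of $w^v$, $\widetilde{u_b^\ve}$, $S(t)u_0^\ve$, the $\Bbf$-convolution, and the stochastic convolution are essentially the same as the paper's (Lemmas~\ref{6:lemmawv1}, \ref{6:lemmau_b}, Propositions~\ref{B:reg0}--\ref{B:reg2}). However, your treatment of the term $K_4 v = \int_0^t S(t-s)\nabla\cdot\Abf^\ve(v(s))\,ds$ contains a genuine gap. You claim that $\nabla\cdot\Abf^\ve(v)$ ``makes sense as an element of $H_A^{-1/2}$ with norm controlled by $\|v\|_{L^2}$'' and then conclude that $v\mapsto K_4 v$ is globally Lipschitz via the bound $\|\Abf^\ve(v_1)-\Abf^\ve(v_2)\|_{L^2}\lesssim\|v_1-v_2\|_{L^2}$. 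This would be correct if $\nabla\cdot\Abf^\ve(v)$ were interpreted as the \emph{distributional} divergence, paired against $\varphi\in H_A^{1/2}$ by $-\int_{\cO}\Abf^\ve(v)\cdot\nabla\varphi\,dx$. But in the Duhamel formula \eqref{e6:uvemu} the source is the $L^2$-\emph{function} $\nabla\cdot\Abf^\ve(v)=\abf^\ve(v)\cdot\nabla v$, and the boundary contribution $\int_{\partial\cO'\times\cO''}\Abf^\ve(v)\cdot\nu\,\varphi\,d\mathcal{H}^{d'-1}dx''$, which distinguishes the two interpretations, is precisely what is carried by the separate term $w^v$. Testing the $L^2$-function $\abf^\ve(v)\cdot\nabla v$ against $\varphi\in H_A^{1/2}$ and integrating by parts produces exactly this normal-trace term on $\partial\cO'\times\cO''$, and it cannot be bounded by $\|v\|_{L^2(\cO)}$ — one needs a trace estimate on $\Abf^\ve(v)$, costing roughly $\|v\|_{H^1(\cO)}$. (This issue is invisible for $K_3$ because $\varphi\in H_A^{1/2}\subset L^2(\cO';H_0^2(\cO''))$ vanishes together with $\nabla_{x''}\varphi$ on $\cO'\times\partial\cO''$, so the distributional and $L^2$-function interpretations of $D_{x''}^2:\Bbf^\ve(v)$ agree; for $K_4$ the $x'$-divergence sees the Neumann part $\partial\cO'\times\cO''$, where $\varphi$ need not vanish.)

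The practical consequence is that $K_4$ is \emph{not} Lipschitz from $\cE$ to $\cE$ under the chain-rule bound $\|\nabla\cdot\Abf^\ve(v_1)-\nabla\cdot\Abf^\ve(v_2)\|_{L^2}$: the cross term $(\abf^\ve(v_1)-\abf^\ve(v_2))\cdot\nabla v_2$ lands in $L^1$ but not $L^2$ in general, and so one cannot conclude by ``replacing $\Abf^\ve(v)$ with $\Abf^\ve(v_1)-\Abf^\ve(v_2)$ and using the Lipschitz bounds'' as you propose. The paper circumvents this: for $K_4$ it uses the $L^2$-function interpretation, gets well-definedness directly from the contractivity of $S(t)$ and Proposition~\ref{B:reg1}, and proves \emph{continuity only} (not Lipschitz continuity) by a compactness/dominated-convergence argument: assuming $v_n\to v$ in $\cE$, one passes to a pointwise a.e.\ convergent, dominated subsequence of $(v_n,\nabla v_n)$ to show $\abf^\ve(v_n)\cdot\nabla v_n\to\abf^\ve(v)\cdot\nabla v$ in $L^2$. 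To repair your proposal you should either adopt this dominated-convergence argument for $K_4$, or else genuinely redefine $K_4$ via the weak-divergence pairing \emph{and} simultaneously absorb the boundary term, i.e.\ remove or modify the $w^v$-term so that the fixed point equation is unchanged — a change to the decomposition \eqref{e6:uvemu} that you do not mention. As written, the Lipschitz claim for $K_4$ is unfounded.
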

 
 \begin{proof}  For $v\in\cE$, let us decompose  $Kv$, defined in \eqref{e6:uvemu}, into six parts:
$$
(Kv)(t) = (K_0v)(t)+ (K_1 v)(t) + (K_2 v)(t) + (K_3 v)(t) - (K_4 v)(t) + (K_5 v)(t),
$$
where 
\begin{align*}
  (K_0v)(t) &= w^v(t), \\
  (K_1v)(t) &= \widetilde{u_b}(t), \\
  (K_2v)(t) &= S(t) u_0,\\
  (K_3v)(t) &= \int_0^t S(t-s) D_{x''}^2 : \Bbf(v(s)) \, ds,\\
  (K_4v)(t) &= \int_0^t S(t-s) \nabla \cdot \Abf (v(s)) \, ds, \text{ and } \\
  (K_5v)(t) &= \int_0^t S(t-s) \Phi (x,v(s)) \, dW(s).
\end{align*}
Observe that $K_0$ and $K_1$ were already addressed  in Lemmas~\ref{6:lemmawv1} and \ref{6:lemmau_b}. Furthermore,  $K_2v$ is independent  of $v$ and,   from the basic theory of semigroups of linear operators, $K_2v$ is clearly seen to be an element of $\E$. The other terms will now be investigated individually.

\textit{The analysis of $K_3$}. Let us verify that $K_3 : \E \rightarrow \E$ is continuous. Note that,  if $h \in L^2(\Omega;$ $L^2(0,T; L^2(\cO))$, and if $d'+1 \leq j, k \leq d$, then, as distributions,
    \begin{align*}
      \frac{\partial h}{\partial x_j} &\in L^2\big(\Omega; L^2(0,T;L^2(\cO';H^{-1}(\cO'')))\big),  \\
      \frac{\partial^2 h}{\partial x_j \partial x_k} &\in L^2\big(\Omega; L^2(0,T;L^2(\cO';H^{-2}(\cO'')))\big),
    \end{align*}
with
    \begin{align*}
    \mathbb{E}\int_{\cO'} \int_0^T \Big\Vert \frac{\partial h}{\partial x_j } (s) \Big\Vert_{H^{-1}(\cO'')}^2 ds\,dx'  &\leq \mathbb{E} \int_0^T \Vert h (s) \Vert_{L^2(\cO)}^2 \,ds, \\
    \mathbb{E} \int_{\cO'}\int_0^T \Big\Vert \frac{\partial^2 h}{\partial x_j \partial x_k} (s) \Big\Vert_{H^{-2}(\cO'')}^2 ds\,dx'  &\leq \mathbb{E} \int_0^T \Vert h (s) \Vert_{L^2(\cO)}^2 \,ds.
\end{align*}  
 In particular, $D_{x''}^2: \Bbf(v(s))\in H_A^{-1/2} = L^2(\cO';H^{-2}(\cO'')) + L^2(\cO''; [H^1(\cO')]^*)$, so we have that a.s.
    \begin{align}
        \Big\Vert \int_0^t S(t-s) D_{x''}^2 : \Bbf(v(s))\, ds & \Big\Vert_{L^2(\cO)} \nonumber \\ &\leq \int_0^t \Vert S(t-s) D_{x''}^2 : \Bbf(v(s)) \Vert_{L^2(\cO)} \, ds \label{e6.1000} \\
        &\leq C \int_0^t \Big( 1 + \frac{1}{(t-s)^{1/2}} \Big) \Vert  \Bbf(v(s)) \Vert_{L^2(\cO)} \, ds  \nonumber \\
        &\leq C \sup_{0\leq t \leq T} \Vert \, \Bbf(v(s)) \, \Vert_{L^2(\cO')}. \nonumber
    \end{align}
    where we have used Proposition~\ref{B:reg0} in the second inequality above. 
  Taking the $\sup_{0\le t\le T}$, squaring and taking expectation  in  \eqref{e6.1000},  we conclude that $v \in \E \mapsto K_3 v \in \cE_0:= L^2(\Omega;$ $C([0,T];$ $L^2(\mathcal{O})))$ is well-defined and 
  \begin{equation}\label{e6.1001}
 \| K_3v\|_{\cE_0}\le C \|\Bbf(v)\|_{\cE_0}.
 \end{equation}
Similarly,
\begin{equation}\label{e6.1002}
 \| K_3v_1-K_3v_2\|_{\cE_0}\le C \|\Bbf(v_1)-\Bbf(v_2)\|_{\cE_0}.
 \end{equation}

  Likewise, by Proposition \ref{B:reg1}, we see that almost surely
    \begin{align}
      \int_0^T \Big\Vert \int_0^t S(t-s) D_{x''}^2 : \Bbf(v(s))\, ds  \Big\Vert_{H_A^{1/2}}^2 &\, dt\nonumber\\
        &\leq C \,  \int_0^T \Vert \, D_{x''}^2: \Bbf(v(s)) \, \Vert_{H_A^{-1/2}}^2 \, ds  \nonumber\\
        &\leq C \,  \int_0^T \Vert \, D_{x''}^2: \Bbf(v(s)) \, \Vert_{L^2(\cO';H^{-2}(\cO''))}^2\, ds  \nonumber\\
        &\leq C \, \int_0^T \Vert \, \Bbf(v(s)) \, \Vert_{L^{2}(\cO)}^2 \, ds. \label{6:K3est2}
    \end{align}
    Thus we see that $K_3 v \in \cE_1:= L^2(\Omega;L^2(0,T; H_A^{1/2}))$, recalling that $H_A^{1/2}=L^2(\cO';H_0^2(\mathcal{O}''))\cap L^2(\cO''; H^1(\cO'))$, with
   \begin{equation}\label{e6.1003}
   \| K_3v\|_{\cE_1}\le C \|\Bbf(v)\|_{\cE_0}.
 \end{equation}
Similarly,
\begin{equation}\label{e6.1004}
 \| K_3v_1-K_3v_2\|_{\cE_1}\le C \|\Bbf(v_1)-\Bbf(v_2)\|_{\cE_0}.
 \end{equation}
 Therefore, we conclude that  $K_3:\cE\to \E$  with
     \begin{equation}\label{e6.1005}
   \| K_3v\|_{\cE}\le C \|\Bbf(v)\|_{\cE_0},
 \end{equation}
and
\begin{equation}\label{e6.1006}
 \| K_3v_1-K_3v_2\|_{\cE}\le C \|\Bbf(v_1)-\Bbf(v_2)\|_{\cE_0}.
 \end{equation}

     \textit{The analysis of $K_4$}. Given $v \in \E$, the contractivity of $S(t)$ gives
    $$\sup_{0 \leq t \leq T} \Vert K_4 v(t) \Vert_{L^2(\cO)}^2 \leq C_T \int_0^T \Vert \nabla \cdot \Abf(v(s)) \Vert_{L^2(\cO)}^2 \, ds,$$
whereas Proposition \ref{B:reg1} gives
    $$
    \int_0^T \Vert K_4 v(s) \Vert_{H_A^{1}}^2 \, ds \leq C \int_0^T \Vert \nabla \cdot \Abf(v(s)) \Vert_{L^2(\cO)}^2 \, ds.
    $$
  From this, one can easily show that $K_3 v \in \E$. Moreover, if $w$ is another element of $\E$, we see that
\begin{equation}
     \Vert K_4 v - K_4 w \Vert_{\E} \leq C \bbE \int_0^T \Vert \abf(v(s)) \cdot \nabla v(s) - \abf(w(s))\cdot \nabla w(s)  \Vert_{L^2(\cO)}^2 \, ds. \label{6:K4est1}
\end{equation}
    We claim that the inequality above implies that $K_4$ is continuous. Indeed, assume, by contradiction, that $K_3$ is not continuous. In this case, there would exist a sequence $(v_n)$ in $\E$     such that $v_n \rightarrow v$ in $L^2(\Omega \times [0,T]; H^1(\cO))$, but
    \begin{equation}
          \Vert K_4(v_n) - K_4(v) \Vert_{\cE} \geq \delta, \label{6:k4.2}
    \end{equation}
for some $\delta > 0$. On the other hand,  passing to a subsequence if necessary (see, e.g., theorem~4.9 in \cite{Br}), we could assume that     
$$\begin{cases}
v_n \rightarrow v               &\text{for almost every } x\in \cO, 0<t<T \text{ and } \omega \in \Omega, \\
\nabla v_n \rightarrow \nabla v &\text{for almost every } x\in \cO, 0<t<T \text{ and } \omega \in \Omega, \text{ and} \\
|v_n| + |\nabla_x v_n| \leq g   &\text{for some } g \in L_{t,x,\omega}^2.
\end{cases}$$
Consequently, since $\abf$ is bounded by assumption,  applying the dominated convergence to \eqref{6:K4est1}, we would see that
$$ \Vert K_4(v_n) - K_4(v) \Vert_{\cE} \rightarrow 0,$$
contradicting \eqref{6:k4.2}.

  \textit{Analysis of $K_5$}. Finally, let us study the stochastic term $K_5$. Fix $v \in \mathcal{E}$. That $(K_5 v)(t) \in L^2(\Omega; C([0,T];L^2(\mathcal{O})))$ is a consequence of the contractivity of the semigroup $S(t)$ and the well known maximal inequality for stochastic convolutions (see \cite{Tu, Ko1, Ko2}), from which we obtain
  \begin{equation*}
  \mathbb{E} \sup_{0\leq t \leq T}  \Big\Vert \int_0^t S(t-s) \Phi(v(s)) \,dW(s) \Big\Vert_{L^2(\cO)}^2\leq C\, \mathbb{E} \, \int_0^T \Vert \Phi(v(t)) \Vert_{L_2^0(\mathfrak{U}; L^2(\cO))}^2\,dt, 
  \end{equation*}
  so, using  \eqref{e1.4*}, we have
  \begin{equation}\label{e4.1001}
  \|K_5v\|_{\cE_0}^2 \le C(1+\|v\|^2_{L^2(\Om\X[0,T];L^2(\cO))}).
  \end{equation}
Similarly, using \eqref{e1.5*},
\begin{equation}\label{e4.1002}
 \|K_5v_1-K_5v_2\|_{\cE_0}^2 \le C(\|v_1-v_2\|^2_{L^2(\Om\X[0,T];L^2(\cO))}).    
 \end{equation}
 Moreoever, by Theorem \ref{B:reg2} (with $\alpha = 0$), and Theorem \ref{thmspectralA},
    \begin{align*}
        \mathbb{E} \int_0^T \Big\Vert \int_0^t S(t-s) & \Phi(v(s)) \, dW(s) \Big\Vert_{H_A^{1/2}}^2 \,ds\\ &\leq C \, \mathbb{E} \int_0^T \Vert \Phi(v(s)) \Vert_{L_2(\mathfrak{U}, L^2(\mathcal{O}))}^2 \,ds,
    \end{align*}
    so, using again \eqref{e1.4*}, we get
   \begin{equation}\label{e4.1003}
  \|K_5v\|_{\cE_1}^2 \le C(1+\|v\|^2_{L^2(\Om\X[0,T];L^2(\cO))}).
  \end{equation}
  and, similarly, using \eqref{e1.5*}
 \begin{equation}\label{e4.1004}
 \|K_5v_1-K_5v_2\|_{\cE_1}^2 \le C(\|v_1-v_2\|^2_{L^2(\Om\X[0,T];L^2(\cO))}).    
 \end{equation}
 Hence, \eqref{e4.1001}--\eqref{e4.1004}  imply that  $K_5:\cE\to\cE$ is continuous.

\textit{Conclusion.} Since $K_0, K_1, \ldots, K_5$ are all continuous mappings  $\E\to\cE$, the lemma is proven.
\end{proof}

\begin{lemma}\label{L:Kv1000} For $v\in\cE$, $Kv$ satisfies $Kv-\tilde u_b\in L^2(\Om\X[0,T],\cP;H_A^{1/2}(\cO))$, and 
\begin{equation}\label{e6.Kv}
Kv = u_0 + \int_0^t G(s)\, ds + \int_0^t \Phi(v(s))\, dW(s),
\end{equation}
where $G\in L^2(\Omega \times [0,T], \cP; H_A^{-1/2})$ is almost surely given by
\begin{equation}\label{e6.Kv2}
\begin{aligned}
  \big\langle G(s), \varphi \big\rangle_{H_A^{-1/2}, H_A^{1/2}} =\,& - \ve \int_\cO \nabla Kv(s,x) \cdot \nabla \varphi(x) \, dx \nonumber \\
                                                                     & - \mu  \int_\cO  \Delta_{x''} Kv(s,x) \Delta_{x''} \varphi(x) \, dx \nonumber \\
                                                                     & + \int_\cO  \Bbf(v(s,x)) : D^2_{x''} \varphi(x) \, dx \nonumber \\
                                                                     & + \int_\cO  \Abf(v(s,x)) \cdot \nabla \varphi(x) \, dx \nonumber.
\end{aligned}
\end{equation}
\end{lemma}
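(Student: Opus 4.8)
The plan is to verify the Duhamel identity \eqref{e6.Kv}--\eqref{e6.Kv2} by decomposing $Kv$ as in the proof of Lemma~\ref{6:Kcontinuity}, i.e.\ $Kv = K_0v + K_1v + K_2v + K_3v - K_4v + K_5v$, and checking that the first five (deterministic) terms together contribute $u_0 + \int_0^t G(s)\,ds$ while $K_5v = \int_0^t S(t-s)\Phi(v(s))\,dW(s)$ contributes the It\^o integral in \eqref{e6.Kv}. The assertion that $Kv - \tilde u_b \in L^2(\Om\X[0,T],\cP;H_A^{1/2})$ is immediate from the bounds obtained in Lemma~\ref{6:Kcontinuity}: indeed $K_1v = \tilde u_b$, while $K_0v = w^v \in L^2(\Om\X[0,T],\cP;H_A^{1/2})$ by Lemma~\ref{6:lemmawv1}, $K_2v = S(t)u_0$ lies in this space by the smoothing of $S$ (Theorem~\ref{thmspectralA}, Proposition~\ref{B:reg1}), and $K_3v, K_4v, K_5v$ lie in $\cE \subset L^2(\Om\X[0,T],\cP; H_A^{1/2})$ by \eqref{e6.1003}, the estimate for $K_4$, and \eqref{e4.1003}.

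\textbf{Identifying the drift.} First I would recall that, by the basic theory of analytic semigroups, $K_2v(t) = S(t)u_0$ solves $\frac{d}{dt}S(t)u_0 = -A S(t)u_0$ in $H_A^{-1/2}$ with $S(0)u_0 = u_0$; that $K_0v = w^v$ is the pathwise weak solution of \eqref{e6:1wu}--\eqref{e6:4wu}, so by Definition~\ref{D:PWS} it satisfies, for a.e.\ $t$ and all $\varphi \in H_A^{1/2}$,
\begin{equation*}
\Big\langle \tfrac{\partial w^v}{\partial t}(t),\varphi\Big\rangle_{H_A^{-1/2},H_A^{1/2}} = -\ve\!\int_\cO \nabla w^v \cdot \nabla\varphi\,dx - \mu\!\int_\cO \Delta_{x''}w^v\,\Delta_{x''}\varphi\,dx + \int_{\cO''}\!\!\int_{\po\cO'}\! \Abf^\ve(v)\cdot\nu\,\varphi\,d\mathcal H^{d'-1}dx'';
\end{equation*}
and that $K_1v = \tilde u_b$ solves \eqref{e6:1ub}--\eqref{e6:5ub}, hence in the sense of Theorem~\ref{6:thmLions} satisfies $\langle\frac{\partial\tilde u_b}{\partial t},\varphi\rangle = -\ve\int_\cO\nabla\tilde u_b\cdot\nabla\varphi - \mu\int_\cO\Delta_{x''}\tilde u_b\,\Delta_{x''}\varphi$ with $\tilde u_b(0)=0$ and the boundary conditions $\po_\nu\tilde u_b = 0$ on $\po\cO'\X\cO''$ built in. For $K_3v$ and $K_4v$ I would use that $t\mapsto \int_0^t S(t-s)L(s)\,ds$ (with $L(s) = D^2_{x''}\Bbf^\ve(v(s))$, resp.\ $-\nabla\cdot\Abf^\ve(v(s))$, in $H_A^{-1/2}$) is the mild solution of $\frac{d}{dt}h = -Ah + L$, $h(0)=0$, and that mild solutions here are weak solutions: testing against $\varphi\in H_A^{1/2}$ gives $\langle\frac{d}{dt}h,\varphi\rangle = -\langle Ah,\varphi\rangle + \langle L,\varphi\rangle$, where $\langle D^2_{x''}\Bbf^\ve(v),\varphi\rangle = \int_\cO \Bbf^\ve(v):D^2_{x''}\varphi\,dx$ and $\langle -\nabla\cdot\Abf^\ve(v),\varphi\rangle = \int_\cO \Abf^\ve(v)\cdot\nabla\varphi\,dx$ by integration by parts (the boundary terms vanish because either $\varphi$ or the relevant normal derivative vanishes, consistently with $D(A)$). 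Summing the five deterministic weak formulations, the terms $-\ve\int\nabla\cdot\nabla\varphi - \mu\int\Delta_{x''}\cdot\Delta_{x''}\varphi$ collapse onto $Kv - K_5v$, and crucially the boundary integral $\int_{\cO''}\int_{\po\cO'}\Abf^\ve(v)\cdot\nu\,\varphi$ from $w^v$ is exactly the boundary contribution produced when one integrates $\int_\cO\Abf^\ve(v)\cdot\nabla\varphi$ by parts against a $\varphi$ that does \emph{not} vanish on $\po\cO'\X\cO''$; after cancellation one is left precisely with the formula for $G$ in \eqref{e6.Kv2}. Adding the initial data, only $K_2v$ contributes $u_0$ at $t=0$, so $Kv - K_5v = u_0 + \int_0^t G(s)\,ds$ in $C([0,T];H_A^{-1/2})$ a.s.

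\textbf{The stochastic term and predictability.} It remains to show $K_5v(t) = \int_0^t S(t-s)\Phi^\ve(v(s))\,dW(s)$ equals $\int_0^t \Phi^\ve(v(s))\,dW(s)$ plus a correction that has already been absorbed into $G$ — but in fact the cleanest route is to \emph{define} the drift by \eqref{e6.Kv2} and prove \eqref{e6.Kv} as an identity in $H_A^{-1/2}$ by applying the $H_A^{-1/2}$-valued It\^o formula / stochastic Fubini to the stochastic convolution: one has the standard identity $\int_0^t S(t-s)\Phi^\ve(v(s))\,dW(s) = \int_0^t \Phi^\ve(v(s))\,dW(s) - \int_0^t A\Big(\int_0^s S(s-r)\Phi^\ve(v(r))\,dW(r)\Big)ds$, valid in $H_A^{-1/2}$ because $\Phi^\ve(v)$ takes values in $L_2(\fkU;L^2(\cO))$ by \eqref{e1.4*} and $S$ maps $L^2(\cO)$ into $D(A)$ for $s>0$ with the requisite integrable singularity (Theorem~\ref{B:reg2}). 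Combining this with the deterministic computation, the term $-\int_0^t A K_5v(s)\,ds$ merges with the other $-\langle A\cdot,\varphi\rangle$ contributions into the $-\ve\int\nabla Kv\cdot\nabla\varphi - \mu\int\Delta_{x''}Kv\,\Delta_{x''}\varphi$ appearing in \eqref{e6.Kv2}, yielding \eqref{e6.Kv}. Predictability of $Kv$, hence of $G$ and of the integrand $\Phi^\ve(v)$, follows because each $K_iv$ is predictable: $K_0v, K_1v$ by Lemmas~\ref{6:lemmawv1}, \ref{6:lemmau_b} (Galerkin approximations are predictable), $K_2v$ is deterministic, $K_3v, K_4v$ are Bochner integrals of predictable integrands, and $K_5v$ is a stochastic convolution of a predictable integrand. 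The main obstacle — and the point requiring the most care — is the boundary bookkeeping in the deterministic sum: one must check that the single boundary integral carried by $w^v$ over $\po\cO'\X\cO''$ exactly matches the boundary term generated by integrating $\int_\cO \Abf^\ve(v)\cdot\nabla\varphi$ by parts for test functions $\varphi\in H_A^{1/2}$ (which are \emph{not} required to vanish there), so that \eqref{e6.Kv2} comes out with $\Abf^\ve(v)\cdot\nabla\varphi$ integrated over all of $\cO$ rather than with a leftover surface term; this is precisely the mechanism by which the Neumann-type condition \eqref{e6.7} is encoded, and it is also where one uses that the competing normal derivative on $\cO'\X\po\cO''$ vanishes so no spurious term survives there.
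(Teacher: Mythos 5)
Your proof is correct and follows essentially the same strategy as the paper's: decompose $Kv = K_0v + \dots + K_5v$, write the weak (Duhamel-type) formulation for each piece, and add them up, invoking the spectral/semigroup theory from Appendix~A and Propositions~\ref{B:reg1}--\ref{B:reg2}. Your treatment is in fact somewhat more careful than the paper's, which omits the boundary integral $\int_{\cO''}\int_{\po\cO'}\Abf^\ve(v)\cdot\nu\,\varphi\,d\H^{d'-1}dx''$ in its displayed equation for $K_{0,1,2}v$ (and has a sign typo on the $u_0$ term); the cancellation you identify — between that boundary contribution carried by $w^v$ and the surface term generated when $\langle\nabla\cdot\Abf^\ve(v),\varphi\rangle$ is integrated by parts against test functions $\varphi\in H_A^{1/2}$ not vanishing on $\po\cO'\X\cO''$ — is precisely the mechanism left implicit in the paper, and spelling it out is the right thing to do.
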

\begin{proof}  The fact that $Kv-\tilde u_b\in L^2(\Om\X[0,T],\cP;H_A^{1/2}(\cO))$ follows from the proof of Lemma~\ref{6:Kcontinuity}. Clearly, $K_{_{0,1,2}} v:=K_0v+K_1v+K_2v$ satisfies
\begin{equation*}
\begin{aligned}
\int_{\cO} K_{_{0,1,2}}  v\, \varphi\,dx=&-\int_{\cO}u_0\varphi\,dx-\ve\int_0^t\int_{\cO}\nabla K_{_{0,1,2}}v\cdot \nabla\varphi\,dx\,dt\\
&-\mu \int_0^t\int_{\cO}\Delta_{x''} K_{_{0,1,2}}v \Delta_{x''} \varphi\,dx\, ds,
\end{aligned}
\end{equation*}
for all $\varphi\in H_A^{1/2}(\cO)$, a.s.\ and for all \ $t\in[0,T]$.  As for $K_3v, K_4v$, they are convolutions of the semigroup, having the form
$$
K_jv(t)=\int_0^t S(t-s)\psi(s)\,ds,\quad j=3,4,
$$
with $\psi=D^2\Bbf(v)$ and $\psi=\nabla\cdot \Abf(v)$, respectively.  Therefore, proceeding similarly to the proof of Proposition~\ref{B:reg1}, through the use of the spectral theorem, we deduce that $K_{_{3,4}}v:=K_3v+K_4v$ satisfies, for all $\varphi\in H_A^{1/2}(\cO)$, a.s.\ and for all \ $t\in[0,T]$,
\begin{align*}
  \int_{\cO} K_{_{3,4}}v\, \varphi \,dx =\,& - \ve \int_0^t\int_\cO \nabla K_{_{3,4}}v(s,x) \cdot \nabla \varphi(x) \, dx\,ds \nonumber \\
                                                                     & - \mu\int_0^t  \int_\cO  \Delta_{x''} K_{_{3,4}}v(s,x) \Delta_{x''} \varphi(x) \, dx\,ds \nonumber \\
                                                                     & + \int_0^t\int_\cO  \Bbf(v(s,x)) : D^2_{x''} \varphi(x) \, dx\,ds \nonumber \\
                                                                     & +\int_0^t \int_\cO  \Abf(v(s,x)) \cdot \nabla \varphi(x) \, dx\,ds \nonumber.
\end{align*}
Similarly, $K_5v$ is a stochastic convolution of the semigroup and proceeding as in the proof of Proposition~\ref{B:reg2}, again using the spectral theorem,  we deduce that, for all $\varphi\in H_A^{1/2}(\cO)$, a.s.\ and for all \ $t\in[0,T]$, 
\begin{align*}
  \int_{\cO} K_5v\, \varphi \,dx =\,& - \ve \int_0^t\int_\cO \nabla K_5v(s,x) \cdot \nabla \varphi(x) \, dx\,ds \nonumber \\
                                                                     & - \mu\int_0^t  \int_\cO  \Delta_{x''} K_5v(s,x) \Delta_{x''} \varphi(x) \, dx\,ds \nonumber \\
                                                                     & + \int_0^t\int_{\cO} \Phi(v(s)) \varphi(x) \, dx\,dW(s). \nonumber                                                                      
\end{align*}
Adding up the equations obtained for $K_{_{0,1,2}} v$, $K_{_{3,4}}v$  and $K_5v$ we get \eqref{e6.Kv}, as desired.

\end{proof}

\begin{lemma}[Relative energy identity] \label{6:LemmaRelative0} If $v_1,v_2\in\E$, then, a.s., and for all $0 \leq t \leq T$,
\begin{align}
  \frac{1}{2}&\int_\cO | Kv_1(t,x) - Kv_2(t,x) |^2 \, dx \nonumber   \\
                                               +\, & \ve \int_0^t \int_\cO     |\nabla Kv_1(s,x)  - \nabla Kv_1(s,x) |^2  \, dx\, ds \nonumber \\
                                               +\, & \mu \int_0^t \int_\cO \big(\Delta_{x''} Kv_1(s,x) - \Delta_{x''} Kv_2(s)\big)^2\, dx\, ds \nonumber \\
                                               =\, &     \int_0^t \int_\cO  \big( \Bbf(v_1(s,x)) - \Bbf(v_2(s,x)) \big) : \big(D^2_{x''} Kv_1(s,x) - D^2_{x''} Kv_2(s,x) \big)  \, dx\, ds \nonumber\\
                                                & +    \int_0^t \int_\cO \big(\Abf(v_1(s,x)) - \Abf(v_2(s,x))\big) \cdot \big(\nabla Kv_1(s,x) - \nabla Kv_2(s,x) \big)  \, dx\, ds\nonumber \\
                                                & +    \int_0^t \int_\cO \big( \Phi(v_1(s)) - \Phi(v_2(s)) \big) \big( Kv_1(s,x) -  Kv_2(s,x) \big) \, dx\,dW(s) \nonumber \\
                                                & + \frac{1}{2} \int_0^t \int_\cO \sum_{k=1}^\infty | g_k(v_1(s,x)) - g_k(v_2(s,x)) |^2  \, dx\,ds. \label{6:Ito0}
\end{align} 
\end{lemma}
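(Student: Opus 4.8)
The plan is to apply the infinite-dimensional It\^o formula to the functional $v\mapsto \tfrac12\|v\|_{L^2(\cO)}^2$ evaluated along the difference $Z(t):=Kv_1(t)-Kv_2(t)$, using the fact established in Lemma~\ref{L:Kv1000} that each $Kv_i$ is a semimartingale with values in $L^2(\cO)$ whose drift lives in $H_A^{-1/2}$ and whose diffusion is $\Phi(v_i)$. First I would subtract the two representations \eqref{e6.Kv}: the process $Z$ satisfies $Z(t)=\int_0^t\big(G_1(s)-G_2(s)\big)\,ds+\int_0^t\big(\Phi(v_1(s))-\Phi(v_2(s))\big)\,dW(s)$, with $G_1-G_2\in L^2(\Om\times[0,T],\cP;H_A^{-1/2})$ and $Z\in L^2(\Om\times[0,T],\cP;H_A^{1/2})$ after subtracting the common boundary lift $\widetilde{u_b}$ (note $\widetilde{u_b}$ cancels in the difference, so no boundary contribution survives). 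This is exactly the setting $Z\in L^2(0,T;V)\cap L^2(\Om;C([0,T];H))$ with $dZ=G\,dt+\Phi\,dW$, $G\in L^2(0,T;V^*)$, in which the It\^o formula for the square of the $H$-norm (Krylov--Rozovskii / Pardoux; see also the references used for Theorem~\ref{6:thmLions}) is applicable.

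Applying that It\^o formula gives, a.s.\ and for all $t\in[0,T]$,
\begin{equation*}
\tfrac12\|Z(t)\|_{L^2(\cO)}^2 = \int_0^t \langle G_1(s)-G_2(s), Z(s)\rangle_{H_A^{-1/2},H_A^{1/2}}\,ds + \int_0^t \big(\Phi(v_1(s))-\Phi(v_2(s))\big)(Z(s))\,dW(s) + \tfrac12\int_0^t \|\Phi(v_1(s))-\Phi(v_2(s))\|_{L_2(\fkU;L^2(\cO))}^2\,ds,
\end{equation*}
where $Z(0)=0$ because both $Kv_i(0)=u_0$. The last term is precisely $\tfrac12\int_0^t\int_\cO\sum_{k\ge1}|g_k(v_1(s,x))-g_k(v_2(s,x))|^2\,dx\,ds$, using $\Phi(v)e_k=g_k(v(\cdot))$ and the orthonormality of $(e_k)$. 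For the duality pairing I would insert the explicit formula for $G_i$ from Lemma~\ref{L:Kv1000}, with test element $\varphi=Z(s)\in H_A^{1/2}$: the two $\ve$-terms combine into $-\ve\int_\cO|\nabla Z(s,x)|^2\,dx$, the two $\mu$-terms combine into $-\mu\int_\cO\big(\Delta_{x''}Z(s,x)\big)^2\,dx$ (here using $\varphi\in L^2(\cO';H_0^2(\cO''))$ so the integration by parts in $x''$ produces no boundary term), while the $\Bbf$- and $\Abf$-terms give $\int_\cO\big(\Bbf(v_1)-\Bbf(v_2)\big):D^2_{x''}Z\,dx$ and $\int_\cO\big(\Abf(v_1)-\Abf(v_2)\big)\cdot\nabla Z\,dx$ respectively. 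Moving the two negative quadratic terms to the left-hand side yields \eqref{6:Ito0}.

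The main obstacle is the rigorous justification of the It\^o formula in this Gelfand-triple setting with the somewhat nonstandard space $H_A^{1/2}=L^2(\cO';H_0^2(\cO''))\cap L^2(\cO'';H^1(\cO'))$ and the identification $V^*=H_A^{-1/2}$; one must check that $G_i$ as given by Lemma~\ref{L:Kv1000} genuinely defines an element of $L^2(0,T;H_A^{-1/2})$ — for the $\ve$- and $\mu$-terms this is immediate from $Kv_i\in L^2(0,T;H_A^{1/2})$, and for the flux terms it follows from $\Bbf(v_i),\Abf(v_i)\in L^2(\cO)$ together with the mapping properties of $D^2_{x''}$ and $\nabla$ into $H^{-2}(\cO'')$ and $[H^1(\cO')]^*$ recorded in the analysis of $K_3$ and $K_4$. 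Once the pairing $\langle G_i,\varphi\rangle$ is seen to agree with the bilinear form plus flux functionals, so that $dKv_i = G_i\,dt+\Phi(v_i)\,dW$ is an honest identity in $H_A^{-1/2}$, the It\^o formula applies verbatim and the bookkeeping above closes the proof; the remaining steps are routine and I would not belabor them.
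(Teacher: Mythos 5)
Your proposal is correct and arrives at the identity, but it takes a slightly different route from the paper. You invoke the Krylov--Rozovskii/Pardoux It\^o formula for $\tfrac12\|\cdot\|_H^2$ directly in the Gelfand triple $H_A^{1/2}\subset L^2(\cO)\subset H_A^{-1/2}$, once you have checked that $Z=Kv_1-Kv_2$ is a $V$-valued process with drift in $L^2(0,T;V^*)$ and Hilbert--Schmidt diffusion. The paper instead proves the same identity from scratch by the standard regularization device: it introduces $I_\lambda=(I+\lambda A)^{-1}$, sets $w_j^\lambda=I_\lambda Kv_j$, applies the \emph{classical} Hilbert-space It\^o formula to $\tfrac12\|w_1^\lambda-w_2^\lambda\|_{L^2}^2$ (which is legitimate since $I_\lambda$ maps $H_A^{-1/2}$ into $H_A^{1/2}$ and hence the $\lambda$-regularized drift lives in $L^2(\cO)$), and then lets $\lambda\to0$ using $Kv_1-Kv_2\in L^2(\Om;C([0,T];L^2(\cO)))\cap L^2(\Om\X[0,T];H_A^{1/2})$. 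The two arguments are mathematically equivalent --- the $I_\lambda$-regularization is precisely one of the standard proofs of the variational It\^o formula you are citing --- so your route is essentially "the same idea one level of abstraction higher." What the paper's choice buys is self-containedness: it only uses the elementary Hilbert-space It\^o formula plus the spectral resolvent estimates for $A$ already established in the appendix, without appealing to an external black-box theorem whose hypotheses (monotonicity/coercivity structure, measurability of the $V^*$-valued drift) would otherwise have to be verified. Your remarks about the cancellation of $\widetilde{u_b}$ in the difference, $Z(0)=0$ because both $Kv_i(0)=u_0$, the absence of $x''$-boundary terms thanks to $H_0^2(\cO'')$, and the identification of the quadratic-variation correction as $\tfrac12\sum_k|g_k(v_1)-g_k(v_2)|^2$ are all correct and match the paper's bookkeeping.
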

\begin{proof}
By Lemma~\ref{L:Kv1000}, we deduce  that  $Kv_j$, $j=1,2$,  can be written as
$$
Kv_j = u_0 + \int_0^t G_j(s)\, ds + \int_0^t \Phi(v_j(s))\, dW(s),
$$
where $G_j \in L^2(\Omega \times [0,T], \cP; H_A^{-1/2})$ is almost surely given by
\begin{align*}
  \big\langle G_j(s), \varphi \big\rangle_{H_A^{-1/2}, H_A^{1/2}} =\,& - \ve \int_\cO \nabla Kv_j(s,x) \cdot \nabla \varphi(x) \, dx \nonumber \\
                                                                     & - \mu  \int_\cO  \Delta_{x''} Kv_j(s,x) \Delta_{x''} \varphi(x) \, dx \nonumber \\
                                                                     & + \int_\cO  \Bbf(v_j(s,x)) : D^2_{x''} \varphi(x) \, dx \nonumber \\
                                                                     & + \int_\cO  \Abf(v_j(s,x)) \cdot \nabla \varphi(x) \, dx \nonumber.
\end{align*}

Let us introduce the approximations of the identity map $I_\lambda = (I + \lambda A)^{-1}$. We observe that $I_\lambda$ has the following properties:
\begin{enumerate}
  \item $I_\lambda \in L(H_A^{\alpha};H_A^{\alpha+1})$ with norm $\leq 1/\lambda$; and
  \item for any $f \in H_A^\alpha$, $I_\lambda f \rightarrow f$ in $H^\alpha_A$ as $\lambda \rightarrow 0$.
\end{enumerate}
Consequently, if $w_j = Kv_j$, and $w_j^\lambda = I_\lambda w_j$, then
\begin{align*}
  w_1^\lambda(t) - w_2^\lambda(t) &= \int_0^t  \big( I_\lambda G_1(s) - I_\lambda G_2(s)\big)\, ds \\ &+ \int_0^t \big( I_\lambda\Phi( v_1(s)) - I_\lambda \Phi(v_2(s)))\, dW(s) \text{ in } L^2(\cO).
\end{align*}
Thus the Itô's formula applied to the function $f \mapsto \frac{1}{2}\Vert f \Vert_{L^2(\cO)}^2$ gives, after some manipulation,
\begin{align}\label{6:Ito0.5}
& \frac{1}{2} \Vert w_1^\lambda(t) - w_2^\lambda(t) \Vert_{L^2(\cO)}^2 \\ 
 & +\, \ve \int_0^t \int_\cO \big( \nabla w_1(s,x) - \nabla w_2(s,x) \big) \cdot \nabla I_\lambda \big( w_1^\lambda(s,x) - w_2^\lambda(s,x) \big)\, dx\,ds \nonumber  \\ 
  &+ \mu \int_0^t \int_\cO \big( \Delta_{x''} w_1(s,x) - \Delta_{x''} w_2(s,x) \big) \cdot \Delta_{x''} I_\lambda \big( w_1^\lambda(s,x) - w_2^\lambda(s,x) \big)\, dx\, ds \nonumber \\
 &=  \int_0^t \int_\cO \big(\Bbf(v_1(s,x)) - \Bbf(v_2(s,x)) \big) : D_{x''}^2 I_\lambda \big( w_1^\lambda(s,x) - w_2^\lambda(s,x) \big)\, dx\, ds \nonumber\\
 & + \int_0^t \int_\cO \big(\Abf(v_1(s,x)) - \Abf(v_2(s,x)) \big) \cdot \nabla I_\lambda\big( w_1^\lambda(s,x) - w_2^\lambda(s,x) \big)\, dx\, ds \nonumber \\
                                                                         & + \sum_{k=1}^\infty \int_0^t \int_\cO \big( I^\l g_k (x,v_1(s,x)) - I^\l g_k(v_2(s,x))\big) \big(w_1^\lambda(s,x) - w_2^\lambda(s,x) \big) \,dx\, d\beta_k(s) \nonumber \\
                                                                         & +\frac12 \sum_{k=1}^\infty \int_0^t \int_\cO \big( I^\l g_k (x,v_1(s,x)) - I^\l g_k(v_2(s,x))\big)^2  \,dx\, ds. \nonumber
\end{align}  
At last, we notice that $w_1 - w_2 \in L^2(\Omega;$ $C([0,T];$ $L^2(\cO))$ $\cap$ $L^2(\Omega;$ $L^2(0,T; H_A^{1/2}))$, implying that we can pass $\lambda \rightarrow 0$ in \eqref{6:Ito0.5} to deduce \eqref{6:Ito0}.
\end{proof}

\begin{lemma} [A second relative energy estimate] \label{6:LemmaRelative}

There exists a constant $C_* > 0$, such that for any two elements $v_1$ and $v_2$ of $\E$, and for any $0\leq t \leq T$,
\begin{align}
  \bbE \sup_{0\leq \tau \leq t}\Big(  &\int_\cO | Kv_1(\tau,x) - Kv_2(\tau,x) |^2 \, dx \nonumber \\ & + \ve\, \int_0^\tau \int_\cO     |\nabla Kv_1(s,x)  - \nabla Kv_1(s,x) |^2  \, dx\, ds \nonumber \\
  &+\mu\, \int_0^\tau \int_\cO     \big( \Delta_{x''} Kv_1(s,x)  - \Delta_{x''} Kv_1(s,x) \big)^2  \, dx\, ds \Big) \nonumber \\&\quad\quad\quad\quad\quad\leq C_* \, \bbE \int_0^t \int_\cO | v_1(s,x) - v_2(s,x) |^2\, dx\,ds. \label{6:relativeenergy0}
\end{align} 
\end{lemma}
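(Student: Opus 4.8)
The plan is to start from the relative energy identity \eqref{6:Ito0} of Lemma~\ref{6:LemmaRelative0}, take the supremum over $\tau\in[0,t]$ and then expectations, and estimate each term on the right-hand side. Write $w_j=Kv_j$, $j=1,2$, for brevity. The two dissipative terms on the left-hand side, $\ve\int_0^\tau\!\!\int_\cO|\nabla w_1-\nabla w_2|^2$ and $\mu\int_0^\tau\!\!\int_\cO|\Delta_{x''}w_1-\Delta_{x''}w_2|^2$, are nonnegative and will be kept on the left; the first term $\frac12\|w_1(\tau)-w_2(\tau)\|_{L^2(\cO)}^2$ is what we ultimately want to control in sup-norm. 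So after rearranging, the task is to bound the supremum over $\tau$ of (i) the flux term $\int_0^\tau\!\!\int_\cO(\Abf(v_1)-\Abf(v_2))\cdot(\nabla w_1-\nabla w_2)$, (ii) the diffusion term $\int_0^\tau\!\!\int_\cO(\Bbf(v_1)-\Bbf(v_2)):(D_{x''}^2 w_1-D_{x''}^2 w_2)$, (iii) the stochastic integral $\int_0^\tau\!\!\int_\cO(\Phi(v_1)-\Phi(v_2))(w_1-w_2)\,dW(s)$, and (iv) the Itô correction $\frac12\int_0^\tau\!\!\int_\cO\sum_k|g_k(v_1)-g_k(v_2)|^2$.

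For (i) and (ii) I would use that $\abf^\ve=(\Abf^\ve)'$ and $\bbf^\ve=(\tilde\Bbf^\ve)'$ are bounded (so $\Abf^\ve,\tilde\Bbf^\ve$ are globally Lipschitz on $\R$), hence $|\Abf(v_1)-\Abf(v_2)|\lesssim|v_1-v_2|$ and $|\Bbf(v_1)-\Bbf(v_2)|\lesssim|v_1-v_2|$ pointwise; then Cauchy--Schwarz and Young's inequality with a small parameter $\eta>0$ give, for (i),
$$
\Big|\int_0^\tau\!\!\int_\cO(\Abf(v_1)-\Abf(v_2))\cdot(\nabla w_1-\nabla w_2)\Big|
\le \eta\int_0^t\!\!\int_\cO|\nabla w_1-\nabla w_2|^2 + \frac{C}{\eta}\int_0^t\!\!\int_\cO|v_1-v_2|^2,
$$
and the first term is absorbed by the $\ve$-dissipation on the left (choosing $\eta<\ve$). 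The term (ii) is the more delicate one: $D_{x''}^2 w_j$ is only in $H_A^{-1/2}$ a priori, not $L^2$, so one cannot integrate $\Bbf(v_1)-\Bbf(v_2)$ against it directly. The fix is to integrate by parts, moving one $x''$-derivative onto $\Bbf(v_1)-\Bbf(v_2)$: since $\tilde\Bbf^\ve$ is a Lipschitz function of $b(\cdot)$ and, by the regularity obtained in Lemma~\ref{6:Kcontinuity}, $w_j-\tilde u_b\in L^2(\Om\times[0,T];H_A^{1/2})$ with $\nabla_{x''}w_j\in L^2$, one rewrites the term as $-\int_0^\tau\!\!\int_\cO\nabla_{x''}(\Bbf(v_1)-\Bbf(v_2)):(\nabla_{x''}w_1-\nabla_{x''}w_2)$ with no boundary contribution because of the Dirichlet/Neumann conditions built into $H_A^{1/2}$; then one uses $|\nabla_{x''}(\Bbf(v_1)-\Bbf(v_2))|\lesssim|v_1-v_2|+|\nabla_{x''}v_1-\nabla_{x''}v_2|$ together with the bound $|v_j|+|\nabla_{x''}v_j|\le g$, $g\in L^2_{t,x,\om}$, from the argument in Lemma~\ref{6:Kcontinuity}, and Young's inequality again with the $\mu$-dissipation absorbing the $\nabla_{x''}w$-terms. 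Alternatively, and perhaps more cleanly, one keeps $D_{x''}^2 w_j$ as a duality pairing $\langle D_{x''}^2(w_1-w_2),\Bbf(v_1)-\Bbf(v_2)\rangle_{H_A^{-1/2},H_A^{1/2}}$ and estimates it by $\|D_{x''}^2(w_1-w_2)\|_{H_A^{-1/2}}\|\Bbf(v_1)-\Bbf(v_2)\|_{H_A^{1/2}}$, controlling $\|D_{x''}^2(w_1-w_2)\|_{H_A^{-1/2}}\lesssim\|w_1-w_2\|_{H_A^{1/2}}$ and then absorbing.

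For (iii), the stochastic term, I would apply the Burkholder--Davis--Gundy inequality:
$$
\bbE\sup_{0\le\tau\le t}\Big|\int_0^\tau\!\!\int_\cO(\Phi(v_1)-\Phi(v_2))(w_1-w_2)\,dW(s)\Big|
\le C\,\bbE\Big(\int_0^t\sum_k\big\|(g_k(v_1)-g_k(v_2))(w_1-w_2)\big\|_{L^2(\cO)}^2 ds\Big)^{1/2}.
$$
Using \eqref{e1.5*} to bound $\sum_k|g_k(v_1)-g_k(v_2)|^2\le D|v_1-v_2|^2$ pointwise, the integrand is $\le D\int_\cO|v_1-v_2|^2|w_1-w_2|^2\,dx\le D\,\|w_1(s)-w_2(s)\|_{L^\infty}^2\cdots$ — but $w$ is not $L^\infty$; instead pull out $\sup_s\|w_1(s)-w_2(s)\|_{L^2(\cO)}$ is still not directly applicable because of the product. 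The standard device is: the quadratic variation integrand is $\le D\big(\sup_{0\le s\le t}\|w_1(s)-w_2(s)\|_{L^2(\cO)}^2\big)\int_0^t\|v_1(s)-v_2(s)\|_{L^\infty}^2\,ds$ — still problematic. So the correct route, following \cite{FL2}, is to bound it by $C\bbE\big(\sup_{0\le s\le t}\|w_1(s)-w_2(s)\|_{L^2}^2\cdot D\int_0^t\|v_1(s)-v_2(s)\|_{L^2}^2\,ds\big)^{1/2}$ after observing $\int_\cO|g_k(v_1)-g_k(v_2)|^2|w_1-w_2|^2\le (\sup_\cO|w_1-w_2|)^2\cdots$ does not work, hence one instead uses that $\sum_k\|(g_k(v_1)-g_k(v_2))(w_1-w_2)\|_{L^2(\cO)}^2\le D\,\|w_1-w_2\|_{L^2(\cO)}^2\|v_1-v_2\|_{L^\infty}^2$ is replaced by the Cauchy--Schwarz in $k$-free form; in practice one bounds $\le D\|w_1(s)-w_2(s)\|^2_{L^2}\cdot\|v_1(s)-v_2(s)\|^2_{L^\infty(\cO)}$ only when $v_j$ are bounded (which, at this level of approximation, can be arranged, but let us not rely on it). The robust argument is: $\sum_k\|(g_k(v_1)-g_k(v_2))(w_1-w_2)\|^2_{L^2(\cO)}\le (\sup_{\cO}|w_1-w_2|^2)\cdot D\|v_1-v_2\|^2_{L^2}$, and since $\sup_\cO$ is not available, one instead interchanges: $\le D\big(\sup_{0\le s\le t}\|w_1(s)-w_2(s)\|^2_{L^2(\cO)}\big)\cdot\sup_{\cO}|v_1-v_2|^2$... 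I will instead follow precisely the treatment in \cite{FL2}, applying Young's inequality inside BDG to get
$$
\le \tfrac12\bbE\sup_{0\le\tau\le t}\|w_1(\tau)-w_2(\tau)\|^2_{L^2(\cO)} + C\,D\,\bbE\int_0^t\|v_1(s)-v_2(s)\|^2_{L^2(\cO)}\,ds,
$$
where the first term is absorbed into the left-hand side. Finally (iv) is immediately $\le\frac{D}{2}\bbE\int_0^t\|v_1(s)-v_2(s)\|^2_{L^2(\cO)}\,ds$ by \eqref{e1.5*}. Collecting all estimates, absorbing the $\eta$-terms into the $\ve$- and $\mu$-dissipations and the $\frac12\sup$-term into the left side yields \eqref{6:relativeenergy0} with $C_*$ depending on $\Lip(\Abf^\ve)$, $\Lip(\tilde\Bbf^\ve)$, $D$, $\ve$, $\mu$, $T$ and the constant from Lemma~\ref{6:Kcontinuity}; a Gronwall argument is not even needed since the right-hand side already has the clean form $C_*\bbE\int_0^t\|v_1-v_2\|^2$.

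The main obstacle I anticipate is term (ii): reconciling the low regularity of $D_{x''}^2 Kv_j$ (only $H_A^{-1/2}$) with the need to pair it against $\Bbf(v_1)-\Bbf(v_2)$, which requires either a careful integration by parts exploiting the chain rule $\Bbf=\tilde\Bbf\circ b$ and the boundary conditions encoded in $H_A^{1/2}$ so that no boundary terms appear, or a duality estimate $\|D_{x''}^2(w_1-w_2)\|_{H_A^{-1/2}}\lesssim\|w_1-w_2\|_{H_A^{1/2}}$ followed by absorption into the $\mu$-dissipation; the bookkeeping with the $\ve$ versus $\mu$ smallness of the Young parameters is the only genuinely delicate point, and a secondary technical nuisance is handling the stochastic term's quadratic variation without an $L^\infty$ bound on $w_1-w_2$, which is circumvented exactly as in \cite{FL2} by Young's inequality inside BDG.
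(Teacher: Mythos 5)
Your overall scheme matches the paper's: take the supremum of the identity \eqref{6:Ito0} over $\tau\in[0,t]$, then expectations, estimate the four resulting terms, and absorb. Your treatment of the convective term, the It\^o correction, and (eventually) the stochastic integral via Burkholder--Davis--Gundy followed by Young's inequality is correct in spirit; the only quantitative slip there is that the absorption constant in front of $\bbE\sup_\tau\|Kv_1(\tau)-Kv_2(\tau)\|_{L^2}^2$ must be strictly less than $1/2$, whereas you wrote $1/2$ exactly (the paper uses $1/6$ of the whole left-hand side, which is safely small).

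The genuine gap is in your treatment of the diffusive term (ii). You begin from a false premise -- that $D_{x''}^2(Kv_1)-D_{x''}^2(Kv_2)$ is ``only in $H_A^{-1/2}$, not $L^2$.'' In fact $Kv_1-Kv_2=(Kv_1-\widetilde{u_b^\ve})-(Kv_2-\widetilde{u_b^\ve})$ lies in $L^2(\Om\times[0,T],\cP;H_A^{1/2})$ by Lemma~\ref{L:Kv1000}, and $H_A^{1/2}=L^2(\cO';H_0^2(\cO''))\cap L^2(\cO'';H^1(\cO'))$ by Theorem~\ref{thmspectralA}, so $D_{x''}^2(Kv_1-Kv_2)$ is honestly in $L^2(\cO)$. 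Because of this misconception you propose two workarounds, and neither delivers the stated inequality. Moving one $x''$-derivative onto $\Bbf(v_1)-\Bbf(v_2)$ produces, by the chain rule, the term $\bbf(v_1)\nabla_{x''}(v_1-v_2)+(\bbf(v_1)-\bbf(v_2))\nabla_{x''}v_2$; this contains $|\nabla_{x''}(v_1-v_2)|$ and $|v_1-v_2|\,|\nabla_{x''}v_2|$, neither of which is dominated by the right-hand side $C_*\,\bbE\int_0^t\int_\cO|v_1-v_2|^2$ alone (your asserted bound $|\nabla_{x''}(\Bbf(v_1)-\Bbf(v_2))|\lesssim|v_1-v_2|+|\nabla_{x''}(v_1-v_2)|$ is also not correct as stated). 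Your duality-pairing alternative fares no better, since $\|\Bbf(v_1)-\Bbf(v_2)\|_{H_A^{1/2}}$ is likewise not controlled by $\|v_1-v_2\|_{L^2}$. The correct and much simpler route -- the one the paper takes -- is: Cauchy--Schwarz and Young directly in $L^2$, then invoke the elliptic estimate $\Vert f \Vert_{L^2(\cO';H_0^2(\cO''))}^2\le C\int_\cO(\Delta_{x''}f)^2\,dx$ (valid because $Kv_1-Kv_2\in L^2(\cO';H_0^2(\cO''))$) so that the term $\eta\int_0^t\int_\cO|D^2_{x''}(Kv_1-Kv_2)|^2\,dx\,ds$ produced by Young's inequality can be absorbed by the $\mu\int_0^\tau\int_\cO(\Delta_{x''}(Kv_1-Kv_2))^2$ dissipation on the left; no derivatives land on $\Bbf(v_1)-\Bbf(v_2)$ at all.
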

\begin{proof}
Let us take the expectation of the supremum of \eqref{6:Ito0} between $0 \leq \tau \leq t$, so that
\begin{align}
  \bbE \sup_{0\leq \tau \leq t}\Big(  \int_\cO \frac{1}{2}| &Kv_1(\tau,x) - Kv_2(\tau,x) |^2 \, dx \nonumber \\ 
          & + \ve\, \int_0^\tau \int_\cO     |\nabla Kv_1(s,x)  - \nabla Kv_1(s,x) |^2  \, dx\, ds            \nonumber \\
          & + \mu\, \int_0^\tau \int_\cO     \big( \Delta_{x''} Kv_1(s,x)  - \Delta_{x''} Kv_1(s,x) \big)^2  \, dx\, ds \Big) \nonumber \\
                                               \leq\, & \,     \bbE \sup_{0\leq \tau \leq t} \int_0^\tau \int_\cO  \big( \Bbf(v_1(s,x)) - \Bbf(v_2(s,x)) \big) \nonumber \\&\quad\quad\quad\quad\quad\quad\quad\quad\quad\quad: \big(D^2_{x''} Kv_1(s,x) - D^2_{x''} Kv_2(s,x) \big)  \, dx\, ds \nonumber\\
                                                & +   \bbE \sup_{0\leq \tau \leq t} \int_0^\tau \int_\cO \big(\Abf(v_1(s,x)) - \Abf(v_2(s,x))\big) \nonumber \\&\quad\quad\quad\quad\quad\quad\quad\quad\quad\quad\cdot \big(\nabla Kv_1(s,x) - \nabla Kv_2(s,x) \big)  \, dx\, ds\nonumber \\
                                                & +   \bbE \sup_{0\leq \tau \leq t} \int_0^\tau \int_\cO \big( \Phi(v_1(s)) - \Phi(v_2(s)) \big) \nonumber \\&\quad\quad\quad\quad\quad\quad\quad\quad\quad\quad\big( Kv_1(s,x) -  Kv_2(s,x) \big) \, dx\,dW(s) \nonumber \\
                                                & + \bbE \sup_{0\leq \tau \leq t} \frac{1}{2} \int_0^\tau \int_\cO \sum_{k=1}^\infty | g_k(v_1(s,x)) - g_k(v_2(s,x)) |^2  \, dx\,ds \nonumber \\
                                                = \, & I_1+ \ldots + I_4 \label{6:relativeenergy0.1}
\end{align} 
Let us look at each term separately. Using the well-known fact that
$$
\Vert f \Vert_{L^2(\cO';H_0^2(\cO'')}^2 \leq C\, \int_\cO (\Delta_{x''} f(x))^2 \, dx
$$
for some constant $C>0$ independent of  $f$, we, therefore, have, after an integration by parts and routine estimates
\begin{align}
I_1&\leq C\,\bbE \int_0^t \int_\cO  \big( v_1(s,x) - v_2(s,x) \big)^2 \, dx\, ds \nonumber\\ 
    &\quad\quad + \frac{1}{6}\bbE  \sup_{0\leq \tau \leq t}\Big( \int_\cO  \frac{1}{2} | Kv_1(\tau,x) - Kv_2(\tau,x) |^2 \, dx \nonumber \\ 
    &\quad\quad\quad\quad\quad+ \ve\, \int_0^\tau \int_\cO     |\nabla Kv_1(s,x)  - \nabla Kv_1(s,x) |^2  \, dx\, ds            \nonumber \\
    &\quad\quad\quad\quad\quad+ \mu\, \int_0^\tau \int_\cO     \big( \Delta_{x''} Kv_1(s,x)  - \Delta_{x''} Kv_1(s,x) \big)^2  \, dx\, ds \Big). \label{6:(I)}
\end{align}

Likewise, one has that
\begin{align}
I_2 &\leq C\,\bbE \int_0^t \int_\cO  \big( v_1(s,x) - v_2(s,x) \big)^2 \, dx\, ds \nonumber\\ 
     &\quad\quad + \frac{1}{6}\bbE \sup_{0\leq \tau \leq t} \Big( \int_\cO  \frac{1}{2} | Kv_1(\tau,x) - Kv_2(\tau,x) |^2 \, dx \nonumber \\ 
     &\quad\quad\quad\quad\quad+ \ve\, \int_0^\tau \int_\cO     |\nabla Kv_1(s,x)  - \nabla Kv_1(s,x) |^2  \, dx\, ds            \nonumber \\
     &\quad\quad\quad\quad\quad+ \mu\, \int_0^\tau \int_\cO     \big( \Delta_{x''} Kv_1(s,x)  - \Delta_{x''} Kv_1(s,x) \big)^2  \, dx\, ds \Big). \label{6:(II)}
\end{align}

To estimate  $I_3$, we apply  the Burkholder inequality (see, e.g., \cite{O}), to get
  \begin{align}\label{6:(III)}
   I_3 &= \mathbb{E}  \operatornamewithlimits{sup}_{0\leq \tau \leq t} \Bigg| \sum_{k=1}^\infty \int_0^\tau \int_{\cO} \big(g_k(v_1(s,x)) - g_k(v_2(s,x))  \big)\\ 
   &\quad\quad\quad\quad\quad\quad\quad\quad \big( (Kv_1)(s,x) - (Kv_2)(s,x) \big) \,dx \,d\beta_k(s) \Bigg| \nonumber\\
    &\leq C \,  \mathbb{E}\Big(   \int_0^t \sum_{k=1}^\infty \Big\{ \int_{\cO} (g_k( v_1(s,x)) - g_k(v_2(s,x)) ) \nonumber \\ 
    &\quad\quad\quad\quad\quad\quad\quad\quad\quad\quad\quad\quad\quad\quad ((Kv_1)(s,x) - (Kv_2)(s,x) ) \, dx \Big\}^2\, ds \Big)^{1/2}\nonumber \\
       &\leq \frac{1}{6} \mathbb{E}  \operatornamewithlimits{sup}_{0 \leq \tau \leq t} \Big( \frac{1}{2}\Vert (Kv_1)(t) - (Kv_2)(t) \Vert_{L^2(\cO)}^2 \nonumber    \\ 
    &\quad\quad\quad\quad\quad\quad + \mu \int_0^t \Vert \Delta_{x''}  (K v_1) (s) - \Delta_{x''} (K v_2) (s) \Vert_{L^2(\mathcal{O})}^2 \, dx ds \nonumber  \\
    &\quad\quad\quad\quad\quad\quad + \ve \int_0^t \Vert \nabla  (K v_1) (s)  - \nabla (K v_2) (s) \Vert_{L^2(\mathcal{O})}^2 \, dx ds  \Big) \nonumber  \\  
    & \quad\quad\quad\quad\quad\quad\quad\quad\quad\quad  + C \, \mathbb{E} \int_0^\tau \Vert v_1(s) - v_2(s) \Vert_{L^2(\mathcal{O})}^2 \,ds. \nonumber 
  \end{align}
  
  Finally, it is clear that
  \begin{equation}
    I_4 \leq D\,\bbE \int_0^t \int_{\cO} | v_1(s,x) - v_2(s,x) |^2\, dx\, ds. \label{6:(IV)}
  \end{equation}
  
  Combining \eqref{6:(I)}--\eqref{6:(IV)} and \eqref{6:relativeenergy0.1} we arrive at \eqref{6:relativeenergy0}, as desired.
\end{proof}

\begin{theorem}
$K: \E \rightarrow \E$ has a unique fixed point $u^{\mu,\ve}$. Moreover, $u^{\mu,\ve}$ is a weak solution of \eqref{e6.4}--\eqref{e6.8} in the sense that $u^{\mu,\ve}-\tilde u_b\in L^2(\Om\X[0,T], \cP; H_A^{1/2}(\cO))$, and
\begin{equation}\label{e6.Kv'}
u^{\mu,\ve} = u_0 + \int_0^t G(s)\, ds + \int_0^t \Phi(u^{\mu,\ve}(s))\, dW(s),
\end{equation}
where $G(s)$ is as in Lemma~\ref{L:Kv1000} with $u^{\mu,\ve}$ instead of $v$. 

\end{theorem}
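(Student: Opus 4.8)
The plan is to obtain the fixed point of $K$ via the Banach contraction principle applied on a suitable time interval, and then to propagate the conclusion to $[0,T]$, finally identifying the fixed point with a weak solution. First I would fix a small $T_0 \in (0,T]$ to be chosen and work on the space $\E_{T_0}$ (the analogue of $\E$ with $[0,T]$ replaced by $[0,T_0]$). Applying Lemma~\ref{6:LemmaRelative} on $[0,T_0]$ gives, for $v_1,v_2 \in \E_{T_0}$,
\[
\|Kv_1 - Kv_2\|_{\E_{T_0}}^2 \le C_* \, \bbE\int_0^{T_0}\!\!\int_\cO |v_1(s,x)-v_2(s,x)|^2\,dx\,ds \le C_* T_0 \, \|v_1-v_2\|_{\E_{T_0}}^2,
\]
so that choosing $T_0$ with $C_* T_0 < 1$ makes $K$ a contraction on $\E_{T_0}$; combined with Lemma~\ref{6:Kcontinuity} (well-definedness and continuity) and completeness of $\E_{T_0}$, the Banach fixed point theorem yields a unique $u^{\mu,\ve}$ on $[0,T_0]$. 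Since $C_*$ depends only on $\Lip(\Abf^\ve)$, the Poincar\'e-type trace constant, the embedding constants and $D = 4\sum_k \alpha_k^2$ — none of which depend on the starting time — the same $T_0$ works for the shifted problems started at $T_0, 2T_0, \dots$, and gluing these solutions gives a unique fixed point on all of $[0,T]$. Uniqueness on $[0,T]$ follows since any two global fixed points agree on $[0,T_0]$ by the contraction estimate, hence agree on each successive subinterval by the same argument.

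Next I would verify that this fixed point solves \eqref{e6.4}--\eqref{e6.8} in the stated weak sense. Since $u^{\mu,\ve} = K u^{\mu,\ve}$, Lemma~\ref{L:Kv1000} applies with $v = u^{\mu,\ve}$: it immediately gives $u^{\mu,\ve} - \tilde u_b \in L^2(\Om\X[0,T],\cP; H_A^{1/2}(\cO))$ and the integral identity
\[
u^{\mu,\ve} = u_0 + \int_0^t G(s)\,ds + \int_0^t \Phi(u^{\mu,\ve}(s))\,dW(s),
\]
with $G \in L^2(\Omega\X[0,T],\cP;H_A^{-1/2})$ given by the pairing formula of Lemma~\ref{L:Kv1000} with $u^{\mu,\ve}$ in place of $v$. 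Unwinding the definition of $G$, one sees that for every $\varphi \in H_A^{1/2}(\cO)$, a.s.\ and for a.e.\ $t$,
\[
\langle \partial_t u^{\mu,\ve}(t), \varphi\rangle_{H_A^{-1/2},H_A^{1/2}} = \langle G(t),\varphi\rangle,
\]
which is exactly the variational formulation of \eqref{e6.4} together with the Neumann-type boundary contribution from $w^v$ encoding \eqref{e6.7} and the homogeneous Neumann conditions \eqref{e6.8} and $\partial_\nu u = 0$ on $\partial\cO'\X\cO''$ built into $D(A)$; the membership $u^{\mu,\ve}-\tilde u_b \in L^2(\cP;H_A^{1/2})$ encodes the Dirichlet condition \eqref{e6.6}; and the initial condition \eqref{e6.5} is read off from the integral identity evaluated at $t=0$ using that $u^{\mu,\ve} \in L^2(\Omega;C([0,T];L^2(\cO)))$.

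The main obstacle, as usual with fixed-point arguments on short time followed by concatenation, is not the contraction estimate itself — Lemma~\ref{6:LemmaRelative} does the heavy lifting — but rather checking carefully that the constant $C_*$ genuinely does not degenerate when one restarts the problem from a later time with the new initial datum $u^{\mu,\ve}(kT_0)$ (which is now merely an $L^\infty(\Om\X\cO)$, $\F_{kT_0}$-measurable function rather than the smooth $u_0^\ve$), and that the glued function is again an element of $\E$ — in particular that it has the claimed path regularity $L^2(\Omega;C([0,T];L^2(\cO)))$ across the junction points $kT_0$. This is handled by noting that all the estimates in Lemmas~\ref{6:lemmawv1}, \ref{6:lemmau_b}, \ref{6:Kcontinuity}, \ref{L:Kv1000} and \ref{6:LemmaRelative} depend on the initial datum only through $\|u_0\|_{L^2(\Omega;L^2(\cO))}$ (finite here since $u^{\mu,\ve}(kT_0) \in L^\infty$) and on structural constants that are time-translation invariant, so the argument closes without loss. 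A minor point to be careful about is the predictability of the glued solution and of the stochastic integrals on each subinterval, which follows from the predictability assertions already recorded for $w^v$, $\tilde u_b^\ve$ and for stochastic convolutions, together with the fact that $u^{\mu,\ve}(kT_0)$ is $\F_{kT_0}$-measurable.
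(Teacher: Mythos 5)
Your proof is correct but takes a genuinely different route from the paper's for the contraction step. You run the classical local-in-time contraction with concatenation: on $[0,T_0]$ with $C_*T_0<1$, Lemma~\ref{6:LemmaRelative} makes $K$ a contraction on $\E_{T_0}$, giving a unique local fixed point, which is then propagated to $[0,T]$ by restarting at $T_0,2T_0,\dots$ and gluing. The paper instead introduces an exponentially weighted (Bielecki-type) norm
\[
\|u\|_{*\E}^2=\sup_{0\le t\le T}e^{-C_*t/\alpha}\,\bbE\sup_{0\le\tau\le t}\Big(\tfrac12\|u(\tau)\|_{L^2(\cO)}^2+\ve\int_0^\tau\|\nabla u\|_{L^2}^2\,ds+\mu\int_0^\tau\|\Delta_{x''}u\|_{L^2}^2\,ds\Big),
\]
equivalent to the $\E$-norm, and combines Lemma~\ref{6:LemmaRelative} with the elementary estimate $C_*\int_0^t e^{C_*s/\alpha}\,ds\le\alpha\,e^{C_*t/\alpha}$ to conclude that $K$ is a \emph{global} contraction on $(\E,\|\cdot\|_{*\E})$ with modulus $\alpha<1$, with no smallness requirement on $T$ and no concatenation. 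The advantage of the paper's route is precisely that it sidesteps the gluing step, which in your argument, although standard, is not entirely free: one must verify that the concatenated process is still a fixed point of the original Duhamel map $K$ on all of $[0,T]$, which amounts to writing out the semigroup identities of the form $S(t)u_0+\int_0^{T_0}S(t-s)F(s)\,ds=S(t-T_0)\big(S(T_0)u_0+\int_0^{T_0}S(T_0-s)F(s)\,ds\big)$ for each component $S(t)u_0^\ve$, $w^v$, $\widetilde{u_b^\ve}$, and the deterministic and stochastic convolutions across the junctions (or else to argue uniqueness through the weak formulation of Lemma~\ref{L:Kv1000}); you correctly flag this as the main subtlety but leave it at the level of a remark. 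One small imprecision: at this stage the maximum principle (Theorem~\ref{6':compunimax}) is not yet available, so the restarted initial datum $u^{\mu,\ve}(kT_0)$ is only known to be $\F_{kT_0}$-measurable and in $L^2(\Om;L^2(\cO))$, not in $L^\infty(\Om\X\cO)$; happily the estimates you cite need only the $L^2$ norm, so this does not affect the argument. Your final step, identifying the fixed point with a weak solution by applying Lemma~\ref{L:Kv1000} with $v=u^{\mu,\ve}$, matches the paper exactly.
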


\begin{proof}
Let us introduce the the equivalent norm in $\mathcal{E}$
\begin{align*}
  \Vert u \Vert_{*\E}^2 &\,= \operatornamewithlimits{sup}_{0\leq t \leq T} e^{-C_* t/ \alpha} \,  \mathbb{E} \operatornamewithlimits{sup}_{0 \leq \tau \leq t} \Big( \frac{1}{2} \Vert u(\tau) \Vert_{L^2(\cO)}^2 \\ &\quad\quad + \mu \int_0^\tau \Vert \Delta_{x''} u (s) \Vert_{L^2(\mathcal{O})}^2 \, ds + \ve \int_0^\tau \Vert \nabla u (s) \Vert_{L^2(\mathcal{O})}^2 \, ds \Big),
  \end{align*}
  where $0 < \alpha < 1$ may be arbitrarily chosen. {}From   \eqref{6:relativeenergy0}, it follows that
  $$
  \Vert Kv_1 - Kv_2 \Vert_{*\E}^2 \leq   C_* \mathbb{E} \int_0^t \Vert v_1 (s) - v_2(s) \Vert_{L^2(\cO)}^2 ds. 
  $$
  Now, we have
  \begin{align*}
    C_* \mathbb{E} \int_0^t \Vert v_1 (s) &- v_2(s) \Vert_{L^2(\cO)}^2 ds  \leq C_* \mathbb{E} \int_0^t \operatornamewithlimits{sup}_{0\leq \tau \leq s} \Vert v_1 (\tau) - v_2(\tau) \Vert_{L^2(\cO)}^2 \,ds \\
    &= C_* \,  \int_0^t e^{C_* s/ \alpha}\, e^{-C_* s/ \alpha} \,\mathbb{E} \operatornamewithlimits{sup}_{0\leq s \leq t} \Vert v_1 (\tau) - v_2(\tau) \Vert_{L^2(\cO)}^2\, dt \\
    &\leq C_*  \int_0^t e^{C_* s/ \alpha} \Vert v_1 - v_2 \Vert_{*\E}^2\, dt \\
    &\leq \alpha \, e^{C_*t/\alpha} \Vert v_1 - v_2 \Vert_{*\E}^2.
  \end{align*}
 Hence, we get
  $$
  \Vert Kv_1 - Kv_2 \Vert_{*\E}^2 \leq \alpha\,\Vert v_1 - v_2 \Vert_{*\E}^2. 
  $$
  This proves that $K$ is a contraction and so the first part of the theorem follows from Banach's fixed point theorem. The second part of the statement  follows immediately from Lemma~\ref{L:Kv1000}.
\end{proof}

\begin{lemma} [An uniform energy estimate] \label{6:uniformlemma} Let $u^{\mu,\ve}$ be a weak solution of \eqref{e6.4}--\eqref{e6.8}.
Then, there exists a constant $C>0$  such that for $0 < \mu < 1$,
\begin{align}
\bbE \Big( \sup_{0\leq t \leq T} \int_\cO & | u^{\mu, \ve}(t,x)-\tilde u_b(t) |^2 \, dx   + \ve\, \int_0^t \int_\cO     |\nabla u^{\mu,\ve}(s,x) |^2  \, dx\, ds \nonumber \\
  &\quad\quad\quad\quad\quad\quad\quad\quad+\mu\, \int_0^t \int_\cO     \big( \Delta_{x''}  u^{\mu,\ve}(s,x) \big)^2  \, dx\, ds \Big) \leq C. \label{6:uniformestimate1}
\end{align} 
\end{lemma}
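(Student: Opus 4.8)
The plan is to derive \eqref{6:uniformestimate1} by applying the relative energy identity of Lemma~\ref{6:LemmaRelative0} with $v_1 = u^{\mu,\ve}$ and $v_2$ replaced by the boundary extension $\widetilde{u_b^\ve}$, or — more directly — by running an It\^o expansion for $\|u^{\mu,\ve}(t) - \widetilde{u_b^\ve}(t)\|_{L^2(\cO)}^2$ analogous to \eqref{6:Ito0}, and then absorbing the bad terms. First I would set $z = u^{\mu,\ve} - \widetilde{u_b^\ve}$, note $z \in L^2(\Om\times[0,T],\cP; H_A^{1/2}(\cO))$ by construction, so $z$ has zero trace on the parabolic boundary and zero normal derivative on $\po\cO'\times\cO''$ in the appropriate sense; this is exactly what makes the boundary terms in the integration by parts vanish. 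Using \eqref{e6.Kv'} for $u^{\mu,\ve}$ together with \eqref{e6:1ub}--\eqref{e6:5ub} for $\widetilde{u_b^\ve}$, I would write the equation satisfied by $z$ in the variational form $\langle \po_t z, \varphi\rangle_{H_A^{-1/2},H_A^{1/2}} + a(t;z,\varphi) = \langle \text{(source terms)}, \varphi\rangle$ and apply It\^o's formula to $\tfrac12\|z\|_{L^2(\cO)}^2$ (rigorously via the $I_\lambda$-regularization as in the proof of Lemma~\ref{6:LemmaRelative0}).

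The resulting identity will have on the left the quantity in \eqref{6:uniformestimate1} (with the gradient and bi-Laplacian coercivity terms coming from $a(t;z,z)$) and on the right: (i) the flux terms $\int\Abf^\ve(u^{\mu,\ve})\cdot\nabla z$ and $\int \Bbf^\ve(u^{\mu,\ve}) : D^2_{x''}z$; (ii) cross terms involving $\widetilde{u_b^\ve}$ and its derivatives coming from the source $-(\po_t - \ve\Delta + \mu\Delta_{x''}^2)\widetilde{u_b^\ve}$; (iii) the stochastic integral $\int (\Phi^\ve(u^{\mu,\ve}) - \Phi^\ve(\widetilde{u_b^\ve}))(z)\,dW$ plus its It\^o correction $\tfrac12\int G^\ve(\cdot)^2$-type term. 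The key point is that $\abf^\ve = (\Abf^\ve)'$ and $\bbf^\ve = (\Bbf^\ve)'$ are bounded by hypothesis, and $\Abf^\ve, \Bbf^\ve$ grow at most linearly, so $\|\Abf^\ve(u^{\mu,\ve})\|_{L^2(\cO)} \le C(1 + \|u^{\mu,\ve}\|_{L^2(\cO)}) \le C(1 + \|z\|_{L^2(\cO)})$ using the a priori bound on $\widetilde{u_b^\ve}$ from Lemma~\ref{6:lemmau_b} and the regularity assumption \eqref{e1.ub}; similarly for $\Bbf^\ve$. Then Cauchy--Schwarz and Young's inequality let me absorb $\tfrac{\ve}{2}\|\nabla z\|^2$ and $\tfrac{\mu}{2}\|\Delta_{x''}z\|^2$ into the left-hand side (here the inequality $\|f\|_{L^2(\cO';H_0^2(\cO''))}^2 \le C\int_\cO(\Delta_{x''}f)^2$ quoted in the proof of Lemma~\ref{6:LemmaRelative} is what controls the $D^2_{x''}$ terms), leaving a remainder $\le C\int_0^t \|z(s)\|_{L^2(\cO)}^2\,ds + C$, where $C$ depends on $\ve$ but is uniform in $\mu \in (0,1)$ and on $T$, $D$, and the norms of $u_b$ in \eqref{e1.ub}.

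For the stochastic term, I would take $\sup_{0\le\tau\le t}$ and expectation and apply the Burkholder--Davis--Gundy inequality exactly as in \eqref{6:(III)}, which after Young gives $\tfrac16$ of the left-hand side plus $C\,\bbE\int_0^t\|z(s)\|_{L^2(\cO)}^2\,ds$; the It\^o correction term is bounded by $D\,\bbE\int_0^t\|z(s)\|^2\,ds$ using \eqref{e1.5*} and \eqref{e1.8'}. Collecting everything, one obtains
\[
\bbE\sup_{0\le\tau\le t}\Big(\|z(\tau)\|_{L^2(\cO)}^2 + \ve\!\int_0^\tau\!\|\nabla z\|^2 + \mu\!\int_0^\tau\!\|\Delta_{x''}z\|^2\Big) \le C + C\bbE\int_0^t\sup_{0\le\sigma\le s}\|z(\sigma)\|_{L^2(\cO)}^2\,ds,
\]
and Gronwall's lemma closes the estimate with a constant independent of $\mu$. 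Finally I would translate back from $\nabla z$, $\Delta_{x''}z$ to $\nabla u^{\mu,\ve}$, $\Delta_{x''}u^{\mu,\ve}$ at the cost of the (finite, $\mu$-uniform) norms of $\widetilde{u_b^\ve}$, which is harmless. The main obstacle I anticipate is purely bookkeeping: making the It\^o expansion for $z$ rigorous at the level of regularity $H_A^{1/2}$ (the $I_\lambda$ device of Lemma~\ref{6:LemmaRelative0} handles this) and carefully checking that every boundary integral produced by integrating by parts against $z$ genuinely vanishes — this uses both $z|_{\cO'\times\po\cO''} = 0$ and $\po_\nu z = 0$ on $\po\cO'\times\cO''$, together with $\po_\nu \widetilde{u_b^\ve} = 0$ on $\cO'\times\po\cO''$, so that no uncontrolled trace of $\nabla u^{\mu,\ve}$ on the boundary appears.
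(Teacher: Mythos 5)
Your proposal follows essentially the same framework as the paper — set $z = u^{\mu,\ve} - \widetilde{u_b^\ve}$, apply It\^o's formula to $\tfrac12\|z\|_{L^2(\cO)}^2$ via the $I_\lambda$ regularization, treat the stochastic integral with Burkholder, and close with Gronwall — but there is a genuine gap in how you propose to estimate the degenerate diffusion term, and the gap defeats the $\mu$-uniformity that is the entire point of the lemma.

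You write the diffusion contribution as $\int \Bbf^\ve(u^{\mu,\ve}) : D^2_{x''} z$ and plan to control it by Cauchy--Schwarz and Young, absorbing $\tfrac{\mu}{2}\|\Delta_{x''}z\|^2$ into the left-hand side. But Young applied that way leaves a remainder of order $\tfrac{1}{\mu}\|\Bbf^\ve(u^{\mu,\ve})\|^2_{L^2}$, which is \emph{not} uniform for $\mu\in(0,1)$; your assertion that the remainder constant is $\mu$-uniform is therefore unjustified. The inequality $\|f\|_{L^2(\cO';H_0^2(\cO''))}^2 \le C\int(\Delta_{x''}f)^2$ from the proof of Lemma~\ref{6:LemmaRelative} is correct, but there it serves a fixed-$\mu$ contraction estimate — that argument intentionally blows up as $\mu\to 0$ and cannot be reused here.

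The paper avoids this by integrating the diffusion term by parts once more, so that it appears as $-\int \bbf(u^{\mu,\ve})\,\nabla_{x''} u^{\mu,\ve}\cdot \nabla_{x''} z$ rather than paired with $D^2_{x''} z$. Splitting $\nabla_{x''} u^{\mu,\ve} = \nabla_{x''} z + \nabla_{x''}\widetilde{u_b^\ve}$, the quadratic piece $\int \bbf(u^{\mu,\ve})\,\nabla_{x''} z\cdot \nabla_{x''} z$ is nonnegative because $\bbf\ge 0$, and is moved to the left-hand side; the cross term $\int \bbf(u^{\mu,\ve})\,\nabla_{x''}\widetilde{u_b^\ve}\cdot \nabla_{x''} z$ is then absorbed by Young's inequality into $\ve\|\nabla z\|^2$ (not $\mu\|\Delta_{x''}z\|^2$), leaving a remainder proportional to $\ve^{-1}\|\nabla_{x''}\widetilde{u_b^\ve}\|_{L^2_{\om,t,x}}^2$, which is $\ve$-dependent but uniform in $\mu$, exactly as the lemma requires. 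This single maneuver — one extra integration by parts plus using the sign of $\bbf$ instead of the artificial $\mu$-coercivity — is the nontrivial idea your sketch misses; everything else in your outline (the vanishing of boundary integrals thanks to $z\in H_A^{1/2}$, the flux term via boundedness of $\abf^\ve$ and Young with $\ve$, Burkholder, the $G^2$ term via \eqref{e1.4*}, Gronwall) is correct and matches the paper.
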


\begin{proof}
 First,  consider the function $\tilde u_b(t,x)$ solution of \eqref{e6:1ub}--\eqref{e6:5ub}. Applying  It\^o formula to $\frac12\|u^{\ve,\mu} - \tilde u_b\|_{L^2(\cO)}^2$, we see that a.s.
\begin{align*}
  \frac{1}{2} &  \Vert u^{\mu, \ve}(t) -\tilde u_b(t) \Vert_{L^2(\cO)}^2 =  \frac{1}{2}\Vert u_0^\ve  \Vert_{L^2(\cO)}^2  \nonumber \\
              &- \int_0^t \int_\cO \mu(  \Delta_{x''} u^{\mu, \ve} - \Delta_{x''} \tilde u_b )( \Delta_{x''} u^{\mu, \ve} - \Delta_{x''}\tilde u_b ) \, dx\, ds \nonumber \\
              &- \int_0^t \int_\cO \ve( \nabla u^{\mu, \ve} - \nabla \tilde u_b)\cdot(\nabla u^{\mu, \ve} - \nabla \tilde u_b)\, dx\, ds \nonumber \\
              &- \int_0^t \int_\cO (\bbf(u^{\mu,\ve}) \nabla_{x''} u^{\mu, \ve}) \cdot (\nabla_{x''} u^{\mu,\ve} - \nabla_{x''} \tilde u_b) \, dx\, ds \nonumber \\
              &+ \int_0^t \int_\cO \Abf(u^{\mu,\ve}) \cdot (\nabla u^{\mu,\ve} - \nabla \tilde u_b ) \, dx\, ds \nonumber \\
              &+ \int_0^t \int_\cO (u^{\mu,\ve} - \tilde u_b) \Phi(u^{\mu,\ve}) \, dx \, dW(s) \nonumber \\
              &+ \frac{1}{2} \int_0^t \int_\cO \sum_{k=1}^\infty | g_k(u^{\mu,\ve}) |^2 \, dx\, ds\nonumber,
\end{align*}
and so
\begin{align*}
  \bbE \Big( \frac12\sup_{0\leq \tau \leq t} &\int_\cO | u^{\mu,\ve}(\tau,x)-\tilde u_b(\tau,x) |^2 \, dx  + \ve\, \int_0^t \int_\cO     |\nabla u^{\mu,\ve}(s,x)-\nabla \tilde u_b(s,x) |^2  \, dx\, ds  \\
  &\quad+\mu\, \int_0^t \int_\cO     \big( \Delta_{x''}  u^{\mu,\ve}(s,x) -\Delta_{x''} \tilde u_b(s,x)\big)^2  \, dx\, ds  \\
  &+ \int_0^t \int_\cO \bbf(u^{\mu,\ve}) (\nabla_{x''} u^{\mu, \ve}-\nabla_{x''}\tilde u_b) \cdot (\nabla_{x''} u^{\mu,\ve} - \nabla_{x''} \tilde u_b) \, dx\, ds\Big)\\
  &\le \frac 12\|u_0\|^2+ \bbE \int_0^t \int_\cO |(\bbf(u^{\mu,\ve}) \nabla_{x''} \tilde u_b) \cdot (\nabla_{x''} u^{\mu,\ve} - \nabla_{x''} \tilde u_b)| \, dx\, ds \nonumber\\
   &+\bbE \int_0^t \int_\cO |(\Abf(u^{\mu,\ve})-\Abf(\tilde u_b)) \cdot (\nabla u^{\mu,\ve} - \nabla \tilde u_b )| \, dx\, ds \nonumber \\
   &+\bbE \int_0^t \int_\cO |\Abf(\tilde u_b) \cdot (\nabla u^{\mu,\ve} - \nabla \tilde u_b )| \, dx\, ds \nonumber \\
              &+\bbE\sup_{0\le \tau\le t} \Big|\int_0^t \int_\cO (u^{\mu,\ve} - \tilde u_b) \Phi(u^{\mu,\ve}) \, dx \, dW(s)\Big| \nonumber \\
              &+ \bbE\frac{1}{2} \int_0^t \int_\cO \sum_{k=1}^\infty | g_k( u^{\mu,\ve}) |^2 \, dx\, ds\nonumber\\
              &=I_1+\cdots+I_6. 
\end{align*} 

Here,  $I_1$ is trivial, $I_2$  is estimated using Young's inequality with $\ve$ to get a term that can be absorbed in the left-hand side and another term which  is bounded  by a multiple of the squared $L_{\om,t,x}^2$-norm of $\nabla_{x''}\tilde u_b$, using the boundedness of $\Bbf'$. $I_3, I_4$ are similarly estimated using 
Young's inequality with $\ve$ and the boundedness of $\Abf'$. $I_5$ is estimated using Burkholder inequality as in the proof of Lemma \ref{6:LemmaRelative}. As for $I_6$, we use \eqref{e1.4*}.   We then use  Gr\"onwall's inequality to see that
\begin{align*}
  \bbE \Big( \sup_{0\leq t \leq T} &\int_\cO | u^{\mu,\ve}(t,x) - \tilde u_b(t,x) |^2 \, dx \nonumber + \ve\, \int_0^T \int_\cO     |\nabla u^{\mu,\ve}(t,x) - \nabla \tilde u_b(t,x) |^2  \, dx\, dt \nonumber \\
  &\quad\quad\quad\quad\quad\quad\quad\quad+\mu\, \int_0^T \int_\cO     \big( \Delta_{x''}  u^{\mu,\ve}(t,x) - \Delta_{x''} \tilde u_b(t,x)\big)^2  \, dx\, dt \Big) \leq C,
\end{align*} 
uniformly for $0 < \mu < 1$, from which \eqref{6:uniformestimate1} immediately follows.
\end{proof}

\section{Existence, part two: The second approximate problem}\label{S:7}

\subsection{Some a priori properties of weak solutions}\label{SS:7.1}

Since our limiting argument when $\mu \rightarrow 0$ involves the Yamada-Watanabe method \cite{YW}, with application of Gy\"ongy-Krylov criterion \cite{GK}, it is necessary that we first establish the uniqueness of weak solutions to \eqref{e6.1}--\eqref{e6.3}. This subsection will be concerned with this and other important facts about weak solutions. 

First, let us state the concept of solution that we will be employing throughout this section.

\begin{definition}
A process $u$ is said to be a \text{weak solution} to \eqref{e6.1}--\eqref{e6.3'} provided that

\begin{enumerate}
  \item $u \in L^2(\Omega; C([0,T];L^2(\cO)))$ $\cap$ $L^2(\Omega \times [0,T], \cP; H^1(\cO))$;
  \item $u = u^b$ almost surely in the sense of traces on $(0,T) \times \cO' \times \po\cO''$; 
  \item Almost surely, and for any $0 \leq t \leq T$ and any $\theta \in L^2(\cO', H_0^1(\cO''))$ $\cap$ $L^2(\cO'';H^1(\cO'))$, 
  \begin{align}
  (u(t), \theta)_{L^2(\cO)} =\,&  (u_0, \theta)_{L^2(\cO)} - \ve \int_0^t \int_\cO \nabla_{x'} u \cdot \nabla_{x'} \theta \,dx\, ds \nonumber \\
                               &- \int_0^t \int_\cO \operatorname{div}_{x''} \Bbf^\ve(u) \cdot \nabla_{x''} \theta \, dx\, ds + \int_0^t \int_\cO \Abf(u) \cdot \nabla \theta \, dx\, ds \nonumber \\
                               &+ \int_0^t \int_\cO \theta(x) \, \Phi(u) \, dx\, dW(s). \label{6':1}
  \end{align}
\end{enumerate}
\end{definition}

We will now deduce some estimates based on It\^o's formula for weak solutions of \eqref{e6.1}--\eqref{e6.3'}. Our arguments are similar to those we have already applied in the proof of Lemma \ref{6:LemmaRelative0}.

Let us introduce the operator $T: D(T) \subset L^2(\cO) \rightarrow L^2(\cO)$ by
$$
\begin{cases}
  D(T) = \{ f \in H^2(\cO); &\partial_\nu f = 0 \text{ on } \partial \cO' \times \cO'' \\ &\text{and } f = 0 \text{ on }  \cO' \X \partial \cO''  \}, \\
    Tf = -\Delta f.
\end{cases}
$$
According to Theorem~\ref{thmspectralT}, $T$ is also a nonnegative self-adjoint operator. Again, if we introduce its ``approximation of the identity'' $J_\lambda = (I + \lambda T)^{-1}$ for $\lambda > 0$, we have that \begin{enumerate}
  \item If $H_T^\alpha$ denotes the intermediate spaces of $T$, $J_\lambda \in L(H_T^{\alpha};H_T^{\alpha+1})$ with norm $\leq 1/\lambda$, and
  \item for any $f \in H_T^\alpha$, $J_\lambda f \rightarrow f$ in $H^\alpha_T$ as $\lambda \rightarrow 0$.
\end{enumerate}

The key element for the proof of our Itô's formula is the nearly obvious constatation that, for $H_T^{1/2} = L^2(\cO';H_0^1(\cO))$ $\cap$ $L^2(\cO'';H^1(\cO))$, a weak solution $u$ of \eqref{6':1} satisfies
\begin{equation}
  u(t) = u_0 + \int_0^t B(s) \, ds + \int_0^t \Phi(s,u(s)) \, dW(s) \quad \text{ in } H_T^{-1/2} = (H_T^{1/2})^*, \nonumber
\end{equation}
where $B \in L^2(\Omega \times [0,T], \cP; H_T^{-1/2})$ is almost surely the functional
\begin{align*}
  \big\langle B(s), \theta \big\rangle_{H^{-1/2}_T, H_T^{1/2}} =\,& -\int_\cO \big(\ve \nabla_{x'} u \cdot \nabla_{x'} \theta + \operatorname{div}_{x''} \Bbf^\ve(u) \cdot \nabla_{x''} \theta \big) \, dx \\
  &+   \int_\cO \Abf(u) \cdot \nabla \theta \, dx.
\end{align*}

Hence, $u^\lambda = J_\lambda u$ satisfies
\begin{equation}
  u^\lambda (t) = J_\lambda u_0 + \int_0^t J_\lambda B(s) \, ds + \int_0^t J_\lambda \Phi(s) \, dW(s) \quad \text{ in } L^2(\cO). \label{e6':2}
\end{equation}
Consequently, given any $\eta \in C^2(-\infty,\infty)$ with $\frac{d^2\eta}{du^2} \in L^\infty(-\infty, \infty)$, and any $\theta \in C^1(\overline{\cO})$ which vanishes near $\cO' \X \partial \cO''$ (i.e., $ \theta \in C_c^1(\R^{d'} \times \cO'')$), the classic It\^o's formula asserts that
\begin{align}
  \int_\cO \theta \, \eta(u^\lambda) \, dx =\,& \int_\cO \theta \, \eta(J_\lambda u_0)  \, dx + \int_0^t \int_\cO  \theta \, \eta'(u^\lambda) J_\lambda B(s) \, dx \, ds \nonumber \\
                     &+ \int_0^t \int_\cO  \theta \, \eta'(u^\lambda) J_\lambda \Phi(u) \, dx \, dW(s) \nonumber\\  &+ \frac{1}{2} \int_0^t \int_\cO \theta \, \eta''(u^\lambda) \sum_{k=1}^\infty | J_\lambda g_k(u) |^2  \, dx \, ds, \label{e6':3}
\end{align}
a.s. for any $0 \leq t \leq T$. Here and henceforth, for simplicity, we denote $\eta'=\frac{d\eta}{du}$ and $\eta''=\frac{d^2\eta}{du^2}$. Next we observe that 
\begin{align*}
  (J_\lambda B(s), \theta \, \eta'(u^\lambda) )_{L^2(\cO)} &= \big\langle B(s), J_\lambda ( \theta \,\ \eta'(u^\lambda) ) \big\rangle_{H^{-1/2}_T, H_T^{1/2}},
\end{align*}
so that \eqref{e6':3} means that
\begin{align}
  \int_\cO \theta \, \eta(u^\lambda) \, dx =\,& \int_\cO \theta \, \eta(J_\lambda u_0)  \, dx - \ve \int_0^t \int_\cO \nabla_{x'} u \cdot \nabla_{x'} J_\lambda \big( \theta \, \eta'(u^\lambda) \big)  \, dx \, ds \nonumber \\
                                              &- \int_0^t \int_\cO \operatorname{div}_{x''} \Bbf^\ve(u) \cdot \nabla_{x''} J_\lambda \big( \theta \, \eta'(u^\lambda)  \big) \,dx\,ds \nonumber\\
                                              &+ \int_0^t \int_\cO \Abf(u) \cdot \nabla J_\lambda \big( \theta \, \eta'(u^\lambda)  \big) \,dx\,ds \nonumber\\ 
                                              &+ \int_0^t \int_\cO  \theta \, \eta'(u^\lambda) J_\lambda \Phi(u) \, dx \, dW(s) \nonumber \\  &+ \frac{1}{2} \int_0^t \int_\cO \theta \, \eta''(u^\lambda) \sum_{k=1}^\infty | J_\lambda g_k(u) |^2  \, dx \, ds \label{e6':4}
\end{align}
Passing the limit $\lambda \rightarrow 0$, we obtain the following theorem.

\begin{theorem} \label{6':Ito1}
  If $u$ is a weak solution of \eqref{e6.1}--\eqref{e6.3'}, the following chain rule is valid: For every $0\leq t \leq T$, every $\eta \in C^2(-\infty, \infty)$ with $\eta'' \in L^\infty$, and every $\theta \in H_T^{1/2} \cap C^1(\overline{\cO})$,
  \begin{align}
  \int_\cO \theta(x) \, \eta(u(t,x)) \, dx =\,& \int_\cO \theta(x) \, \eta(u_0(x))  \, dx \nonumber \\&- \ve \int_0^t \int_\cO \nabla_{x'} u(s,x) \cdot \nabla_{x'} \big( \theta(x) \, \eta'(u(s,x)) \big)  \, dx \, ds \nonumber \\
                                              &- \int_0^t \int_\cO \operatorname{div}_{x''} \Bbf^\ve(u(s,x)) \cdot \nabla_{x''} \big( \theta(x) \, \eta'(u(s,x))  \big) \,dx\,ds \nonumber\\
                                              &+ \int_0^t \int_\cO \Abf(u(s,x)) \cdot \nabla \big( \theta(x) \, \eta'(u(s,x))  \big) \,dx\,ds \nonumber \\ 
                                              &+ \int_0^t \int_\cO  \theta(x) \, \eta'(u(s,x)) \Phi(u(s,x)) \, dx \, dW(s) \nonumber \\  &+ \frac{1}{2} \int_0^t \int_\cO \theta(x) \, \eta''(u(s,x)) \sum_{k=1}^\infty | g_k(u(s,x)) |^2  \, dx \, ds, \text{ a.s.}  \label{e6':Ito1}
\end{align}
\end{theorem}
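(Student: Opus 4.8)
The plan is to pass to the limit $\lambda\to 0$ in the identity \eqref{e6':4}, which is the classical finite-dimensional It\^o formula applied to the smoothed process $u^\lambda = J_\lambda u$. The smoothing $J_\lambda$ is introduced precisely so that \eqref{e6':3} holds rigorously: $u^\lambda$ takes values in $L^2(\cO)$ and solves the $L^2$-valued SDE \eqref{e6':2}, so the standard It\^o formula (say, from \cite{O} or in the form used in \cite{DHV, FL2}) applies directly with the $C^2$ function $\theta\,\eta(\cdot)$, noting that $\eta''\in L^\infty$ guarantees the requisite bounds on the drift correction and the quadratic variation term. Having \eqref{e6':4}, the claim reduces to showing that each term converges to the corresponding term of \eqref{e6':Ito1} as $\lambda\to 0$, for a fixed $0\le t\le T$, a.s.

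\textbf{Convergence of the terms.} First I would record the key facts about $J_\lambda=(I+\lambda T)^{-1}$: it is a contraction on $L^2(\cO)$, it maps $H_T^\alpha$ into $H_T^{\alpha+1}$ with norm $\le 1/\lambda$, it commutes with $T$ and hence is self-adjoint on $L^2(\cO)$, and $J_\lambda g\to g$ in $H_T^\alpha$ for every $g\in H_T^\alpha$ (these are exactly properties (1)--(2) stated just above). Since $u\in L^2(\Om\times[0,T],\cP;H^1(\cO))$ and $u=u_b$ on $\cO'\times\po\cO''$, while $\theta\in H_T^{1/2}\cap C^1(\overline\cO)$ with $\theta$ vanishing near $\cO'\times\po\cO''$, the product $\theta\,\eta'(u^\lambda)$ belongs to $H_T^{1/2}$ (it vanishes near $\cO'\times\po\cO''$ and its gradient is in $L^2$ because $\eta'$ is locally Lipschitz and $u^\lambda\in H^1$). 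For the deterministic integrals I would use the self-adjointness of $J_\lambda$ to move $J_\lambda$ off the test function, e.g. $\int_\cO \Abf(u)\cdot\nabla J_\lambda(\theta\,\eta'(u^\lambda))\,dx$; then combine $u^\lambda\to u$ in $L^2(\Om\times[0,T];H^1(\cO))$ (whence $\eta'(u^\lambda)\to\eta'(u)$ and $\nabla(\theta\eta'(u^\lambda))\to\nabla(\theta\eta'(u))$ in $L^2$, up to a subsequence and using dominated convergence as the boundedness of $u$ gives $L^\infty$ control of $\eta',\eta''$) with $J_\lambda\to I$ strongly on $L^2$, and the uniform $L^2$-bound on $\nabla u$, $\nabla\Bbf^\ve(u)$, $\Abf(u)$ to pass the limit inside. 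The left-hand side $\int_\cO\theta\,\eta(u^\lambda)\to\int_\cO\theta\,\eta(u)$ and the initial term $\int_\cO\theta\,\eta(J_\lambda u_0)\to\int_\cO\theta\,\eta(u_0)$ follow from $J_\lambda\to I$ in $L^2$ together with continuity of $\eta$ on bounded sets. For the stochastic integral $\int_0^t\int_\cO\theta\,\eta'(u^\lambda)J_\lambda\Phi(u)\,dx\,dW(s)$ I would use the It\^o isometry together with $J_\lambda\Phi(u)\to\Phi(u)$ in $L_2(\mathfrak U;L^2(\cO))$ and $\eta'(u^\lambda)\to\eta'(u)$ boundedly, plus \eqref{e1.4*}, to get $L^2(\Om)$-convergence, hence a.s. convergence along a subsequence; the correction term $\tfrac12\int_0^t\int_\cO\theta\,\eta''(u^\lambda)\sum_k|J_\lambda g_k(u)|^2\,dx\,ds$ is handled the same way using $\sum_k|J_\lambda g_k(u)|^2\le\sum_k|g_k(u)|^2\le D(1+|u|^2)\in L^1$ and dominated convergence. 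A diagonal argument over the finitely many terms then yields \eqref{e6':Ito1} a.s. for the given $t$; density of $C^2$ functions with $\eta''\in L^\infty$ and a continuity-in-$t$ argument (using that all terms are a.s. continuous in $t$, $u$ having continuous $L^2$-trajectories) upgrade this to all $t\in[0,T]$ simultaneously.

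\textbf{Main obstacle.} The delicate point is the identification of the limit of the elliptic terms $\int_0^t\int_\cO \nabla_{x'}u\cdot\nabla_{x'}J_\lambda(\theta\,\eta'(u^\lambda))\,dx\,ds$ and the analogous $x''$-divergence term: one must justify that $\nabla_{x'}J_\lambda(\theta\,\eta'(u^\lambda))\to\nabla_{x'}(\theta\,\eta'(u))$ in $L^2$, which requires knowing that $\theta\,\eta'(u^\lambda)$ stays bounded in $H_T^{1/2}$ uniformly in $\lambda$ and converges there. This uses that $J_\lambda$ is bounded on $H_T^{1/2}$ uniformly (it is a contraction in the graph norm), that $u^\lambda\to u$ in $H^1$ on every compact subset of $\cO$ away from $\cO'\times\po\cO''$ where $\theta$ is supported, and the chain rule for Sobolev functions composed with the $C^1$ map $\eta'$. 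Once this uniform $H_T^{1/2}$-bound and convergence are in hand, weak-strong pairing gives the limit; I expect this boundary-compatibility bookkeeping, rather than any single estimate, to be where the real care is needed, but it follows the same pattern as the corresponding arguments in \cite{FL2} and in the proof of Lemma~\ref{6:LemmaRelative0} above.
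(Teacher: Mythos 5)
Your overall plan — pass to the limit $\lambda\to 0$ in \eqref{e6':4} — is exactly the paper's, and most of the bookkeeping you sketch (contractivity and self-adjointness of $J_\lambda$, It\^o isometry for the stochastic integral, dominated convergence for the quadratic-variation term) is correct. But there is a genuine gap at the point you yourself flag as the ``main obstacle,'' and in the paragraph before it you state an assertion that is false and, if taken at face value, would make the obstacle disappear.

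You write that $u^\lambda\to u$ in $L^2(\Om\times[0,T];H^1(\cO))$. This cannot hold globally: $u^\lambda=J_\lambda u\in D(T)$ has zero trace on $\cO'\times\po\cO''$ for every $\lambda>0$, while $u=u_b\neq 0$ there; since the trace map is continuous on $H^1(\cO)$, an $H^1$-limit of the $u^\lambda$ would again have zero trace. Concretely, $\nabla_{x''}u^\lambda$ develops a boundary layer near $\cO'\times\po\cO''$ whose $L^2$-mass does not vanish. The $x'$-direction is safe — there $T$ splits off a Neumann factor compatible with the boundary behaviour of $u$, so the tensorial spectral Theorems~\ref{tensorialspectralthm} and~\ref{thmspectralT} give $u^\lambda\to u$ in $L^2(\cO'';H^1(\cO'))$ — but the $x''$-gradients converge only locally, in $L^2(\cO';H^1_{\loc}(\cO''))$. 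This is why the paper first reduces to $\theta$ supported away from $\cO'\times\po\cO''$ (which you also do implicitly); but the local $x''$-convergence then still has to be proved, and you only assert it.

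That local convergence is the real technical content of the proof and is not a soft uniform-bound/compactness argument. The paper obtains it by working directly with the variational characterization $\lambda\int_\cO\nabla u^\lambda\cdot\nabla\varphi+\int_\cO u^\lambda\varphi=\int_\cO u\varphi$, differentiating it in an $x''$-variable, testing with $\varphi=\phi^2\partial_{x''}u^\lambda$ for a cutoff $\phi$ supported away from $\cO'\times\po\cO''$, which yields the uniform bound $\int_\cO\phi^2(\partial_{x''}u^\lambda)^2\,dx\le C(\phi)\int_\cO(u^2+|\nabla u|^2)\,dx$ and, on letting $\lambda\to0$, the energy inequality $\limsup_\lambda\int\phi^2(\partial_{x''}u^\lambda)^2\le\int\phi^2(\partial_{x''}u)^2$; combined with the already-known weak convergence this upgrades to strong local convergence of $\nabla_{x''}u^\lambda$. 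Without this step you cannot pass to the limit in the $\div_{x''}\Bbf^\ve(u)\cdot\nabla_{x''}J_\lambda(\theta\eta'(u^\lambda))$ and $\Abf(u)\cdot\nabla J_\lambda(\theta\eta'(u^\lambda))$ terms, since $\nabla_{x''}(\theta\eta'(u^\lambda))$ contains $\theta\eta''(u^\lambda)\nabla_{x''}u^\lambda$; your appeal to a uniform $H_T^{1/2}$-bound on $\theta\eta'(u^\lambda)$ together with ``the same pattern as in \cite{FL2}'' does not supply it.
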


\begin{proof}
Evidently, it suffices to prove the theorem for the simpler case we were dealing before, which was with $\theta$ vanishing near $\cO' \times \partial \cO''$. The passage to the limit $\lambda \rightarrow 0$ in \eqref{e6':4} has simple terms, not so complicated terms, and others that really require some attention. The simple terms are all that do not involve any derivatives on $u^\lambda$; these ones clearly converge to the corresponding limit.

On the other hand, by means of the Spectral Theorems \ref{tensorialspectralthm} and \ref{thmspectralT}, one can easily see that $u^\lambda \rightarrow u$ in $L^2(\Omega \times [0,T], \cP;$ $L^2(\cO''; H^1(\cO')))$. Hence the term with derivatives with respect to $x'$ also converge nicely. 

However, for the terms involving the derivatives of $u^\lambda$ with respect to $x''$, which require more attention, we have the following. First, let us show that $u^\lambda \rightarrow u$ in $L^2(\Omega \times [0,T, \cP;$ $L^2(\cO';H_{\text{loc}}^1(\cO'')))$. Fixing $\omega \in \Omega$ and $0\leq t \leq T$, $u^\lambda$ is characterized by
\begin{equation}\label{e.ulambda}
\lambda \int_\cO \nabla u^\lambda \cdot \nabla \varphi \, dx + \int_\cO u^\lambda \varphi \, dx = \int_\cO u \varphi \, dx
\end{equation}
for any $\varphi \in C^\infty(\overline{\cO})$ vanishing near $\cO' \X \partial \cO''$. Thus, if $\partial_{x''}$ denotes any partial derivative with respect to one of the $x''$--variables, we have
$$\lambda \int_\cO \nabla u^\lambda \cdot \nabla \partial_{x''}\varphi \, dx + \int_\cO u^\lambda \partial_{x''}\varphi \, dx = \int_\cO u \partial_{x''}\varphi \, dx,$$
which imples (as $u^\lambda \in H^2$),
$$\lambda \int_\cO \nabla \partial_{x''} u^\lambda \cdot \nabla \varphi \, dx + \int_\cO \partial_{x''} u^\lambda \varphi \, dx = \int_\cO \partial_{x''} u \varphi \, dx.$$
By means of smooth approximations, the identity above remains true for $\varphi = \phi^2 \partial_{x''} u^\lambda $, where $\phi$ is also in $C^1(\overline{\cO})$ and vanishes near $\cO' \X \partial \cO''$. Therefore,
\begin{align}
  \lambda \int_\cO |\nabla \partial_{x''} u^\lambda|^2 \phi^2 \, dx + 2 \lambda \int_\cO &\phi \partial_{x''} u^\lambda \nabla \partial_{x''} u^\lambda \cdot \nabla \phi dx \nonumber \\ & +  \int_\cO ( \partial_{x''} u^\lambda )^2 \phi^2 \, dx = \int_\cO \partial_{x''} u \partial_{x''} u^\lambda \phi^2 \, dx. \label{6}
\end{align}
Consequently, as $\lambda \int_\cO |\nabla u^\lambda|^2 \, dx$ is bounded by $\frac14 \int_\cO u^2 dx$, which follows easily from \eqref{e.ulambda}, we get
$$\int_\cO \phi^2 (\partial_{x''} u^\lambda)^2 \, dx \leq C(\phi) \int_\cO (u^2 + |\nabla u|^2) \, dx.$$
It is established thus that $u^\lambda \rightharpoonup u$ weakly in $L^2(\Omega \times [0,T], \cP; L^2(\cO'; H_{\text{loc}}^1(\cO'')))$, once that we already knew that $u^\lambda \rightarrow u$ strongly in $L^2(\Omega \times [0,T]\times \cO)$. On the other hand, returning to \eqref{6}, we see that
$$\limsup_{\lambda \rightarrow 0} \int_\cO \phi^2 (\partial_{x''} u^\lambda)^2 \, dx \leq \int_\cO \phi^2 (\partial_{x''} u)^2 \, dx,$$
implying, as asserted, that $u^\lambda \rightarrow u$ strongly in $L^2(\Omega \times [0,T], \cP;$ $L^2(\cO';H_{\text{loc}}^1(\cO'')))$.

 Consequently, it is not hard to see that there exists a sequence $\lambda_n \rightarrow 0+$ such that a.s.
\begin{align*}
  \int_0^t \int_\cO \operatorname{div}_{x''} \Bbf^\ve&(u(s,x)) \cdot \nabla_{x''} J_{\lambda_n} \big( \theta(x) \, \eta'(u^{\lambda_n}(s,x))  \big) \,dx\,ds \\ &\rightarrow \int_0^t \int_\cO \operatorname{div}_{x''} \Bbf^\ve(u(s,x)) \cdot \nabla_{x''} \big( \theta(x) \, \eta'(u(s,x))  \big) \,dx\,ds, \text{ and} \\
  \int_0^t \int_\cO \Abf&(u(s,x)) \cdot \nabla J_{\lambda_n} \big( \theta(x) \, \eta'(u^{\lambda_n}(s,x))  \big) \,dx\,ds \\ &\rightarrow \int_0^t \int_\cO \Abf(u(s,x)) \cdot \nabla \big( \theta(x) \, \eta'(u(s,x))  \big) \,dx\,ds.
\end{align*}
This concludes the proof. 
\end{proof}

Regarding now an Itô's formula for the difference of two solutions $u$ and $v$, let us just mention the reprisal of the previous arguments yields the following theorem.

\begin{theorem} \label{6':Ito2}
  Let $u$ and $v$ be two weak solutions of \eqref{e6.1}--\eqref{e6.3'} with possibly different initial and boundary datum. Then, almost surely, for any $\eta \in C^2(-\infty,\infty)$ with $\eta'' \in L^\infty$, any $0\leq t \leq T$, and any $\theta \in H_T^{1/2} \cap C^1(\overline{\cO})$,
    \begin{align}
  \int_\cO \theta(x) \, &\eta(u(t,x) - v(t,x)) \, dx =\, \int_\cO \theta(x) \, \eta(u_0(x) - v_0(x))  \, dx \nonumber \\&- \ve \int_0^t \int_\cO \big(\nabla_{x'} u(s,x) - \nabla_{x'} v(s,x) \big)  \cdot \nabla_{x'} \big( \theta(x) \, \eta'(u(s,x)-v(s,x)) \big)  \, dx \, ds \nonumber \\
                                              &- \int_0^t \int_\cO \big( \operatorname{div}_{x''} \Bbf^\ve(u(s,x)) - \operatorname{div}_{x''} \Bbf^\ve(v(s,x)) \big) \nonumber \\ &\quad\quad\quad\quad\quad\quad\quad\quad\quad\quad\quad\quad\quad\,\,\,\cdot \nabla_{x''} \big( \theta(x) \, \eta'(u(s,x)- v(s,x))  \big) \,dx\,ds \nonumber\\
                                              &+ \int_0^t \int_\cO (\Abf(u(s,x)) -\Abf(v(s,x)) \cdot \nabla \big( \theta(x) \, \eta'(u(s,x)-v(s,x))  \big) \,dx\,ds \nonumber \\ 
                                              &+ \int_0^t \int_\cO  \theta(x) \, \eta'(u(s,x)-v(s,x))(\Phi(u(s,x)) - \Phi(v(s,x))) \, dx \, dW(s) \nonumber \\  &+ \frac{1}{2} \int_0^t \int_\cO \theta(x) \, \eta''(u(s,x)-v(s,x))  \nonumber \\ &\quad\quad\quad\quad\quad\quad\quad\quad\quad\quad\quad\quad\quad\sum_{k=1}^\infty | g_k(u(s,x)) - g_k(v(s,x)) |^2  \, dx \, ds. \label{e6':Ito2}
\end{align}
\end{theorem}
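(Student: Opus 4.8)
The plan is to reproduce, almost verbatim, the mollification-and-passage-to-the-limit scheme from the proof of Theorem~\ref{6':Ito1}, now applied to the difference $w:=u-v$. Subtracting the two copies of the weak formulation \eqref{6':1} satisfied by $u$ and $v$, we first record that $w$ solves
\begin{equation*}
w(t) = w_0 + \int_0^t \bigl(B_u(s)-B_v(s)\bigr)\,ds + \int_0^t \bigl(\Phi(u(s))-\Phi(v(s))\bigr)\,dW(s)\quad\text{in } H_T^{-1/2},
\end{equation*}
where $w_0=u_0-v_0$ and $B_u,B_v\in L^2(\Omega\times[0,T],\cP;H_T^{-1/2})$ are the functionals built from $u$ and $v$ exactly as $B$ was built from the solution in the proof of Theorem~\ref{6':Ito1}. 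Applying the resolvents $J_\lambda=(I+\lambda T)^{-1}$, the process $w^\lambda:=J_\lambda w=u^\lambda-v^\lambda$ satisfies an $L^2(\cO)$-valued It\^o equation, to which the classical It\^o formula for the functional $f\mapsto\int_\cO\theta\,\eta(f)\,dx$ applies and yields
\begin{align*}
\int_\cO\theta\,\eta(w^\lambda(t))\,dx
 =\,&\int_\cO\theta\,\eta(J_\lambda w_0)\,dx
  + \int_0^t\int_\cO\theta\,\eta'(w^\lambda)\,J_\lambda(B_u-B_v)\,dx\,ds\\
  & + \int_0^t\int_\cO\theta\,\eta'(w^\lambda)\,J_\lambda(\Phi(u)-\Phi(v))\,dx\,dW(s)\\
  & + \frac12\int_0^t\int_\cO\theta\,\eta''(w^\lambda)\sum_{k\ge1}\bigl|J_\lambda(g_k(u)-g_k(v))\bigr|^2\,dx\,ds ,
\end{align*}
after which one rewrites $(J_\lambda(B_u-B_v),\theta\,\eta'(w^\lambda))_{L^2(\cO)}=\langle B_u-B_v,\,J_\lambda(\theta\,\eta'(w^\lambda))\rangle_{H_T^{-1/2},H_T^{1/2}}$ and expands $B_u-B_v$ into its viscous, parabolic-diffusion and flux parts, exactly as in \eqref{e6':4}. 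The pointwise convergences $\eta'(w^\lambda)\to\eta'(w)$ and $\eta''(w^\lambda)\to\eta''(w)$ follow from $\eta''\in L^\infty$ and the strong convergence $w^\lambda\to w$ in $L^2(\Omega\times[0,T]\times\cO)$.

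The passage to the limit $\lambda\to0$ then splits into the same three tiers as in the proof of Theorem~\ref{6':Ito1}. The terms not carrying a spatial derivative on $w^\lambda$ — in particular the stochastic integral and the quadratic-variation term, where one uses $\sum_k|g_k(u)-g_k(v)|^2\le D|u-v|^2$ from \eqref{e1.5*} — converge directly. In the viscous term one expands $\nabla_{x'}\bigl(\theta\,\eta'(w^\lambda)\bigr)=\eta'(w^\lambda)\nabla_{x'}\theta+\theta\,\eta''(w^\lambda)\bigl(\nabla_{x'}u^\lambda-\nabla_{x'}v^\lambda\bigr)$ and invokes, via the Spectral Theorems~\ref{tensorialspectralthm} and~\ref{thmspectralT}, the convergences $u^\lambda\to u$, $v^\lambda\to v$ in $L^2(\Omega\times[0,T],\cP;L^2(\cO'';H^1(\cO')))$. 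For the parabolic-diffusion and flux terms, which carry $\nabla_{x''}$, I would argue exactly as in Theorem~\ref{6':Ito1}: for $\theta$ vanishing near $\cO'\times\partial\cO''$ one first establishes $u^\lambda\to u$, $v^\lambda\to v$ strongly in $L^2(\Omega\times[0,T],\cP;L^2(\cO';H_{\text{loc}}^1(\cO'')))$, by testing the defining relation of $u^\lambda$ (resp.\ $v^\lambda$) with $\phi^2\partial_{x''}u^\lambda$ and running the weak-convergence-plus-$\limsup$ argument; one then removes the restriction on $\theta$ by approximating a general $\theta\in H_T^{1/2}\cap C^1(\overline{\cO})$ by functions supported away from that face, each term being controlled by the $H_T^{1/2}$-norm of the test function together with the uniform $L^2$-bounds on $u,v,\nabla u,\nabla v$. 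Using the boundedness of $(\Bbf^\ve)'$ and $(\Abf^\ve)'$ one identifies the limits of the flux and diffusion terms with the corresponding expressions in \eqref{e6':Ito2}; since all convergences hold along a subsequence $\lambda_n\to0$ and the limit is unique, the identity follows for the whole family, which is \eqref{e6':Ito2}.

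The main obstacle is, as in the proof of Theorem~\ref{6':Ito1}, the limit in the $x''$-derivative terms: $J_\lambda$ does not interact well enough with the mixed boundary conditions to provide convergence of $\nabla_{x''}w^\lambda$ up to $\cO'\times\partial\cO''$, forcing one to work with interior estimates in $\cO''$ and to upgrade weak to strong convergence through the $\limsup$ energy comparison (cf.\ also the computations in Lemma~\ref{6:LemmaRelative0}). Because $w=u-v$ enters all the relevant quantities linearly, no estimate beyond those already present in the proof of Theorem~\ref{6':Ito1} is needed, so I would present the argument tersely — pointing to that proof for the three limiting steps and writing out only the modifications forced by the differences $u-v$, $\Bbf^\ve(u)-\Bbf^\ve(v)$, $\Abf^\ve(u)-\Abf^\ve(v)$ and $g_k(u)-g_k(v)$.
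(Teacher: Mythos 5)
Your proposal is correct and takes the same route the paper itself intends: for this theorem the paper gives no separate argument but merely remarks that "the reprisal of the previous arguments" (i.e.\ those proving Theorem~\ref{6':Ito1}) applied to the difference $u-v$ yields the result, and you have written out precisely those steps — the $H_T^{-1/2}$-valued It\^o equation for $w=u-v$, resolvent mollification by $J_\lambda$, the classical It\^o formula in $L^2(\cO)$, rewriting via self-adjointness of $J_\lambda$, and the three-tier passage $\lambda\to0$ (derivative-free terms, $x'$-terms via the spectral theorems, $x''$-terms via the interior $H^1_{\mathrm{loc}}(\cO'')$ estimate and weak-plus-$\limsup$ upgrade), followed by density in $\theta$ — with only the obvious linear modifications coming from the differences $u-v$, $\Bbf^\ve(u)-\Bbf^\ve(v)$, $\Abf(u)-\Abf(v)$, $g_k(u)-g_k(v)$.
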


The previous formula has an important extension which follows from the elementary theory on Sobolev spaces (see, e.g., \cite{Ev}). We state it  as a corollary for future reference. 

\begin{corollary} \label{6':corolIto2}
If $\eta'(u - v) \in L^2(\Omega \times [0,T], \cP;$ $L^2(\cO';H_0^1(\cO'')))$, then $\theta(x)$ can be chosen  to be only in $C^1(\overline{\cO})$ in the equation \eqref{6':Ito2}.
\end{corollary}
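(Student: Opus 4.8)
The plan is to obtain Corollary~\ref{6':corolIto2} by a density argument that extends Theorem~\ref{6':Ito2} to the enlarged class of test functions $\theta$. Fix two weak solutions $u,v$ as in Theorem~\ref{6':Ito2} and suppose that $\eta'(u-v)\in L^2(\Omega\times[0,T],\cP;L^2(\cO';H_0^1(\cO'')))$. Given $\theta\in C^1(\overline\cO)$, the point is that $\theta$ need not vanish near $\cO'\times\po\cO''$, so we cannot directly invoke \eqref{e6':Ito2}. First I would approximate $\theta$ by a sequence $\theta_n\in C^1(\overline\cO)$ that does vanish near $\cO'\times\po\cO''$ (e.g. $\theta_n=\rho_n\theta$ where $\rho_n$ is a cutoff in the $x''$-direction equal to $1$ away from $\po\cO''$ and equal to $0$ in a shrinking collar of $\po\cO''$), chosen so that $\theta_n\to\theta$ uniformly on $\overline\cO$, $\theta_n\to\theta$ in $L^2(\cO)$, and $\nabla_{x'}\theta_n\to\nabla_{x'}\theta$ in $L^2(\cO)$. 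Write \eqref{e6':Ito2} for each $\theta_n$, which is legitimate, and then pass to the limit $n\to\infty$ term by term.

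The key step is controlling the two parabolic terms in $x''$, namely
\[
-\int_0^t\int_\cO\bigl(\operatorname{div}_{x''}\Bbf^\ve(u)-\operatorname{div}_{x''}\Bbf^\ve(v)\bigr)\cdot\nabla_{x''}\bigl(\theta_n\,\eta'(u-v)\bigr)\,dx\,ds,
\]
after expanding the gradient by the product rule into a piece with $\eta'(u-v)\nabla_{x''}\theta_n$ and a piece with $\theta_n\nabla_{x''}\eta'(u-v)=\theta_n\,\eta''(u-v)\nabla_{x''}(u-v)$. For the first piece, $\nabla_{x''}\theta_n$ does not converge in $L^2$, so here is where the extra hypothesis enters: I would instead integrate by parts once more, or rather keep the term in the form in which $\nabla_{x''}$ falls on $\eta'(u-v)\theta_n$ only after rewriting using the assumed regularity $\eta'(u-v)\in L^2_\omega L^2_t L^2_{x'}H_0^1(\cO'')$. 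Concretely, the problematic contribution is $\int_0^t\int_\cO(\operatorname{div}_{x''}\Bbf^\ve(u)-\operatorname{div}_{x''}\Bbf^\ve(v))\cdot(\nabla_{x''}\theta_n)\,\eta'(u-v)\,dx\,ds$; since $\operatorname{div}_{x''}\Bbf^\ve(u)\in L^2(\Omega\times[0,T]\times\cO)$ (because $\nabla_{x''}b(u)\in L^2$ and $\Bbf^\ve$ has the $b$-Lipschitz structure of \eqref{e1.4'}), and $\eta'(u-v)\in L^2_{x'}H_0^1(\cO'')$ has zero trace on $\po\cO''$, one can legitimately move the $x''$-derivative off $\theta_n$ onto $\eta'(u-v)$ (the boundary term vanishes because $\eta'(u-v)$ vanishes on $\po\cO''$ in the trace sense), arriving at an expression in which all factors now converge: $\operatorname{div}_{x''}\Bbf^\ve(u-v)$ strongly in $L^2$, $\theta_n\to\theta$ uniformly, and $\nabla_{x''}\eta'(u-v)$ fixed in $L^2$. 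The $x'$-terms pass to the limit directly because $\nabla_{x'}\theta_n\to\nabla_{x'}\theta$ in $L^2$ and $\nabla_{x'}u,\nabla_{x'}v\in L^2$; the transport term, the stochastic integral term, and the It\^o correction term all pass to the limit using $\theta_n\to\theta$ in $L^2$ together with the a priori bounds on $u,v$ and \eqref{e1.4*}, the stochastic integral convergence being justified by the It\^o isometry after passing to a subsequence if necessary.

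I expect the main obstacle to be exactly the justification of the extra integration by parts in the $x''$-variable for the term $\int_0^t\int_\cO(\operatorname{div}_{x''}\Bbf^\ve(u)-\operatorname{div}_{x''}\Bbf^\ve(v))\cdot(\nabla_{x''}\theta_n)\,\eta'(u-v)\,dx\,ds$: one must ensure that the product $(\operatorname{div}_{x''}\Bbf^\ve(u)-\operatorname{div}_{x''}\Bbf^\ve(v))\,\eta'(u-v)$, or rather $\Bbf^\ve(u)-\Bbf^\ve(v)$ together with its $x''$-divergence, has enough Sobolev regularity in $x''$ for the trace to make sense and the boundary term to vanish. This is where one invokes the chain rule of \cite{CP} (cf. the remark around \eqref{e1.c-r}) and the fact that $\Bbf^\ve$ is a locally Lipschitz function of $b(\cdot)$, so that $\Bbf^\ve(u)-\Bbf^\ve(v)$ inherits $H^1_{x''}$-regularity from $\nabla_{x''}b(u),\nabla_{x''}b(v)\in L^2$; combined with $\eta''\in L^\infty$ and $\eta'(u-v)\in L^2_{x'}H_0^1(\cO'')$, the product rule in Sobolev spaces (see, e.g., \cite{Ev}) then yields a well-defined $W^{1,1}_{x''}$ function whose trace on $\po\cO''$ is zero, legitimizing the integration by parts and hence the whole limiting procedure. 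With that step in hand, the remaining convergences are routine and one recovers \eqref{e6':Ito2} with $\theta\in C^1(\overline\cO)$ arbitrary, which is the assertion of the corollary.
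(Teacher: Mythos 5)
Your high-level plan---apply Theorem~\ref{6':Ito2} with $\theta_n=\rho_n\theta$, where $\rho_n$ is a cutoff vanishing near $\cO'\X\po\cO''$, and pass to the limit---is the right one. However, the step you single out as the crux, the ``extra integration by parts in $x''$'' for the term $\int W\cdot(\nabla_{x''}\theta_n)\,\eta'(u-v)$ with $W=\operatorname{div}_{x''}\Bbf^\ve(u)-\operatorname{div}_{x''}\Bbf^\ve(v)$, does not go through. Moving $\nabla_{x''}$ off $\theta_n$ produces, besides a boundary term, the bulk term $-\int\theta_n\,\nabla_{x''}\cdot\bigl(W\,\eta'(u-v)\bigr)$, and $\nabla_{x''}\cdot\bigl(W\eta'(u-v)\bigr)=(\nabla_{x''}\cdot W)\eta'(u-v)+W\cdot\nabla_{x''}\eta'(u-v)$ contains $\nabla_{x''}\cdot W= \Delta_{x''}\bigl[\Bbf^\ve(u)-\Bbf^\ve(v)\bigr]$, a second $x''$-derivative of $\Bbf^\ve(u)$ that the hypotheses simply do not control: \eqref{e1.8'''} together with the $b$-Lipschitz structure of $\Bbf^\ve$ yields $\operatorname{div}_{x''}\Bbf^\ve(u)\in L^2$, but nothing more. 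Hence your claim that $W\eta'(u-v)\in W^{1,1}_{x''}$ with a well-defined trace is false in general, and the integration by parts is illegitimate. (As a side note, ``$\operatorname{div}_{x''}\Bbf^\ve(u-v)$'' is an abuse of notation; $\Bbf^\ve$ is nonlinear.)

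No extra integration by parts is needed. Since $\theta\in C^1(\overline\cO)$ and $\eta'(u-v)\in L^2(\Omega\X[0,T],\cP;L^2(\cO';H_0^1(\cO'')))$ by hypothesis, the product $\theta\,\eta'(u-v)$ lies in the same space, and $\theta_n\eta'(u-v)=\rho_n\bigl(\theta\,\eta'(u-v)\bigr)$. The elementary Sobolev fact invoked by the paper is that for $f\in H_0^1(\cO'')$ and a standard boundary cutoff $\rho_n$ (with $|\nabla_{x''}\rho_n|\lesssim 1/\operatorname{dist}(\cdot,\po\cO'')$ on a collar of shrinking width), one has $\rho_n f\to f$ in $H^1(\cO'')$: the piece $(\nabla_{x''}\rho_n)f$ tends to zero in $L^2$ by Hardy's inequality and dominated convergence. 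Consequently $\theta_n\eta'(u-v)\to\theta\,\eta'(u-v)$ in $L^2(\Omega\X[0,T];L^2(\cO';H_0^1(\cO'')))$, so $\nabla_{x''}\bigl(\theta_n\eta'(u-v)\bigr)\to\nabla_{x''}\bigl(\theta\,\eta'(u-v)\bigr)$ in $L^2$ and can be paired directly with the fixed $L^2$ factor $W$. The same observation handles the $x''$-part of the transport term; the remaining terms pass to the limit as you describe. With this correction your proof is complete and coincides with the paper's intent.
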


We now establish our comparison principle for weak solutions to \eqref{e6.1}--\eqref{e6.3'}.
\begin{lemma}
Assume that $u$ and $v$ are weak solutions to \eqref{e6.1}--\eqref{e6.3'}, and assume that
\begin{equation}
  u^b \leq v^b \text{ a.s. on } (0,T) \times \cO' \times  \partial \cO'', \label{6':boundarydatum}
\end{equation}
where $u^b$ and $v_b$ denote, respectively, the boundary data of $u$ and $v$. Then, for any $0 \leq t \leq T$,
\begin{equation}
  \bbE \int_\cO (u(t,x) - v(t,x))_+\, dx \leq \bbE \int_\cO (u_0(x) - v_0(x))_+\, dx, \label{6':precomparisonprinciple}
\end{equation}
with likewise $u_0$ and $v_0$ standing for the initial data of, respectively, $u$ and $v$.
\end{lemma}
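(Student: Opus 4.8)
The plan is to feed the It\^o--Kru\v zkov test pair into the chain rule of Theorem~\ref{6':Ito2}, or rather into its refinement Corollary~\ref{6':corolIto2}, and to pass to a limit. Concretely, I would fix $0\le t\le T$, take the constant test function $\theta\equiv1$, and take $\eta=\eta_\delta$ a $C^2$ convex regularization of the positive part: $\eta_\delta(s)=0$ for $s\le0$, $0\le\eta_\delta'\le1$, $\eta_\delta''\ge0$, $\supp\eta_\delta''\subset(0,\delta)$, with $\eta_\delta'\uparrow\mathbf 1_{\{s>0\}}$ and $\eta_\delta(s)\uparrow(s)_+$ as $\delta\downarrow0$, and — this is the point — normalised so that in addition $\eta_\delta''\in L^\infty$, $s\,\eta_\delta''(s)\le C$ and $s^2\eta_\delta''(s)\le C\delta$ for all $s$ (e.g.\ $\eta_\delta''(s)=\delta^{-1}\rho(s/\delta)$ for a fixed nonnegative bump $\rho$ supported in $(0,1)$).

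The first thing to check is that $\theta\equiv1$ is admissible: by Corollary~\ref{6':corolIto2} this is guaranteed once $\eta_\delta'(u-v)\in L^2(\Om\X[0,T],\cP;L^2(\cO';H_0^1(\cO'')))$, and this holds because $\nabla_{x''}\bigl(\eta_\delta'(u-v)\bigr)=\eta_\delta''(u-v)\nabla_{x''}(u-v)\in L^2$ (as $u,v$ are $H^1$ in space and $\eta_\delta''$ is bounded), while the trace of $\eta_\delta'(u-v)$ on $\cO'\X\po\cO''$ vanishes since there $u-v=u^b-v^b\le0$ a.s.\ by \eqref{6':boundarydatum} and $\eta_\delta'\equiv0$ on $(-\infty,0]$. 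This is exactly where hypothesis \eqref{6':boundarydatum} enters; note that the Neumann-type condition \eqref{e6.3'} on the hyperbolic part of the boundary is already encoded in the weak formulation \eqref{6':1}, hence in \eqref{e6':Ito2}, so that with $\theta\equiv1$ no boundary integral survives. Plugging $\theta\equiv1$, $\eta=\eta_\delta$ into \eqref{e6':Ito2}, taking expectations (every term being integrable for fixed $\delta>0$, the stochastic integral then having zero mean), and writing $\operatorname{div}_{x''}\Bbf^\ve(w)=\bbf^\ve(w)\nabla_{x''}w$, one is led to
\begin{multline*}
\bbE\!\int_\cO\eta_\delta(u(t)-v(t))\,dx=\bbE\!\int_\cO\eta_\delta(u_0-v_0)\,dx-\ve\,\bbE\!\int_0^t\!\!\int_\cO\eta_\delta''(u-v)|\nabla_{x'}(u-v)|^2\,dx\,ds\\
-\bbE\!\int_0^t\!\!\int_\cO\bigl(\bbf^\ve(u)\nabla_{x''}u-\bbf^\ve(v)\nabla_{x''}v\bigr)\cdot\eta_\delta''(u-v)\nabla_{x''}(u-v)\,dx\,ds\\
+\bbE\!\int_0^t\!\!\int_\cO\bigl(\Abf^\ve(u)-\Abf^\ve(v)\bigr)\cdot\eta_\delta''(u-v)\nabla(u-v)\,dx\,ds\\
+\tfrac12\,\bbE\!\int_0^t\!\!\int_\cO\eta_\delta''(u-v)\!\sum_{k\ge1}|g_k(u)-g_k(v)|^2\,dx\,ds.
\end{multline*}

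What remains is to show that as $\delta\to0$ the left side tends to $\bbE\int_\cO(u(t)-v(t))_+\,dx$ (monotone/dominated convergence, using $u,v\in L^2(\Om;C([0,T];L^2(\cO)))$), the first right-hand term tends to $\bbE\int_\cO(u_0-v_0)_+\,dx$, and the remaining four integrals have nonpositive $\limsup$. The viscous integral is $\le0$ outright and I would discard it. In the degenerate second-order integral I would split $\bbf^\ve(u)\nabla_{x''}u-\bbf^\ve(v)\nabla_{x''}v=\bbf^\ve(u)\nabla_{x''}(u-v)+\bigl(\bbf^\ve(u)-\bbf^\ve(v)\bigr)\nabla_{x''}v$: the first piece contributes $-\bbE\int_0^t\int_\cO\eta_\delta''(u-v)\,\bigl(\bbf^\ve(u)\nabla_{x''}(u-v)\bigr)\cdot\nabla_{x''}(u-v)\le0$ by positive semidefiniteness of $\bbf^\ve$, and is dropped; on $\{0<u-v<\delta\}$ one has $|\bbf^\ve(u)-\bbf^\ve(v)|\,\eta_\delta''(u-v)\le L\,|u-v|\,\eta_\delta''(u-v)\le CL$ (Lipschitz continuity of the approximation $\bbf^\ve$), so the second piece is $\le CL\,\bbE\int_0^t\int_\cO\mathbf 1_{\{0<u-v<\delta\}}|\nabla_{x''}v|\,|\nabla_{x''}(u-v)|\,dx\,ds\to0$ by dominated convergence, since $\mathbf 1_{\{0<u-v<\delta\}}\to0$ a.e.\ and $|\nabla_{x''}v|\,|\nabla_{x''}(u-v)|\in L^1$. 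The flux integral is treated identically: $|\Abf^\ve(u)-\Abf^\ve(v)|\,\eta_\delta''(u-v)\le\|\abf^\ve\|_\infty|u-v|\eta_\delta''(u-v)\le C$, whence it is $\le C\,\bbE\int_0^t\int_\cO\mathbf 1_{\{0<u-v<\delta\}}|\nabla(u-v)|\to0$. Finally, \eqref{e1.5*} together with $s^2\eta_\delta''(s)\le C\delta$ gives that the It\^o correction is $\le\tfrac D2\,\bbE\int_0^t\int_\cO|u-v|^2\eta_\delta''(u-v)\le C'\delta\to0$. Letting $\delta\to0$ then yields \eqref{6':precomparisonprinciple}.

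The only really delicate ingredient is the second-order term — the Carrillo-type cancellation — but at this stage it is much softer than usual: because the approximating flux and diffusion are smooth (so that $\abf^\ve$ and $\bbf^\ve$ are Lipschitz) and because the weak solutions carry $H^1$-regularity in space, no kinetic defect measure and no doubling of variables in $\xi$ are needed, and one gets by with the elementary inequalities $s\eta_\delta''(s)\le C$, $s^2\eta_\delta''(s)\le C\delta$ on the regularization together with $\mathbf 1_{\{0<u-v<\delta\}}\to0$ a.e.\ and dominated convergence. The subtlety I would be most careful about is the membership $\eta_\delta'(u-v)\in L^2(\cdot;L^2(\cO';H_0^1(\cO'')))$ — this is what makes the constant test function legitimate in \eqref{e6':Ito2}, and it is precisely here that the boundary ordering \eqref{6':boundarydatum} is genuinely used.
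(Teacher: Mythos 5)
Your proposal is correct and follows essentially the same route as the paper's proof: apply the It\^o formula of Theorem~\ref{6':Ito2} via Corollary~\ref{6':corolIto2} with $\theta\equiv1$ and a one-sided convex regularization $\eta_\delta$ of the positive part, using the boundary ordering \eqref{6':boundarydatum} to place $\eta_\delta'(u-v)$ in $L^2(\cO';H_0^1(\cO''))$, then take expectations (killing the stochastic integral), drop the nonnegative viscous and degenerate-diffusion terms, and send $\delta\to0$ using the Lipschitz bounds on $\abf^\ve,\bbf^\ve$ and the support/scaling properties of $\eta_\delta''$. Your decomposition of the second-order term and the $O(\delta)$ estimate of the It\^o correction are exactly the paper's $(\mathrm{i})/(\mathrm{ii})$ split and final estimate, so there is no genuine deviation.
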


\begin{proof}

Let $\psi \in C_c^\infty(-\infty, \infty)$ be such that $\psi \geq 0$, $\operatorname{supp.} \psi \subset (-1,1)$ and $\int_{-\infty}^\infty \psi(t)\,dt = 1$. If $\psi_\delta(t) = \frac{1}{\delta} \psi(\delta^{-1}t)$ ($\delta > 0$), then put

$$\sign_\delta^+(t) = \int_{-\infty}^t \psi_\delta(k-\d) \,dk = \int_{-\infty}^t \psi\Big( \frac{k-\delta}{\delta} \Big) \,\frac{dk}{\delta}.$$
Define also $\eta_\delta(t) = \int_{-\infty}^t \sign_\delta^+(k) \, dk.$ Notice that $\eta_\delta$ is a smooth convex approximation of the ``positive part'' function $t \mapsto t_+$.

Because of \eqref{6':boundarydatum}, $\eta_\delta'(u-v) \in L^2(\Omega \times [0,T], \cP;$ $L^2(\cO';H_0^1(\cO'')))$, hence, by Corollary \ref{6':corolIto2} and \eqref{6':Ito2},

\begin{align}
  \int_\cO &\eta_\delta(u(t,x) - v(t,x)) \, dx =\, \int_\cO \theta(x) \, \eta_\delta(u_0(x) - v_0(x))  \, dx \nonumber \\&- \ve \int_0^t \int_\cO \eta_\delta''(u(s,x) - v(s,x))|\nabla_{x'} u(s,x) - \nabla_{x'} v(s,x) |^2  \, dx \, ds \nonumber \\
                                              &- \int_0^t \int_\cO \eta_\delta''(u(s,x) - v(s,x)) \big(  \bbf^\ve(u(s,x)) \nabla_{x''} u(s,x) - \bbf^\ve v(s,x)) \nabla_{x''} v(s,x) \big) \nonumber \\ &\quad\quad\quad\quad\quad\quad\quad\quad\quad\quad\quad\quad\quad\,\,\,\cdot \big( \nabla_{x''} u(s,x) - \nabla_{x''} v(s,x))  \big) \,dx\,ds \nonumber\\
                                              &+ \int_0^t \int_\cO \eta_\delta''(u(s,x) - v(s,x)) (\Abf(u(s,x)) -\Abf(v(s,x)) \nonumber \\ &\quad\quad\quad\quad\quad\quad\quad\quad\quad\quad\quad\quad\quad\,\,\,\cdot  \big( \nabla (u(s,x) - \nabla v(s,x))  \big) \,dx\,ds \nonumber \\ 
                                              &+ \int_0^t \int_\cO  \eta_\delta'(u(s,x)-v(s,x)) (\Phi(u(s,x)) - \Phi(v(s,x))) \, dx \, dW(s) \nonumber \\  &+ \frac{1}{2} \int_0^t \int_\cO \eta_\delta''(u(s,x) - v(s,x)) \sum_{k=1}^\infty | g_k(u(s,x)) - g_k(v(s,x)) |^2  \, dx \, ds, \text{ a.s.}  \nonumber
\end{align}

We now pass $\delta \rightarrow 0$ in the identity above. First, we observe that, taking the expected value of the expressions, the term involving the stochastic integral will vanish; additionally, we also observe that the third term is $\leq 0$. 

On the other hand, the fourth term can be written as 
\begin{align*}
  -\int_0^t \int_\cO \eta_\delta''(u(s,x) - v(s,x)) &\big(\nabla_{x''} u(s,x) - \nabla_{x''} v(s,x) \big) \nonumber \\ &\cdot \bbf^\ve(u(s,x))\big( \nabla_{x''} u(s,x) - \nabla_{x''} v(s,x))  \big) \,dx\,ds\\
  -\int_0^t \int_\cO \eta_\delta''(u(s,x) - v(s,x)) &\big(\nabla_{x''} u(s,x) - \nabla_{x''} v(s,x) \big) \nonumber \\ &\cdot \big(\bbf^\ve(u(s,x)) - \bbf^\ve (v(s,x)) \big) \nabla_{x''} v(s,x)\,dx\,ds \\
  &= - \text{(i)} + \text{(ii)}.  
\end{align*}
Of course $\text{(i)} \geq 0$, whereas
\begin{align*}
  \text{(ii)} &\leq \Vert \bbf_\ve \Vert_\infty \int_0^t \int_\cO \psi\Big( \frac{u(s,x) - v(s,x)-\delta}{\delta} \Big) \Big| \frac{u(s,x) - v(s,x)}{\delta} \Big| \\
              &\quad\quad\quad |\nabla_{x''} u(s,x) - \nabla_{x''} v(s,x)| \, |\nabla_{x''} v(s,x) | \, dx\, ds. 
\end{align*}
The integrand above is uniformly bounded by an $L^1$--function and converges pointwisely to $0$. Hence,
$$\bbE \text{(ii)} = o(1).$$

Likewise the hyperbolic term is $o(1)$. Finally, for the last term, we notice that
\begin{align*}
  \frac{1}{2} \int_0^t \int_\cO \eta_\delta''&(u(s,x) - v(s,x)) \sum_{k=1}^\infty | g_k(u(s,x)) - g_k(v(s,x)) |^2  \, dx \, ds \\
  &\leq D \delta \int_0^t \int_\cO \psi \Big( \frac{u(s,x) - v(s,x)-\delta}{\delta} \Big) \Big| \frac{u(s,x) - v(s,x)}{\delta} \Big|^2 \, dx\, ds \\ &= O(\delta).
\end{align*}

For
\begin{align*}
  \bbE \int_\cO \eta_\delta(u(t,x) - v(t,x)) \, dx  &\rightarrow \bbE \int_\cO (u(t,x) - v(t,x))_+ \, dx \text{ and}\\
  \bbE \int_\cO \eta_\delta(u_0(x) - v_0(x)) \, dx  &\rightarrow \bbE \int_\cO (u_0(x) - v_0(x))_+ \, dx,
\end{align*}
the proof of the theorem is thus complete.
\end{proof}

The following theorem then follows immediately from the result just proven. Note, in particular, that by virtue of \eqref{e1.f} and \eqref{e1.8'}, the functions $u\equiv u_\text{min}$ and $u\equiv u_\text{max}$ solve equations \eqref{e6.1} and \eqref{e6.3'}.
\begin{theorem} \label{6':compunimax}
  Let $u$ and $v$ be weak solutions to \eqref{e6.1}--\eqref{e6.3'} with initial and boundary datum being, respectively, $u_0$ and $u^b$, and $v_0$ and $v^b$. The following assertions hold true.
  \begin{enumerate}
    \item The comparison principle: If $u_0 \leq v_0$ and $u^b \leq v^b$ almost surely, then $u \leq v$ almost surely.
    \item Uniqueness: If $u_0 = v_0$ and  $u^b = v^b$ almost surely, then $u = v$ almost surely.
    \item The maximum princple: If $u_\text{min} \leq u_0 \leq u_\text{max}$ and  $u_\text{min} \leq u^b \leq \text{max}$ almost surely, then $u_\text{min} \leq u \leq u_\text{max}$ almost surely.
  \end{enumerate}
\end{theorem}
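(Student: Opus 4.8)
The plan is to derive Theorem~\ref{6':compunimax} as a direct consequence of the preceding lemma, which already contains the entire analytic content. The three assertions are, respectively, a reformulation, a specialization, and an application of the estimate \eqref{6':precomparisonprinciple}.

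\textbf{Part (1): the comparison principle.} First I would apply the preceding lemma with the given pair $u,v$. Hypothesis $u^b\le v^b$ a.s.\ on $(0,T)\X\cO'\X\po\cO''$ is exactly \eqref{6':boundarydatum}, so \eqref{6':precomparisonprinciple} gives, for every $0\le t\le T$,
\begin{equation*}
\bbE\int_\cO(u(t,x)-v(t,x))_+\,dx\le\bbE\int_\cO(u_0(x)-v_0(x))_+\,dx.
\end{equation*}
Since by assumption $u_0\le v_0$ a.s., the right-hand side vanishes, hence $\bbE\int_\cO(u(t,x)-v(t,x))_+\,dx=0$ for all $t$. As the integrand is nonnegative, $(u(t,x)-v(t,x))_+=0$ for a.e.\ $(x,t)$, a.s., i.e.\ $u\le v$ a.e.\ in $(0,T)\X\cO$, a.s. Because weak solutions lie in $L^2(\Om;C([0,T];L^2(\cO)))$, the trajectories are continuous in $L^2(\cO)$, and one upgrades "a.e.\ $t$" to "every $t$" in the usual way; I would state this briefly rather than belabor it.

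\textbf{Part (2): uniqueness.} Here I would simply invoke part (1) twice. If $u_0=v_0$ and $u^b=v^b$ a.s., then in particular $u_0\le v_0$, $u^b\le v^b$, so $u\le v$ a.s.; symmetrically $v_0\le u_0$, $v^b\le u^b$, so $v\le u$ a.s. Combining, $u=v$ a.s.

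\textbf{Part (3): the maximum principle.} The key observation, already flagged in the paragraph preceding the theorem, is that by \eqref{e1.f} (which forces $\Abf'(u_{\mathrm{min}})=\Abf'(u_{\mathrm{max}})=0$, hence the constant states have vanishing flux and satisfy the Neumann condition \eqref{e6.3'}) together with \eqref{e1.8'} (which kills the noise at the extreme states, $g_k(u_{\mathrm{min}})=g_k(u_{\mathrm{max}})=0$), the constant functions $u\equiv u_{\mathrm{min}}$ and $u\equiv u_{\mathrm{max}}$ are themselves weak solutions of \eqref{e6.1}--\eqref{e6.3'}, with initial and boundary data $u_{\mathrm{min}}$ and $u_{\mathrm{max}}$ respectively (one checks each term in the weak formulation \eqref{6':1} vanishes identically for a constant state: the gradient terms, the $\Bbf^\ve$ term via $\mathrm{div}_{x''}\Bbf^\ve(\text{const})=0$, the $\Abf$ term via integration by parts and $\Abf^\ve(u_{\mathrm{min/max}})\cdot\nu$ matching the Neumann datum, and the stochastic term via \eqref{e1.8'}; the diagonal structure \eqref{e1.Dir} is not needed at this approximate level but \eqref{e6.3'} is exactly what makes the flux boundary term compatible). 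Then I would apply part (1) with the pair $(u_{\mathrm{min}},u)$: since $u_{\mathrm{min}}\le u_0$ and $u_{\mathrm{min}}\le u^b$ a.s., it follows that $u_{\mathrm{min}}\le u$ a.s. Applying part (1) again with the pair $(u,u_{\mathrm{max}})$ and using $u_0\le u_{\mathrm{max}}$, $u^b\le u_{\mathrm{max}}$ a.s.\ gives $u\le u_{\mathrm{max}}$ a.s. Hence $u_{\mathrm{min}}\le u\le u_{\mathrm{max}}$ a.s., as claimed.

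The only genuinely substantive point — and hence the one place I expect to spend real care — is the verification in part (3) that the constant states are admissible weak solutions in the precise sense of the definition above, i.e.\ that they satisfy the weak identity \eqref{6':1} for all admissible test functions $\theta$ and in particular are consistent with the flux boundary condition \eqref{e6.3'}; this is where hypotheses \eqref{e1.f} and \eqref{e1.8'} are used in an essential way. Everything else is a formal consequence of the already-established estimate \eqref{6':precomparisonprinciple} and the continuity of trajectories, and I would present it tersely.
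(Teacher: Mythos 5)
Your proof is correct and takes exactly the approach the paper intends: the paper asserts the theorem ``follows immediately'' from the preceding comparison lemma and notes as the key point precisely what you develop in part (3), namely that the constants $u\equiv u_{\text{min}}$ and $u\equiv u_{\text{max}}$ are weak solutions by virtue of \eqref{e1.f} and \eqref{e1.8'}. Your verification of that fact, including the boundary-term check against \eqref{e6.3'}, simply spells out what the paper leaves implicit.
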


Let us close this section with an energy estimate which will be required later on. Its proof follows rather easily from the ideas in the proof of Lemma~\ref{6:uniformlemma} and so we omit it here for the sake of brevity. 

\begin{lemma}[An uniform energy estimate]\label{L:7.100}
Let $u^\ve$ be a weak solution of \eqref{e6.1}--\eqref{e6.3'}. There exists a constant $C > 0$ such that for $0 < \ve < 1$
\begin{equation}
\bbE\sup_{0\le t\le T}\|u^\ve(t)\|^2_{L^2(\cO)} +  \bbE \int_0^T \int_\cO \Big( |\nabla_{x''} \bbf (u^\ve) |^2 +  \ve\, |\nabla_{x'} u^\ve(s,x) |^2 \Big) \, dx\, ds \leq C. \label{6':uniformenergyestimate}
\end{equation} 
\end{lemma}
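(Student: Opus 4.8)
The plan is to follow the scheme of the proof of Lemma~\ref{6:uniformlemma}, the essential new point being that the bound must be made \emph{uniform in} $\ve\in(0,1)$. The starting observation is that, by part (3) of Theorem~\ref{6':compunimax} (applicable thanks to \eqref{e1.f} and \eqref{e1.8'}), one has $u_{\rm min}\le u^\ve\le u_{\rm max}$ almost surely; consequently, on the range of $u^\ve$ the truncated data agree with the original ones, so $\Abf^\ve(u^\ve)=\Abf(u^\ve)$, $\Bbf^\ve(u^\ve)=\Bbf(u^\ve)$, $\sigma^\ve(u^\ve)=\sigma(u^\ve)$, $b^\ve(u^\ve)=b(u^\ve)$, and all of these, together with $G^\ve(u^\ve)^2=\sum_k|g_k^\ve(u^\ve)|^2$, are bounded by a constant depending only on the data of the problem (using also \eqref{e1.4*}). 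I would then fix a smooth extension $\widetilde{u_b^\ve}$ of $u_b^\ve$ to $\overline\cO$ with $\partial_\nu\widetilde{u_b^\ve}=0$ on $\partial\cO'\X\cO''$ and $\widetilde{u_b^\ve}=u_b^\ve$ on $\cO'\X\partial\cO''$, chosen so that $\widetilde{u_b^\ve}$, $\nabla\widetilde{u_b^\ve}$ and $\partial_t\widetilde{u_b^\ve}$ are bounded in $L^2(\Om\X(0,T)\X\cO)$ uniformly in $\ve$ (possible by the regularity of $u_b$; cf.\ Lemma~\ref{6:lemmau_b} and the discussion preceding Theorem~\ref{6':Ito1}). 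Setting $v^\ve:=u^\ve-\widetilde{u_b^\ve}$, the function $v^\ve(t)$ lies in $H_T^{1/2}$ for a.e.\ $t$, and $v^\ve$ solves, in the weak sense of \eqref{6':1}, the equation for $u^\ve$ augmented by the smooth drift $-\partial_t\widetilde{u_b^\ve}$ and with homogeneous Dirichlet data on $\cO'\X\partial\cO''$.

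Next I would apply the It\^o formula for the square of the $L^2(\cO)$-norm to $v^\ve$ --- legitimately, by the resolvent-regularization ($J_\lambda$) argument used to establish Theorem~\ref{6':Ito1}, now applied to the equation satisfied by $v^\ve$; note that testing against $v^\ve$ itself is admissible because $v^\ve$ vanishes on $\cO'\X\partial\cO''$, so no boundary term arises there, while the boundary terms on $\partial\cO'\X\cO''$ cancel by virtue of \eqref{e6.3'}. Taking $\bbE\sup_{0\le\tau\le t}$ of the resulting identity, and moving to the left-hand side the nonnegative dissipation $\ve\int_0^\tau\|\nabla_{x'}v^\ve\|_{L^2(\cO)}^2\,ds$ and $\int_0^\tau\!\int_\cO|\sigma(u^\ve)\nabla_{x''}u^\ve|^2\,dx\,ds$ (the latter being exactly the good part of $(\operatorname{div}_{x''}\Bbf^\ve(u^\ve),\nabla_{x''}u^\ve)$, after writing $\nabla v^\ve=\nabla u^\ve-\nabla\widetilde{u_b^\ve}$), I am left to estimate: the initial term $\tfrac12\bbE\|v^\ve(0)\|_{L^2(\cO)}^2\le C$; the cross terms $\ve\,(\nabla_{x'}u^\ve,\nabla_{x'}\widetilde{u_b^\ve})_{L^2(\cO)}$ and $\int_\cO\bbf(u^\ve)\nabla_{x''}u^\ve\cdot\nabla_{x''}\widetilde{u_b^\ve}\,dx$, which by Young's inequality are bounded by half of the corresponding dissipation plus a quantity controlled by $\|\sigma\|_\infty$ and the fixed norms of $\widetilde{u_b^\ve}$; the pairing $(\partial_t\widetilde{u_b^\ve},v^\ve)_{L^2(\cO)}\le C(1+\|v^\ve\|_{L^2(\cO)}^2)$; the stochastic term, treated by the Burkholder inequality (see, e.g., \cite{O}) together with $\sum_k(v^\ve,g_k^\ve(u^\ve))_{L^2(\cO)}^2\le\|v^\ve\|_{L^2(\cO)}^2\,\|G^\ve(u^\ve)\|_{L^2(\cO)}^2$ and Young's inequality, so that $\tfrac14\bbE\sup_\tau\|v^\ve(\tau)\|_{L^2(\cO)}^2$ is absorbed; and the It\^o correction $\tfrac12\int_0^t\|G^\ve(u^\ve)\|_{L^2(\cO)}^2\,ds\le C$ by \eqref{e1.4*}.

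The one term whose $\ve$-uniform control is not routine --- and the main obstacle --- is the hyperbolic flux $\int_0^t(\Abf(u^\ve),\nabla v^\ve)_{L^2(\cO)}\,ds=\int_0^t(\Abf(u^\ve),\nabla u^\ve)_{L^2(\cO)}\,ds-\int_0^t(\Abf(u^\ve),\nabla\widetilde{u_b^\ve})_{L^2(\cO)}\,ds$, since $\nabla v^\ve$ carries only the $\ve$-weighted dissipation and cannot be absorbed when $\ve$ is small. Here I would use the chain rule in $H^1(\cO)\cap L^\infty(\cO)$ to write $\Abf(u^\ve)\cdot\nabla u^\ve=\operatorname{div}\vec\Psi(u^\ve)$, where $\vec\Psi_i(\xi):=\int_{u_{\rm min}}^{\xi}A_i(\zeta)\,d\zeta$; then the divergence theorem gives $\int_\cO\Abf(u^\ve)\cdot\nabla u^\ve\,dx=\int_{\partial\cO}\vec\Psi(u^\ve)\cdot\nu\,d\mathcal H^{d-1}$, and since the trace of $u^\ve$ on $\partial\cO$ takes values in $[u_{\rm min},u_{\rm max}]$ almost everywhere (it equals $u_b^\ve$ on $\cO'\X\partial\cO''$, and lies in $[u_{\rm min},u_{\rm max}]$ on $\partial\cO'\X\cO''$ by the maximum principle), this boundary integral is bounded by $\|\vec\Psi\|_{L^\infty([u_{\rm min},u_{\rm max}])}\,\mathcal H^{d-1}(\partial\cO)$, uniformly in $\ve$; the remaining piece is bounded by $\|\Abf\|_{L^\infty([u_{\rm min},u_{\rm max}])}\,\|\nabla\widetilde{u_b^\ve}\|_{L^1(\cO)}$, again uniformly. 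Collecting all the estimates, Gr\"onwall's inequality applied to $t\mapsto\bbE\sup_{0\le\tau\le t}\|v^\ve(\tau)\|_{L^2(\cO)}^2$ closes the bound; substituting it back into the dissipative terms, using the structure conditions \eqref{e1.4'}--\eqref{e1.4''} to bound the dissipative quantity appearing in \eqref{6':uniformenergyestimate} by $\int_\cO|\sigma(u^\ve)\nabla_{x''}u^\ve|^2\,dx$, and noting $\|u^\ve\|_{L^2(\cO)}\le\|v^\ve\|_{L^2(\cO)}+\|\widetilde{u_b^\ve}\|_{L^2(\cO)}$, one obtains \eqref{6':uniformenergyestimate}. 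This is also the reason the a priori $L^\infty$-bound of Theorem~\ref{6':compunimax} enters the argument in an essential way.
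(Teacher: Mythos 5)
Your proposal is correct, and it supplies exactly the ingredient that the paper's terse remark (``follows rather easily from the ideas in the proof of Lemma~\ref{6:uniformlemma}'') glosses over. In the proof of Lemma~\ref{6:uniformlemma}, the hyperbolic terms $I_3,I_4$ are controlled by Young's inequality with $\ve$ and absorbed into the $\ve\|\nabla u\|^2$ dissipation, which yields a constant depending on $1/\ve$; that route is therefore inadequate here, and you are right to single out the flux term as the sole genuine obstacle to $\ve$-uniformity. Your fix is the natural one: invoke part~(3) of Theorem~\ref{6':compunimax} (the $L^\infty$ bound, which is available precisely because of \eqref{e1.f} and \eqref{e1.8'}) and then rewrite $\int_\cO\Abf(u^\ve)\cdot\nabla u^\ve\,dx$ as a boundary integral $\int_{\partial\cO}\vec\Psi(u^\ve)\cdot\nu\,d\H^{d-1}$ via the chain rule in $H^1\cap L^\infty$ and the Gauss--Green theorem for $W^{1,1}$ fields. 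Since the trace of $u^\ve$ on $\partial\cO$ inherits the bound $u_{\rm min}\le u^\ve\le u_{\rm max}$, this boundary integral is bounded by $\|\vec\Psi\|_{L^\infty([u_{\rm min},u_{\rm max}])}\,\H^{d-1}(\partial\cO)$, independently of $\ve$, and the rest of the argument (cross terms via Young, stochastic term via Burkholder, Gr\"onwall) goes through as in Lemma~\ref{6:uniformlemma}. The treatment of the degenerate dissipation is also right: $\operatorname{div}_{x''}\Bbf^\ve(u^\ve)\cdot\nabla_{x''}u^\ve=|\s(u^\ve)\nabla_{x''}u^\ve|^2$, and then \eqref{e1.4'}--\eqref{e1.4''} together with the $L^\infty$ bound convert this into the quantity in \eqref{6':uniformenergyestimate}.

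Two small remarks of a cosmetic nature. First, the sentence about ``boundary terms on $\partial\cO'\X\cO''$ cancel by virtue of \eqref{e6.3'}'' is slightly misplaced: in the weak formulation \eqref{6':1} the boundary terms have already been eliminated, so the It\^o computation for $\tfrac12\|v^\ve\|^2_{L^2(\cO)}$ produces no boundary terms at all; the Gauss--Green identity is a separate device you then apply to evaluate the interior flux integral, not a cancellation. Second, you tacitly need $\widetilde{u_b^\ve}$ to take values in $[u_{\rm min},u_{\rm max}]$ (or at least to be uniformly bounded in $L^\infty$) in order to estimate the cross term $\int\Abf(u^\ve)\cdot\nabla\widetilde{u_b^\ve}$ and to justify that $\|v^\ve\|_\infty$ is bounded, and this should be recorded as part of the choice of extension; the extension can certainly be arranged to have this property. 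Neither of these affects the correctness of the argument.
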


\subsection{Existence of solutions to \eqref{e6.1}--\eqref{e6.3'}}\label{SS:7.2}

The proof of the existence of weak solutions to \eqref{e6.1}--\eqref{e6.3'} now follows through the same arguments employed in section~4 of \cite{DHV}, step by step, with minor adaptations. First, Lemma~\ref{6:uniformlemma} assures the uniform boundedness of the solutions of \eqref{e6.4}--\eqref{e6.8} in $L^2(\Om;L^2([0,T];H^1(\cO)))$. Using Kolmogorov's continuity theorem as in proposition~4.4 of \cite{DHV} we get that 
\begin{equation}\label{eSS7.2_1} 
\bbE\|u^{\ve,\mu}\|_{C^\l([0,T];H^{-3}(\cO))}\le C,
\end{equation}
for any $\l\in(0,1/2)$ for some $C>0$, independent of $\mu$.  Using Lemma~\ref{6:uniformlemma} and \eqref{eSS7.2_1} we 
prove the tightness of the laws of $u^{\mu,\ve}$ in $\mathcal{X}=L^2([0,T];L^2(\cO))\cap C([0,T];H^{-3}(\cO))$, and so its relative compactness by Prokhorov's theorem. Then we apply Skorokhod representation theorem to infer the convergence a.e.\ of a subsequence $u^{\mu_n,\ve}$ in a new probability space $(\tilde \Om, \tilde {\mathcal F}, \tilde\bbP)$. Then, as in section~4 of \cite{DHV}, we prove that the limit is a  weak  martingale solution of \eqref{e6.1}--\eqref{e6.3'}. Finally we use the uniqueness of the solutions of \eqref{e6.1}--\eqref{e6.3'}, Theorem~\ref{6':compunimax}, and apply Gy\"ongy-Krylov's criterion
to conclude that the whole sequence $u^{\mu,\ve}$ converges to the unique solution $u^\ve$ of \eqref{e6.1}--\eqref{e6.3'}, which concludes the prove of the existence of solutions to \eqref{e6.1}--\eqref{e6.3'}.

\section{Existence, part three: Degenerate case}\label{S:8}

We finally discuss the existence of a kinetic solution to problem \eqref{e1.1}--\eqref{e1.4}. Here, we follow the compactness argument in \cite{Ha}, with the decisive help of the space regularity result established in \cite{GH}.  Again we use the Yamada-Watanabe method \cite{YW}, with application of Gy\"ongy-Krylov's criterion \cite{GK}. Concerning the latter, we recall that the uniqueness of the kinetic solution to problem~\eqref{e1.1}--\eqref{e1.4}  has been established in Theorem~\ref{T:2.2}.  Now, let $u^\ve$ be the solution to the problem~\eqref{e6.1}--\eqref{e6.3'} and let $\ff^\ve(t,x,\xi):=\chi^\ve (t, x,\xi)=1_{(-\infty, u^\ve(t,x))}(\xi)-1_{(-\infty,0)}(\xi)$. We can prove, by an argument similar to the one in \cite{DHV}, that $\ff^\ve$ satisfies
\begin{multline}\label{e8.1}
\po_t \ff^\ve + \abf(\xi)\cdot \nabla_{x} \ff^\ve + \bbf^\ve(\xi):D_{x''}^2 \ff^\ve \\= \left(m^\ve-\frac12G^2(\xi)\,\d_{u^\ve(t,x)}(\xi)\right)_\xi 
+\sum_{k=1}^\infty g_k(\xi) \dot\beta_k(t)\, \d_{u^\ve(t,x)}(\xi)\\
=-\abf''(\xi)\cdot\nabla_{x''}\ff^\ve+ q_\xi^\ve-\sum_{k=1}^\infty g_k(\xi)(\po_\xi\ff^\ve) \dot\beta_k(t) +\sum_{k=1}^\infty\d_0(\xi) g_k(\xi)\dot\beta_k(t),
\end{multline}
where $\bbf^\ve(\xi):=\bbf(\xi)+\ve I_{d''\X d''}$ and  $q^\ve=m^\ve-\frac12G^2(\xi)\,\d_{u^\ve(t,x)}(\xi)$,
$$
dm^\ve(t,x,\xi)=|\s^\ve(u^\ve)\nabla u^\ve|^2\, d\d_{u^\ve=\xi}\,dx\,d\xi,
$$
and $\s^\ve(u)\in {\mathbb M}^{d''}$ is such that $\s^{\ve}(u)^2=\bbf(u)+\ve I_{d''\X d''}$, 
and because of Lemma~\ref{L:7.100} the right-hand side of the above equation is of the form 
$\po_\xi n^\ve(t,x,\xi)$ where $n^\ve$ is a measure on $(0,T)\X\cO\X(-L_0,L_0)$ with total variation $|n^\ve|$ satisfying 
$\bbE |n^\ve|\le C$, for some $C>0$ independent of $\ve$. Reasoning as in \cite{GH}, we see that the symbol 
$$
\LL^\ve(i\tau,i \k, \xi):=i(\tau+\abf(\xi)\cdot \k) + {\k''}^\top \bbf^\ve(\xi)\k'',
$$  
$(\tau,\k)=(\tau,\k',\k'')\in\R\X\R^{d'}\X\R^{d''}$ satisfies condition \eqref{e1.4'''}, uniformly in $\ve$. Moreover, given any  
$V\Subset \cO$ and $\phi\in C_c^\infty(\cO)$, with $\phi(x)=1$, for $x\in V$,  we see that $\ff^{\phi,\ve}:=\phi\ff^\ve$ satisfies  
\begin{multline}\label{e8.2}
\po_t \ff^{\phi,\ve} + \abf(\xi)\cdot \nabla_{x} \ff^{\phi,\ve} + \bbf^\ve(\xi):D_{x''}^2 \ff^{\phi,\ve} \\
= q_\xi^{\phi,\ve}+\s^\ve(\xi)\nabla_{x''}\phi \cdot \s^\ve(\xi)\nabla_{x''}\ff^\ve+ \sum_{k=1}^\infty \phi(x) g_k(\xi)(\po_\xi\ff^\ve) \dot\beta_k(t) \\+\sum_{k=1}^\infty\d_0(\xi) \phi(x) g_k(\xi)\dot\beta_k(t),
\end{multline}
where 
$$
q^{\phi,\ve}=\phi (m^\ve-\frac12G^2(\xi)\,\d_{u^\ve(t,x)}(\xi))-\ff^\ve\abf(\xi)\cdot\nabla_{x}\phi.
$$
Now, we also have that  $n^{\phi,\ve}:=\s^\ve(\xi)\nabla_{x''}\phi \cdot \s^\ve(\xi)\nabla_{x''}\ff^\ve$ is a.s.\ a finite total variation measure on $(0,T)\X\cO\X(-L_0,L_0)$ such that 
$\bbE|n^{\phi,\ve}|\le C$,   because of  \eqref{e1.4''}, \eqref{e1.4'''} and Lemma~\ref{L:7.100}.  After extending $\ff^{\phi,\ve}$ periodically in the space variable $x$ with a period $\Pi\supset \supp\phi$, we can apply the averaging lemma by Gess and Hofmanov\'a in \cite{GH} to deduce that 
\begin{equation}\label{e8.3}
 \| u^\ve\|_{L^r(\Om\X(-T_0, T_0); W^{s,r}(V))}\le C_{V},
 \end{equation}
 for some $C_V$ independent of $\ve$ and some $1<r<2$, $0<s<1$, for any $V\Subset\cO$. Again, using Kolmogorov's continuity theorem as in proposition~4.4 of \cite{DHV} we get that 
\begin{equation}\label{e8.4} 
\bbE\|u^{\ve}\|_{C^\l([0,T];H^{-2}(\cO))}\le C,
\end{equation}
for any $\l\in(0,1/2)$ for some $C>0$, independent of $\ve$. Define, $\mathcal{X}_u=L^2([0,T];L^2(\cO))\cap C([0,T];H^{-3}(\cO))$, $\mathcal{X}_W=C([0,T];\frak{U}_0)$ and $\mathcal{X}=\mathcal{X}_u\X\mathcal{X}_W$. Let $\mu_{u^\ve}$ be the law of $u^\ve$ in $\mathcal{X}_u$, $\mu_W$ be the law of $W$ in $\mathcal{X}_W$, and $\mu_\ve$ be the joint law of $(\mu_{u^\ve},\mu_W)$ in $\mathcal{X}$.  {}From \eqref{e8.3} and \eqref{e8.4}, as in \cite{FL1}, we conclude the tightness of the $\mu_\ve$ in $\mathcal{X}$, and so the pre-compactness of these laws in $\mathcal{X}$. Then one applies Skorokhod's representation theorem to obtain a new probability space $(\tilde \Om; \tilde \bbP)$ and a subsequence of random variables  $(\tilde u_{\ve_j}, \tilde W):\tilde \Om\to \mathcal{X}$, whose laws $\tilde \mu_{\ve_j}$ are equivalent to $\mu_{\ve_j}$ such that $\tilde u_{\ve_j}$ converges in measure to some $\tilde u:\tilde \Om\to\mathcal{X}$. In particular, $\tilde u_{\ve_j}$  converges a.s.\ in $L^2((0,T)\X\cO\X (-L_0,L_0))$ to a certain $\tilde u:\tilde \Om\to \mathcal{X}_u$.  Then, one can reason as in \cite{Ha,FL1}, to prove that $\tilde u$ is a martingale solution to \eqref{e1.1}--\eqref{e1.4}, that is, $\tilde u$ is a kinetic solution of \eqref{e1.1}--\eqref{e1.4}, with $\tilde \Om$ and $\tilde W$ instead of $\Om$ and $W$. Observe that the verification of the Neumann condition follows directly form the convergence in $L^2((0,T)\X\cO\X(-L,L))$. On the other hand, the verification of the Dirichlet  conditions may be performed using the arguments of Section~4 in \cite{FL2} with slight adaptations, to which we refer for the details. Hence,  because of the uniqueness of the kinetic solution of \eqref{e1.1}--\eqref{e1.4} established by Theorem~\ref{T:2.2}, we may apply Gy\"only-Krylov's criterion to conclude that the whole sequence $u^\ve$
converges to a kinetic solution of \eqref{e1.1}--\eqref{e1.4}, which concludes the prove of the existence of a kinetic solution to \eqref{e1.1}--\eqref{e1.4}.

\appendix

\section{Spectral analysis of an elliptic operator} \label{appendixA}

On this additional section, we will provide a detailed spectral analysis of the operator $A: D(A) \subset L^2(\cO'\times \cO'') \rightarrow L^2(\cO' \times \cO'')$ given by
$$\begin{cases}
D(A) &= \{ u \in L^2(\cO' \times \cO''); L^2(\cO''; H^2(\cO')) \cap u \in L^2(\cO'; (H_0^2 \cap H^4)(\cO'))  \\  &\quad\quad\quad\quad\quad\quad\quad\quad\quad \text{ and, in the sense of traces, } \partial_\nu u = 0 \text{ on } \partial \cO' \times \cO'' \}, \\
A &= -\ve\Delta_{x',x''} + \mu\Delta_{x''}^2.
\end{cases}$$
Our investigation of this operator is motivated by the approximate problem \eqref{e6.4}--\eqref{e6.8}, in which $A$ arises naturally. To better understand  $A$  we  next recall  the theory of the  tensorial product of Hilbert spaces. 

\subsection{Definition of the tensor product of two Hilbert spaces}

Let $H_1$ and $H_2$ be two (complex) Hilbert spaces, whose scalar products are, respectively,  $(\quad,\quad)_{H_1}$ and $(\quad,\quad)_{H_2}$.  The rigorous construction of tensor product space $H_1 \widehat{\otimes} H_2$ can be performed as follows (see, e.g., \cite{We}).

Let us first define the so-called algebraic tensor product between two (complex) linear spaces $E_1$ and $E_2$, not necessarily endowed with any kind of topology. Consider the set 
\begin{align*}
F(E_1, E_2) =  \Big\{ \sum_{j \in J} a_j [x_j, y_j] ;\, &J \text{ is finite}, a_j \in \mathbb{C}, x_j \in E_1,\\&\text{and } y_j \in E_2  \text{ for all } j \in J \Big\},
\end{align*}
 i.e., the set of all formal linear combinations of elements $[x,y] \in E_1 \times E_2$.
Notice that $F(E_1, E_2)$ is again another linear space. Thus, we can also consider the subspace of $N \subset F(E_1, E_2)$ consisting of sums of elements of the form
$$\sum_{j\in J} \sum_{k \in K} a_j b_k [x_j, y_k] + (-1) \Big[ \sum_{j\in J} a_j x_j, \sum_{k\in K} b_k y_k \Big],$$
where $J$ and $K$ are finite index sets, and $a_j, b_k \in \C$, $x_j \in E_1$, and $y_k \in E_2$ for every $j \in J$ and $k \in K$. 

\begin{definition}
With the notations of the preceding paragraph, the quotient space
$$E_1 \otimes E_2 = F(E_1, E_2) / N$$
is the so-called \textit{algebraic tensor product of $E_1$ and $E_2$}.
\end{definition}

Observe that for every $[x, y] \in H_1 \times H_2$ we may define the \textit{simple tensor}
$$x \otimes y = [x,y] + N \in H_1 \otimes H_2.$$ 
As a result, we may understand the algebraic tensor product $E_1 \otimes E_2$ as the space of the linear combination of simple tensors. 

Let us now assume that $E_1$ and $E_2$ are pre-Hilbert spaces, possessing, respectively, the scalar products $(\quad,\quad)_{E_1}$ and $(\quad,\quad)_{E_2}$. It is easy to verify that the formula
$$\Big( \sum_{j\in J} a_j u_j \otimes v_j, \sum_{k \in K} a_k' u_k' \otimes v_k' \Big)_{E_1 \otimes E_2} = \sum_{j\in J} \sum_{k\in K} a_j \overline{a_k'} (u_j, u_k')_{E_1} (v_j, v_k')_{E_2}$$
gives rise to a well-defined scalar product in $E_1 \otimes E_2$. However, even if $E_1 = H_1$ and $E_2 = H_2$ (i.e., $E_1$ are $E_2$ Hilbert spaces), in general $\langle H_1 \otimes H_2, (\quad,\quad)_{H_1 \otimes H_2} \rangle$ fails to be complete; indeed, this would be true if, and only if, $H_1$ or $H_2$ were of finite dimension. Nevertheless, we may always complete such space, thus obtaining  a Hilbert space. 

\begin{definition}
With the same hypothesis and notations of the previous parapragh, the completion of $H_1 \otimes H_2 $ under $(\quad,\quad)_{H_1 \otimes H_2}$, denoted by $H_1 \,\widehat{\otimes}\, H_2 $, is the so-called (complete) \textit{tensor product of $H_1$ and $H_2$}.
\end{definition}

Henceforth, we will identify the algebraic tensor product $H_1 \otimes H_2$ as being a subset of the complete tensor product $H_1 \,\widehat{\otimes}\, H_2$. From $H_1 \otimes H_2$ being dense in $H_1 \,\widehat{\otimes}\, H_2$, the following simple yet useful fact can be easily deduced (see \cite{We}, theorem 3.12).

\begin{proposition} \label{propA.1}
Let $H_1$ and $H_2$ be two Hilbert spaces. 
\begin{enumerate}
  \item If $M_1$ and $M_2$ are total subsets of, respectively, $H_1$ and $H_2$, then the set $\{ u \otimes v \}_{u \in M_1, v \in M_2}$ is total in $H_1 \,\widehat{\otimes}\, H_2$.
  \item If $\{ e_\alpha \}_{\alpha \in A}$ and $\{ f_\beta \}_{\beta \in B}$ are Hilbert bases of, respectively, $H_1$ and $H_2$, then $\{ e_\alpha \otimes f_\beta \}_{\alpha \in A, \beta \in B}$ is a Hilbert basis of $H_1 \,\widehat{\otimes}\, H_2$.
\end{enumerate}

\end{proposition}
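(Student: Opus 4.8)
The plan is to reduce both assertions to the single structural fact, true by construction, that the algebraic tensor product $H_1\otimes H_2$ is dense in the completed tensor product $H_1\,\widehat\otimes\,H_2$. First I would record the elementary norm identity $\|x\otimes y\|_{H_1\widehat\otimes H_2}=\|x\|_{H_1}\,\|y\|_{H_2}$, which is read off directly from the defining formula for $(\,\cdot\,,\cdot\,)_{H_1\otimes H_2}$ before completion (and hence persists on $H_1\,\widehat\otimes\,H_2$), together with the bilinearity of the map $(x,y)\mapsto x\otimes y$; these combine to give
\[
\|x\otimes y-x'\otimes y'\|_{H_1\widehat\otimes H_2}\le \|x-x'\|_{H_1}\,\|y\|_{H_2}+\|x'\|_{H_1}\,\|y-y'\|_{H_2}.
\]

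For part (1), fix a simple tensor $x\otimes y$ and $\varepsilon>0$. Since $M_1$ is total in $H_1$ and $M_2$ is total in $H_2$, I would choose finite linear combinations $x'=\sum_i a_i u_i$ with $u_i\in M_1$ and $y'=\sum_j b_j v_j$ with $v_j\in M_2$ such that $\|x-x'\|_{H_1}$ and $\|y-y'\|_{H_2}$ are small; by bilinearity $x'\otimes y'=\sum_{i,j}a_ib_j\,(u_i\otimes v_j)$ lies in $\operatorname{span}\{u\otimes v:u\in M_1,\ v\in M_2\}$, and the displayed estimate makes it $\varepsilon$-close to $x\otimes y$. As a general element of $H_1\otimes H_2$ is a finite sum of simple tensors, the span of $\{u\otimes v\}$ is therefore dense in $H_1\otimes H_2$, and since the latter is dense in $H_1\,\widehat\otimes\,H_2$ we conclude that $\{u\otimes v\}_{u\in M_1,v\in M_2}$ is total, as claimed. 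For part (2), I would first note that from the defining inner product
\[
(e_\alpha\otimes f_\beta,\,e_{\alpha'}\otimes f_{\beta'})_{H_1\widehat\otimes H_2}=(e_\alpha,e_{\alpha'})_{H_1}\,(f_\beta,f_{\beta'})_{H_2}=\delta_{\alpha\alpha'}\,\delta_{\beta\beta'},
\]
so $\{e_\alpha\otimes f_\beta\}$ is an orthonormal family; since a Hilbert basis is precisely a total orthonormal system, and since $\{e_\alpha\}$, $\{f_\beta\}$ are in particular total in $H_1$, $H_2$, part (1) yields totality of $\{e_\alpha\otimes f_\beta\}$, which together with orthonormality is exactly the statement.

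There is no substantial obstacle here; the only points demanding a little care are that the total subsets (respectively the bases) need not be countable, so the approximants must be taken as \emph{finite} linear combinations and the argument run with $\varepsilon$'s rather than sequential limits, and that the norm identity $\|x\otimes y\|=\|x\|\,\|y\|$ must genuinely be established on $H_1\otimes H_2$ prior to completion so that it is available on $H_1\,\widehat\otimes\,H_2$.
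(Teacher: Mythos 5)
Your proof is correct and is exactly the standard textbook argument for this result. The paper does not give a proof but instead refers the reader to Weidmann (\cite{We}, Theorem~3.12); the argument there is the same: establish the cross-norm identity $\|x\otimes y\|=\|x\|\,\|y\|$, use bilinearity to get the continuity estimate for simple tensors, deduce density of $\operatorname{span}\{u\otimes v\}$ in the algebraic tensor product and hence in the completion, and for part (2) combine this with the immediate orthonormality of $\{e_\alpha\otimes f_\beta\}$. The only cosmetic remark is that in your estimate the factor $\|x'\|$ should be controlled — say by choosing $x'$ with $\|x'\|\le\|x\|+1$ — but since $x'$ is a close approximant of $x$ this is automatic and any reader would regard it as implicit.
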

\subsection{Operators in tensor product spaces}

Let $H_1$, $H_2$, $V_1$ and $V_2$ be Hilbert spaces. 

\begin{definition}
Given two possibily unbounded linear operators $T_1 : D(T_1) \subset H_1 \rightarrow V_1$ and $T_2 : D(T_2) \subset H_2 \rightarrow V_2$, we will define its tensor product $T_1 \otimes T_2 : D(T_1 \otimes T_2) \subset H_1 \widehat\otimes H_2 \rightarrow V_1 \widehat{\otimes} V_2$ as the linear operator by the formulae
\begin{equation*}
\begin{cases}
  D(T_1 \otimes T_2) = D(T_1) \otimes D(T_2), \text{ and} \\
  (T_1 \otimes T_2) \big( \sum_{j \in J} b_j u_j \otimes v_j \big) = \sum_{j \in J} b_j\, T_1 u_j \otimes T_2 v_j,
\end{cases}
\end{equation*}
where $J$ is a finite index set, and $b_j \in \C$, $u_j \in H_1$ and $v_j \in H_2$ for every $j \in J$.
\end{definition}

Concerning the above definition, it can be shown that the value of $(T_1\otimes T_2)(\sum b_j u_j \otimes v_j)$ independs of the representation of $\sum b_j u_j \otimes v_j$; this can be easily proven from the linearity of $T_1 \otimes T_2$ (see \cite{We}, section 8.5).

Next we recall an important theorem in the theory of tensor product of operators in tensor product of Hilbert spaces (see \cite{We}, theorem 8.33).

\begin{theorem} \label{thmA.1}
Let $T_1 : D(T_1) \subset H_1 \rightarrow H_1$ and $T_2 : D(T_2) \subset H_2 \rightarrow H_2$ be two linear operators, and define $A : D(A) \subset H_1 \widehat{\otimes} H_2 \rightarrow H_1 \widehat{\otimes} H_2$ by
$$\begin{cases}
  D(A) = D(T_1) \otimes D(T_2), &\text{ and} \\
  A = T_1 \otimes I_{H_2} + I_{H_1} \otimes T_2,&
\end{cases}$$
  with $I_{H_1}$ stands for the identical operator in $H_1$, etc. 
 If $T_1$ and $T_2$ are essentially self-adjoint operators on $H_1$ and $H_2$ respectively, then $T_1 \otimes T_2$ and $A$ are essentially self-adjoint on $H_1 \,\widehat{\otimes}\, H_2$.
\end{theorem}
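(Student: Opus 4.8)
This is a classical result (it is Theorem~8.33 in \cite{We}); the plan is to recall the proof, whose two ingredients are Nelson's analytic vector theorem and a density (``core'') argument.

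\emph{The self-adjoint case.} Suppose first that $T_1$ and $T_2$ are self-adjoint. Let $E_j$ be the spectral measure of $T_j$ and set $\mathcal D_j:=\bigcup_{R>0}\operatorname{Ran}E_j([-R,R])$; then $\mathcal D_j$ is a dense, $T_j$-invariant linear subspace of $H_j$ contained in $D(T_j)$, and every $\phi\in\operatorname{Ran}E_j([-R,R])$ satisfies $\|T_j^{\,n}\phi\|\le R^{\,n}\|\phi\|$ for all $n$, so each element of $\mathcal D_j$ is an entire analytic vector for $T_j$. By part~(1) of Proposition~\ref{propA.1} the algebraic tensor product $\mathcal D:=\mathcal D_1\otimes\mathcal D_2$ is dense in $H_1\widehat\otimes H_2$; it lies in $D(T_1)\otimes D(T_2)$ and is invariant under both $A=T_1\otimes I_{H_2}+I_{H_1}\otimes T_2$ and $T_1\otimes T_2$. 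These operators are symmetric on $D(T_1)\otimes D(T_2)$, directly from the symmetry of $T_1,T_2$ and the definition of the inner product on $H_1\widehat\otimes H_2$. Finally each simple tensor $\phi\otimes\psi$ with $\phi\in\operatorname{Ran}E_1([-R,R])$, $\psi\in\operatorname{Ran}E_2([-R',R'])$ is an entire analytic vector for both operators: for the product, $\|(T_1\otimes T_2)^n(\phi\otimes\psi)\|=\|T_1^{\,n}\phi\|\,\|T_2^{\,n}\psi\|\le(RR')^n\|\phi\|\,\|\psi\|$, and for $A$, since its two summands commute, the binomial formula gives $A^n(\phi\otimes\psi)=\sum_{k=0}^n\binom nk(T_1^{\,k}\phi)\otimes(T_2^{\,n-k}\psi)$, hence $\|A^n(\phi\otimes\psi)\|\le(R+R')^n\|\phi\|\,\|\psi\|$. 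Thus $A$ and $T_1\otimes T_2$ are symmetric operators on $D(T_1)\otimes D(T_2)$ admitting the dense set $\mathcal D$ of analytic vectors in their domain, so by Nelson's theorem they are essentially self-adjoint.

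\emph{Reduction to the self-adjoint case.} In general, let $\bar T_j$ be the self-adjoint closure of $T_j$. By the previous step, $\bar A:=\bar T_1\otimes I+I\otimes\bar T_2$ and $\bar T_1\otimes\bar T_2$ are essentially self-adjoint on $D(\bar T_1)\otimes D(\bar T_2)$, with self-adjoint closures $S$ and $P$. Since $A=\bar A$ and $T_1\otimes T_2=\bar T_1\otimes\bar T_2$ on the smaller domain $D(T_1)\otimes D(T_2)$, it suffices to show that $D(T_1)\otimes D(T_2)$ is again a core for $S$ and for $P$. Because $D(\bar T_1)\otimes D(\bar T_2)$ is a core, it is enough to approximate, in the relevant graph norm, any $\phi\otimes\psi$ with $\phi\in D(\bar T_1)$, $\psi\in D(\bar T_2)$ by elements of $D(T_1)\otimes D(T_2)$: pick $\phi_n\in D(T_1)$ with $\phi_n\to\phi$ and $T_1\phi_n\to\bar T_1\phi$, and $\psi_n\in D(T_2)$ with $\psi_n\to\psi$, $T_2\psi_n\to\bar T_2\psi$; then $\phi_n\otimes\psi_n\to\phi\otimes\psi$ while $S(\phi_n\otimes\psi_n)=T_1\phi_n\otimes\psi_n+\phi_n\otimes T_2\psi_n\to S(\phi\otimes\psi)$ and, similarly, $P(\phi_n\otimes\psi_n)=T_1\phi_n\otimes T_2\psi_n\to P(\phi\otimes\psi)$, and one extends by linearity.

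\emph{Main obstacle.} The subtle point is exactly why the reduction is unavoidable: an arbitrary core of a self-adjoint operator may contain no nonzero analytic vector at all (e.g.\ $C_c^\infty(\mathbb R)$ is a core for $-i\,d/dx$ on $L^2(\mathbb R)$, yet every analytic vector of $-i\,d/dx$ extends holomorphically to a strip and so cannot be compactly supported), so Nelson's theorem cannot be applied to $A$ or $T_1\otimes T_2$ on the prescribed domain $D(T_1)\otimes D(T_2)$ until one has first enlarged the domains to $D(\bar T_1)\otimes D(\bar T_2)$, where the spectral subspaces live. A fully equivalent alternative, which the paper could also use, is the multiplication-operator picture furnished by the spectral theorem: representing $H_j\cong L^2(M_j,\mu_j)$ with $T_j$ unitarily equivalent to multiplication by a real measurable function $f_j$, the operators $\bar A$ and $\bar T_1\otimes\bar T_2$ become multiplication by $f_1(m_1)+f_2(m_2)$ and $f_1(m_1)f_2(m_2)$ on $L^2(M_1\times M_2,\mu_1\times\mu_2)$, which are self-adjoint on their natural maximal domains, and one then verifies directly that $D(T_1)\otimes D(T_2)$ is dense there in graph norm.
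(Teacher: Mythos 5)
The paper itself does not prove this result; it is cited directly as Theorem~8.33 of Weidmann \cite{We}, so there is no in-paper proof to compare against. Your argument is the standard one (it is essentially Theorem~VIII.33 of Reed--Simon \cite{RS}, also in the bibliography) and it is correct. In the self-adjoint case, the estimate $\|A^n(\phi\otimes\psi)\|\le(R+R')^n\|\phi\|\,\|\psi\|$, obtained from the binomial formula for the two commuting summands of $A$ and the corresponding spectral bounds, together with Proposition~\ref{propA.1}, produces a dense invariant set of analytic vectors inside $D(T_1)\otimes D(T_2)$, so Nelson's theorem applies; the symmetry of $A$ and $T_1\otimes T_2$ on simple tensors is immediate and extends by linearity. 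The reduction step is also right: approximating each $\phi\in D(\bar T_1)$, $\psi\in D(\bar T_2)$ in graph norm by $\phi_n\in D(T_1)$, $\psi_n\in D(T_2)$, one checks that $\phi_n\otimes\psi_n\to\phi\otimes\psi$, $T_1\phi_n\otimes\psi_n+\phi_n\otimes T_2\psi_n\to S(\phi\otimes\psi)$ and $T_1\phi_n\otimes T_2\psi_n\to P(\phi\otimes\psi)$, so after extending by linearity and a diagonal argument $D(T_1)\otimes D(T_2)$ is a core for both closures, which is exactly essential self-adjointness on the prescribed domain. Your side remark --- that a core of a self-adjoint operator may contain no nonzero analytic vectors, so Nelson's theorem cannot be invoked on $D(T_1)\otimes D(T_2)$ directly and the enlargement to $D(\bar T_1)\otimes D(\bar T_2)$ followed by the core argument is genuinely needed --- is accurate and worth keeping.
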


Let us explore this result in light of the Spectral Theorem. For this purpose, let us recall this well known  result in its multiplicative operator form, whose statement we reproduce from \cite{RS}:

\begin{theorem} \label{spectralthm}
Let $T$ be a self-adjoint operator on a separable Hilbert space $H$ with domain $D(T)$. Then there is a measure space $(M, \mu)$ with a finite measure $\mu$, a unitary operator $U: H \rightarrow L^2(M, d\mu)$, and a real-valued function $f$ on $M$ which is finite a.e. such that
\begin{enumerate}
  \item  $u \in D(T)$ if, and only if, $ f( \> \cdot \>) (U u) (\> \cdot \>) \in L^2(M, d\mu) $;
  \item If $\psi \in U(D(T))$, then $(U\, T\, U^{-1} \psi)(m) = f(m) \psi(m)$.
\end{enumerate}
\end{theorem}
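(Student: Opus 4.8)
\emph{Proof proposal.} The plan is to reproduce the classical proof of the spectral theorem in multiplication-operator form: first for bounded $T$ by means of the continuous functional calculus and a decomposition into cyclic subspaces, and then reducing the unbounded case to the bounded one via the Cayley transform.

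First I would suppose $T$ bounded and self-adjoint. Using that $\|p(T)\| = \sup_{\sigma(T)}|p|$ for every polynomial $p$ (which follows from the equality $r(T)=\|T\|$ valid for self-adjoint operators) together with the Stone--Weierstrass theorem, one extends $p \mapsto p(T)$ to an isometric $*$-homomorphism $C(\sigma(T)) \ni g \mapsto g(T) \in \mathcal{B}(H)$. Since $H$ is separable, an exhaustion argument (or Zorn's lemma) produces an at most countable orthogonal decomposition $H = \bigoplus_{n} H_n$ into closed subspaces each of which is cyclic for $T$, that is, $H_n = \overline{\{\, g(T)\varphi_n : g \in C(\sigma(T))\,\}}$ for some $\varphi_n \in H_n$, and one may rescale so that $\sum_n \|\varphi_n\|^2 < \infty$. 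On each $H_n$ the functional $g \mapsto \langle \varphi_n, g(T)\varphi_n \rangle$ is positive, so by the Riesz representation theorem there is a finite Borel measure $\mu_n$ on $\sigma(T)$ with $\mu_n(\sigma(T)) = \|\varphi_n\|^2$ such that $\langle \varphi_n, g(T)\varphi_n\rangle = \int g\, d\mu_n$; the assignment $g(T)\varphi_n \mapsto g$ then extends to a unitary $U_n : H_n \to L^2(\sigma(T),\mu_n)$ intertwining $T|_{H_n}$ with multiplication by the coordinate function. Taking $M$ to be the disjoint union of copies of $\sigma(T)$ indexed by $n$, $\mu$ the corresponding sum measure (finite, since $\mu(M)=\sum_n\|\varphi_n\|^2<\infty$), $U = \bigoplus_n U_n$, and $f$ the coordinate function on each copy, one obtains the statement in the bounded case, with $f$ bounded and (1)--(2) immediate.

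For unbounded self-adjoint $T$ I would pass through the Cayley transform $V = (T-iI)(T+iI)^{-1}$, which is unitary, satisfies $1 \notin \sigma_p(V)$, and obeys $T = i(I+V)(I-V)^{-1}$ on $D(T) = \operatorname{ran}(I-V)$. Running the cyclic-decomposition argument above with the complex Stone--Weierstrass theorem and $C(\mathbb{S}^1)$ in place of $C(\sigma(T))$ yields a finite measure space $(M,\mu)$ and a unitary $U$ conjugating $V$ to multiplication by a function $e$ with $|e|=1$ $\mu$-a.e.; since $1$ is not an eigenvalue of $V$, the set $\{e=1\}$ is $\mu$-null, so $f := i\frac{1+e}{1-e}$ is real-valued and finite $\mu$-a.e. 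The remaining point is to check that this $f$ represents $T$, that is, $U(D(T)) = \{\psi \in L^2(M,\mu) : f\psi \in L^2(M,\mu)\}$ and $U T U^{-1}$ acts as multiplication by $f$ on that domain, which is a routine computation from $D(T)=\operatorname{ran}(I-V)$ and the functional calculus of $V$. I expect this transfer from the bounded (unitary) representation to the unbounded operator $T$, together with the verification that $f$ is finite $\mu$-a.e., to be the main obstacle; the bounded case, the functional calculus, the Riesz step and the cyclic decomposition being standard.
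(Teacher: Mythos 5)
The paper does not actually prove Theorem~\ref{spectralthm}: it is quoted verbatim from Reed and Simon~\cite{RS} (the text says ``whose statement we reproduce from \cite{RS}''), so there is no proof in the paper against which to compare yours. That said, your proposed argument is correct and is, in essence, the classical proof that appears in standard references including \cite{RS}: the cyclic-subspace decomposition combined with the continuous functional calculus and the Riesz representation theorem handles the bounded case, and the Cayley transform reduces the unbounded case to a unitary one. The places you flagged as potential obstacles do indeed go through smoothly. The finiteness of $\mu$ comes from rescaling the cyclic vectors so that $\sum_n\|\varphi_n\|^2<\infty$, as you say. For the unbounded step, the fact that $\{e=1\}$ is $\mu$-null is exactly the statement that $1$ is not an eigenvalue of the multiplication operator $M_e$, which matches $1\notin\sigma_p(V)$. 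The identification $U(D(T))=\{\psi: f\psi\in L^2\}$ follows from $D(T)=\operatorname{ran}(I-V)$: if $\psi=(1-e)\phi$ with $\phi\in L^2$ then $f\psi=i(1+e)\phi\in L^2$ since $|1+e|\le 2$; conversely if $f\psi\in L^2$ then $2\psi/(1-e)=\psi+i^{-1}f\psi\in L^2$, so $\psi\in\operatorname{ran}(M_{1-e})$. One small refinement: Reed and Simon phrase the unbounded reduction via the bounded resolvents $(T\pm i)^{-1}$ rather than the Cayley transform itself, but the two devices are equivalent and your version is entirely acceptable.
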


Combining these two theorems, we can deduce the following tensorial Spectral Theorem, which follows  in a standard way from the classical spectral theorem~\ref{spectralthm}, and so we omit its proof here.  

\begin{theorem} \label{tensorialspectralthm}
  Let $H_1$ and $H_2$ be two separable Hilbert spaces, and consider two self-adjoint operators $T_1 : D(T_1) \subset H_1 \rightarrow H_1$ and $T_2 : D(T_2) \subset H_2 \rightarrow H_2$. Let also $U_1 : H_1 \rightarrow L^2(M_1, \mu_1)$ and $U_2 : H_2 \rightarrow L^2(M_2, \mu_2)$ be two unitary maps, with $(M_1, \mu_1)$ and $(M_2, \mu_2)$ being finite measure spaces, such that for $j = 1, 2$:
\begin{itemize}
  \item  $u \in D(T_j)$ if, and only if, $ f_j( \, \cdot \,) (U_j u) (\, \cdot \,) \in L^2(M_j, d\mu_j) $, for some measurable function $f_j : M_j \rightarrow \R$;
  \item If $\psi \in U_j (D(T_j))$, then $(U_j\, T_j\, U_j^{-1} \psi)(m) = f_j(m) \psi(m)$.
\end{itemize}

  If $A = T_1 \otimes I_{H_2} + I_{H_1} \otimes T_2$, $A$ is essentially self-adjoint. Writing $\Lambda = \overline{A}$ and $U = \overline{U_1 \otimes U_2} : H_1 \,\widehat{\otimes}\, H_2 \rightarrow L^2(M_1 \times M_2, \mu_1 \times \mu_2)$, $U$ defines an unitary map such that
  \begin{enumerate}
  \item  $u \in D(\Lambda)$ if, and only if, $ \big( f_1(m_1) + f_2 (m_2) \big) (U u) (m_1, m_2) \in L^2(M_1 \times M_2, d\mu_1 \times d\mu_2) $;
  \item If $\psi \in U(D(\Lambda))$, then $(U\, \Lambda\, U^{-1} \psi)(m_1, m_2) = \big(f_1(m_1) + f_2(m_2) \big) \psi(m_1, m_2)$.
\end{enumerate}   
\end{theorem}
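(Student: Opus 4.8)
The plan is to reduce the statement to the classical multiplicative spectral theorem~\ref{spectralthm} together with Theorem~\ref{thmA.1}, the work being essentially the bookkeeping of tensor products. First I would check that $U=\overline{U_1\otimes U_2}$ is a well-defined unitary operator from $H_1\,\widehat\otimes\,H_2$ onto $L^2(M_1\times M_2,\mu_1\times\mu_2)$. On the algebraic tensor product the map $u\otimes v\mapsto U_1u\otimes U_2v$ preserves inner products by the very definition of $(\,\cdot\,,\,\cdot\,)_{H_1\otimes H_2}$ (using that $U_1,U_2$ are unitary), so it extends by continuity to an isometry of the completions; by Proposition~\ref{propA.1}(2) it carries a Hilbert basis $\{e_\alpha\otimes f_\beta\}$ of $H_1\,\widehat\otimes\,H_2$ onto the Hilbert basis $\{(U_1e_\alpha)\otimes(U_2f_\beta)\}$ of $L^2(M_1,\mu_1)\,\widehat\otimes\,L^2(M_2,\mu_2)$, hence it is onto, i.e.\ unitary. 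Finally, the standard map $\phi\otimes\psi\mapsto\big((m_1,m_2)\mapsto\phi(m_1)\psi(m_2)\big)$ identifies $L^2(M_1,\mu_1)\,\widehat\otimes\,L^2(M_2,\mu_2)$ with $L^2(M_1\times M_2,\mu_1\times\mu_2)$ — again because it sends a Hilbert basis onto a Hilbert basis, products of $L^2$-orthonormal bases being $L^2$-orthonormal bases of the product measure space. Composing yields the unitary $U$ of the statement. Moreover, essential self-adjointness of $A=T_1\otimes I_{H_2}+I_{H_1}\otimes T_2$, and hence self-adjointness of $\Lambda=\overline A$, is exactly Theorem~\ref{thmA.1}, since a self-adjoint operator is a fortiori essentially self-adjoint.

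Next I would compute the conjugated operator on the core $D(A)=D(T_1)\otimes D(T_2)$. Writing $g(m_1,m_2):=f_1(m_1)+f_2(m_2)$, for $u=\sum_{j\in J}b_j\,u_j\otimes v_j$ with $u_j\in D(T_1)$, $v_j\in D(T_2)$ one has $Au=\sum_j b_j\big(T_1u_j\otimes v_j+u_j\otimes T_2v_j\big)$, and property (2) of $U_1,U_2$ gives
\begin{align*}
(UAu)(m_1,m_2)&=\sum_{j}b_j\Big(f_1(m_1)(U_1u_j)(m_1)(U_2v_j)(m_2)\\
&\qquad\qquad\quad+(U_1u_j)(m_1)f_2(m_2)(U_2v_j)(m_2)\Big)\\
&=g(m_1,m_2)\,(Uu)(m_1,m_2).
\end{align*}
Thus $UAU^{-1}$ coincides with the multiplication operator $M_g:\psi\mapsto g\psi$ on the dense subspace $U(D(A))$. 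Since $f_1,f_2$ are finite a.e., $g$ is finite $\mu_1\times\mu_2$-a.e.\ (Fubini) and real-valued, so $M_g$ with its maximal domain $D(M_g)=\{\psi\in L^2(M_1\times M_2):g\psi\in L^2(M_1\times M_2)\}$ is self-adjoint.

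The key step is then to identify $U\Lambda U^{-1}$ with $M_g$ on this maximal domain. Because $U$ is bounded with bounded inverse, conjugation by $U$ commutes with closure, so $U\Lambda U^{-1}=U\overline A\,U^{-1}=\overline{UAU^{-1}}$ is self-adjoint; and since $UAU^{-1}\subset M_g$ with $M_g$ closed, we get $U\Lambda U^{-1}=\overline{UAU^{-1}}\subset M_g$. Now recall that a self-adjoint operator $S$ has no proper symmetric extension: if $S\subset T$ with $T$ symmetric, then $T\subset T^*\subset S^*=S$, whence $T=S$. Applying this with $S=U\Lambda U^{-1}$ and $T=M_g$ (which is symmetric, being self-adjoint) forces $U\Lambda U^{-1}=M_g$. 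Unwinding this equality gives exactly the two asserted properties: $u\in D(\Lambda)$ iff $Uu\in D(M_g)$, i.e.\ iff $\big(f_1(m_1)+f_2(m_2)\big)(Uu)(m_1,m_2)\in L^2(M_1\times M_2,\mu_1\times\mu_2)$; and for $\psi=Uu\in U(D(\Lambda))$, $(U\Lambda U^{-1}\psi)(m_1,m_2)=g(m_1,m_2)\psi(m_1,m_2)=\big(f_1(m_1)+f_2(m_2)\big)\psi(m_1,m_2)$.

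The only genuinely delicate point is this last identification: one must ensure that the closure $\overline A$ "sees" the full maximal multiplication domain and not some proper subdomain. This is precisely where the maximality of self-adjoint operators among symmetric operators enters, and it is also what makes Theorem~\ref{thmA.1} — essential self-adjointness of $A$ already on the algebraic tensor product $D(T_1)\otimes D(T_2)$ — an indispensable input rather than a technicality. Everything else is routine manipulation of tensor products and the classical spectral theorem~\ref{spectralthm}.
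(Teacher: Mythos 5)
Your proposal is correct and is precisely the ``standard'' argument the paper alludes to when it omits the proof: you verify that $U=\overline{U_1\otimes U_2}$ is unitary via orthonormal bases (Proposition~\ref{propA.1}) and the canonical identification $L^2(M_1,\mu_1)\,\widehat\otimes\,L^2(M_2,\mu_2)\cong L^2(M_1\times M_2,\mu_1\times\mu_2)$, compute $UAU^{-1}$ on the core $D(T_1)\otimes D(T_2)$ to get multiplication by $g(m_1,m_2)=f_1(m_1)+f_2(m_2)$, and then invoke the maximality of self-adjoint operators among symmetric extensions (together with Theorem~\ref{thmA.1}, which supplies essential self-adjointness of $A$) to conclude $U\Lambda U^{-1}=M_g$ on the full maximal multiplication domain. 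The subsidiary points you check — that closure commutes with conjugation by a unitary, that $g$ is real and $\mu_1\times\mu_2$-a.e.\ finite so $M_g$ is self-adjoint on its maximal domain — are exactly the right ones, and your remark that essential self-adjointness on the algebraic tensor product is the indispensable input (rather than a mere technicality) is well taken.
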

\begin{remark}
Under the same conditions, an analogous theorem can be deduced for $T_1 \otimes T_2$: it is an essentially  self-adjoint operator, and its closure is equivalent to the multiplication operator $\psi(m_1, m_2) \mapsto f_1(m_1)f_2(m_2)\psi(m_1,m_2)$.
\end{remark}

\subsection{The nonnegative case} \label{A:nonnegative}

Preserving the notations and hypoteheses of Theorem \ref{tensorialspectralthm}, let us briefly investigate the situation in which both operators $T_1$ and $T_2$ are nonnegative.

 Let $H$ be a separable Hilbert space and $T : D(T) \subset H \rightarrow H$ is a self-adjoint operator, and consider the unitary operator $U$, the measure space $(M,\mu)$ and the measurable function $f : M \rightarrow \R$ given by Theorem \ref{spectralthm}. If $T$ is nonnegative, then the function given $f : M \rightarrow \mathbb{R}$ is nonnegative as well (and conversely). Thence,
\begin{itemize}
  \item if $S(t) = \exp \{ -t T \}$ is the semigroup associated with $-T$, then $(U S(t) \psi)(m)$ $=$ $\exp \{ -t f(m) \} (U\psi)(m)$ for any $\psi \in H$, $t > 0$ and $m \in M$, and
  \item for $\alpha \geq 0$, we may characterize the intermediate spaces $D(T^\alpha)$ as
  $$D(T^\alpha) =: H_T^\alpha = U^{-1} \big( L^2(M, (1 + f(m))^{2\alpha} d\mu) \big) =: U^{-1}(X_f^\alpha).$$
  When $0\leq \alpha \leq 1$, these are precisely the interpolation spaces $[H, D(T)]_\alpha$ studied in \cite{LM}, vol. 1. We can also extend this definition for $\alpha < 0$, by setting $H_T^\alpha = (H_T^{-\alpha})^*$, for these spaces can still be naturally identified with $X_f^\alpha =  L^2(M, (1 + f(m))^{2\alpha} d\mu)$. Except when mentioned, the spaces $H_T^\alpha$ are equiped with the induced norms of $X_f^\alpha$ (which are equivalent to the graph norms when $\alpha \geq 0$), and thus are also Hilbert. Notice then that 
\begin{equation}
  H_T^\alpha \supset H_T^\beta \text{ with dense and continuous injection} \label{eqA.3}
\end{equation}
provided that $\alpha \leq \beta$.  
\end{itemize}
With this in mind, it is not hard to verify the following consequence of Theorem \ref{tensorialspectralthm}. We omit 
its proof  for the sake of brevity. 

\begin{corollary} \label{coroltensorspectralthm}
Under the same hypotheses and notations of Theorem \ref{tensorialspectralthm}, assume that $T_1$ and $T_2$ are positive, and denote by $H_{T_1}^\alpha$ and $H_{T_2}^\alpha$ the intermediate spaces of $T_1$ and $T_2$ respectively. Then $\Lambda = \overline{T_1 \otimes I_{H_2} + I_{H_1} \otimes T_2}$ is a nonnegative self-adjoint operator. Regarding its intermediate spaces $H_{\Lambda}^\alpha$, if $\alpha \geq 0$,
\begin{equation}
  H_\Lambda^\alpha = D(\Lambda^\alpha) = H_1 \, \widehat{\otimes} \, H_{T_2}^\alpha \, \cap \, H_{T_1} ^\alpha \, \widehat{\otimes} \, H_2 \text{ with equivalent norms, } \label{eqA.4}
\end{equation}
and, if $\alpha < 0$,
\begin{equation}
H_\Lambda^\alpha = H_1 \,\widehat{\otimes}\, H_{T_2}^\alpha \, + \, H_{T_1}^\alpha \, \widehat{\otimes} \, H_2 \text{ with equivalent norms}. \label{eqA.5}
\end{equation}
\end{corollary}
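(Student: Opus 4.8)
The plan is to deduce Corollary~\ref{coroltensorspectralthm} directly from the tensorial Spectral Theorem~\ref{tensorialspectralthm} together with the model description of intermediate spaces given in Subsection~\ref{A:nonnegative}. First I would record that, since $T_1$ and $T_2$ are nonnegative self-adjoint operators, the multiplier functions $f_1:M_1\to\R$ and $f_2:M_2\to\R$ furnished by Theorem~\ref{spectralthm} are nonnegative $\mu_1$- resp.\ $\mu_2$-a.e.; hence $f_1(m_1)+f_2(m_2)\ge 0$ $(\mu_1\times\mu_2)$-a.e., so the operator $\Lambda=\overline{T_1\otimes I_{H_2}+I_{H_1}\otimes T_2}$, being unitarily equivalent via $U=\overline{U_1\otimes U_2}$ to multiplication by $f_1\oplus f_2:=f_1(m_1)+f_2(m_2)$ on $L^2(M_1\times M_2,\mu_1\times\mu_2)$, is nonnegative and self-adjoint. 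This already gives the first assertion.

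For the intermediate spaces, the key identity is that, under $U$, the space $H_\Lambda^\alpha$ is carried onto
$$
L^2\bigl(M_1\times M_2,\,(1+f_1(m_1)+f_2(m_2))^{2\alpha}\,d\mu_1\,d\mu_2\bigr),
$$
by the very definition $H_\Lambda^\alpha=U^{-1}(X_{f_1\oplus f_2}^\alpha)$ recalled in Subsection~\ref{A:nonnegative}. The whole matter then reduces to an elementary, purely measure-theoretic comparison of weights on a product measure space. For $\alpha\ge 0$ one uses the two-sided bound
$$
c_\alpha\,\bigl[(1+f_1(m_1))^{2\alpha}+(1+f_2(m_2))^{2\alpha}\bigr]\le (1+f_1(m_1)+f_2(m_2))^{2\alpha}\le C_\alpha\,\bigl[(1+f_1(m_1))^{2\alpha}+(1+f_2(m_2))^{2\alpha}\bigr],
$$
valid for nonnegative $f_1,f_2$ with constants depending only on $\alpha$ (convexity/monotonicity of $t\mapsto t^{2\alpha}$ on $[0,\infty)$ gives the right inequality, and $(a+b)^{2\alpha}\ge \max(a,b)^{2\alpha}$ plus $\max\ge\frac12(\cdot+\cdot)$ gives the left one, after writing $1+f_1+f_2\le (1+f_1)+(1+f_2)$ and $1+f_1+f_2\ge\max(1+f_1,1+f_2)$). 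Integrating, the $X_{f_1\oplus f_2}^\alpha$-norm is equivalent to the sum of the $L^2((1+f_1)^{2\alpha}d\mu_1\times d\mu_2)$-norm and the $L^2(d\mu_1\times (1+f_2)^{2\alpha}d\mu_2)$-norm; pulling back by $U_1^{-1}\otimes U_2^{-1}$ and using Proposition~\ref{propA.1}(2) to identify $H_1\,\widehat\otimes\, H_{T_2}^\alpha$ and $H_{T_1}^\alpha\,\widehat\otimes\, H_2$ with the corresponding weighted $L^2$ spaces (a Hilbert basis of $H_1$ tensored with one of $H_{T_2}^\alpha$, etc.), this is exactly \eqref{eqA.4}, the intersection being taken inside $H_1\,\widehat\otimes\, H_2$. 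For $\alpha<0$ one runs the same weight comparison with the now-equivalent but reversed roles: $(1+f_1+f_2)^{2\alpha}\sim \min\bigl((1+f_1)^{2\alpha},(1+f_2)^{2\alpha}\bigr)$, whence the weighted $L^2$ space is the \emph{sum} of the two weighted spaces, giving \eqref{eqA.5}; alternatively one can simply invoke $H_\Lambda^\alpha=(H_\Lambda^{-\alpha})^*$ together with the duality $(H_1\,\widehat\otimes\, H_{T_2}^{-\alpha}\cap H_{T_1}^{-\alpha}\,\widehat\otimes\, H_2)^*=H_1\,\widehat\otimes\, H_{T_2}^{\alpha}+H_{T_1}^{\alpha}\,\widehat\otimes\, H_2$, which is the standard interpolation/duality pairing between intersections and sums of Hilbert spaces.

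I do not expect any serious obstacle here: the substantive content is entirely contained in Theorem~\ref{tensorialspectralthm}, and what remains is bookkeeping. The only point that deserves a little care is the identification, via Proposition~\ref{propA.1}, of the completed tensor products $H_1\,\widehat\otimes\, H_{T_j}^\alpha$ with the model weighted $L^2$ spaces $L^2(M_{3-j},\mu_{3-j})\,\widehat\otimes\, X_{f_j}^\alpha\cong L^2(M_1\times M_2,\,d\mu_{3-j}\times(1+f_j)^{2\alpha}d\mu_j)$ — i.e.\ checking that $\overline{U_1\otimes U_2}$ restricts to a unitary between these — and, for $\alpha\ge 0$, noting that the intersection norm so obtained is equivalent to the graph norm of $\Lambda^\alpha$ because of the weight comparison above. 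These are exactly the steps the excerpt already flags as "standard", so the proof can legitimately be compressed, and this is why the authors write "We omit its proof for the sake of brevity."
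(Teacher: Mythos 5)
Your argument is correct and is precisely the one the paper alludes to when it says the corollary ``follows from the theory developed so far in this appendix'' and omits the proof for brevity: pass to the multiplication model via Theorem~\ref{tensorialspectralthm}, observe that $H_\Lambda^\alpha$ becomes the weighted $L^2$ space with weight $(1+f_1(m_1)+f_2(m_2))^{2\alpha}$, and compare that weight two-sidedly with $(1+f_1)^{2\alpha}+(1+f_2)^{2\alpha}$ for $\alpha\ge 0$ (resp.\ $\min((1+f_1)^{2\alpha},(1+f_2)^{2\alpha})$ for $\alpha<0$), identifying the resulting spaces with the intersection resp.\ sum of the tensor-product spaces via Proposition~\ref{propA.1} and the canonical isomorphism $L^2(M_1,\nu_1)\,\widehat\otimes\,L^2(M_2,\nu_2)\cong L^2(M_1\times M_2,\nu_1\times\nu_2)$. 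The duality alternative you mention for $\alpha<0$ is equally valid; both routes are the standard ``bookkeeping'' the authors had in mind.
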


\subsection{An application to a differential operator}\label{A4}

At last, we will analyze the operator stated on the very first paragraph of this appendix. The proof of the following result follows from the theory developed so far in this section and we omit it here for the sake of brevity.

\begin{theorem} \label{thmspectralA}
Let $\cO' \subset \R^{d'}$ and $\cO'' \subset \R^{d''}$ be smooth nonempty bounded open sets, and let $A: D(A) \subset L^2(\cO'\times \cO'') \rightarrow L^2(\cO' \times \cO'')$ be the unbounded operator
$$\begin{cases}
D(A) &= \{ u \in L^2(\cO' \times \cO''); L^2(\cO''; H^2(\cO')) \cap u \in L^2(\cO'; (H_0^2 \cap H^4)(\cO''))  \\  &\quad\quad\quad\quad\quad\quad\quad\quad\quad \text{ and, in the sense of traces, } \partial_\nu u = 0 \text{ on } \partial \cO' \times \cO'' \}, \\
A &= -\ve\Delta_{x' + x''} + \mu\Delta_{x''}^2.
\end{cases}$$
Then $A$ defines a nonnegative self-adjoint operator, which can be understood as 
$A = \overline{I_{H_1} \,\widehat \otimes\, T_2 +  T_1 \,\widehat \otimes\, I_{H_2}}$
where $T_j : D(T_j) \subset H_j \rightarrow H_j$ ($j=1, 2$) are given by
$$\begin{matrix}
&H_1 = L^2(\cO'),  &D(T_1) = \{f \in H^2(\cO'); \partial_\nu f = 0 \text{ on } \partial \cO' \}, &T_1 = -\varepsilon \Delta_{x'}, \\ 
&H_2 = L^2(\cO''), &D(T_2) = H_0^2 (\cO'') \cap H^4(\cO''), \text{ and }   &T_2 = \mu \varepsilon \Delta_{x''} - \ve \Delta_{x''}.
\end{matrix}$$
Moreover, denoting by $H_A^\alpha$ its intermediate spaces (see the previous subsection), we have that with equivalent norms
\begin{align*}
  H_A^{1/2} &= L^2(\cO'; H_0^2(\cO'')) \cap L^2(\cO''; H^1(\cO')), \\
  H_A^{1/4} &= L^2(\cO'; H_0^1(\cO'')) \cap L^2(\cO''; H^{1/2}(\cO')), \\
  H_A^{-1/4} &= L^2(\cO'; H^{-1}(\cO'')) + L^2(\cO''; H^{1/2}(\cO')^*), \text{ and} \\
  H_A^{-1/2} &= L^2(\cO'; H^{-2}(\cO'')) + L^2(\cO''; H^1(\cO')^*).
\end{align*}
\end{theorem}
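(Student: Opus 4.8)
The plan is to prove Theorem~\ref{thmspectralA} by realizing $A$ as a sum of commuting one-variable operators via the tensor product machinery of this appendix, and then reading off the intermediate spaces from Corollary~\ref{coroltensorspectralthm}. First I would identify $L^2(\cO'\times\cO'')$ with $H_1\,\widehat\otimes\,H_2$ where $H_1=L^2(\cO')$ and $H_2=L^2(\cO'')$; this is legitimate by Proposition~\ref{propA.1}, since choosing orthonormal bases of $H_1$ and $H_2$ (e.g., eigenfunctions of the Neumann Laplacian on $\cO'$ and of the operator below on $\cO''$) and taking their tensor products gives a Hilbert basis of $L^2(\cO'\times\cO'')$, which coincides with the standard one. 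Then I would set $T_1=-\varepsilon\Delta_{x'}$ on $D(T_1)=\{f\in H^2(\cO'):\partial_\nu f=0\text{ on }\partial\cO'\}$ and $T_2=\mu\Delta_{x''}^2-\varepsilon\Delta_{x''}$ on $D(T_2)=H_0^2(\cO'')\cap H^4(\cO'')$.

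The key preliminary step is that $T_1$ and $T_2$ are each nonnegative self-adjoint (indeed positive, up to the constant in the third condition of Lions' theorem, but the relevant point is nonnegativity with compact resolvent): $T_1$ is the standard Neumann Laplacian, classically self-adjoint and nonnegative with discrete spectrum; $T_2$ is the realization of $\mu\Delta^2-\varepsilon\Delta$ with clamped-plate boundary conditions ($u=\partial_\nu u=0$, i.e.\ $u\in H_0^2$), whose self-adjointness and nonnegativity follow from the fact that it is the operator associated with the closed, coercive, symmetric bilinear form $a_2(u,v)=\mu\int\Delta u\,\Delta v+\varepsilon\int\nabla u\cdot\nabla v$ on $H_0^2(\cO'')$, with domain $H_0^2\cap H^4$ by elliptic regularity for the biharmonic operator on a smooth domain. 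Once these are in hand, Theorem~\ref{thmA.1} (or directly Theorem~\ref{tensorialspectralthm}) gives that $I_{H_1}\,\widehat\otimes\,T_2+T_1\,\widehat\otimes\,I_{H_2}$ is essentially self-adjoint on the algebraic tensor product, and I would then check that its closure has exactly the domain $D(A)$ stated in the theorem. For this I would use the joint eigenbasis $\{e_j\otimes f_k\}$ with eigenvalues $\lambda_j+\tau_k$: a function $u=\sum c_{jk}e_j\otimes f_k$ lies in $D(\overline A)$ iff $\sum|c_{jk}|^2(\lambda_j+\tau_k)^2<\infty$, and this condition is equivalent to $u\in L^2(\cO'';H^2(\cO'))\cap L^2(\cO';H_0^2\cap H^4(\cO''))$ with $\partial_\nu u=0$ on $\partial\cO'\times\cO''$, using that $\lambda_j\sim j^{2/d'}$ controls the $H^2$-norm in $x'$, that $\tau_k\sim k^{4/d''}$ controls the $H^4$-norm in $x''$, and that $(\lambda_j+\tau_k)^2\lesssim\lambda_j^2+\tau_k^2$; I would compare the mixed term $\lambda_j\tau_k$ against $\lambda_j^2+\tau_k^2$ by Young's inequality to see both domains agree. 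The nonnegativity of $A$ is then immediate since all eigenvalues $\lambda_j+\tau_k\ge 0$.

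Next I would invoke Corollary~\ref{coroltensorspectralthm} with $\alpha=1/2,\,1/4,\,-1/4,\,-1/2$. For $\alpha\ge0$ it gives $H_A^\alpha=H_1\,\widehat\otimes\,H_{T_2}^\alpha\cap H_{T_1}^\alpha\,\widehat\otimes\,H_2$ with equivalent norms, and for $\alpha<0$ the analogous sum. The remaining work is purely one-variable identification of the intermediate spaces $H_{T_1}^\alpha$ and $H_{T_2}^\alpha$. Since $T_1$ is the Neumann Laplacian, $H_{T_1}^{1/2}=H^1(\cO')$, $H_{T_1}^{1/4}=H^{1/2}(\cO')$, $H_{T_1}^{-1/4}=(H^{1/2}(\cO'))^*$, $H_{T_1}^{-1/2}=(H^1(\cO'))^*=H^{-1}(\cO')$ — these are the standard interpolation identifications from \cite{LM}. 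Since $T_2$ is a fourth-order operator with form domain $H_0^2(\cO'')$, its square root has domain $H_0^2(\cO'')$, so $H_{T_2}^{1/2}=H_0^2(\cO'')$; interpolating, $H_{T_2}^{1/4}=[L^2,H_0^2]_{1/2}=H_0^1(\cO'')$ (using that $H_0^1=[L^2,H_0^2]_{1/2}$, valid because $1/2\neq 3/4$ so no boundary obstruction arises), $H_{T_2}^{-1/4}=H^{-1}(\cO'')$ and $H_{T_2}^{-1/2}=H^{-2}(\cO'')$ by duality. Substituting these into the corollary yields exactly the four displayed formulas, completing the proof.

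I expect the main obstacle to be the precise verification that $D(\overline A)$ equals the stated domain — in particular, confirming that the clamped boundary condition $u\in H_0^2$ in the $x''$-variable together with the Neumann condition on $\partial\cO'\times\cO''$ is exactly what the spectral description $\sum|c_{jk}|^2(\lambda_j+\tau_k)^2<\infty$ encodes, and that elliptic regularity upgrades this to $H^2$ in $x'$ and $H^4$ in $x''$ globally (not just locally). This is where one must be careful about mixed regularity and about the interplay between the two sets of boundary conditions; but since the operators act in separate variables and commute, a separation-of-variables argument on the Fourier side (relative to the joint eigenbasis) handles it cleanly, and everything else reduces to the routine interpolation identities cited above.
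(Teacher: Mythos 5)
Your argument is correct in substance, and since the paper explicitly omits the proof (``follows from the theory developed so far in this section''), your write-up is essentially a reconstruction of the intended argument. The key ingredients are all present: (i) realize $L^2(\cO'\times\cO'')$ as $H_1\,\widehat\otimes\,H_2$; (ii) verify that $T_1$ (Neumann Laplacian) and $T_2$ ($\mu\Delta^2-\ve\Delta$ with clamped boundary conditions, realized as the form operator associated with $a_2(u,v)=\mu\int\Delta u\,\Delta v+\ve\int\nabla u\cdot\nabla v$ on $H_0^2(\cO'')$, with $D(T_2)=H_0^2\cap H^4$ by biharmonic elliptic regularity) are nonnegative self-adjoint with compact resolvent; (iii) apply Theorem~\ref{thmA.1} (or Theorem~\ref{tensorialspectralthm}) for essential self-adjointness of the sum; (iv) read off $H_A^\alpha$ from Corollary~\ref{coroltensorspectralthm}; and (v) identify the one-variable intermediate spaces $H_{T_1}^\alpha$ and $H_{T_2}^\alpha$ via form domains, interpolation, and duality. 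The identifications $H_{T_1}^{1/2}=H^1(\cO')$, $H_{T_1}^{1/4}=H^{1/2}(\cO')$, $H_{T_2}^{1/2}=H_0^2(\cO'')$, $H_{T_2}^{1/4}=H_0^1(\cO'')$, and the negative-index spaces by duality are all correct.

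Two small remarks. First, identifying $D(\overline{A})$ with the stated domain does not require Weyl asymptotics $\lambda_j\sim j^{2/d'}$, $\tau_k\sim k^{4/d''}$: Corollary~\ref{coroltensorspectralthm} with $\alpha=1$ gives directly $D(\overline{A})=H_1\,\widehat\otimes\,D(T_2)\cap D(T_1)\,\widehat\otimes\,H_2=L^2(\cO';D(T_2))\cap L^2(\cO'';D(T_1))$, using only $(\lambda_j+\tau_k)^2\sim\lambda_j^2+\tau_k^2$, which holds for nonnegative reals; the explicit one-variable elliptic regularity for $T_1$ and $T_2$ then yields exactly the Sobolev description in the statement, so the ``mixed regularity'' worry you raise at the end resolves automatically at the one-variable level. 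Second, your parenthetical justification for $[L^2,H_0^2]_{1/2}=H_0^1$ (``because $1/2\neq 3/4$'') misstates the Lions--Magenes obstruction: the relevant condition is that $m\theta-\tfrac12\notin\mathbb Z_{\geq 0}$, which here gives $1-\tfrac12=\tfrac12\notin\mathbb Z$, so the identity does hold — but not for the reason you gave. Neither point affects the validity of your proof.
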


\begin{remark}
For $j=1,2$, the spectrum of $T_j$ is a sequence $0 \leq \lambda_1^{(j)} \leq \lambda_2^{(j)} \leq \lambda_3^{(j)} \leq \ldots$ with $\lambda_n^{(j)} \rightarrow \infty$ as $n \rightarrow \infty$. In addition, there exists a Hilbert basis $\{e_1^{(j)}, e_2^{(j)}, \ldots \}$ of $H_j$ composed eigenfunctions of $T_j$ such that $T_j e_n^{(j)} = \lambda_n^{(j)} e_n^{(j)}$. By Proposition \ref{propA.1}, $\{ e_m^{(1)} \otimes e_n^{(2)} \}_{m \in \N, m \in \N}$ defines a Hilbert basis of $H_1 \, \widehat{\otimes} \, H_2$. Since $e_m^{(1)} \otimes e_n^{(2)} \in D(T_1) \otimes D(T_2)$ and $A(e_m^{(1)} \otimes e_n^{(2)}) = (\lambda_m^{(1)} + \lambda_n^{(2)}) e_m^{(1)} \otimes e_n^{(2)}$, we arrive at the following conclusion: $A$ is also diagonalizable and its eigenvalues are $\{\lambda_m^{(1)} + \lambda_n^{(2)}\}_{m \in \N, n \in \N}$. Of course, the same finding could have been reached through Theorem \ref{tensorialspectralthm}.
\end{remark}

Let us also state the following theorem which also follows from the theory developed in this appendix and will be needed in the study of the second approximate problem.

\begin{theorem} \label{thmspectralT}
Let $\cO' \subset \R^{d'}$ and $\cO'' \subset \R^{d''}$ be smooth nonempty bounded open sets, and let $T: D(T) \subset L^2(\cO'\times \cO'') \rightarrow L^2(\cO' \times \cO'')$ be the unbounded operator
$$\begin{cases}
D(T) = \{ u \in H^2(\cO); &\text{in the sense of traces, } u = 0 \text{ on } \cO' \X \partial \cO'', \text{ and }\\ &  \partial_\nu u = 0 \text{ on } \partial \cO' \times \cO'' \}, \\
T = -\ve\Delta .
\end{cases}$$
Then $T$ defines a nonnegative self-adjoint operator, which can be understood as 
$T = \overline{I_{H_1} \,\widehat \otimes\, T_2 +  T_1 \,\widehat \otimes\, I_{H_2}}$
where $T_j : D(T_j) \subset H_j \rightarrow H_j$ ($j=1, 2$) are given by
$$\begin{matrix}
&H_1  &= L^2(\cO'),  & D(T_1) = \{f \in H^2(\cO'); \partial_\nu f = 0 \text{ on } \partial \cO' \}, &T_1 = -\varepsilon \Delta_{x'}, \\ 
&H_2  &= L^2(\cO''), & D(T_2) = \{g \in H^2(\cO''); g = 0 \text{ on } \partial \cO'' \}, & \text{ and }   T_2 = -\varepsilon \Delta_{x''}.
\end{matrix}$$
Consequently, with regard to its intermediate spaces $H_T^\alpha$, we have that with equivalent norms
\begin{align*}
  H_T^{1/2}  &= L^2(\cO'; H_0^1(\cO'')) \cap L^2(\cO''; H^1(\cO'') , \\
  H_T^{-1/2} &= L^2(\cO'; H^{-1}(\cO'')) + L^2(\cO''; H^1(\cO')^*).
\end{align*}
\end{theorem}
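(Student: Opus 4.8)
The statement to prove is Theorem~\ref{thmspectralT}, which asserts that the operator $T = -\ve\Delta$ with mixed Neumann/Dirichlet boundary conditions on the product domain $\cO'\X\cO''$ decomposes as a closure of a sum of tensor products, and identifies its intermediate spaces.

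\textbf{Overall strategy.} The plan is to realize $T$ via the tensorial spectral machinery already developed in this appendix (Theorem~\ref{thmA.1}, Theorem~\ref{tensorialspectralthm}, Corollary~\ref{coroltensorspectralthm}), exactly as was done for $A$ in Theorem~\ref{thmspectralA}. First I would define the two single-variable operators $T_1 = -\ve\Delta_{x'}$ on $H_1 = L^2(\cO')$ with domain $\{f\in H^2(\cO')\,:\,\po_\nu f = 0 \text{ on }\po\cO'\}$ (the Neumann Laplacian, scaled), and $T_2 = -\ve\Delta_{x''}$ on $H_2 = L^2(\cO'')$ with domain $\{g\in H^2(\cO'')\,:\, g = 0\text{ on }\po\cO''\}$ (the Dirichlet Laplacian, scaled). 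Both are classical: each is a nonnegative self-adjoint operator with compact resolvent and a complete orthonormal basis of eigenfunctions; this is standard elliptic theory on smooth bounded domains (e.g.\ \cite{Ev, LM}). In particular $T_2$ is strictly positive while $T_1$ is only nonnegative, but for the tensor sum $I_{H_1}\otimes T_2 + T_1\otimes I_{H_2}$ one still gets nonnegativity, and self-adjointness of the closure follows from Theorem~\ref{thmA.1} (both $T_j$ being essentially self-adjoint, indeed self-adjoint, on their natural domains). Then, denoting $T := \overline{I_{H_1}\widehat\otimes T_2 + T_1\widehat\otimes I_{H_2}}$, I would check that this closure coincides with the differential operator $-\ve\Delta$ on $D(T) = \{u\in H^2(\cO)\,:\, u = 0\text{ on }\cO'\X\po\cO'',\ \po_\nu u = 0\text{ on }\po\cO'\X\cO''\}$. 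The inclusion of the algebraic tensor product domain into $D(T)$ and the action formula are immediate; that the closure has exactly this $H^2$-domain follows from elliptic regularity for the mixed boundary value problem together with the density (Proposition~\ref{propA.1}) of tensor-product eigenfunctions, as in the $A$-case.

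\textbf{Identification of the intermediate spaces.} For the spaces $H_T^{1/2}$ and $H_T^{-1/2}$ I would invoke Corollary~\ref{coroltensorspectralthm}: with $\alpha = 1/2 > 0$,
\begin{equation*}
H_T^{1/2} = H_1\,\widehat\otimes\, H_{T_2}^{1/2}\ \cap\ H_{T_1}^{1/2}\,\widehat\otimes\, H_2,
\end{equation*}
and with $\alpha = -1/2 < 0$,
\begin{equation*}
H_T^{-1/2} = H_1\,\widehat\otimes\, H_{T_2}^{-1/2}\ +\ H_{T_1}^{-1/2}\,\widehat\otimes\, H_2,
\end{equation*}
both with equivalent norms. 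So it remains to identify the single-variable intermediate spaces: $H_{T_2}^{1/2} = [L^2(\cO''), H^2(\cO'')\cap H_0^2(\cO'')]_{1/2} = H_0^1(\cO'')$ and $H_{T_1}^{1/2} = [L^2(\cO'), \{f\in H^2(\cO'):\po_\nu f=0\}]_{1/2} = H^1(\cO')$; these are the standard interpolation identities for the Dirichlet and Neumann Laplacians found in \cite{LM}. Dualizing (and using the Hilbert-space tensor identity $L^2(\cO')\widehat\otimes X \cong L^2(\cO';X)$ for any separable Hilbert space $X$) yields $H_{T_2}^{-1/2} = H^{-1}(\cO'')$ and $H_{T_1}^{-1/2} = H^1(\cO')^*$. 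Substituting gives precisely $H_T^{1/2} = L^2(\cO';H_0^1(\cO''))\cap L^2(\cO'';H^1(\cO'))$ and $H_T^{-1/2} = L^2(\cO';H^{-1}(\cO'')) + L^2(\cO'';H^1(\cO')^*)$, as claimed. (One should note the evident typo in the displayed statement, "$L^2(\cO'';H^1(\cO''))$", which must read $L^2(\cO'';H^1(\cO'))$, consistent with $H_T^{1/2} = H_1\widehat\otimes H_{T_2}^{1/2}\cap H_{T_1}^{1/2}\widehat\otimes H_2$.)

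\textbf{Main obstacle.} The routine part is the tensor-product bookkeeping; the only genuinely nontrivial input is the interpolation identity $[L^2(\cO''), H^2\cap H_0^2(\cO'')]_{1/2} = H_0^1(\cO'')$, i.e.\ that the half-power domain of the (squared-root of the) Dirichlet operator $-\Delta$ with $D((-\Delta)) = H^2\cap H_0^1$ — wait, here the relevant $T_2$ has domain $H^2\cap H_0^2$, hmm, no: re-reading, $D(T_2) = \{g\in H^2(\cO''): g = 0\text{ on }\po\cO''\} = H^2\cap H_0^1(\cO'')$, the ordinary Dirichlet Laplacian — is $H_0^1(\cO'')$. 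This is classical (\cite{LM}, Vol.~1), but it does require the boundary $\po\cO''$ to be smooth (which is assumed) and is the place where all the boundary-condition information enters. A secondary point requiring a line of care is verifying that the closure of the algebraic tensor sum really has the full $H^2$-domain with the stated mixed boundary conditions and no larger; this is handled by elliptic regularity for the mixed problem exactly as in the proof of Theorem~\ref{thmspectralA}, so I would simply refer to that argument rather than repeat it.
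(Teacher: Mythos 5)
Your proposal is correct and follows exactly the route the paper intends: the paper states Theorem~\ref{thmspectralT} without a written proof (saying only that it ``follows from the theory developed in this appendix''), and your argument instantiates precisely that machinery --- Theorem~\ref{thmA.1} for (essential) self-adjointness of the tensor sum, Corollary~\ref{coroltensorspectralthm} for the intermediate spaces, and the classical half-power identifications $H_{T_1}^{1/2}=H^1(\cO')$, $H_{T_2}^{1/2}=H_0^1(\cO'')$ for the one-variable Neumann and Dirichlet Laplacians (which the paper's remark after the theorem traces to \cite{FL1}, while you cite \cite{LM}; both are fine). You also correctly spot the typo in the displayed formula for $H_T^{1/2}$, where $L^2(\cO'';H^1(\cO''))$ should read $L^2(\cO'';H^1(\cO'))$.
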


\begin{remark}
The fact that $D(T_2)^{1/2} = H_0^1(\cO'')$ can be proven just as we had shown that $D(T_2) \in H^1(\cO')$ in \cite{FL1}; actually the computations are simpler in this case.
\end{remark}

\section{Some convolution estimates on Hilbert spaces}\label{B}

For future reference, we will state here some convolution estimates on Hilbert spaces. Let $H$ be a separable Hilbert space and $A : D(A) \subset H \rightarrow H$ be a nonnegative self-adjoint operator; let us also denote by $H_A^\alpha$ its intermediate spaces (see Subsection \ref{A:nonnegative}).

We state the following three propositions that are used in the text, for whose  proof  we refer to \cite{FL1} and \cite{FL2}.

\begin{proposition} \label{B:reg0}
    For any real number $s \geq 0$ and $t>0$, the expression $A^s S(t)$ defines a \textit{bounded} linear operator in $H$. Moreover,
    \begin{equation}
        \Vert \, A^s S(t) \, \Vert_{\mathscr{L}(H)} \leq c_s/t^s. \label{ineq1}
    \end{equation}
    Consequently, for $\alpha < \beta$ and $t > 0$, $S(t)$ is a bounded linear operator from $H_A^\alpha$ into $H_A^\beta$ whose norm may be majorized by
    $$\Vert \,  S(t) \, \Vert_{\mathscr{L}(H^\alpha_A; H^\beta_A)} \leq c_{\alpha,\beta} \Big( 1 + \frac{1}{t^{\beta - \alpha}} \Big).$$
\end{proposition}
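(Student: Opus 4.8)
\textbf{Proof plan for Proposition~\ref{B:reg0}.}
The statement splits naturally into two parts: the bound \eqref{ineq1} on $\|A^s S(t)\|_{\mathscr{L}(H)}$, and the resulting mapping property of $S(t)$ between the intermediate spaces $H_A^\alpha$ and $H_A^\beta$. Both follow from the spectral theorem for the self-adjoint nonnegative operator $A$ (Theorem~\ref{spectralthm}), which identifies $A$ with multiplication by a nonnegative measurable function $f$ on some finite measure space $(M,\mu)$, via a unitary $U : H \to L^2(M,\mu)$; under this identification $S(t)=\exp\{-tA\}$ becomes multiplication by $e^{-tf(m)}$, $A^s$ becomes multiplication by $f(m)^s$, and the norm in $H_A^\alpha$ is (equivalent to) the norm of $L^2(M,(1+f(m))^{2\alpha}\,d\mu)$, as recalled in Subsection~\ref{A:nonnegative}.

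First I would prove \eqref{ineq1}. Transporting to $L^2(M,\mu)$, the operator norm of $A^s S(t)$ equals $\sup_{m\in M} f(m)^s e^{-tf(m)}$ (an essential supremum with respect to $\mu$, but this is harmless). Writing $\lambda=f(m)\ge 0$, one estimates $\sup_{\lambda\ge 0}\lambda^s e^{-t\lambda}$ by the substitution $\tau=t\lambda$: this gives $t^{-s}\sup_{\tau\ge0}\tau^s e^{-\tau}=t^{-s}\,(s/e)^s=:c_s\,t^{-s}$, where the maximum of $\tau\mapsto\tau^se^{-\tau}$ is attained at $\tau=s$ (and $c_0=1$ when $s=0$). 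This yields \eqref{ineq1} with $c_s=(s/e)^s$, and in particular shows $A^sS(t)$ is bounded on $H$ for every $t>0$.

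Next, for the mapping property between intermediate spaces with $\alpha<\beta$ and $t>0$: given $u\in H_A^\alpha$, I want to bound $\|S(t)u\|_{H_A^\beta}$. On the multiplier side this is $\big\|(1+f)^{\beta}e^{-tf}Uu\big\|_{L^2(M,\mu)}$, and it suffices to show the pointwise inequality
\[
(1+\lambda)^{\beta}e^{-t\lambda}\ \le\ c_{\alpha,\beta}\Big(1+\frac{1}{t^{\beta-\alpha}}\Big)(1+\lambda)^{\alpha},\qquad \lambda\ge0,
\]
since then $\|S(t)u\|_{H_A^\beta}\le c_{\alpha,\beta}(1+t^{-(\beta-\alpha)})\|u\|_{H_A^\alpha}$ follows by integrating. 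To get the pointwise bound, set $\gamma=\beta-\alpha>0$ and estimate $(1+\lambda)^{\gamma}e^{-t\lambda}$: split into $t\lambda\le1$, where $(1+\lambda)^\gamma\le(1+1/t)^\gamma\le 2^\gamma\max(1,t^{-\gamma})$ and $e^{-t\lambda}\le1$; and $t\lambda>1$, where one uses $(1+\lambda)^\gamma\le (2\lambda)^\gamma$ and the elementary bound $\lambda^\gamma e^{-t\lambda}\le c_\gamma t^{-\gamma}$ from the first part. Collecting terms and absorbing constants into $c_{\alpha,\beta}$ gives the claimed inequality (using $1\le 1+t^{-\gamma}$ to merge the $t\lambda\le1$ contribution). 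Since $H_A^\alpha$ for $\alpha<0$ is defined as $(H_A^{-\alpha})^*$ and still identifies with $L^2(M,(1+f)^{2\alpha}\,d\mu)$ (Subsection~\ref{A:nonnegative}), the same multiplier argument covers all real $\alpha<\beta$ with no change.

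There is no serious obstacle here; the only point requiring a little care is the bookkeeping of constants and the fact that the intermediate-space norms are only \emph{equivalent} to the weighted $L^2$ norms, so the final constant $c_{\alpha,\beta}$ absorbs those equivalence constants. Since the proof is entirely routine spectral calculus, the paper's choice to cite \cite{FL1,FL2} for the details is appropriate, and I would present the argument exactly along the lines above.
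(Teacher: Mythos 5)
The paper does not prove this proposition: Appendix~\ref{B} explicitly states that the proofs of Propositions~\ref{B:reg0}--\ref{B:reg2} are deferred to \cite{FL1} and \cite{FL2}, so there is no internal argument to compare against. Your spectral-calculus route is the standard one and is essentially correct: transporting $A$ to multiplication by $f\ge 0$ on $L^2(M,\mu)$ via Theorem~\ref{spectralthm}, the operator $A^sS(t)$ becomes multiplication by $f^s e^{-tf}$, whose essential sup is controlled by $\sup_{\lambda\ge 0}\lambda^s e^{-t\lambda}=(s/e)^s t^{-s}$, exactly as you compute; and the second claim reduces to a pointwise inequality in the multiplier.

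There is one small slip in the second part. In the regime $t\lambda>1$ you invoke $(1+\lambda)^\gamma\le(2\lambda)^\gamma$, but that inequality is only valid for $\lambda\ge1$; if $t>1$ one can have $t\lambda>1$ with $\lambda<1$, and then $(2\lambda)^\gamma<(1+\lambda)^\gamma$. The fix is trivial and in fact removes the case split altogether: for every $\lambda\ge0$,
\[
(1+\lambda)^\gamma\le 2^\gamma\bigl(1+\lambda^\gamma\bigr),
\]
so
\[
(1+\lambda)^\gamma e^{-t\lambda}\le 2^\gamma\bigl(e^{-t\lambda}+\lambda^\gamma e^{-t\lambda}\bigr)\le 2^\gamma\bigl(1+c_\gamma t^{-\gamma}\bigr)\le 2^\gamma\max(1,c_\gamma)\Bigl(1+\frac{1}{t^\gamma}\Bigr),
\]
using the bound $\lambda^\gamma e^{-t\lambda}\le c_\gamma t^{-\gamma}$ from the first part. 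With this correction your argument is complete and is what one finds in the cited references.
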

 
\begin{proposition} \label{B:reg1}
  For any $-\infty < \alpha < \infty$, define the Duhamel convolution operator
$$
(\mathcal{I} h)(t) = \int_0^t S(t-s) h(s) ds
$$
for $h \in L^2(0,T; H_A^\alpha)$.

Then $\mathcal{I}$ maps $L^2(0,T;H_A^\alpha)$ into $L^2(0,T;H_A^{\alpha+1})$ and
  \begin{equation}
    \int_0^T \Vert \mathcal{I} h(s) \Vert_{H_A^{\alpha+1}}^2 ds \leq C \int_0^T \Vert h(s) \Vert_{H_A^\alpha}^2 ds, \label{regduhamel}
  \end{equation}
  for some absolute constant $C$ depending only on $T$.
\end{proposition}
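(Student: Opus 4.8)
The plan is to reduce the estimate to the scalar spectral picture provided by Theorem~\ref{spectralthm}, and then carry out a one-dimensional Young-type inequality that is uniform in the spectral parameter. More precisely, since $A$ is a nonnegative self-adjoint operator, by Theorem~\ref{spectralthm} there is a unitary $U\colon H\to L^2(M,d\mu)$ and a measurable $f\ge0$ on $M$ so that $A$ becomes multiplication by $f$ and $S(t)=\exp\{-tA\}$ becomes multiplication by $e^{-tf}$; moreover $H_A^\alpha=U^{-1}(X_f^\alpha)$ with $X_f^\alpha=L^2(M,(1+f)^{2\alpha}d\mu)$. Thus it suffices to prove, for $v(m)=(Uh(\cdot))(m)\in L^2((0,T)\times M)$, the bound
\begin{equation*}
\int_0^T\!\!\int_M (1+f(m))^{2(\alpha+1)}\Bigl|\int_0^t e^{-(t-s)f(m)}v(s,m)\,ds\Bigr|^2 d\mu(m)\,dt
\le C\int_0^T\!\!\int_M (1+f(m))^{2\alpha}|v(s,m)|^2\,d\mu(m)\,ds.
\end{equation*}

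First I would fix $m$ and write $\lambda=f(m)\ge0$; the inner double integral is a one-variable convolution estimate on $(0,T)$ with kernel $k_\lambda(r):=(1+\lambda)\,e^{-\lambda r}\mathbf{1}_{r>0}$, against the weighted data $(1+\lambda)^\alpha v(s,m)$. The key scalar fact is that $\|k_\lambda\|_{L^1(0,\infty)}=(1+\lambda)/\lambda\le 2$ when $\lambda\ge1$, while for $0\le\lambda\le1$ one has $\|k_\lambda\|_{L^1(0,T)}\le(1+\lambda)T\le 2T$; in either case $\sup_{\lambda\ge0}\|k_\lambda\|_{L^1(0,T)}\le C(T)$. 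By Young's inequality for convolutions on $(0,T)$ (extending $v$ by zero outside $(0,T)$),
\begin{equation*}
\Bigl\|\,(k_\lambda * [(1+\lambda)^\alpha v(\cdot,m)])\,\Bigr\|_{L^2(0,T)}\le \|k_\lambda\|_{L^1(0,T)}\,(1+\lambda)^\alpha\|v(\cdot,m)\|_{L^2(0,T)}\le C(T)(1+\lambda)^\alpha\|v(\cdot,m)\|_{L^2(0,T)}.
\end{equation*}
Squaring, integrating in $d\mu(m)$, using $\lambda=f(m)$ and then Fubini gives exactly \eqref{regduhamel}. Transporting back through $U^{-1}$ and recalling that $U$ is unitary between $H_A^\beta$ and $X_f^\beta$ for every $\beta$, we recover $\int_0^T\|\mathcal I h(s)\|_{H_A^{\alpha+1}}^2\,ds\le C\int_0^T\|h(s)\|_{H_A^\alpha}^2\,ds$.

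The only mild subtlety — and the step I would treat most carefully — is the uniformity of the $L^1$-kernel bound across the full range $\lambda\in[0,\infty)$, in particular the gluing of the two regimes $\lambda\le1$ and $\lambda\ge1$, and the fact that the gain of one full power of $(1+f)$ comes precisely from the extra factor $(1+\lambda)$ in $k_\lambda$ (i.e. from $(1+f)e^{-(t-s)f}$ being bounded in $L^1_t$ uniformly in $f$). Everything else — measurability of $(s,m)\mapsto v(s,m)$, applicability of Fubini, the reduction via the spectral theorem, and the identification $H_A^\alpha\simeq X_f^\alpha$ — is standard and is quoted from Subsection~\ref{A:nonnegative} and Theorem~\ref{spectralthm}. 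Since the paper explicitly says the proof is deferred to \cite{FL1,FL2}, I would present this spectral-plus-Young argument as the proof sketch and refer the reader there for the routine verifications.
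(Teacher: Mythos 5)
Your argument is correct and is the expected route given the appendix's setup: reduce via Theorem~\ref{spectralthm} to the multiplication picture in which $H_A^\beta\simeq X_f^\beta$, then prove the scalar estimate
$(1+\lambda)^2\bigl\|\,e^{-\lambda\cdot}\!*\!v\,\bigr\|_{L^2(0,T)}^2\le C(T)^2\|v\|_{L^2(0,T)}^2$
uniformly in $\lambda\ge0$ by Young's inequality, using that $\|e^{-\lambda\cdot}\|_{L^1(0,T)}\le\min(T,1/\lambda)$ so that the extra factor $(1+\lambda)$ is absorbed (your two-regime gluing at $\lambda=1$ is fine), and finish with Fubini and the unitarity of $U$. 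Since the paper itself defers the proof of Proposition~\ref{B:reg1} to \cite{FL1,FL2}, I cannot line it up against an in-text argument, but the spectral machinery laid out in Appendix~\ref{appendixA} (in particular Theorem~\ref{spectralthm} and Subsection~\ref{A:nonnegative}) is evidently there precisely to support this kind of reduction, so your proof matches the intended approach rather than being a genuinely different one.
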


\begin{proposition}  \label{B:reg2}
Let $(\Omega, \mathscr{F}, (\mathscr{F})_{t\geq 0}, \mathbb{P})$ be stochastic basis with a complete and right-continuous filtration, and let $W$ be a cylindrical Wiener process; i.e.,
$$
W(t) = \sum_{k=1}^\infty \beta_k(t) e_k
$$
where the $\beta_k$'s are mutually independent real-valued standard Wiener processes relative to $(\mathscr{F}_t)_{t\geq 0}$, and $(e_k)$ is an orthonormal basis of another separable Hilbert space $\mathfrak{U}$.  

For some $-\infty< \alpha <\infty$, assume that $\Psi \in L^2((0,T)\times\Omega;L_2(\mathfrak{U};H_A^\alpha))$ is predictable. Then, if $(\mathcal{I}_W \Psi)(t)$ is the stochastic convolution 
$$
(\mathcal{I}_W \Psi)(t) = \int_0^t S(t-s) \Psi(s)\,dW(s),
$$
then $\mathcal{I}_W  \Psi\in L^2(\Omega\X[0,T];H_A^{\alpha+1/2})$ and
\begin{equation}\label{regduhamel2}
\Vert \mathcal{I}_W  \Psi \Vert_{L^2(\Omega;L^2(0,T;H_A^{\alpha+1/2}))} \leq C \Vert \Psi \Vert_{L^2(\Omega;L^2(0,T;L_2(\mathfrak{U};H_A^\alpha))},
\end{equation}
for some $C>0$ depending only on $T$. 
\end{proposition}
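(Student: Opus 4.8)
The plan is to obtain the half-derivative gain by combining the It\^o isometry in the Hilbert space $H_A^{\alpha+1/2}$ with the spectral (multiplication-operator) representation of $A$; everything reduces to one elementary scalar inequality. Using the Spectral Theorem~\ref{spectralthm}, fix a unitary $U\colon H\to L^2(M,\mu)$ and a nonnegative measurable multiplier $f$ with $UAU^{-1}$ equal to multiplication by $f$, so that, for every real $\beta$, $\|h\|_{H_A^\beta}^2=\int_M(1+f)^{2\beta}|Uh|^2\,d\mu$ defines (for $\beta\ge0$, an equivalent) norm on $H_A^\beta$ and extends the scale to $\beta<0$ (see Subsection~\ref{A:nonnegative}), while $US(\tau)U^{-1}$ is multiplication by $e^{-\tau f}$. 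The first step is the scalar bound: for every $T>0$,
\begin{equation}\label{e.Bscalar}
(1+f)\int_0^T e^{-2\tau f}\,d\tau\;\le\;C_T:=\max\{2T,1\}\qquad\text{for all }f\ge0,
\end{equation}
which holds because the left-hand side equals $(1+f)\,\dfrac{1-e^{-2Tf}}{2f}\le(1+f)\min\{T,\tfrac1{2f}\}$, bounded by $2T$ for $f\le1$ and by $1$ for $f>1$ (and with limiting value $T$ at $f=0$). Multiplying \eqref{e.Bscalar} by $(1+f)^{2\alpha}|Uh|^2$ and integrating over $M$ gives the deterministic smoothing estimate
\begin{equation}\label{e.Bsgp}
\int_0^T\|S(\tau)h\|_{H_A^{\alpha+1/2}}^2\,d\tau\;\le\;C_T\,\|h\|_{H_A^\alpha}^2,\qquad h\in H_A^\alpha,
\end{equation}
which is the crucial point: it is here that the factor $(1+f)$, instead of blowing up near $f=0$, is absorbed.

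Next I would use \eqref{e.Bsgp} to legitimize $\mathcal{I}_W\Psi$ as an $H_A^{\alpha+1/2}$-valued process and to prove the claimed bound simultaneously. For each fixed $t\in(0,T)$, the process $s\mapsto S(t-s)\Psi(s)$ is predictable with values in $L_2(\mathfrak{U};H_A^{\alpha+1/2})$ (by Proposition~\ref{B:reg0}, $S(\tau)\in\mathscr{L}(H_A^\alpha;H_A^{\alpha+1/2})$ for $\tau>0$). Applying \eqref{e.Bsgp} with $h=\Psi(s)e_k$, summing the Parseval series over $k$, and then invoking Tonelli's theorem yields
$$
\bbE\int_0^T\!\!\int_0^{t}\|S(t-s)\Psi(s)\|_{L_2(\mathfrak{U};H_A^{\alpha+1/2})}^2\,ds\,dt
=\bbE\int_0^T\!\!\int_0^{T-s}\|S(\tau)\Psi(s)\|_{L_2(\mathfrak{U};H_A^{\alpha+1/2})}^2\,d\tau\,ds
\le C_T\,\bbE\int_0^T\|\Psi(s)\|_{L_2(\mathfrak{U};H_A^\alpha)}^2\,ds<\infty,
$$
so the inner $s$-integral is finite for a.e.\ $t$; for such $t$ the $H_A^{\alpha+1/2}$-valued stochastic integral defining $\mathcal{I}_W\Psi(t)$ exists, and it coincides, after the continuous embedding $H_A^{\alpha+1/2}\hookrightarrow H_A^\alpha\hookrightarrow H$, with the usual stochastic convolution, since stochastic integration commutes with bounded linear maps. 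Note that the crude operator bound from Proposition~\ref{B:reg0} alone would give a non-integrable factor $(t-s)^{-1}$ here, so this finiteness genuinely rests on \eqref{e.Bsgp}.

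Finally, for \eqref{regduhamel2} itself: fix $t$, apply the It\^o isometry in $H_A^{\alpha+1/2}$,
$$
\bbE\,\|\mathcal{I}_W\Psi(t)\|_{H_A^{\alpha+1/2}}^2=\bbE\int_0^t\|S(t-s)\Psi(s)\|_{L_2(\mathfrak{U};H_A^{\alpha+1/2})}^2\,ds,
$$
integrate in $t$ over $(0,T)$, interchange $s$ and $t$ by Tonelli (all integrands nonnegative), substitute $\tau=t-s$, bound $\int_0^{T-s}(\cdots)\,d\tau\le\int_0^T(\cdots)\,d\tau$, and use \eqref{e.Bsgp} (with $h=\Psi(s)e_k$) followed by the Parseval sum in $k$; this gives
$$
\bbE\int_0^T\|\mathcal{I}_W\Psi(t)\|_{H_A^{\alpha+1/2}}^2\,dt\le C_T\,\bbE\int_0^T\|\Psi(s)\|_{L_2(\mathfrak{U};H_A^\alpha)}^2\,ds,
$$
which is \eqref{regduhamel2} after returning to the (equivalent) graph/duality norms on $H_A^{\alpha\pm1/2}$. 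The only genuinely delicate point is the uniform scalar bound \eqref{e.Bscalar}; in particular the fact that $C_T$ must depend on $T$ (the analogous estimate on $(0,\infty)$ fails, the integral there producing a $1/f$ singularity) is exactly what confines the gain to half a derivative. Everything else is bookkeeping with Tonelli, Parseval, and the It\^o isometry, and follows the lines of the arguments in \cite{FL1,FL2}.
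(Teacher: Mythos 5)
Your proof is correct and is essentially the standard argument that the paper defers to \cite{FL1,FL2}: the It\^o isometry in $H_A^{\alpha+1/2}$ combined with the spectral smoothing estimate $\int_0^T\|S(\tau)h\|_{H_A^{\alpha+1/2}}^2\,d\tau\le C_T\|h\|_{H_A^{\alpha}}^2$, which you correctly reduce to the uniform scalar bound $(1+f)\int_0^T e^{-2\tau f}\,d\tau\le\max\{2T,1\}$ via the multiplication-operator representation of $A$. Your observation that the pointwise operator bound of Proposition~\ref{B:reg0} would, after squaring, only produce a non-integrable factor $(t-s)^{-1}$, so that the time-integrated estimate is genuinely needed, is exactly the right point.
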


\end{document}